\documentclass[11pt, leqno]{amsart}

\setlength{\textwidth}{15.8cm} \setlength{\textheight}{21.3cm}
\setlength{\oddsidemargin}{0.0cm} \setlength{\evensidemargin}{0.0cm}

\usepackage{
  amsmath,
  amsthm,
  amssymb,
  tikz,
  fancyhdr,
  enumitem,
  setspace,
  multirow,
  mathdots,
  upgreek,
  yhmath,
  ytableau,
  mathrsfs,
}

\usepackage[all, knot]{xy}
\xyoption{arc}

\usetikzlibrary{arrows,calc,matrix,shapes}

\usepackage[colorlinks=true, pdfstartview=FitV,linkcolor=black,citecolor=black,urlcolor=black]{hyperref}


\usepackage[vcentermath,enableskew]{youngtab}
\usepackage{lscape}
\usepackage[center,small,sc]{caption}

\usepackage{multicol}

\theoremstyle{plain}
\newtheorem{theorem}{Theorem}[section]
\newtheorem{lemma}[theorem]{Lemma}
\newtheorem{proposition}[theorem]{Proposition}
\newtheorem{corollary}[theorem]{Corollary}
\newtheorem{conjecture}[theorem]{Conjecture}

\theoremstyle{definition}
\newtheorem{definition}[theorem]{Definition}
\newtheorem{example}[theorem]{Example}

\newtheorem{remark}[theorem]{Remark}
\newtheorem{problem}[theorem]{Problem}

\numberwithin{equation}{section}

\newcommand{\xx}{\mathbf{x}}
\newcommand{\g}{\mathfrak{g}}

\newcommand{\iso}{\cong} 
\newcommand{\fw}{\varpi} 
\newcommand{\tfw}{\widetilde{\fw}} 

\newcommand{\seteq}{\mathbin{:=}}


\newcommand{\ntau}[1]{\btau^{(#1)}}

\newcommand{\cn}[1]{\mathbf{#1}}

\newcommand{\clr}{\mathrm{c}}
\newcommand{\blam}{\uplambda}
\newcommand{\btau}{\uptau}
\newcommand{\bmu}{\upmu}
\newcommand{\ASP}{\mathrm{ASP}}
\newcommand*\gn[1]{\tikz[baseline=(char.base)]{%
            \node[shape=circle,fill=gray!40,draw,inner sep=1.3pt] (char) {#1};}}
\newcommand*\nn[1]{\tikz[baseline=(char.base)]{%
            \node[shape=circle,draw,inner sep=1.3pt] (char) {#1};}}

\newcommand{\Cat}{\mathcal{C}} 
\newcommand{\CatCR}{\widetilde{\Cat}} 
\newcommand{\Mot}{\mathcal{M}} 
\newcommand{\Rior}{\mathcal{R}} 
\newcommand{\Dyck}{\mathcal{D}} 
\newcommand{\qbinom}[3]{\left[ \begin{matrix} #1 \\ #2 \end{matrix} \right]_{#3}} 
\newcommand{\qPdf}[1]{\langle q \rangle_{#1}} 
\newcommand{\nilp}{\mathbf{P}} 

\newcommand{\Z}{\mathbb{Z}}
\newcommand{\B}{\mathfrak{B}}

\newcommand{\mcL}{\mathcal{L}}

\newcommand{\tens}{\mathop\otimes}
\newcommand{\virtual}[1]{\widehat{#1}}  
\newcommand{\wt}{\operatorname{wt}} 
\newcommand{\sgn}{\operatorname{sgn}} 
\newcommand{\sig}{\operatorname{sig}} 
\newcommand{\ch}{\operatorname{ch}} 
\newcommand{\ps}{\operatorname{ps}} 
\newcommand{\nps}{\widetilde{\ps}} 
\newcommand{\Des}{\operatorname{Des}} 
\newcommand{\maj}{\operatorname{maj}} 

\newcommand{\Sh}{\mathsf{Sh}}

\def\Q{\mathbb Q}

\def\ZZ{\mathcal Z}

\newcommand{\ofour}{\overline{4}}
\newcommand{\othree}{\overline{3}}
\newcommand{\otwo}{\overline{2}}
\newcommand{\one}{\overline{1}}
\newcommand{\on}{\overline{n}}
\newcommand{\ok}{\overline{k}}

\newcommand{\dyckgrid}[1]{
\foreach \i in {0, ..., #1} {\draw[gray!40, very thin] (\i,\i) -- (#1, \i); \draw[gray!40, very thin] (\i,\i) -- (\i, 0);}
}

\usepackage{listings}
\lstset{backgroundcolor=\color{blue!5}}
\lstdefinelanguage{Sage}[]{Python}
{morekeywords={False,True},sensitive=true}
\lstset{
  emph={sage},
  emphstyle=\color{blue},
  emph={[2]self},
  emphstyle={[2]\color{brown}},
  frame=none,
  showtabs=False,
  showspaces=False,
  showstringspaces=False,
  commentstyle={\ttfamily\color{olive}},
  keywordstyle={\ttfamily\color{purple}\bfseries},
  stringstyle={\ttfamily\color{orange}\bfseries},
  language=Sage,
  basicstyle={\footnotesize\ttfamily\singlespacing},
  aboveskip=0.0em,
  belowskip=.1in,
  xleftmargin=.1in,
  xrightmargin=.1in,
}

\definecolor{darkred}{rgb}{0.7,0,0} 
\newcommand{\defn}[1]{{\color{darkred}\emph{#1}}} 

\usepackage[colorinlistoftodos]{todonotes}

\setlength{\marginparwidth}{2cm}

\title{Identities from representation theory}

\author[S.-j.~Oh]{Se-jin Oh}
\address[S.-j.~Oh]{Ewha Womans University Seoul, 52 Ewhayeodae-gil, Daehyeon-dong, Seodaemun-gu, Seoul, South Korea}
\email{sejin092@gmail.com}
\urladdr{https://sites.google.com/site/mathsejinoh/}

\author[T.~Scrimshaw]{Travis Scrimshaw}
\address[T. Scrimshaw]{School of Mathematics and Physics, The University of Queensland, St. Lucia, QLD 4072, Australia}
\email{tcscrims@gmail.com}
\urladdr{https://sites.google.com/view/tscrim/home}


\thanks{SjO was partially supported by the National Research Foundation of Korea(NRF) Grant funded by the Korea government(MSIP) (NRF-2016R1C1B2013135).
TS was partially supported by the National Science Foundation RTG grant NSF/DMS-1148634 and the Australian Research Council DP170102648.}

\begin{document}

\begin{abstract}
We give a new Jacobi--Trudi-type formula for characters of finite-dimensional irreducible representations in type $C_n$ using characters of the fundamental representations and non-intersecting lattice paths.
We give equivalent determinant formulas for the decomposition multiplicities for tensor powers of the spin representation in type $B_n$ and the exterior representation in type $C_n$.
This gives a combinatorial proof of an identity of Katz and equates such a multiplicity with the dimension of an irreducible representation in type $C_n$.
By taking certain specializations, we obtain identities for $q$-Catalan triangle numbers, the $q,t$-Catalan number of Stump, $q$-triangle versions of Motzkin and Riordan numbers, and generalizations of Touchard's identity.
We use (spin) rigid tableaux and crystal base theory to show some formulas relating Catalan, Motzkin, and Riordan triangle numbers.
\end{abstract}

\maketitle
\tableofcontents

\section{Introduction}

The interaction between combinatorics and representation theory has a long history. For example, by enumerating bases and modules by combinatorial objects, we can translate problems in representation theory into computations using combinatorial rules that are often simple. On the other hand, the algebraic structures frequently imply certain identities or positivity results.

For example, the only known proof that the $q,t$-Catalan number $\CatCR_n(q,t)$ of Garsia and Haiman~\cite{GH96} is symmetric in $q$ and $t$, \textit{i.e.} $\CatCR_n(q,t) = \CatCR_n(t,q)$, is by showing that $\CatCR_n(q,t)$ is the bi-graded Hilbert series of a certain representation that is naturally symmetric in $q$ and $t$~\cite{GH01,GH02}. It is a famous open problem in combinatorics to prove this bijectively.
Another problem in combinatorial group theory (and geometry) was the positivity of Kazhdan--Lusztig polynomials and was proven by using the structure of Soergel bimodules~\cite{EW14,Soergel07}.
Representation theory has also been applied to show certain polynomials from combinatorics are symmetric and unimodel (see, \textit{e.g.},~\cite{Stanley80} and references therein).

One important relationship is given by constructing the dimension of a $\mathfrak{gl}_n$-representation $V(\lambda)$ by using semistandard Young tableaux of shape $\lambda$. The character of $V(\lambda)$ is then a Schur function $s_{\lambda}$, which also is well-defined in the stable limit as $n \to \infty$. One can show that combinatorially that Schur functions are a basis for the ring of symmetric functions, which implies that finite-dimensional $\mathfrak{gl}_n$ modules are fully reducible and $V(\lambda) \iso V(\mu)$ if and only if $\lambda = \mu$. In turn, the fact that $s_{\lambda} s_{\mu}$ corresponds to $V(\lambda) \otimes V(\mu)$ implies that the Littlewood--Richardson coefficients are non-negative. Furthermore, the Lindstr\"om--Gessel--Viennot (LGV) lemma~\cite{Lindstrom73,GV85} can be applied to give a bijective proof of the Jacobi--Trudi identity to compute Schur functions. For more information on Schur functions, see, \textit{e.g.},~\cite[Ch.~7]{ECII}.

For the other classical Lie algebras, the problem is more intricate. There have been a number of different tableaux to enumerate the bases of a finite-dimensional irreducible representation of $\mathfrak{so}_n$~\cite{KW93}, including by King and El-Sharkaway~\cite{KElS83}, King and Welsh~\cite{KW93}, Koike and Terada~\cite{KT90}, Proctor~\cite{Proctor90III,Proctor94}, and Sundaram~\cite{Sundaram90}. For $\mathfrak{sp}_{2n}$, the representations can also be indexed by certain tableaux~\cite{Berele86}, which include those by De Concini~\cite{dC79}, King~\cite{King76}, King and El-Sharkaway~\cite{KElS83}, Sundaram~\cite{Sundaram86}.
In~\cite{KT87}, Koike and Terada give two new bases for the ring of symmetric functions that corresponds to stable limits of $\mathfrak{sp}_{2n}$ and $\mathfrak{so}_n$ characters.
Jacobi--Trudi-type formulas and bijective proofs using the LGV lemma for the characters of $\mathfrak{sp}_{2n}$ and $\mathfrak{so}_n$ were given in~\cite{FK97,SV16}.

One of the major advancements in both combinatorics and representation theory was Kashiwara's crystal bases~\cite{K90,K91}, which are the $q \to 0$ limit of bases of representations of a Drinfel'd--Jimbo quantum group $U_q(\g)$. Crystal give representation theoretic interpretations of and interconnected a number of combinatorial constructions such as coplactic operators~\cite{Lothaire02}, evacuation~\cite{Lenart07}, promotion~\cite{Shimozono02}, and charge~\cite{NY97}. Crystals also yielded a new tableaux model for $V(\lambda)$~\cite{KN94}, Levi branching rules, and a combinatorial method to compute the decompositions of tensor products in a unified framework.

Our main result (Theorem~\ref{thm:general_det_formula_Cn}) is a Jacobi--Trudi-type formula for characters of irreducible representations in terms of representations corresponding to fundamental weights (\textit{i.e.} given by single column tableaux).
In particular, our formula is distinct from~\cite[Eq.~(3.9),~(3.10) and~(3.11)]{FK97}, which uses the reflection principle and can involve multiple terms in each entry of the matrix.
Moreover, it is different than~\cite[Thm.~3.2]{SV16}, which is a ``dual'' version of our results as it uses single row tableaux representations (\textit{i.e.,}, symmetric powers of the natural representation or homogeneous symmetric functions).
Our proof uses the LGV lemma on partial Dyck paths and King tableaux, which yields a determinant formula for the dimension using Catalan triangle numbers.
It is also distinct from~\cite{KT87}, which uses characters of single column tableaux in type $A$ (\textit{i.e.}, elementary symmetric functions).

Our second main result are determinant formulas for the decomposition multiplicities of a tensor power of the spin representation in type $B_n$ (Theorem~\ref{thm:triangular_Catalan_det}) and the exterior algebra of $\mathfrak{sp}_{2n}$ (Theorem~\ref{thm:triangular_Catalan_det_type_C}). Again, our proof is applying the LGV lemma to partial Dyck paths. In particular, when we consider the multiplicity of the trivial representation, we have a natural identification of the two lattice paths as proper Dyck paths. This gives a direct bijective proof of a result of Katz~\cite[Thm~1.4]{Katz16}. By using our interpretation using crystals, we can see that this identification essentially corresponds to the virtualization map of type $C_n$ to $B_n$ (see, {\it e.g.},~\cite{K96,OSS03III,OSS03II,SchillingS15}). Thus, we also provide a representation theoretic proof~\cite[Thm.~1.4]{Katz16}, answering a problem posed by Katz~\cite[Sec.~4]{Katz16}.

The remainder of our results are about various identities that arise from representation theory. We show that the (natural) $q$-Catalan number $\Cat_n(q)$ from MacMahon~\cite{MacMahon15} is equal to the principle specialization of a type $C_{n-1}$ character (up to a power of $q$) (Theorem~\ref{thm:q_catalan_paths}) by giving a representation theoretic formulation of the $q,t$-Catalan number given by Stump~\cite{Stump08} (Theorem~\ref{thm:qt_polynomial}). Using this as a guide, we give a new $q,t$-Catalan triangle numbers using specializations of type $C_{n-1}$ characters. 
Furthermore, we extend this to a $q,t$-binomial coefficient that specializes to the usual $q$-binomial coefficients at $t = q^{-1}$ (up to a power of $q$) (Theorem~\ref{thm:q_binomial_paths}).

Additionally, we prove the classical Touchard identity using representations in type $C_n$, extend this to the Catalan triangle numbers, and give a new $q,t$-analog that specializes to the principle specialization of a type $C_n$ character.
We also give representation theoretic proofs of some classical identities between Catalan numbers and Motzkin/Riordan numbers. 
We provide tableau models for highest weight crystals in types $B_n$ and $D_n$ using semistandard (spin) rigid tableaux of~\cite{KLO17} and are equinumerous to Motzkin/Riordan triangle numbers in various ways.
Furthermore, we give new (recursive) $q$-analogs of Motzkin and Riordan (triangle) numbers, which we conjecture to have a combinatorial interpretation.

This paper is organized as follows.
In Section~\ref{sec:background}, we provide the necessary background.
In Section~\ref{sec:JT_type_C}, we give a Jacobi--Trudi-type formula for characters in type $C_n$.
In Section~\ref{sec:decomposition_multiplicities}, we compute the decomposition multiplicities of tensor powers of the type $B_n$ spin and type $C_n$ exterior representation.
In Section~\ref{sec:repr_identities}, we provide a representation theoretic proof of several combinatorial identities.
In Section~\ref{sec:rigid_tableaux}, we use semistandard rigid tableaux for representations in type $B_n$ to give $(s+1)$-many
distinct tableaux models which are equinumerous to the Motzkin triangle number $\Mot_{(m,s)}$.
In Section~\ref{sec:spin_rigid_tableaux}, we show identities involving Riordan triangle numbers using semistandard spin rigid tableaux and type $D_n$ representations.
In Section~\ref{sec:problems}, we conclude with a number of open problems.

After completion of this manuscript, the authors were informed of the preprint~\cite{Okada89} of Soichi Okada that also proves Theorem~\ref{thm:general_det_formula_Cn} using the LGV lemma and King tableaux. The authors additionally learned of~\cite{Okada09}, which provides an alternative proof of Corollary~\ref{cor: Cn determinatal r omega_n} and Proposition~\ref{prop:determinant_double_spin_B}.

\section{Background}
\label{sec:background}

In this section, we give the necessary background.

\subsection{Catalan triangle numbers}

The Catalan numbers are a well-studied sequence of numbers whose history spans over the past three centuries, where the earliest recorded discovery is by the Chinese mathematician Antu Ming around 1730~\cite{Larcombe99,Luo88}. The $n$-th Catalan number is
\[
\Cat_n \seteq \frac{1}{2n+1} \binom{2n}{n}
\]
and also satisfies the recursion
\begin{equation}
\label{eq:Catalan_recurrence}
\Cat_n = \sum_{k=1}^n \Cat_{k-1} \Cat_{n-k}
\end{equation}
starting with $\Cat_0 = 1$. Stanley has given a list of over 200 objects count the Catalan numbers~\cite{ECII,Stanley15}, and this list continues to grow, {\it e.g.},~\cite{Reynolds15}.

One such interpretation is given by the set of all Dyck paths in an $n \times n$ grid that stay weakly below the diagonal $y = x$, which we denote by $\Dyck_n$. To be precise, we consider a directed graph, known as the \defn{Catalan graph}, with vertices $V = \{(i,j) \mid 0 \leq j \leq i \leq n\}$ and (directed) edges
\[
N = (i,j) \to (i,j+1),
\qquad\qquad
E = (i,j) \to (i+1,j),
\]
which we call North steps and East steps respectively. A \defn{Dyck path} is a path from $(0, 0)$ to $(n, n)$ in the Catalan graph. Indeed, we have $\Cat_n = \lvert \Dyck_n \rvert$. We call the corresponding word in the alphabet $\{ N, E \}$ a \defn{Dyck word}.

We will also require the following generalization of the Catalan numbers that appeared as early as 1800~\cite{Arbogast}. The \defn{$(n,k)$-th Catalan triangle number} is defined by
\[
\Cat_{(n,k)} \seteq \binom{n+k}{k}-\binom{n+k}{k-1} = \frac{(n+k)! (n-k+1)}{k! (n+1)!} = \Cat_{(n,k-1)} + \Cat_{(n-1,k)}
\]
and can be described combinatorially the set of all paths in the Catalan graph that start at $(0,0)$ and end at $(n,k)$~\cite{Bailey96}.
Thus, we denote the set of such paths by $\Dyck_{(n,k)}$, which is equivalent be the set of partial Dyck words with $k$ occurrences of the letter $N$ and $n$ occurrences of the letter $E$.
We consider $\Cat_{(n,k)} = 0$ whenever $k < 0$ or $k > n$.
Note that $\Cat_{(n,n)} = \Cat_{(n,n-1)} = \Cat_n$.

\subsection{Motzkin triangle numbers}

Another well-studied sequence of numbers are the Motzkin numbers that were introduced by Motzkin~\cite{Motzkin48}.
The \defn{$n$-th Motzkin number} is
\[
\Mot_n \seteq \sum_{k=0}^{\lfloor n/2 \rfloor} \Cat_k \binom{n}{2k}
\]
and satisfies the recurrence
\[
\Mot_n = \Mot_{n-1} + \sum_{k=0}^{n-2} \Mot_k \Mot_{n-2-k} = \frac{2n+1}{n+2} \Mot_{n-1} + \frac{3n-3}{n+2} \Mot_{n-2}.
\]
The $n$-th Motzkin number has an interpretation of the number of ways of drawing non-intersecting chords on $n$ points on a circle. They also have an interpretation similar to Dyck paths, called \defn{Motzkin paths}, using steps
\[
U = (i,j) \to (i+1,j+1),
\qquad
H = (i, j) \to (i+1, j),
\qquad
D = (i, j) \to (i+1, j-1),
\]
that starts at $(0, 0)$, goes to $(n, 0)$, and stays weakly above the $y = 0$ horizontal line. Note that a Dyck path is a Motzkin path without a horizontal step $H$.
In~\cite{Callen17}, it was shown that $\Mot_{n-1}$ is the number of Dyck paths in $\Dyck_n$ whose peaks all occur at odd height.

There is a triangle version of the Motzkin numbers~\cite{Lando03} (see also~\cite[A026300]{OEIS}). 
The \defn{$(n,k)$-th Motzkin triangle number} is defined by
\begin{align*}
\Mot_{(n,k)} & \seteq \sum_{i=0}^{\lfloor (n-k)/2 \rfloor} \binom{n}{2i+k} \left[ \binom{2i+k}{i} - \binom{2i+k}{i-1} \right]
\\ & = \Mot_{(n-1,k)} + \Mot_{(n-1,k-1)} + \Mot_{(n-1,k+1)}.
\end{align*}
These count the number of Motzkin paths that end at $(n,k)$, and so $\Mot_{(n,0)} = \Mot_n$ and $\Mot_{(n,n)} = 1$. Note that this differs from our convention for Catalan triangle numbers.

\subsection{Riordan triangle numbers}

The Riordan numbers are another sequence of numbers closely related to Motzkin numbers that were first introduced by Riordan~\cite{Riordan75}, where the name was coined by Bernhart~\cite{Bernhart97}. The \defn{$n$-th Riordan number} is defined by
\[
\Rior_n \seteq \frac{1}{n+1} \sum_{k=1}^{\lfloor n/2 \rfloor} \binom{n+1}{k} \binom{n-k-1}{k-1} = \frac{n-1}{n+1} \left(2 \Rior_{n-1} + 3 \Rior_{n-2} \right)
\]
with $\Rior_0 = 1$ and $\Rior_1 = 0$. The $n$-th Riordan number has an interpretation as the number of Motzkin paths from $(0,0)$ to $(n,0)$ that does not have any horizontal steps on the $y=0$ line. Such paths are called \defn{Riordan paths}. Another interpretation is the number of Dyck paths in $\Dyck_n$ whose peaks all occur at even height~\cite{Callen17}.

Let $\Rior_{(n,k)}$ denote the \defn{$(n,k)$-th Riordan triangle number} defined by
\[
\Rior_{(n,k)} \seteq \sum_{i=0}^{n-k} (-1)^i (\Mot_{(n+1-i,k)} + \Mot_{(n+1-i,k-1)}).
\]
The $(n,k)$-th Riordan triangle number has an interpretation as the Riordan paths that end at $(n,k)$, and thus, we have $\Rior_{(n,0)} = \Rior_n$ and $\Rior_{(n,n)} = 1$. Note that this differs from our convention for Catalan triangle numbers.

One of the earliest known (to the authors) occurrences of the Riordan triangle numbers is in the paper~\cite{Bernhart97} by taking a difference of trinomial coefficients (see~\cite[Fig.~22(b)]{Bernhart97}). The Riordan triangle numbers also appeared in the same year in a paper by~\cite[Fig.~5]{MRSV97}, but under a different interpretation. To convert the notation/paths of~\cite{MRSV97} to the standard Riordan paths given here, use the map
\[
e \mapsto U,
\qquad\qquad\qquad
ne \mapsto H,
\qquad\qquad\qquad
n^2e \mapsto D.
\]
We leave it as an exercise to the reader to show this is indeed a bijection. 

\begin{lemma} \label{lem:Mot_Rio_relation} \cite[Lemma~4.10]{KLO17} For $m,s \ge 1$, we have
\[
\Rior_{(m,s)} + \Rior_{(m-1,s)}=\Mot_{(m-1,s)} + \Mot_{(m-1,s-1)}.
\]
\end{lemma}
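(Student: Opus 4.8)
The plan is to prove the identity purely algebraically, by unwinding the defining alternating sum of the Riordan triangle number and exploiting the cancellation between two consecutive values of the first index. To streamline the bookkeeping I would abbreviate $B_j \seteq \Mot_{(j,s)} + \Mot_{(j,s-1)}$, so that the right-hand side we are aiming for is simply $B_{m-1}$, while the definition reads $\Rior_{(n,s)} = \sum_{i=0}^{n-s} (-1)^i B_{n-1-i}$ for each $n \ge s$. The key observation is that $\Rior_{(m,s)}$ and $\Rior_{(m-1,s)}$ are alternating sums over \emph{the same} family of quantities $B_j$, but with the summation window shifted by exactly one index, so that adding them makes the overlapping terms cancel in pairs and only a single extreme term survives.

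Concretely, I would write
\[
\Rior_{(m,s)} = \sum_{i=0}^{m-s} (-1)^i B_{m-1-i},
\qquad
\Rior_{(m-1,s)} = \sum_{i=0}^{m-1-s} (-1)^i B_{m-2-i},
\]
and then reindex the second sum via $i \mapsto i-1$, turning its summand into $-(-1)^i B_{m-1-i}$ with $i$ now running from $1$ to $m-s$. Splitting off the $i=0$ term of the first sum then gives
\[
\Rior_{(m,s)} + \Rior_{(m-1,s)} = B_{m-1} + \sum_{i=1}^{m-s} (-1)^i B_{m-1-i} - \sum_{i=1}^{m-s} (-1)^i B_{m-1-i} = B_{m-1},
\]
which is exactly $\Mot_{(m-1,s)} + \Mot_{(m-1,s-1)}$, the desired right-hand side.

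The only points requiring genuine care are the degenerate boundaries. When $m = s$ the second summation window is empty and one invokes the convention $\Rior_{(s-1,s)} = 0$ (no lattice path can reach height $s$ after $s-1$ steps), so the identity collapses to $\Rior_{(s,s)} = B_{s-1} = \Mot_{(s-1,s)} + \Mot_{(s-1,s-1)} = 0 + 1 = 1$; this is the case I would verify separately. An alternative, more combinatorial route is to show that both sides obey the common three-term triangle recurrence $F_{(m,s)} = F_{(m-1,s-1)} + F_{(m-1,s)} + F_{(m-1,s+1)}$ and then induct on $m$. I expect the main obstacle along that route to be that the Riordan path recurrence takes this clean three-term form only for $s \ge 1$ and degenerates at height zero (no horizontal step is permitted on the line $y=0$, forcing $\Rior_{(m,0)} = \Rior_{(m-1,1)}$); consequently the base column $s=0$ must be supplied by hand, via the classical relation $\Mot_n = \Rior_n + \Rior_{n+1}$, before the induction can propagate up the triangle.
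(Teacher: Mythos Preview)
Your telescoping argument is correct and is the natural way to verify the identity once one has the alternating-sum expression for $\Rior_{(n,s)}$ in hand. The paper itself does not supply a proof --- the lemma is simply quoted from \cite{KLO17} --- so there is no alternative approach to compare against; your argument stands on its own.

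One point worth flagging: the formula you use,
\[
\Rior_{(n,s)} = \sum_{i=0}^{n-s} (-1)^i B_{n-1-i}, \qquad B_j = \Mot_{(j,s)} + \Mot_{(j,s-1)},
\]
differs from the paper's stated definition, which has $B_{n+1-i}$ in place of $B_{n-1-i}$. Your version is the correct one: for example it gives $\Rior_{(1,1)} = B_0 = \Mot_{(0,1)} + \Mot_{(0,0)} = 0 + 1 = 1$, whereas the paper's index would give $B_2 = \Mot_{(2,1)} + \Mot_{(2,0)} = 2 + 2 = 4$. Indeed, running your same telescoping with the paper's index produces $B_{m+1}$ rather than $B_{m-1}$, which contradicts the lemma. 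So you have silently corrected a typo in the paper's displayed definition; it would be worth saying so explicitly. Your handling of the boundary case $m = s$ is also fine.
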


\subsection{Lindstr\"om--Gessel--Viennot lemma}

We state one of our main computational tools, the \defn{Lindstr\"om--Gessel--Viennot (LGV) lemma}.

Let $G$ be a finite directed acyclic graph with edge weights $w \colon E(G) \to R$ for some commutative ring $R$. Let
\[
e(u, v) = \sum_{P \colon u \to v} \prod_{e \in P} w(e),
\]
where the sum is over all paths $P$ from $u$ to $v$. Next, fix some initial vertices $\mathbf{s} = (s_1, \dotsc, s_n)$ and terminal vertices $\mathbf{t} = (t_1, \dotsc, t_n)$. A \defn{family of non-intersecting lattice paths} $\nilp = (P^{(1)}, \dotsc, P^{(n)})$ from $\mathbf{s}$ to $\mathbf{t}$ are paths $P^{(i)} \colon s_i \to t_{\sigma(i)}$, for some fixed $\sigma \in \mathfrak{S}_n$, such that $P^{(i)} \cap P^{(j)} = \emptyset$ for all $i \neq j$. The \defn{sign} of $\nilp$ is defined by $\sgn(\nilp) = \sgn(\sigma)$.

\begin{lemma}[{\cite{Lindstrom73,GV85}}]
\label{lemma:LGV}
We have
\[
\det \begin{bmatrix}
e(s_1, t_1) & e(s_1, t_2) & \cdots & e(s_1, t_n) \\
e(s_2, t_1) & e(s_2, t_2) & \cdots & e(s_2, t_n) \\
\vdots & \vdots & \ddots & \vdots \\
e(s_n, t_1) & e(s_n, t_2) & \cdots & e(s_n, t_n)
\end{bmatrix}
= \sum_{\nilp} \sgn(\nilp) \prod_{i=1}^n w(P^{(i)}),
\]
where the sum is over all non-intersecting lattices paths $\nilp = (P^{(1)}, \dotsc, P^{(n)})$ from $\mathbf{s}$ to $\mathbf{t}$.
\end{lemma}

We will simply refer to Lemma~\ref{lemma:LGV} as the LGV lemma.

\subsection{Crystals}
\label{sec:crystals}

Let $\g$ be a simple Lie algebra with index set $I$, Cartan matrix $(A_{ij})_{i,j \in I}$, simple roots $(\alpha_i)_{i \in I}$, fundamental weights $(\fw_i)_{i \in I}$, weight lattice $P_{\Z}$, simple coroots $(\alpha_i^{\vee})_{i \in I}$, and canonical pairing $\langle\ ,\ \rangle \colon P_{\Z}^{\vee} \times P_{\Z} \to \Z$ given by $\langle \alpha_i^{\vee}, \alpha_j \rangle = A_{ij}$.
We will use the standard identification of the weight lattice as a sublattice of $\bigoplus_{i=1}^n \Q\epsilon_i$ (see, \textit{e.g.},~\cite{BS17}).
Let $U_q(\g)$ denote the corresponding (Drinfel'd--Jimbo) quantum group.
We denote
\[
\tfw_i = \begin{cases}
2\fw_n & \text{if $\g = B_n$ and $i = n$},
\\ \fw_n + \fw_{n+1} & \text{if $\g = D_{n+1}$ and $i = n$},
\\ 2\fw_i & \text{if $\g = D_{n+1}$ and $i = n+1$},
\\ \fw_i & \text{otherwise}.
\end{cases}
\]

A \defn{$U_q(\g)$-crystal} is a set $B$ with operations $e_i, f_i \colon B \to B \sqcup \{0\}$, for $i \in I$, and weight function $\wt \colon B \to P_{\Z}$ such that
\begin{enumerate}
\item $\langle \wt(b), \alpha_i^{\vee} \rangle + \varepsilon_i(b) = \varphi_i(b)$, where
\begin{itemize}
\item $\varepsilon_i(b) = \max \{ k \in \Z_{\geq 0} \mid e_i^k b \neq 0 \}$,
\item $\varphi_i(b) = \max \{ k \in \Z_{\geq 0} \mid f_i^k b \neq 0 \}$,
\end{itemize}
\item $\wt(e_i b) = \wt(b) + \alpha_i$ if $e_i b \neq 0$,
\item $e_i b = b'$ if and only if $b = f_i b'$ for all $b, b' \in B$,
\item is a crystal basis, in the sense of Kashiwara~\cite{K90,K91}, of a $U_q(\g)$-module.
\end{enumerate}

\begin{remark}
Our definition of a crystal definition includes the property sometimes called \defn{seminormal} or \defn{regular}.
\end{remark}

In~\cite{K90}, Kashiwara showed that all highest weight modules $V(\lambda)$ for $\lambda \in P_{\Z}^+$ admit a crystal basis. Let $B(\lambda)$ denote the crystal basis of $V(\lambda)$, and let $u_{\lambda}$ be the \defn{highest weight element} of $B(\lambda)$, the (unique) element such that $e_i u_{\lambda} = 0$ for all $i \in I$.

Kashiwara showed that $U_q(\g)$-crystals form a tensor category~\cite{K91}. We define the \defn{tensor product} of $U_q(\g)$-crystals $B_1$ and $B_2$ as the crystal $B_2 \otimes B_1$ with elements being the Cartesian product $B_2 \times B_1$ with the crystal structure
\begin{align*}
e_i(b_2 \otimes b_1) & = \begin{cases}
e_i b_2 \otimes b_1 & \text{if } \varepsilon_i(b_2) > \varphi_i(b_1), \\
b_2 \otimes e_i b_1 & \text{if } \varepsilon_i(b_2) \leq \varphi_i(b_1),
\end{cases}
\\ f_i(b_2 \otimes b_1) & = \begin{cases}
f_i b_2 \otimes b_1 & \text{if } \varepsilon_i(b_2) \geq \varphi_i(b_1), \\
b_2 \otimes f_i b_1 & \text{if } \varepsilon_i(b_2) < \varphi_i(b_1),
\end{cases}
\\ \varepsilon_i(b_2 \otimes b_1) & = \max(\varepsilon_i(b_1), \varepsilon_i(b_2) - \langle h_i, \wt(b_1) \rangle),
\\ \varphi_i(b_2 \otimes b_1) & = \max(\varphi_i(b_2), \varphi_i(b_1) + \langle h_i, \wt(b_2) \rangle),
\\ \wt(b_2 \otimes b_1) & = \wt(b_2) + \wt(b_1).
\end{align*}

\begin{remark}
Our tensor product convention follows~\cite{BS17}, which is opposite of Kashiwara~\cite{K91}.
\end{remark}

We can simplify the tensor product rule on the tensor product of $U_q(\g)$-crystals $B = B_L \otimes \dotsm \otimes B_1$ by using the \defn{signature rule}. Let $b = b_L \otimes \cdots \otimes b_2 \otimes b_1 \in B$, and for $i \in I$, we write
\[
\underbrace{-\cdots-}_{\varphi_i(b_L)}\
\underbrace{+\cdots+}_{\varepsilon_i(b_L)}\
\cdots\
\underbrace{-\cdots-}_{\varphi_i(b_1)}\
\underbrace{+\cdots+}_{\varepsilon_i(b_1)}\ .
\]
Then by successively deleting any $(+-)$-pairs (in that order) in the above sequence, we obtain a sequence
\[
\sig_i(b) \seteq
\underbrace{-\cdots-}_{\varphi_i(b)}\
\underbrace{+\cdots+}_{\varepsilon_i(b)}
\]
called the \defn{reduced signature}. Suppose $1 \leq j_-, j_+ \leq L$ are such that $b_{j_-}$ contributes the rightmost $-$ in $\sig_i(b)$ and $b_{j_+}$ contributes the leftmost $+$ in $\sig_i(b)$.
Then, we have
\begin{align*}
e_i b &= b_L \otimes \cdots \otimes b_{j_++1} \otimes e_ib_{j_+} \otimes b_{j_+-1} \otimes \cdots \otimes b_1, \\
f_i b &= b_L \otimes \cdots \otimes b_{j_-+1} \otimes f_ib_{j_-} \otimes b_{j_--1} \otimes \cdots \otimes b_1.
\end{align*}

Let $B_1$ and $B_2$ be two $U_q(\g)$-crystals.
A \defn{crystal morphism} $\psi \colon B_1 \to B_2$ is a map $B_1 \sqcup \{0\} \to B_2 \sqcup \{0\}$ with $\psi(0) = 0$ such that the following properties hold for all $b \in B_1$ and $i \in I$:
\begin{itemize}
\item[(1)] If $\psi(b) \in B_2$, then $\wt\bigl(\psi(b)\bigr) = \wt(b)$, $\varepsilon_i\bigl(\psi(b)\bigr) = \varepsilon_i(b)$, and $\varphi_i\bigl(\psi(b)\bigr) = \varphi_i(b)$.
\item[(2)] We have $\psi(e_i b) = e_i \psi(b)$ if $\psi(e_i b) \neq 0$ and $e_i \psi(b) \neq 0$.
\item[(3)] We have $\psi(f_i b) = f_i \psi(b)$ if $\psi(f_i b) \neq 0$ and $f_i \psi(b) \neq 0$.
\end{itemize}
An \defn{embedding} (resp.~\defn{isomorphism}) is a crystal morphism such that the induced map $B_1 \sqcup \{0\} \to B_2 \sqcup \{0\}$ is an embedding (resp.~bijection).

For a $U_q(\g)$-representation $V$, let $\ch V$ denote the character.
Recall that for a representation $V$ with a crystal basis $B$, we have
\[
\dim V = \lvert B \rvert,
\qquad\qquad
\ch V = \sum_{b \in B} x^{\wt(b)},
\]
where for $\lambda = \sum_{i=1}^n c_i \epsilon_i$, we write $x^{\lambda} = \prod_{i=1}^n x_i^{c_i}$. Furthermore, recall that the character is invariant under the action of the Weyl group corresponding to $\g$.
To simplify our notation, we denote $\ch(\fw) \seteq \ch V(\fw)$.


\begin{figure}
\[
\begin{array}{|rl|}\hline
A_n: &
\begin{tikzpicture}[xscale=1.9,baseline=-4]
\node (1) at (0,0) {$\young(1)$};
\node (2) at (1.5,0) {$\young(2)$};
\node (d) at (3.0,0) {$\cdots$};
\node (n-1) at (4.5,0) {$\young(n)$};
\node (n) at (6,0) {$\boxed{n+1}$};
\draw[->,red] (1) to node[above]{\tiny$1$} (2);
\draw[->,purple] (2) to node[above]{\tiny$2$} (d);
\draw[->,brown] (d) to node[above]{\tiny$n-1$} (n-1);
\draw[->,blue] (n-1) to node[above]{\tiny$n$} (n);
\end{tikzpicture}\\
B_n: &
\begin{tikzpicture}[xscale=1.3,baseline=-4]
\node (1) at (0,0) {$\young(1)$};
\node (d1) at (1.5,0) {$\cdots$};
\node (n) at (3,0) {$\young(n)$};
\node (0) at (4.5,0) {$\young(0)$};
\node (bn) at (6,0) {$\young(\on)$};
\node (d2) at (7.5,0) {$\cdots$};
\node (b1) at (9,0) {$\young(\one)$};
\draw[->,red] (1) to node[above]{\tiny$1$} (d1);
\draw[->,brown] (d1) to node[above]{\tiny$n-1$} (n);
\draw[->,blue] (n) to node[above]{\tiny$n$} (0);
\draw[->,blue] (0) to node[above]{\tiny$n$} (bn);
\draw[->,brown] (bn) to node[above]{\tiny$n-1$} (d2);
\draw[->,red] (d2) to node[above]{\tiny$1$} (b1);
\end{tikzpicture}
\\[10pt]
C_n: &
\begin{tikzpicture}[xscale=1.3,baseline=-4]
\node (1) at (0,0) {$\young(1)$};
\node (d1) at (1.8,0) {$\cdots$};
\node (n) at (3.6,0) {$\young(n)$};
\node (bn) at (5.4,0) {$\young(\on)$};
\node (d2) at (7.2,0) {$\cdots$};
\node (b1) at (9,0) {$\young(\one)$};
\draw[->,red] (1) to node[above]{\tiny$1$} (d1);
\draw[->,brown] (d1) to node[above]{\tiny$n-1$} (n);
\draw[->,blue] (n) to node[above]{\tiny$n$} (bn);
\draw[->,brown] (bn) to node[above]{\tiny$n-1$} (d2);
\draw[->,red] (d2) to node[above]{\tiny$1$} (b1);
\end{tikzpicture}
\\[10pt]
D_{n+1}: &
\begin{tikzpicture}[xscale=1.3,baseline=-4]
\node (1) at (0,0) {$\young(1)$};
\node (d1) at (1.5,0) {$\cdots$};
\node (n-1) at (3,0) {$\boxed{n-1}$};
\node (n) at (4.5,.75) {$\young(n)$};
\node (bn) at (4.5,-.75) {$\young(\on)$};
\node (bn-1) at (6,0) {$\boxed{\overline{n-1}}$};
\node (d2) at (7.5,0) {$\cdots$};
\node (b1) at (9,0) {$\young(\one)$};
\draw[->,red] (1) to node[above]{\tiny$1$} (d1);
\draw[->,magenta] (d1) to node[above]{\tiny$n-2$} (n-1);
\draw[->,brown] (n-1) to node[above,sloped]{\tiny$n-1$} (n);
\draw[->,blue] (n-1) to node[below,sloped]{\tiny$n$} (bn);
\draw[->,blue] (n) to node[above,sloped]{\tiny$n$} (bn-1);
\draw[->,brown] (bn) to node[below,sloped]{\tiny$n-1$} (bn-1);
\draw[->,magenta] (bn-1) to node[above]{\tiny$n-2$} (d2);
\draw[->,red] (d2) to node[above]{\tiny$1$} (b1);
\end{tikzpicture}
\\\hline
\end{array}
\]
\caption{Crystals of the vector representation $B(\omega_1)$ of classical types.}
\label{fig:vec_repr}
\end{figure}
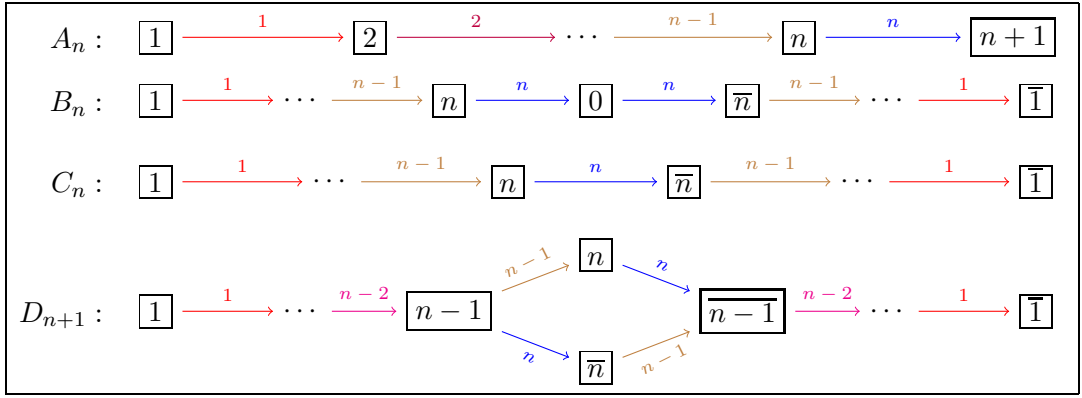

\subsection{Tableaux for finite-dimensional representations}

We use English convention for our partitions and tableaux.

We recall the definition of \defn{Kashiwara--Nakashima (KN) tableaux}~\cite{KN94}, which give a model for highest weight crystals in types $A_n$, $B_n$, $C_n$, and $D_n$. We first identify $\lambda \in P^+$ with a partition by having a height $h$ column correspond to $\tfw_h$. Additionally, for type $B_n$ and $D_n$, we allow a half width spin column corresponding to an odd coefficient of $\fw_n$. For type $D_n$, we also allow the $n$-th row to have negative length. In other words, we generally have $\langle \alpha_i, \lambda \rangle$ being the multiplicity of $i$ in the transpose of the partition associated to $\lambda$. Furthermore, if $\lambda = \sum_{i=1}^n \lambda_i \epsilon_i$, then the partition corresponding to $\lambda$ is given by $(\lambda_1, \lambda_2, \dotsc, \lambda_n)$.

Now consider the crystal of the vector representation, which is given in Figure~\ref{fig:vec_repr} and corresponds to $B(\fw_1)$. When $\lambda$ does not contain a half width column, we construct the elements of $B(\lambda)$ as semistandard tableaux of shape $\lambda$ with letters in $B(\fw_1)$,\footnote{We consider $B(\fw_1)$ as the Hasse diagram of a poset with \boxed{$1$} being the smallest element.} which we identity with an element in $B(\fw_1)^{\otimes |\lambda|}$ by taking the reverse Far-Eastern reading word and use this to define the crystal structure. In particular, we consider the closure of the tableaux whose $i$-th row is filled with $i$ except for type $D_n$ when the $n$-th row has negative length, where we fill it with $\on$.

This does not generate all highest weight crystals in types $B_n$ and $D_n$. To do this, we need the spin representation $B(\fw_n)$, as well as $B(\fw_{n-1})$ for type $D_n$, which are given by elements in $\{+, -\}^n$. In type $D_n$, for $(s_1, \dotsc, s_n) \in B(\fw_k)$, we require that $\prod_{i=1}^n s_i = -,+$ if $k = n-1,n$ respectively. We define the crystal structure by
\begin{align*}
e_i(s_1, \dotsc, s_n) & = \begin{cases}
(\dotsc, s_{i-1}, +, -, s_{i+2}, \dotsc) & \text{if $i < n$ and } (s_i, s_{i+1}) = (-, +), \\
(\dotsc, s_{n-1}, +) & \text{if $i = n$, type $B_n$ and } s_n = -, \\
(\dotsc, s_{n-2}, +, +) & \text{if $i = n$, type $D_n$ and } (s_{n-1}, s_n) = (-, -), \\
0 & \text{otherwise},
\end{cases}
\\ f_i(s_1, \dotsc, s_n) & = \begin{cases}
(\dotsc, s_{i-1}, -, +, s_{i+2}, \dotsc) & \text{if $i < n$ and } (s_i, s_{i+1}) = (+, -), \\
(\dotsc, s_{n-1}, -) & \text{if $i = n$, type $B_n$ and } s_n = +, \\
(\dotsc, s_{n-2}, -, -) & \text{if $i = n$, type $D_n$ and } (s_{n-1}, s_n) = (+, +), \\
0 & \text{otherwise}.
\end{cases}
\\ \wt(s_1, \dotsc, s_n) & = \frac{1}{2} \left( s_1 \epsilon_1 + s_2 \epsilon_2 + \cdots + s_n \epsilon_n \right),
\end{align*}
We realize the spin elements as a tableaux whose shape is a half-width column of height $n$ and $B(\fw_{n-1})$ in type $D_n$ as having a negative half-width box at height $n$. Note that this is consistent with the identification of $P_{\Z}^+$ with partitions given above. Following English convention for tableaux, the entry in the $i$-th row counted from the top of the corresponding tableau is $s_i$ for an element $(s_1, \dotsc, s_n)$.

The decompositions for $\g$ of type $B_n$
\begin{equation}
\label{eq:2tens_B}
B(\fw_n)^{\otimes 2} \iso \bigoplus_{i=0}^{n} B(\tfw_i)
\end{equation}
and for $\g$ of type $D_{n+1}$
\begin{subequations}
\label{eq:2tens}
\begin{align}
\label{eq:2tens_Dpp}
B(\fw_{n+1})^{\otimes 2} & \iso \bigoplus_{i=0}^{\lfloor (n+1)/2 \rfloor} B(\tfw_{n+1-2i}),
\\ \label{eq:2tens_Dmm}
B(\fw_n)^{\otimes 2} & \iso B(2\fw_n) \oplus \bigoplus_{i=0}^{\lfloor (n-1)/2 \rfloor} B(\tfw_{n-2i}),
\\ \label{eq:2tens_Dpm}
B(\fw_{n+1}) \otimes B(\fw_{n}) \iso B(\fw_{n}) \otimes B(\fw_{n+1}) & \iso \bigoplus_{i=0}^{\lfloor n/2 \rfloor} B(\tfw_{n-2i}),
\end{align}
\end{subequations}
are well-known and an easy computation from the signature rule with the spin representations.

However, different sets of tableaux have been used to count the weight space decomposition of $V(\lambda)$.
In the sequel, we will use tableaux equivalent to those given by King~\cite{King76}. A \defn{King tableau} is a semistandard tableau under the alphabet
\[
1 \prec \one \prec 2 \prec \otwo \prec \cdots \prec n \prec \on
\]
such that the minimum entry in row $k$ is $k$. Note that this equivalent to forbidding $\ok$ in row $k+1$ (as the semistandard condition then prohibits anything else).
Let $\wt(T)$ is defined as for KN tableaux ($\ok$ contributes $-\epsilon_k$).
We note that our tableaux are equivalent to the tableaux in~\cite{King76} by interchanging $k \leftrightarrow \ok$ and using the order $\one \prec' 1 \prec' \cdots \prec' n$. Let $K(\lambda)$ denote the set of King tableaux of shape $\lambda$.

\begin{theorem}[{\cite{King76}}]
\label{thm:king_decomposition}
Let $\g$ be of type $C_n$ and $\lambda \in P_{\Z}^+$. We have
\[
\ch(\lambda) = \sum_{T \in K(\lambda)} x^{\wt(T)}.
\]
\end{theorem}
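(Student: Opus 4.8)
The plan is to prove the identity by induction on the rank $n$, matching the generating function $\sum_{T \in K(\lambda)} x^{\wt(T)}$ against the classical symplectic branching rule for $\ch(\lambda)$. Recall from the crystal realization that $\ch(\lambda) = \sum_{b \in B(\lambda)} x^{\wt(b)}$, so it suffices to show that the King generating function satisfies the same recursion in $n$, with the same base case, as the character. For $n=1$ we have $\mathfrak{sp}_2 \iso \mathfrak{sl}_2$ and $\lambda = \lambda_1 \epsilon_1$; a King tableau is a single row filled with $a$ copies of $1$ and $b$ copies of $\one$ with $a+b = \lambda_1$, so the generating function is $\sum_{a+b=\lambda_1} x_1^{a-b} = x_1^{\lambda_1} + x_1^{\lambda_1-2} + \cdots + x_1^{-\lambda_1} = \ch(\lambda_1\epsilon_1)$, establishing the base case.

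For the inductive step, I would stratify the King tableaux of shape $\lambda$ according to the positions of the two largest letters $n$ and $\on$. Because entries strictly increase down columns, the maximal letter of a column-strict filling occupies a horizontal strip; thus deleting all boxes containing $\on$ leaves a filling whose shape $\nu$ satisfies $\lambda/\nu$ being a horizontal strip, and subsequently deleting the boxes containing $n$ leaves a shape $\mu$ with $\nu/\mu$ a horizontal strip. The King condition that the minimum entry of row $k$ is at least $k$ forces row $n$ to consist entirely of $n$'s and $\on$'s, so after deletion $\mu$ has at most $n-1$ rows, and the reduced filling (in the alphabet $1 \prec \one \prec \cdots \prec \overline{n-1}$) is precisely a King tableau of shape $\mu$ for type $C_{n-1}$. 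Conversely, any King tableau of shape $\mu$ for $C_{n-1}$ together with any double interlacing $\mu \prec \nu \prec \lambda$ reconstructs a unique King tableau of shape $\lambda$, and the contributed $\epsilon_n$-weight is $x_n^{(|\nu|-|\mu|)-(|\lambda|-|\nu|)} = x_n^{2|\nu|-|\lambda|-|\mu|}$.

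Summing over all such configurations gives
\[
\sum_{T \in K(\lambda)} x^{\wt(T)} = \sum_{\mu} \Bigg( \sum_{\mu \prec \nu \prec \lambda} x_n^{2|\nu|-|\lambda|-|\mu|} \Bigg) \sum_{S \in K_{C_{n-1}}(\mu)} x^{\wt(S)},
\]
and by the inductive hypothesis the innermost sum equals the type $C_{n-1}$ character of $V(\mu)$. The parenthesized factor is exactly the number of symplectic Gelfand--Tsetlin interlacing patterns weighted by the $x_n$-degree, which is precisely the classical symplectic branching rule of Zhelobenko for $\mathfrak{sp}_{2n} \downarrow \mathfrak{sp}_{2n-2}$. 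Hence the right-hand side equals $\ch(\lambda)$, completing the induction.

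The main obstacle is the bookkeeping in the inductive step: one must verify carefully that the King semistandardness conditions, and in particular the row-minimum constraint, are equivalent, level by level, to the full set of double-interlacing inequalities $\lambda_i \geq \nu_i \geq \lambda_{i+1}$ and $\nu_i \geq \mu_i \geq \nu_{i+1}$ together with the bound on the number of parts of $\mu$, so that the stratification is a genuine bijection and the $x_n$-exponent is as claimed. An alternative to invoking Zhelobenko's rule would be to construct a weight-preserving bijection between $K(\lambda)$ and the KN tableaux realizing $B(\lambda)$, for instance via symplectic jeu de taquin; this keeps everything inside the crystal framework already set up, but the explicit bijection is considerably more delicate to describe than the branching recursion.
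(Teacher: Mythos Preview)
The paper does not supply a proof of this theorem at all: it is stated as background with a citation to King's 1976 paper and is used as a black box (most notably in the proof of Theorem~\ref{thm:general_det_formula_Cn}). So there is nothing in the paper to compare your argument against.

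That said, your proposed approach is the standard one and is essentially how King tableaux are shown to compute symplectic characters: one identifies them with symplectic Gelfand--Tsetlin patterns and invokes the Zhelobenko branching rule. Your stratification by the positions of $n$ and $\on$ is exactly the passage from a King tableau to the top two rows of the associated GT pattern, and the remaining ``bookkeeping'' you flag (that the King row-minimum condition is equivalent to $\ell(\mu)\le n-1$ together with the double interlacing $\mu\prec\nu\prec\lambda$) is routine once stated precisely.

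One point to tighten: Zhelobenko's rule, as usually stated, gives the \emph{multiplicity} $[V_{\mathfrak{sp}_{2n}}(\lambda):V_{\mathfrak{sp}_{2n-2}}(\mu)]$ as the number of interlacing $\nu$, but it does not by itself assign the $x_n$-exponent $2|\nu|-|\lambda|-|\mu|$ to each such $\nu$. To justify the weighted identity
\[
\ch_{\mathfrak{sp}_{2n}}(\lambda)\big|_{\mathfrak{sp}_{2n-2}\times\mathfrak{gl}_1}
=\sum_{\mu}\Bigl(\sum_{\mu\prec\nu\prec\lambda} x_n^{\,2|\nu|-|\lambda|-|\mu|}\Bigr)\ch_{\mathfrak{sp}_{2n-2}}(\mu)
\]
you need the finer two-step branching through Proctor's odd symplectic group $\mathfrak{sp}_{2n-1}$ (each step is multiplicity-free with a well-defined $x_n$-weight), or equivalently the explicit Gelfand--Tsetlin basis for $V(\lambda)$. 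This is standard, but it is the actual representation-theoretic input your induction rests on, so it is worth naming precisely rather than folding it into ``Zhelobenko's rule.'' With that clarification your argument goes through.
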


\subsection{Principal specializations}

We want to consider the \defn{principal specialization} of a type $B_n$, $C_n$, or $D_n$ character $\chi(x_1^{\pm 1}, \dotsc, x_n^{\pm 1})$, which is defined as
\[
\chi(q, q^2, \dotsc, q^n) \seteq \chi(x_1^{\pm 1}, \dotsc, x_n^{\pm 1}) \bigr|_{x_1^{\pm 1}=q^{\pm 1},\ldots,x_n^{\pm 1}=q^{\pm n}}.
\]
For type $A_n$, the principal specialization is given by $\chi(1,q,q^2,\dotsc,q^{n-1})$.
We write $\ps(\lambda)$ for the principal specialization of the character of $V(\lambda)$.
Let $\nps(\lambda)$ denote \defn{normalized principal specialization}, where we define
\[
\nps(\lambda) \seteq q^{-\eta_{\lambda}} \ps(\lambda),
\]
where $\eta_{\lambda}$ is the valuation of $\ps(\lambda)$, or the lowest power of $q$ appearing in $\ps(\lambda)$.
Note that $\nps(\lambda)$ is a polynomial in $q$ with a non-zero constant term.

We recall some descriptions of the principal specializations that follow from the Weyl character formula; we follow~\cite{BKW16}. Recall the identification between dominant weights and partitions from Section~\ref{sec:crystals}. Hence, we denote $|\lambda| = \sum_{i \geq 1} \lambda_i$ and $n(\lambda) = \sum_{i \geq 1} (i-1)\lambda_i$.

\begin{proposition}[{\cite[Eq.~(3.21)]{BKW16}}]
\label{prop:Bn_ps}
In type $B_n$, we have
\[
\ps(\lambda) = q^{n(\lambda) - n|\lambda|} \prod_{i=1}^n \dfrac{1-q^{2\lambda_i+2n-2i+1}}{1-q^{2n-2i+1}} \prod_{1 \leq i < j \leq n} \dfrac{1-q^{\lambda_i-\lambda_j+j-i}}{1-q^{j-i}} \cdot \dfrac{1-q^{\lambda_i+\lambda_j+2n-i-j+1}}{1-q^{2n-i-j+1}}.
\]
\end{proposition}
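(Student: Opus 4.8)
The plan is to derive the formula directly from the Weyl character formula in type $B_n$, since the principal specialization is defined through it. First I would recall that the Weyl group $W$ is the group of signed permutations $S_n \ltimes (\Z/2\Z)^n$ and that $\rho = \sum_{i=1}^n (n-i+\tfrac12)\epsilon_i$. Writing $\ell_i = \lambda_i + n - i + \tfrac12$ for the coordinates of $\lambda+\rho$ and $\rho_i = n-i+\tfrac12$, the antisymmetrization over $W$ factors as an odd-orthogonal bialternant: summing over the sign changes in each coordinate produces the factors $x_j^{\mu_i} - x_j^{-\mu_i}$, and the $S_n$-part produces the determinant, so that
\[
\ch(\lambda) = \frac{\det\bigl(x_j^{\ell_i} - x_j^{-\ell_i}\bigr)_{i,j=1}^n}{\det\bigl(x_j^{\rho_i} - x_j^{-\rho_i}\bigr)_{i,j=1}^n}.
\]

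Next I would impose the principal specialization $x_j = q^j$. Setting $v_i = q^{\ell_i}$ and $w_i = q^{\rho_i}$, each entry becomes $v_i^j - v_i^{-j}$ (respectively $w_i^j - w_i^{-j}$), so both determinants acquire the shape $\det(u_i^j - u_i^{-j})_{i,j=1}^n$. The crux of the argument is the factorization
\[
\det\bigl(u_i^j - u_i^{-j}\bigr)_{i,j=1}^n = \pm \prod_{i=1}^n (u_i - u_i^{-1}) \prod_{1\le i<j\le n}\bigl[(u_i + u_i^{-1}) - (u_j + u_j^{-1})\bigr],
\]
a Vandermonde-type evaluation in the variables $u_i + u_i^{-1}$. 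I would establish it by the standard divisibility-and-degree argument: specializing $u_i = 1$ kills the $i$-th row (forcing the factor $u_i - u_i^{-1}$), while equating $u_i + u_i^{-1} = u_j + u_j^{-1}$ makes two rows proportional (forcing the pairwise factors), and a comparison of Laurent degrees fixes the scalar.

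With both determinants factored, I would take their quotient; the two share the same overall sign, so the $\pm$ cancels. Using the elementary identity $\frac{q^a - q^{-a}}{q^b - q^{-b}} = q^{b-a}\frac{1-q^{2a}}{1-q^{2b}}$ on the single-variable factors (with $a=\ell_i$, $b=\rho_i$) yields $q^{-\lambda_i}\frac{1-q^{2\lambda_i+2n-2i+1}}{1-q^{2n-2i+1}}$, and rewriting $(u_i+u_i^{-1})-(u_j+u_j^{-1}) = (q^{\ell_i}-q^{\ell_j})(1 - q^{-\ell_i-\ell_j})$ together with $\ell_i - \ell_j = \lambda_i - \lambda_j + j - i$ and $\ell_i + \ell_j = \lambda_i + \lambda_j + 2n - i - j + 1$ turns each pairwise ratio into $q^{-\lambda_i}\,\frac{1-q^{\lambda_i-\lambda_j+j-i}}{1-q^{j-i}}\cdot\frac{1-q^{\lambda_i+\lambda_j+2n-i-j+1}}{1-q^{2n-i-j+1}}$. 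Taking the products reproduces exactly the three product factors in the statement.

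The step I expect to require the most care is the bookkeeping of the overall power of $q$. The leftover monomials give $q^{-|\lambda|}$ from the single-variable factors and $q^{-\sum_{i<j}\lambda_i}$ from the pairwise factors. Since $\sum_{i<j}\lambda_i = \sum_i (n-i)\lambda_i = n|\lambda| - \bigl(n(\lambda)+|\lambda|\bigr)$, these combine to $q^{\,n(\lambda)-n|\lambda|}$, matching the prefactor and confirming that the sign ambiguity in the determinant evaluation is harmless. This completes the derivation.
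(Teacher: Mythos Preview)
Your derivation is correct and complete: the bialternant form of the $B_n$ Weyl character, the Vandermonde-in-$u+u^{-1}$ evaluation of $\det(u_i^{\,j}-u_i^{-j})$, and the bookkeeping of the $q$-power all check out, and the sign indeed cancels between numerator and denominator.

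As for comparison with the paper: the paper does not give its own proof of this proposition. It simply quotes the formula from~\cite[Eq.~(3.21)]{BKW16}, remarking that such principal specializations ``follow from the Weyl character formula.'' Your argument is precisely that Weyl-character derivation spelled out in full, so you have supplied what the paper only alludes to.
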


\begin{proposition}[{\cite[Eq.~(3.30a)]{BKW16}}]
\label{prop:Cn_ps}
In type $C_n$, we have
\[
\ps(\lambda) = q^{n(\lambda) - n|\lambda|} \prod_{i=1}^n \dfrac{1-q^{2(\lambda_i+n-i+1)}}{1-q^{2(n-i+1)}} \prod_{1 \leq i < j \leq n} \dfrac{1-q^{\lambda_i-\lambda_j+j-i}}{1-q^{j-i}} \cdot \dfrac{1-q^{\lambda_i+\lambda_j+2n-i-j+2}}{1-q^{2n-i-j+2}}.
\]
\end{proposition}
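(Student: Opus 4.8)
The plan is to derive the formula directly from the Weyl character formula in its bialternant form, specialize $x_i = q^i$, and evaluate the two resulting determinants as products. First I would recall that in type $C_n$ the character $\ch(\lambda)$ is the symplectic Schur function, given by the ratio of alternants
\[
\ch(\lambda) = \frac{\det\bigl(x_i^{\lambda_j + n - j + 1} - x_i^{-(\lambda_j + n - j + 1)}\bigr)_{1 \le i,j \le n}}{\det\bigl(x_i^{n-j+1} - x_i^{-(n-j+1)}\bigr)_{1 \le i,j \le n}},
\]
where the exponents $\lambda_j + n - j + 1$ are the coordinates of $\lambda + \rho$ and $n-j+1$ those of $\rho$. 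Setting $\mu_j \seteq \lambda_j + n - j + 1$ and $\delta_j \seteq n - j + 1$ and specializing $x_i = q^i$, the problem reduces to evaluating, for a strictly decreasing sequence $a_1 > \cdots > a_n > 0$, the determinant $D(a_1, \ldots, a_n) \seteq \det\bigl(q^{i a_j} - q^{-i a_j}\bigr)_{1\le i,j\le n}$, and then forming $\ps(\lambda) = D(\mu)/D(\delta)$.

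The key step is the product evaluation of $D(a_1, \ldots, a_n)$, and here the geometric progression $x_i = q^i$ is essential. Writing $t_j = q^{a_j}$, the $(i,j)$ entry is $t_j^i - t_j^{-i}$, so the exponent $i$ is the row index and is independent of the $a_j$. Factoring $t_j - t_j^{-1} = q^{a_j} - q^{-a_j}$ out of the $j$-th column leaves the entry $(t_j^i - t_j^{-i})/(t_j - t_j^{-1})$, a monic polynomial of degree $i - 1$ in $w_j \seteq q^{a_j} + q^{-a_j}$ (a Chebyshev/quantum-integer polynomial). Since the row index $i$ runs over $1, \ldots, n$, these degrees are exactly $0, 1, \ldots, n-1$, so standard row operations reduce the residual determinant to the Vandermonde $\det(w_j^{i-1}) = \prod_{i<j}(w_j - w_i)$. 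Using $w_j - w_i = (q^{a_j} - q^{a_i})(1 - q^{-a_i - a_j})$, this yields
\[
D(a_1, \ldots, a_n) = \prod_{j=1}^n (q^{a_j} - q^{-a_j}) \prod_{1 \le i < j \le n}(q^{a_j} - q^{a_i})(1 - q^{-a_i - a_j}),
\]
up to an overall sign that is the same for $D(\mu)$ and $D(\delta)$ and hence cancels in the ratio.

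Finally I would substitute $a = \mu$ and $a = \delta$ and take the ratio factor by factor. Each single-index factor gives $q^{-\lambda_j}\frac{1 - q^{2(\lambda_j + n - j + 1)}}{1 - q^{2(n-j+1)}}$; each difference factor gives $q^{\lambda_j}\frac{1 - q^{\lambda_i - \lambda_j + j - i}}{1 - q^{j-i}}$; and each sum factor gives $q^{-\lambda_i - \lambda_j}\frac{1 - q^{\lambda_i + \lambda_j + 2n - i - j + 2}}{1 - q^{2n - i - j + 2}}$, matching the three products in the claimed formula. Collecting the prefactors, the single-index factors contribute $q^{-|\lambda|}$, the difference factors contribute $\prod_j q^{(j-1)\lambda_j} = q^{n(\lambda)}$, and the sum factors contribute $q^{-(n-1)|\lambda|}$, for a total of $q^{n(\lambda) - n|\lambda|}$, exactly the global prefactor in Proposition~\ref{prop:Cn_ps}. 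The main obstacle is not conceptual but the determinant evaluation together with the careful bookkeeping of the powers of $q$ (and the signs) in this last step: organizing the exponents so that each of the three product factors and the global power $q^{n(\lambda)-n|\lambda|}$ emerge simultaneously is where one must be most careful.
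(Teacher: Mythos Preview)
Your derivation is correct: the bialternant form of the Weyl character formula for type $C_n$, the reduction to a Vandermonde in the variables $w_j=q^{a_j}+q^{-a_j}$, and the bookkeeping of the powers of $q$ all check out and yield exactly the stated product. The paper does not supply its own proof of this proposition; it simply quotes the formula from~\cite{BKW16} with the remark that it follows from the Weyl character formula, which is precisely the route you have carried out in detail.
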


\subsection{$q$-analogs and $q,t$-Catalan numbers}

We recall the standard $q$-analogs of integers, factorials, binomial coefficients, and $q$-Pochhammer symbols. The $q$-analog of an integer $n$ by
\[
[n]_q \seteq 1 + q + \cdots + q^{n-1} = \frac{1-q^n}{1-q},
\]
factorials by $[n]!_q \seteq [1]_q \cdots [n]_q$, and binomial coefficients by
\[
\qbinom{n}{k}{q} \seteq \frac{[n]!_q}{[k]!_q [n-k]!_q}.
\]
We recall that $q$-binomial coefficients are positive polynomials in $q$, \textit{i.e.}, $\qbinom{n}{k}{q} \in \Z_{\geq 0}[q]$, and there are two versions of the Pascal's identity:
\begin{subequations}
\label{eq:q_pascal}
\begin{align}
\label{eq:pascal1} \qbinom{n+m}{n}{q} & = q^n \qbinom{n+m-1}{n}{q} + \qbinom{n+m-1}{n-1}{q}
\\ \label{eq:pascal2} & = \qbinom{n+m-1}{n}{q} + q^m \qbinom{n+m-1}{n-1}{q}.
\end{align}
\end{subequations}
The $q$-Pochhammer symbol is defined as
\[
(a; q)_n \seteq \prod_{k=1}^n (1 - aq^{k-1}).
\]
We will use the shorthand $(q)_n \seteq (q;q)_n$.

MacMahon defined a $q$-definition of the Catalan numbers, which are known as the \defn{Mahonian $q$-Catalan numbers}, during his study of permutations~\cite{MacMahon15}. The Mahonian $q$-Catalan numbers are defined by the natural $q$-analog of the closed form of the Catalan numbers:
\[
\Cat_n(q) \seteq \frac{1}{[n+1]_q} \qbinom{2n}{n}{q}.
\]
However, the Mahonian $q$-Catalan numbers are not known to satisfy a $q$-analog of the Catalan recursion relation~\eqref{eq:Catalan_recurrence}.

We also have another $q$-analog of the Catalan numbers that satisfy a $q$-analog of~\eqref{eq:Catalan_recurrence}:
\[
\CatCR_{n+1}(q) = \sum_{k=0}^n q^k \CatCR_k(q) \CatCR_{n-k}(q),
\qquad
\qquad
\CatCR_0(q) = 1.
\]
The values $\CatCR_n(q)$ are called the \defn{Carlitz--Riordan $q$-Catalan numbers}~\cite{CR64}, and there is no known simple formula closed for $\CatCR_n(q)$. However, there does exist the following combinatorial interpretation of $\CatCR_n(q)$.
The \defn{area} of a Dyck path $a \colon \Dyck_n \to \Z$ is given by $a(D) = \sum_{i=1}^n j_i$, where $(j_i, i)$ is the endpoint after the $i$-th North step. It is a standard combinatorial exercise to show
\[
\CatCR_n(q) = \sum_{D \in \Dyck_n} q^{a(D)}.
\]

Another statistic called \defn{bounce} $b \colon \Dyck_n \to \Z$ is used to define the \defn{$q,t$-Catalan numbers}:
\[
\CatCR_n(q,t) = \sum_{D \in \Dyck_n} q^{a(D)} t^{b(D)}.
\]
We do not need the definition of bounce and instead refer the reader to~\cite{GH01,GH02}. Furthermore, Garsia and Haglund showed that $\CatCR_n(q,t) = \CatCR_n(t,q)$ by showing $\CatCR_n(q,t)$ is the bi-graded Hilbert series of the alternating elements subspace of the space of diagonal harmonics~\cite{GH01,GH02}. It is still an open problem to prove combinatorially that $\CatCR_n(q,t) = \CatCR_n(t,q)$.

Another $q,t$-Catalan number was introduced by Stump~\cite{Stump08} by using a refinement of the major index statistic given by F\"urlinger and Hofbauer~\cite[Sec.~5]{FH85} (see also~\cite[St000027,St000947,St001161]{FindStat}). Let $D$ be a Dyck path. Denote $\Des(D) \seteq \{i \mid D_i = N, D_{i+1} = E \}$ be the \defn{descent set} of $D = D_1 \cdots D_{2n}$, which corresponds to the positions of the valleys of $D$. Define statistics
\[
\maj_N(D) \seteq \sum_{i \in \Des(D)} \lvert \{ j \leq i \mid D_j = N \} \rvert,
\qquad
\maj_E(D) \seteq \sum_{i \in \Des(D)} \lvert \{ j \leq i \mid D_j = E \} \rvert,
\]
and another $q,t$-analog of the Catalan numbers by
\[
\Cat_n(q,t) = \sum_{D \in \Dyck_n} q^{\maj_N(D)} \, t^{\binom{n}{2} - \maj_E(D)}.
\]
Note that the usual major index is precisely $\maj_N(D) + \maj_E(D)$, which yields the following.

\begin{proposition}[{\cite{Stump08}}]
\label{prop:specialized_stump}
We have
\[
q^{\binom{n}{2}} \Cat_n(q,q^{-1}) = \Cat_n(q).
\]
\end{proposition}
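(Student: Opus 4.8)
The plan is purely formal once the displayed remark $\maj(D)=\maj_N(D)+\maj_E(D)$ is in hand. First I would substitute $t=q^{-1}$ into
\[
\Cat_n(q,t)=\sum_{D\in\Dyck_n}q^{\maj_N(D)}\,t^{\binom{n}{2}-\maj_E(D)}.
\]
Each summand becomes $q^{\maj_N(D)}\,q^{-\binom{n}{2}+\maj_E(D)}=q^{-\binom{n}{2}}\,q^{\maj_N(D)+\maj_E(D)}$, and since $\maj_N(D)+\maj_E(D)=\maj(D)$, pulling the common factor $q^{-\binom{n}{2}}$ out of the sum gives
\[
q^{\binom{n}{2}}\Cat_n(q,q^{-1})=\sum_{D\in\Dyck_n}q^{\maj(D)}.
\]

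It then remains to identify this major-index generating function over $\Dyck_n$ with the closed form $\Cat_n(q)=\frac{1}{[n+1]_q}\qbinom{2n}{n}{q}$. Because the paper takes the closed form as the \emph{definition} of $\Cat_n(q)$, this equality is the single non-formal ingredient, and it is where I expect the real work to sit. It is the classical Mahonian statement---going back to MacMahon, and precisely the reason $\Cat_n(q)$ is named the Mahonian $q$-Catalan number---that $\sum_{D\in\Dyck_n}q^{\maj(D)}=\frac{1}{[n+1]_q}\qbinom{2n}{n}{q}$. I would either cite it directly from the major-index $q$-Catalan literature (for instance~\cite{FH85}), or derive it from MacMahon's theorem $\sum_w q^{\maj(w)}=\qbinom{2n}{n}{q}$, the sum being over all rearrangements $w$ of $N^nE^n$, combined with a $q$-analogue of the Dvoretzky--Motzkin cycle lemma that isolates the ballot (Dyck) words and produces the denominator $[n+1]_q$.

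Concatenating the two displayed equalities then yields $q^{\binom{n}{2}}\Cat_n(q,q^{-1})=\Cat_n(q)$. Before writing this up I would run the case $n=2$ as a check: with $\Dyck_2=\{EENN,\,ENEN\}$ the descent positions give $\maj=0$ and $\maj=2$, so $\sum_D q^{\maj(D)}=1+q^2=\Cat_2(q)$. This also pins down the orientation convention (East steps precede North steps, since the paths stay weakly below $y=x$) and confirms the sign bookkeeping in the exponent, which is the only place a careless reading could introduce a spurious power of $q$.
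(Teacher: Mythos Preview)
Your proposal is correct and matches the paper's own treatment. The paper does not give a standalone proof of this proposition---it is attributed to Stump---but the sentence immediately preceding it (``the usual major index is precisely $\maj_N(D)+\maj_E(D)$, which yields the following'') is exactly the substitution-and-collapse argument you carry out, and the remaining identification $\sum_{D\in\Dyck_n}q^{\maj(D)}=\Cat_n(q)$ is the classical MacMahon result that the paper invokes by naming $\Cat_n(q)$ the \emph{Mahonian} $q$-Catalan number.
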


We will also consider two forms of $q$-Catalan triangle numbers.
The first is a natural $q$-version of the Catalan triangle numbers:
\[
\Cat_{(n,k)}(q) \seteq \frac{[n+k]_q! [n-k+1]_q}{[k]_q! [n+1]_q!} = q^{-k} \left( \qbinom{n+k}{k}{q} - \qbinom{n+k}{k-1}{q} \right).
\]
Note the right equality follows from
\[
\qbinom{n+k}{k}{q} - \qbinom{n+k}{k-1}{q} = \frac{[n+k]_q! ([n+1]_q -[k]_q)}{[k]_q! [n+1]_q!}
\]
and that $[n+1]_q - [k]_q = q^k [n-k+1]_q$.

\begin{definition}
We define a new $q$-analog of the Catalan triangle numbers by $\Cat'_{(2n-i+1,i)}(q) \seteq \nps(\fw_i)$ in type $C_n$. 
\end{definition}

Note that $\Cat'_{(2n-i+1,i)}(1) = \Cat_{(2n-i+1,i)} = \dim V(\fw_i)$.
Additionally, note that $\Cat'_{(2n-i+1,i)}(q) \neq \Cat_{(2n-i+1,i)}(q)$ in general.
For example, we have
\begin{align*}
q^7 \Cat'_{(7,2)}(q) & = q^{14} + q^{13} + 2 q^{12} + q^{11} + 2 q^{10} + 2 q^9
\\ & \hspace{20pt} + 3 q^8 + 3 q^7 + 3 q^6 + 2 q^5 + 2 q^4 + q^3 + 2 q^2 + q + 1,
\\ \Cat_{(7,2)}(q) & = q^{12} + q^{11} + 2 q^{10} + 2 q^9 + 3 q^8 + 3 q^7
\\ & \hspace{20pt} + 3 q^6 + 3 q^5 + 3 q^4 + 2 q^3 + 2 q^2 + q + 1.
\end{align*}
However, they are related by a simple ratio.

\begin{proposition}
\label{prop:ratio}
For all $1 \leq i \leq n-1$, we have
\[
\frac{\Cat_{(2n-i-1,i)}(q)}{\Cat'_{(2n-i-1,i)}(q)} = q^{\binom{n}{2} - \binom{n-i}{2}} \frac{q^{n-i} + 1}{q^{n}+1}.
\]
\end{proposition}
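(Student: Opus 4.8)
The plan is to expand both the numerator and the denominator as explicit products of cyclotomic factors $1-q^a$ and to watch almost everything telescope. First I would unwind the definitions on the denominator. Reading the defining relation $\Cat'_{(2N-i+1,i)}(q) \seteq \nps(\fw_i)$ with $N = n-1$ shows that $\Cat'_{(2n-i-1,i)}(q)$ is the principal specialization of the fundamental representation $V(\fw_i)$ in type $C_{n-1}$, whose highest weight corresponds to the partition $(1^i)$ of rank $n-1$. Applying Proposition~\ref{prop:Cn_ps} with $n$ replaced by $n-1$ and $\lambda = (1^i)$, I would record the three ingredients: the prefactor $q^{\binom{i}{2}-(n-1)i}$ (using $n(\lambda)=\binom{i}{2}$ and $|\lambda|=i$), the single product, and the double product over pairs $1\le j<l\le n-1$.

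Next I would simplify these factors separately. Since $\lambda_j=1$ for $j\le i$ and $\lambda_j=0$ otherwise, the single product telescopes to $\tfrac{1-q^{2n}}{1-q^{2(n-i)}}$, and in the double product every pair with $\lambda_j=\lambda_l$ contributes $1$. The surviving ``difference'' factor $\tfrac{1-q^{\lambda_j-\lambda_l+l-j}}{1-q^{l-j}}$ is nontrivial only for $j\le i<l$ and telescopes in two stages to $\qbinom{n-1}{i}{q}$. The surviving ``sum'' factor must be split into the pairs with both indices $\le i$ and the pairs with $j\le i<l$; the former has the awkward distance-two shape $\tfrac{1-q^{u+2}}{1-q^{u}}$ and does not telescope on its own. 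The key observation, which I expect to be the crux, is that this sub-product combines with the single product and the $j\le i<l$ sub-product so that the combined expression collapses to the clean form $\prod_{a=0}^{i-1}\tfrac{1-q^{2n-a}}{1-q^{n-a}}$ (I have verified this collapse for $i=1,2,3$, where the shared factors cancel in a regular pattern). Altogether this yields
\[
\ps(\fw_i) = q^{\binom{i}{2}-(n-1)i}\,\qbinom{n-1}{i}{q}\,\prod_{a=0}^{i-1}\frac{1-q^{2n-a}}{1-q^{n-a}}
\]
in type $C_{n-1}$.

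With this closed form in hand, the endgame is routine. I would write the numerator from its definition as $\Cat_{(2n-i-1,i)}(q) = \tfrac{[2n-1]!_q\,[2n-2i]_q}{[i]!_q\,[2n-i]!_q}$, rewrite $\prod_{a=0}^{i-1}(1-q^{2n-a}) = (1-q)^i\,[2n]!_q/[2n-i]!_q$ and similarly for the denominator product, and form the quotient. The factors $[i]!_q$, the powers of $(1-q)$, and the factorial ratios all cancel, leaving $q^{(n-1)i-\binom{i}{2}}\tfrac{[2n-2i]_q\,[n]_q}{[2n]_q\,[n-i]_q}$. Two applications of $\tfrac{1-q^{2a}}{1-q^{a}}=1+q^{a}$ turn this into $q^{(n-1)i-\binom{i}{2}}\tfrac{q^{n-i}+1}{q^{n}+1}$, and the elementary identity $(n-1)i-\binom{i}{2}=\binom{n}{2}-\binom{n-i}{2}$ matches the claimed exponent. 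I would finally note that this exponent is precisely the inverse valuation $q^{-\eta}$ of the type-$C_{n-1}$ specialization, which is the natural source of the power of $q$ and keeps the passage between $\ps$ and $\nps$ transparent; the whole argument is thus a bookkeeping exercise whose only genuine subtlety is the telescoping of the ``sum'' double product described above.
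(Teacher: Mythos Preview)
Your approach is exactly what the paper does: the paper's proof is the single sentence ``Use the single column in type $C_{n-1}$ given in, e.g., Proposition~\ref{prop:Cn_ps}. Then it is straightforward computation,'' and you have carried out precisely that straightforward computation. Your identification of the only nontrivial step (the collapse of the single product together with the ``sum'' part of the double product into $\prod_{a=0}^{i-1}\tfrac{1-q^{2n-a}}{1-q^{n-a}}$) is accurate, and your endgame algebra is correct, including the exponent identity $(n-1)i-\binom{i}{2}=\binom{n}{2}-\binom{n-i}{2}$.

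One small point: you should give a general argument for that telescoping rather than only checking $i\le 3$. A clean way is to first telescope the mixed piece $\prod_{j\le i<l}\tfrac{1-q^{2n+1-j-l}}{1-q^{2n-j-l}}$ in $l$ and then in $j$ to obtain $\dfrac{\prod_{k=2n-2i}^{2n-i-1}(1-q^{k})}{\prod_{k=n-i+1}^{n}(1-q^{k})}$; after cancelling its bottom factor $1-q^{2n-2i}$ against the single product's denominator, the remaining identity
\[
(1-q^{2n})\prod_{1\le j<l\le i}\frac{1-q^{2n+2-j-l}}{1-q^{2n-j-l}}\;\prod_{k=2n-2i+1}^{2n-i-1}(1-q^{k})
=\prod_{k=2n-i+1}^{2n}(1-q^{k})
\]
follows by a shift-by-two telescoping of the inner product along diagonals $j+l=\mathrm{const}$ (each value $m=j+l$ contributes $\tfrac{1-q^{2n+2-m}}{1-q^{2n-m}}$ with the right multiplicity to push the range $[2n-2i+1,\,2n-i-1]$ up to $[2n-i+1,\,2n]$). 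With that made explicit, your write-up is a complete proof along the paper's intended line.
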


\begin{proof}
Use the single column in type $C_{n-1}$ given in, \textit{e.g.}, Proposition~\ref{prop:Cn_ps}.
Then it is straightforward computation.
\end{proof}

Based on Proposition~\ref{prop:ratio}, we can give the more general definition of a modified $q$-Catalan triangle number
\[
\Cat^{\dagger}_{(n,k)}(q) \seteq \frac{q^{\lfloor (n-k+1)/2 \rfloor} + 1}{q^{\lfloor (n+k+1)/2 \rfloor} + 1} \Cat_{(n,k)}(q).
\]
Note that $\Cat^{\dagger}_{(2n-i-1,i)}(q) = q^{\binom{n}{2} - \binom{n-i}{2}} \Cat'_{(2n-i-1,i)}(q)$. We will not use $\Cat^{\dagger}_{(n,k)}(q)$ in this paper.

\section{Jacobi--Trudi-type formulas for type $C_n$ characters}
\label{sec:JT_type_C}

In this section, we give a determinant formula for type $C_n$ characters. To do so, we use the King tableaux description of the weight decomposition given by Theorem~\ref{thm:king_decomposition}. We start by recalling the bijection $\Xi_n \colon \Dyck_n \to K(\fw_{n-1})$ in type $C_{n-1}$, given by
\begin{enumerate}
\item\label{bij_step1} getting the complement partition,
\item\label{bij_step2} adding the staircase shape $\rho = n \dotsm 321$, and
\item\label{bij_step3} replacing $1, 2, \dotsc, 2n$ with $\on, n, \dotsc, 2, \one, 1$ (in that order) in a (sorted) column.
\end{enumerate}
The bijection $\Xi_n$ is a simple modification of the bijection given in~\cite[Exercise~6.19(t)]{ECII} (equivalently~\cite[Item~79]{Stanley15}), which shows that $\lvert B(\fw_{n-1}) \rvert = \dim V(\fw_{n-1}) = \Cat_n$.

\begin{example}
\label{ex:dyck_tableau_bij}
Let $n = 5$. The steps under $\Xi_5$ to go from the Dyck path $EENENNEENN$ to a King tableaux are:
\[
\begin{tikzpicture}[scale=0.65, baseline=32]
\fill[gray!20] (3,3) rectangle (5,0);
\fill[gray!20] (2,0) rectangle (3,1);
\dyckgrid{5}
\draw[-, blue, very thick, line join=round] (0,0) -- (2,0) -- (2,1) -- (3,1) -- (3,3) -- (5,3) -- (5,5);
\draw[-,very thick] (0,0) -- (5,5);
\end{tikzpicture}
\longleftrightarrow
322
\longleftrightarrow
7541
\longleftrightarrow
\young(\one,\otwo,3,\ofour)\ .
\]
\end{example}

By unrolling the bijection $\Xi_{n+1}$, we can construct a weighting $w(N;D)$ on the first $n$ steps $N$ of $D$ such that
\[
\sum_{D \in \Dyck_{n+1}} \prod_N x_{w(N;D)} = \ch(\fw_n)
\]
via the bijection $\Xi_{n+1}$, where we write $x_{\overline{\imath}} \seteq x_i^{-1}$.\footnote{Intrinsically, the weighting is $w(N;D) = x_i^{\pm 1}$, but we will use the (barred) integer valued weighting in the sequel when we take the principal specialization.} Indeed, we define $w(N;D)$ by
\[
w(N, D) \seteq \begin{cases}
(N_X + N_Y + 1) / 2 & \text{if $N_X + N_Y$ odd}, \\
\overline{(N_X + N_Y) / 2} & \text{if $N_X + N_Y$ even},
\end{cases}
\]
where the initial position of $N$ in $D$ is $(N_X, N_Y)$. We denote the product by $x^{\wt(D)} \seteq \prod_N x_{w(N;D)}$.

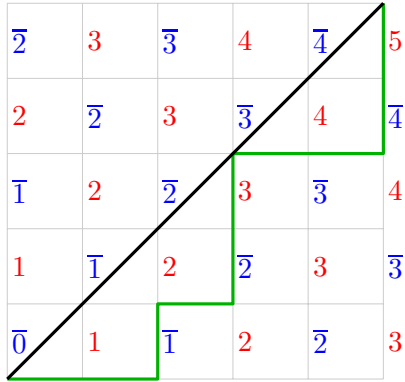
\begin{figure}
\begin{tikzpicture}
\draw[gray!40,very thin] (0,0) grid (5,5);
\draw (0,0.5) node[right=-2pt,blue] {$\overline{0}$};
\foreach \i in {0,...,2} \draw (2-\i,\i+0.5) node[right=-2pt,blue] {$\one$};
\foreach \i in {0,...,4} \draw (4-\i,\i+0.5) node[right=-2pt,blue] {$\otwo$};
\foreach \i in {0,...,3} \draw (5-\i,\i+1.5) node[right=-2pt,blue] {$\othree$};
\foreach \i in {0,...,1} \draw (5-\i,\i+3.5) node[right=-2pt,blue] {$\ofour$};
\foreach \i in {0,1} \draw (1-\i,\i+0.5) node[right=-2pt,red] {$1$};
\foreach \i in {0,...,3} \draw (3-\i,\i+0.5) node[right=-2pt,red] {$2$};
\foreach \i in {0,...,4} \draw (5-\i,\i+0.5) node[right=-2pt,red] {$3$};
\foreach \i in {0,...,2} \draw (5-\i,\i+2.5) node[right=-2pt,red] {$4$};
\foreach \i in {0} \draw (5-\i,\i+4.5) node[right=-2pt,red] {$5$};
\draw[-, very thick, black!30!green,line join=round] (0,0) -- (2,0) -- (2,1) -- (3,1) -- (3,3) -- (5,3) -- (5,5);
\draw[-, very thick] (0,0) -- (5,5);
\end{tikzpicture}
\caption{An example of the weighting $w$ extended to a full $5 \times 5$ grid and the lattice path from Example~\ref{ex:dyck_tableau_bij}.}
\label{fig:Dyck_statistic}
\end{figure}

\begin{remark}
\label{rem:counting_NE}
The values $N_X$ and $N_Y$ are the number of $E$ and $N$ steps respectively occurring before the particular $N$ step in the Dyck word.
\end{remark}

We will also require the weighting given by taking the conjugate of the partition obtained from Step~(\ref{bij_step1}) before doing Step~(\ref{bij_step2}). We denote the resulting bijection by $\Xi'_n$. Thus we obtain a weighting on the horizontal steps, which is given explicitly as
\[
w'(E;D) \seteq \begin{cases}
\overline{n - (E_X + E_Y + 1) / 2} & \text{if $E_X + E_Y$ odd}, \\
n - (E_X + E_Y) / 2 & \text{if $E_Y + E_Y$ even},
\end{cases}
\]
where the sum is over the last $n$ steps $E$ in $D$ and the initial position of $E$ is $(E_X, E_Y)$.
Similarly, we denote $x^{\wt'(D)} \seteq \prod_E x_{w'(E;D)}$.

\begin{example}
\label{ex:dyck_tableau_bij_transpose}
Using the Dyck path $D$ from Example~\ref{ex:dyck_tableau_bij}, applying $\Xi'_n$ yields
\[
D
\longleftrightarrow
331
\longleftrightarrow
7631
\longleftrightarrow
\young(\one,2,\othree,\ofour)\ .
\]
\end{example}

\begin{figure}
\begin{tikzpicture}[scale=1]
\draw[gray!40,very thin] (0,0) grid (5,5);
\draw (4.5,5) node[above=-3pt,blue] {$\overline{0}$};
\foreach \i in {0,...,2} \draw (2.5+\i,5-\i) node[above=-3pt,blue] {$\one$};
\foreach \i in {0,...,4} \draw (0.5+\i,5-\i) node[above=-3pt,blue] {$\otwo$};
\foreach \i in {0,...,3} \draw (0.5+\i,3-\i) node[above=-3pt,blue] {$\othree$};
\foreach \i in {0,...,1} \draw (0.5+\i,1-\i) node[above=-3pt,blue] {$\ofour$};
\foreach \i in {0,1} \draw (3.5+\i,5-\i) node[above=-3pt,red] {$1$};
\foreach \i in {0,...,3} \draw (1.5+\i,5-\i) node[above=-3pt,red] {$2$};
\foreach \i in {0,...,4} \draw (0.5+\i,4-\i) node[above=-3pt,red] {$3$};
\foreach \i in {0,...,2} \draw (0.5+\i,2-\i) node[above=-3pt,red] {$4$};
\draw (0.5,0) node[above=-3pt,red] {$5$};
\draw[-, very thick, black!30!green, line join=round] (0,0) -- (2,0) -- (2,1) -- (3,1) -- (3,3) -- (5,3) -- (5,5);
\draw[-, very thick] (0,0) -- (5,5);
\end{tikzpicture}
\caption{An example of the weighting $w'$ extended to a full $5 \times 5$ grid and the lattice path from Example~\ref{ex:dyck_tableau_bij_transpose}.}
\label{fig:Dyck_statistic_transpose}
\end{figure}
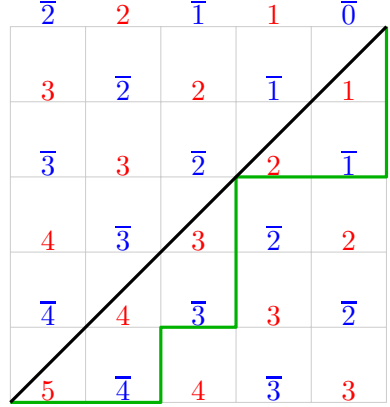

\begin{remark}
\label{rem:removed_fixed}
The last $N$ and the first $E$ step are fixed for any Dyck path. Hence, they would only contribute a constant factor of $x_{n+1}$ in $\sum_{D \in \Dyck_{n+1}} x^{\wt(D)}$ and $\sum_{D \in \Dyck_{n+1}} x^{\wt'(D)}$ if they were included in the computation of $x^{\wt(D)}$ and $x^{\wt'(D)}$. However $x_{n+1}$ does not occur the corresponding King tableaux, so we do not include them.
\end{remark}

It is straightforward to extend the bijection $\Xi_{n+1}$ to a bijection $\Xi_{(2n-i+1,i)} \colon \Dyck_{(2n-i+1,i)} \to K(\fw_i)$ in type $C_n$.
We explain the extension using our weighting on $N$ steps.
For a fixed partial Dyck word $D \in \Dyck_{(2n-i+1,i)}$ starting at $(0,0)$, we consider the $N$-steps $N^{(1)}, \dotsc, N^{(i)}$ and write the column
\[
\Xi_{(2n-i+1,i)}(D) = \begin{array}{|c|c|c|c|c|}
\hline
w(N^{(1)};D) & w(N^{(2)};D) & \cdots & w(N^{(i-1)};D) & w(N^{(i)};D)
\\\hline
\end{array}^{\,T}.
\]

Next, let $\Dyck'_{(i,2n-i+1)}$ denote the set of words with $i$ occurrences of the letter $E$ and $2n-i+1$ occurrences of the letter $N$ such that reversing the word and interchanging $E \longleftrightarrow N$ results in a partial Dyck word in $\Dyck_{(2n-i+1,i)}$.
We call the set $\Dyck'_{i,2n-i+1}$ the set of conjugate partial Dyck words.
We note that we can similarly extend $\Xi_{n+1}$ to a bijection $\Xi'_{(i,2n-i+1)} \colon \Dyck'_{(i,2n-i+1)} \to K(\fw_i)$.
For a fixed conjugate partial Dyck word $D' \in \Dyck'_{(i,2n-i+1)}$ ending at $(n,n)$, we consider the $E$-steps $E^{(1)}, \dotsc, E^{(i)}$ and write the column
\[
\Xi'_{(i,2n-i+1)}(D') = \begin{array}{|c|c|c|c|c|}
\hline
w'(E^{(i)};D') & w'(E^{(i-1)};D') & \cdots & w'(E^{(2)};D') & w'(E^{(1)};D')
\\\hline
\end{array}^{\,T}.
\]

\begin{proposition}
\label{prop:fundamental_char}
Fix some positive integer $n$. We have
\[
x^{\wt(D)} = x^{\wt\bigl( \Xi_{(2n-i+1,i)}(D) \bigr)},
\qquad\qquad
x^{\wt'(D')} = x^{\wt\bigl( \Xi'_{(i,2n-i+1)}(D') \bigr)},
\]
for any $D \in \Dyck_{(2n-i+1,i)}$ and $D' \in \Dyck'_{i,2n-i+1}$.
Moreover, in type $C_n$ we have
\[
\ch(\fw_i) = \sum_{D \in \Dyck_{(2n-i+1,i)}} x^{\wt(D)} = \sum_{D' \in \Dyck'_{(i,2n-i+1)}} x^{\wt'(D')}
\]
\end{proposition}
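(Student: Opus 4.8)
The plan is to separate the proposition into two layers: the two weight-preservation equalities, which I expect to be essentially definitional, and the character identity, which is the substantive claim and which I will obtain from Theorem~\ref{thm:king_decomposition} once I know that $\Xi_{(2n-i+1,i)}$ and $\Xi'_{(i,2n-i+1)}$ are genuine bijections onto $K(\fw_i)$. For the first equality, recall that $\Xi_{(2n-i+1,i)}(D)$ is \emph{defined} to be the height-$i$ column whose $j$-th entry is $w(N^{(j)};D)$, while $x^{\wt(D)} = \prod_N x_{w(N;D)}$; hence $x^{\wt(D)} = x^{\wt(\Xi_{(2n-i+1,i)}(D))}$ holds by construction, provided the output is a legitimate King tableau. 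So the content of the first equality is exactly the well-definedness half of the bijection claim.

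To establish the bijection, I would introduce the single statistic $c_j \seteq N_X^{(j)} + N_Y^{(j)}$, the number of steps preceding the $j$-th North step (Remark~\ref{rem:counting_NE}). The key observation is a dictionary: $w(N^{(j)};D)$ is precisely the letter of \emph{King-rank} $c_j$ in the order $1 \prec \overline{1} \prec \cdots \prec n \prec \overline{n}$, since $c = 1,2,3,4,\dots$ is sent by $w$ to $1, \overline{1}, 2, \overline{2}, \dots$ (the letters $m$ and $\overline{m}$ having ranks $2m-1$ and $2m$). Two inequalities then translate the geometry of $D$ into the tableau conditions. First, consecutive North steps satisfy $c_{j+1} \ge c_j + 1$, so the ranks strictly increase and the column is column-strict; in particular $c_j \ge 1$, so the degenerate weight $\overline{0}$ never arises. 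Second, staying weakly below the diagonal forces $N_X^{(j)} \ge j$ (as $N_Y^{(j)} = j-1$), giving $c_j \ge 2j-1$, which is exactly the King-rank of the letter $j$; thus the $j$-th entry is $\succeq j$, i.e. the King condition for a single column. Conversely, a King column of height $i$ is a strictly increasing sequence $a_1 \prec \cdots \prec a_i$ with $a_j \succeq j$ and $a_i \preceq \overline{n}$; reading off King-ranks yields $c_1 < \cdots < c_i$ with $c_j \ge 2j-1$ and $c_i \le 2n$, and placing the $j$-th North step after $c_j$ steps (padding with East steps to total length $2n+1$) recovers a unique element of $\Dyck_{(2n-i+1,i)}$. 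This gives the bijection.

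The character identity is then immediate: summing the definitional weight equality over $D$ and using bijectivity gives $\sum_{D \in \Dyck_{(2n-i+1,i)}} x^{\wt(D)} = \sum_{T \in K(\fw_i)} x^{\wt(T)}$, which equals $\ch(\fw_i)$ by Theorem~\ref{thm:king_decomposition} with $\lambda = \fw_i$. The conjugate statement is handled by the transpose-symmetric version of the same argument: $\Xi'_{(i,2n-i+1)}$ is defined by the $E$-step weighting $w'$, and under the reverse-and-swap identification of $\Dyck'_{(i,2n-i+1)}$ with $\Dyck_{(2n-i+1,i)}$ the roles of column-strictness and the below-diagonal inequality are played by the images of the East steps; one checks exactly as above that the resulting column lies in $K(\fw_i)$ and that the map is bijective, so $\sum_{D'} x^{\wt'(D')} = \sum_{T \in K(\fw_i)} x^{\wt(T)} = \ch(\fw_i)$ as well. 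Note that although $\Xi$ and $\Xi'$ generally assign different tableaux to the same path, their images are the same set $K(\fw_i)$, so both sums land on $\ch(\fw_i)$.

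The main obstacle is the bookkeeping in the second paragraph: matching the geometric data of a partial Dyck path (the anti-diagonals $c_j$ of its North steps, the below-diagonal inequality, and the endpoint $(2n-i+1,i)$) with the combinatorial data of a King column (column-strictness, the condition $a_j \succeq j$, and the alphabet bound $a_i \preceq \overline{n}$). Once the identity ``King-rank of $w(N^{(j)};D)$ equals $c_j$'' is in place, both the semistandard and the King conditions collapse to the two defining inequalities of a partial Dyck path; the care is entirely in setting up this dictionary and in confirming that the endpoint/alphabet bounds $c_i \le 2n$ align, so that the forward and inverse maps are mutually inverse.
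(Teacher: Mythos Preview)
Your proposal is correct and follows essentially the same route as the paper: the weight equalities are read off from the definition of $\Xi_{(2n-i+1,i)}$ and $\Xi'_{(i,2n-i+1)}$, and the character identity then follows from bijectivity together with Theorem~\ref{thm:king_decomposition}. The paper's own proof is a one-liner (``immediate given the construction of the bijections\ldots\ and from Theorem~\ref{thm:king_decomposition}''), having already asserted just before the proposition that the extension of $\Xi_{n+1}$ to $\Xi_{(2n-i+1,i)}$ is ``straightforward''; your second paragraph simply unpacks that straightforward claim via the King-rank dictionary $c_j \leftrightarrow w(N^{(j)};D)$, which is exactly the verification a careful reader would supply.
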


\begin{proof}
This is immediate given the construction of the bijections $\Xi_{2n-i+1,i}$ and $\Xi'_{i,2n-i+1}$ and from Theorem~\ref{thm:king_decomposition}
\end{proof}

\begin{remark}
\label{rem:catalan_extension}
We can replace both $\Dyck_{n+1,n}$ and $\Dyck'_{n,n+1}$ with $\Dyck_{n+1}$ to obtain
\[
\ch(\fw_n) = \sum_{D \in \Dyck_{n+1}} x^{\wt(D)} = \sum_{D \in \Dyck_{n+1}} x^{\wt'(D)}.
\]
We note that going from $\Dyck_{n+1}$ to $\Dyck_{n+1,n}$ (resp.~$\Dyck'_{n,n+1}$) removes the last $N$ step (resp.\ first $E$ step) as per Remark~\ref{rem:removed_fixed}.
\end{remark}

\begin{remark}
\label{remark:weight_change}
Since our weighting $w$ is encoding the character of a representation, we can extend the Weyl group action on the character to form a new weighting $\widetilde{w}$. Because the characters are Weyl group invariant, we have
\[
\sum_{D \in \Dyck_{n+1}} \prod_N x_{w(N;D)} = \sum_{D \in \Dyck_{n+1}} \prod_N x_{\widetilde{w}(N;D)}.
\]
A similar statement holds for $w'$.
\end{remark}

We can extend this to a Jacobi--Trudi formula for type $C_n$ characters.
Denote
\[
\chi_{(2m-i+1,i)} \seteq \ch(\fw_i) \qquad \text{in type $C_m$}.
\]

\begin{theorem}
\label{thm:general_det_formula_Cn}
For type $C_n$, let $\lambda = \sum_{i \in I} c_i \fw_i \in P_{\Z}^+$, and let $(\lambda'_1, \lambda'_2, \ldots, \lambda'_{\ell})$ denote the conjugate partition of $\lambda$.
Then, we have
\[
\det \left[ \chi_{(a(i,j), b(i,j))} \right]_{i,j=1}^{\ell} =  \ch(\lambda),
\]
where
\begin{align*}
a(i,j) & = 2\ell-i-j+2n+1-\lambda'_{\ell+1-j},
\\ b(i,j) & = j-i+\lambda'_{\ell+1-j}.
\end{align*}
\end{theorem}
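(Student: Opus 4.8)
The plan is to realize the determinant as an instance of the LGV lemma (Lemma~\ref{lemma:LGV}) applied to partial Dyck paths, and to identify the resulting non-intersecting families with King tableaux of shape $\lambda$, so that the total weight is $\ch(\lambda)$ by Theorem~\ref{thm:king_decomposition}. Write $\mu_j = \lambda'_{\ell+1-j}$, so that $\mu_1 \le \cdots \le \mu_\ell$. By Proposition~\ref{prop:fundamental_char}, the $(i,j)$ entry $\chi_{(a(i,j),b(i,j))} = \ch(\fw_{b(i,j)})$ in type $C_{\ell-i+n}$ is the generating function $\sum_D x^{\wt(D)}$ over partial Dyck paths $D$ having $b(i,j)$ North steps and $a(i,j)$ East steps, weighted by the anti-diagonal North-step weighting $w$. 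Since both $a(i,j) = (2\ell - j + 2n + 1 - \mu_j) - i$ and $b(i,j) = (j + \mu_j) - i$ split as a function of $j$ minus a function of $i$, I can record this entry as $e(s_i, t_j)$ for initial points $s_i = (i,i)$ on the main diagonal and terminal points $t_j = (2\ell - j + 2n + 1 - \mu_j,\ j + \mu_j)$. Note every $t_j$ lies on the single anti-diagonal $x + y = 2\ell + 2n + 1$, and that as $j$ increases $t_j$ moves strictly up and to the left while $s_i$ moves strictly up and to the right.

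Because the weighting $w$ records positions relative to the start of each path (Remark~\ref{rem:counting_NE}) whereas the LGV lemma requires a single global edge-weighting on the ambient grid, the two must be reconciled using the Weyl-invariance of characters (Remark~\ref{remark:weight_change}): translating the $i$-th strip to the origin shifts all its North-step weight indices by a fixed (even anti-diagonal) amount, and I must check that the induced global weighting nonetheless assigns each column-path the weight $x^{\wt(\cdot)}$ of the corresponding King-tableau column. This bookkeeping --- matching the one global anti-diagonal weighting on the ambient grid with the per-strip weightings coming from the bijections $\Xi_{(2m-i+1,i)}$ --- is what I expect to be the main technical obstacle; the conjugate weighting $w'$ and $\Xi'$ furnish an equivalent route should the direct one prove awkward.

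With $s_i$ and $t_j$ so placed, Lemma~\ref{lemma:LGV} expresses the determinant as $\sum_{\nilp} \sgn(\nilp) \prod_{i=1}^\ell w(P^{(i)})$ over non-intersecting families $\nilp = (P^{(1)}, \dotsc, P^{(\ell)})$ with $P^{(i)} \colon s_i \to t_{\sigma(i)}$. Since the $s_i$ increase in height along the main diagonal and the $t_j$ increase in height along the common anti-diagonal, any monotone North/East family containing paths $s_i \to t_j$ and $s_{i'} \to t_{j'}$ with $i < i'$ but $j > j'$ must contain a crossing; hence only $\sigma = \mathrm{id}$ survives and every contributing family has $\sgn(\nilp) = +1$. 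Reading the $j$-th path through $\Xi_{(2(\ell-j+n)-\mu_j+1,\ \mu_j)}$ yields a single King-tableau column of height $\mu_j = \lambda'_{\ell+1-j}$, and assembling these in the order $j = \ell, \dotsc, 1$ gives a filling of shape $\lambda$. I then verify that non-intersection of consecutive paths is equivalent to semistandardness in the King alphabet $1 \prec \overline{1} \prec \cdots \prec n \prec \overline{n}$, while the staircase offsets built into $a(i,j)$ and $b(i,j)$ force the minimal entry of row $k$ to be at least $k$ --- exactly the King condition --- so that the families biject with $K(\lambda)$ in a weight-preserving manner. Degenerate entries with $b(i,j) \le 0$ or $b(i,j) > a(i,j)$ contribute $\ch(\fw_{\le 0}) \in \{0,1\}$, consistent with the convention $\Cat_{(n,k)} = 0$ for $k < 0$, and correspond to path data that cannot be completed without a crossing.

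Finally, Theorem~\ref{thm:king_decomposition} identifies $\sum_{T \in K(\lambda)} x^{\wt(T)}$ with $\ch(\lambda)$, which completes the argument. I anticipate that the existence of the LGV factorization and the sign computation will be routine once the geometry is fixed, and that the real work lies in the weighting reconciliation and in pinning down precisely how the column offsets encode the King row-minimum condition.
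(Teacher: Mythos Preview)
Your proposal is correct and follows essentially the same route as the paper's proof: set up LGV with $s_i$ on the main diagonal and $t_j$ on a common anti-diagonal, reconcile the per-strip weightings into a single global edge weighting via Weyl-group invariance, observe that only $\sigma=\mathrm{id}$ survives, and biject the resulting families with King tableaux through $\Xi$ before invoking Theorem~\ref{thm:king_decomposition}. The one place where the paper is more explicit than you is the reweighting step you flag as the main obstacle: it gives the concrete shift $k \mapsto k-(n-i)\bmod m$ (with $m=\ell+n-i$) and checks that this depends only on $i$, hence yields a coherent global edge weighting on the Catalan graph.
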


\begin{proof}
Note that we can rewrite $a(i,j) = 2(\ell+n-i)-b(i,j)+1$, and so $\chi_{(a(i,j), b(i,j))} = \ch( \fw_{b(i,j)} )$ in type $C_{\ell+n-i}$.
We want to apply the LGV lemma, but we note that the edge weighting of the paths from $s_i \to t_j$ do not necessarily match. However, by the Weyl group invariance of the characters, we can reweight our edges and not change $\chi_{(2m-s+1,s)} = \ch(\fw_s)$. Specifically, for a path $s_i \to t_j$, we reweight by sending $k \mapsto k - (n-i) \mod{m}$ (and so $\overline{k} \mapsto \overline{k - (n-i) \mod{m}}$).\footnote{We write the results in $\{1, \dotsc, m\}$; in particular, we write $m$ for $0$.}
Thus, this weighting is coherent for all such choices of start and endpoints, and hence we can define it as an edge weighting on the Catalan graph.

Therefore, we can now apply the LGV lemma, and the only families of non-intersecting lattice paths are of the form $\nilp = (P^{(1)}, \dotsc, P^{(\ell)})$ with $P^{(i)} \colon s_i \to t_i$. Next, form a King tableau of shape $\lambda$ by defining the $i$-th column as $\Xi_{(2n-i+1,i)}(P^{(i)})$ using the permuted weighting above. Note that the resulting entries are in $\{ 1, \one, \dotsc, n, \on \}$, and the semistandard condition of a King tableau follows from the non-intersecting properly.\footnote{This is exactly analogous to proof of the usual Jacobi--Trudi formula.} This is clearly reversible, and so the claim follows.
\end{proof}

\begin{example}
The four grids with starting and terminal points of the paths marked in the computation of $\ch(2\fw_2)$ in type $C_2$, but reweighted as in the proof of Theorem~\ref{thm:general_det_formula_Cn}:
\[
\begin{array}{c@{\hspace{30pt}}c}
\begin{tikzpicture}[baseline=2.5cm,scale=0.9]
\draw[gray!40,very thin] (0,0) grid (5,5);
\foreach \i in {0,...,2} \draw (2-\i,\i+0.5) node[right=-2pt,blue] {$\othree$};
\foreach \i in {0,...,4} \draw (4-\i,\i+0.5) node[right=-2pt,blue] {$\one$};
\foreach \i in {0,...,3} \draw (5-\i,\i+1.5) node[right=-2pt,blue] {$\otwo$};
\foreach \i in {0,1} \draw (1-\i,\i+0.5) node[right=-2pt,red] {$3$};
\foreach \i in {0,...,3} \draw (3-\i,\i+0.5) node[right=-2pt,red] {$1$};
\foreach \i in {0,...,4} \draw (5-\i,\i+0.5) node[right=-2pt,red] {$2$};
\draw[-, very thick] (0,0) -- (5,5);
\draw[thick,dashed] (3.5,3.5) -- +(1.5, -1.5);
\draw[fill=red, color=purple] (0, 0) circle (0.08) node[anchor=east] {$s_1$};
\draw[fill=red, color=purple] (5, 2) circle (0.08) node[anchor=west] {$t_1$};
\end{tikzpicture}
&
\begin{tikzpicture}[baseline=2cm,scale=0.9]
\draw[gray!40,very thin] (0,0) grid (4,4);
\foreach \i in {0,...,2} \draw (2-\i,\i+0.5) node[right=-2pt,blue] {$\othree$};
\foreach \i in {0,...,3} \draw (4-\i,\i+0.5) node[right=-2pt,blue] {$\one$};
\foreach \i in {0,...,1} \draw (4-\i,\i+2.5) node[right=-2pt,blue] {$\otwo$};
\foreach \i in {0,1} \draw (1-\i,\i+0.5) node[right=-2pt,red] {$3$};
\foreach \i in {0,...,3} \draw (3-\i,\i+0.5) node[right=-2pt,red] {$1$};
\foreach \i in {0,...,2} \draw (4-\i,\i+1.5) node[right=-2pt,red] {$2$};
\draw[-, very thick] (0,0) -- (4,4);
\draw[thick,dashed] (3.5,3.5) -- +(0.5, -0.5);
\draw[fill=red, color=purple] (0, 0) circle (0.08) node[anchor=east] {$s_1$};
\draw[fill=red, color=purple] (4, 3) circle (0.08) node[anchor=west] {$t_2$};
\end{tikzpicture}
\allowdisplaybreaks \\[3cm]
\begin{tikzpicture}[baseline=2cm,scale=0.9]
\draw[gray!40,very thin] (0,0) grid (4,4);
\foreach \i in {0,...,2} \draw (2-\i,\i+0.5) node[right=-2pt,blue] {$\one$};
\foreach \i in {0,...,3} \draw (4-\i,\i+0.5) node[right=-2pt,blue] {$\otwo$};
\foreach \i in {0,1} \draw (1-\i,\i+0.5) node[right=-2pt,red] {$1$};
\foreach \i in {0,...,3} \draw (3-\i,\i+0.5) node[right=-2pt,red] {$2$};
\draw[-, very thick] (0,0) -- (4,4);
\draw[thick,dashed] (2.5,2.5) -- +(1.5, -1.5);
\draw[fill=red, color=purple] (0, 0) circle (0.08) node[anchor=east] {$s_2$};
\draw[fill=red, color=purple] (4, 1) circle (0.08) node[anchor=west] {$t_1$};
\end{tikzpicture}
&
\begin{tikzpicture}[baseline=1.5cm,scale=0.9]
\draw[gray!40,very thin] (0,0) grid (3,3);
\foreach \i in {0,...,2} \draw (2-\i,\i+0.5) node[right=-2pt,blue] {$\one$};
\foreach \i in {0,...,1} \draw (3-\i,\i+1.5) node[right=-2pt,blue] {$\otwo$};
\foreach \i in {0,1} \draw (1-\i,\i+0.5) node[right=-2pt,red] {$1$};
\foreach \i in {0,...,2} \draw (3-\i,\i+0.5) node[right=-2pt,red] {$2$};
\draw[-, very thick] (0,0) -- (3,3);
\draw[thick,dashed] (2.5,2.5) -- +(0.5, -0.5);
\draw[fill=red, color=purple] (0, 0) circle (0.08) node[anchor=east] {$s_2$};
\draw[fill=red, color=purple] (3, 2) circle (0.08) node[anchor=west] {$t_2$};
\end{tikzpicture}
\end{array}
\]
\end{example}

\begin{example}
We consider $C_4$ and $\lambda = \fw_2 + 2\fw_3$.
Then the bijection between non-intersecting lattice paths and King tableaux is given by
\[
\begin{tikzpicture}[scale=0.5, baseline=83]
\dyckgrid{12}
\draw[very thick, blue, line join=round] (4,3) -- (5,3) -- (5,4) -- (6,4) -- (6,6) -- (9,6);
\draw[very thick, blue, line join=round] (5,2) -- (7,2) -- (7,4) -- (10,4) -- (10,5);
\draw[very thick, blue, line join=round] (6,1) -- (8,1) -- (8,2) -- (11,2) -- (11,3) -- (12,3);
\draw[very thick,-] (0,0) -- (12,12);
\draw[thick,dashed] (3.5,3.5) -- +(3.5, -3.5);
\draw[thick,dashed] (7.5,7.5) -- +(4.5, -4.5);
\foreach \i in {1,2,3}
  \draw[very thick, red] (\i, \i) -- +(7-2*\i, 0);
\foreach \i in {1,2,3}
  \draw[fill=purple, color=purple] (3+\i, 4-\i) circle (0.12);
\foreach \i in {1,2,3}
{
  \draw[fill=red, color=red] (\i, \i) circle (0.12);
  \draw (\i, \i) node[anchor=south east] {$s_{\i}$};
}
\foreach \i in {2,3,5} { \draw[fill=red, color=red] (7+\i, 8-\i) circle (0.12); }
\draw (8+1, 8-2) node[anchor=south west] {$t_3$};
\draw (8+2, 8-3) node[anchor=south west] {$t_2$};
\draw (8+4, 8-5) node[anchor=south west] {$t_1$};
\end{tikzpicture}
\longleftrightarrow
\young(\one22,\otwo\otwo4,3\ofour).
\]
\end{example}

\begin{example}
\label{ex:JT_full_rect}
We consider $C_3$ and $\lambda = 3\fw_3$.
Then the bijection between non-intersecting lattice paths and King tableaux is given by
\[
\begin{tikzpicture}[scale=0.5, baseline=68]
\fill[gray!10] (6.5,6.5) -- (10,3) -- (10,10) -- cycle;
\dyckgrid{10}
\draw[very thick, blue, line join=round] (4,3) -- (5,3) -- (5, 5) -- (6,5) -- (6,6) -- (7,6);
\draw[very thick, blue, line join=round] (5,2) -- (6,2) -- (6,3) -- (7,3) -- (7,5) -- (8,5);
\draw[very thick, blue, line join=round] (6,1) -- (8,1) -- (8,3) -- (9,3) -- (9,4);
\draw[very thick,-] (0,0) -- (10,10);
\foreach \i in {1,2,3}
{
  \draw[very thick, red] (\i, \i) -- +(7-2*\i, 0);
  \draw[very thick, red] (10-\i, 10-\i) -- +(0, -7+2*\i);
}
\draw[thick,dashed] (3.5,3.5) -- +(3.5, -3.5);
\draw[thick,dashed] (6.5,6.5) -- +(3.5, -3.5);
\foreach \i in {1,2,3}
{
  \draw[fill=red, color=red] (\i, \i) circle (0.12);
  \draw (\i, \i) node[anchor=south east] {$s_{\i}$};
  \draw[fill=red, color=red] (10-\i, 10-\i) circle (0.12);
  \draw (10-\i, 10-\i) node[anchor=south east] {$\widetilde{t}_{\i}$};
}
\foreach \i in {1,2,3}
{
  \draw[fill=purple, color=purple] (3+\i, 4-\i) circle (0.12);
  \draw[fill=purple, color=purple] (6+\i, 7-\i) circle (0.12);
}
\foreach \i in {1,2,3}
{
  \draw[fill=purple, color=purple] (6+\i, 7-\i) circle (0.12);
  \draw (10-\i, 3+\i) node[anchor=south west] {$t_{\i}$};
}
\end{tikzpicture}
\longleftrightarrow
\young(\one\one2,2\otwo\otwo,33\othree).
\]
We add the points $\widetilde{t}_i$ to denote the corresponding terminal points if the paths were extended to be Dyck paths in the shaded upper fixed portion.
\end{example}

\begin{remark}
\label{rem:other_forms}
We can write the determinant of Theorem~\ref{thm:general_det_formula_Cn} under the specialization $x_i = 1$ as
\[
\dim V(\ell \fw_k) = \det \left[ \Cat_{(i+j+2n-k+1,j-i+k)} \right]_{i,j=0}^{\ell-1} =\det \left[ \Cat_{(i+j+2n-k+1,i-j+k)} \right]_{i,j=0}^{\ell-1}.
\]
Furthermore, we have
\begin{equation}
\label{eq:extending_det}
\dim V(\ell \fw_k) = \det \left[ \Cat_{(a(i,j), b(i,j))} \right]_{i,j=1}^{\ell} = \det \left[ \Cat_{(a(i,j), b(i,j)+1)} \right]_{i,j=1}^{\ell},
\end{equation}
by noting that we can only extend the non-intersecting lattice paths corresponding to the first determinant by a $N$ step. See, \textit{e.g.}, Example~\ref{ex:JT_full_rect}.
\end{remark}

Note that the proof of Theorem~\ref{thm:general_det_formula_Cn} does not extend to a Jacobi--Trudi-type formula for $\ch(\ell \fw_n)$ for $\ell > 2$ by using $\det [\ch(\fw_{n+1+i+j})]_{i,j=0}^{\ell-1}$ (or even their principal specializations) in contrast to Remark~\ref{rem:catalan_extension}. This is because any path ending at $t_{\ell}$ has to have the final $N$ step with a weight of $0$ as per Remark~\ref{rem:removed_fixed} (in Example~\ref{ex:JT_full_rect}, this would be the edge $(t_3, \widetilde{t}_3)$). However, for paths ending at $t_1$, the weight of this $N$ step must be $n+1$. Therefore, we cannot apply the LGV lemma in this situation. To demonstrate that, consider the principal specialization, where we have
\begin{align*}
q^{13} \det [q^{-\binom{3+i+j}{2}} \Cat_{3+i+j}(q)]_{i,j=0}^1 & = q^{26} - q^{25} + 2q^{24} + 2q^{22} - q^{21} + 4q^{20} - 2q^{19} + 4q^{18} - 3q^{17}
\\ & \hspace{20pt} + 3q^{16} - 3q^{15} + 4q^{14} - 6q^{13} + 4q^{12} - 3q^{11} + 3q^{10} - 3q^{9}
\\ & \hspace{20pt} + 4q^{8} - 2q^{7} + 4q^{6} - q^{5} + 2q^{4} + 2q^{2} - q + 1.
\end{align*}
Also note that we also cannot use the Mahonian $q$-Catalan numbers:
\begin{align*}
\det [\Cat_{3+i+j}(q)]_{i,j=0}^1 & = q^{26} + q^{24} + 2q^{23} + 2q^{22} + 2q^{21} + 3q^{20} + 3q^{19} + 3q^{18} + 3q^{17}
\\ & \hspace{20pt} + q^{16} + 2q^{15} + q^{14} - 2q^{12} - 2q^{10} - q^{9} - 2q^{8} - q^{7} - q^{6} - q^{4}.
\end{align*}

Note that when we take the principal specialization (resp.\ specialize $x_i = 1$), we obtain the following determinant formula for the principal specializations (resp.\ dimensions) of type $C_n$ representations.
\[
\det [\Cat'_{(a(i,j), b(i,j))}(q) ]_{i,j=1}^{\ell} = \ps(\lambda),
\qquad\qquad
\det [\Cat_{(a(i,j), b(i,j))} ]_{i,j=1}^{\ell} = \dim V(\lambda).
\]
In particular, we note the following, which we give two other alternative proofs in Appendix~\ref{app:alt_proofs}.
Another alternative proof is given by a specialization of~\cite[Thm.~2.1]{Okada09}.

\begin{corollary}
\label{cor: Cn determinatal r omega_n}
For type $C_n$, we have
\[
\det [ \Cat_{n+1+i+j} ]_{i,j=0}^{r-1} = \dim V(r \fw_n).
\]
\end{corollary}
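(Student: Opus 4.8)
The plan is to specialize the Jacobi--Trudi determinant of Theorem~\ref{thm:general_det_formula_Cn} at $\lambda = r\fw_n$ and then reinterpret the resulting Catalan-triangle determinant as a Hankel determinant of ordinary Catalan numbers by completing the non-intersecting lattice paths to genuine Dyck paths. Taking $\lambda = r\fw_n$ (so that $\ell = r$ and $\lambda'_j = n$ for every $j$) in Remark~\ref{rem:other_forms} gives
\[
\dim V(r\fw_n) = \det\left[ \Cat_{(i+j+n+1,\ j-i+n)} \right]_{i,j=0}^{r-1},
\]
so by the LGV lemma (Lemma~\ref{lemma:LGV}) the right-hand side is the signed count of non-intersecting families $\nilp = (P^{(0)}, \dotsc, P^{(r-1)})$ of partial Dyck paths $P^{(i)}\colon s_i \to t_i$ in the Catalan graph, with the identity permutation forced exactly as in the proof of Theorem~\ref{thm:general_det_formula_Cn}. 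This first step is thus a pure index substitution.

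Next I would extend each family $\nilp$ to a family of honest Dyck paths. Exactly as illustrated in Example~\ref{ex:JT_full_rect}, the path $P^{(i)}$ is forced to begin with a block of East steps and is completable by a block of North steps carrying its terminal point $t_i$ up to a point $\widetilde{t}_i$ on the diagonal $y = x$; prepending and appending these forced blocks produces a full Dyck path $\widehat{P}^{(i)}\colon s_i \to \widetilde{t}_i$ with both endpoints on the diagonal. Because the forced East-prefixes lie at distinct heights and the forced North-suffixes lie in distinct columns, they never meet one another, so $\nilp \mapsto \widehat{\nilp} = (\widehat{P}^{(0)}, \dotsc, \widehat{P}^{(r-1)})$ is a bijection between non-intersecting families of partial paths and non-intersecting families of Dyck paths joining the two nested sets of diagonal points, and it preserves both the forced identity permutation and hence the sign. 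Crucially, this step succeeds only after specializing $x_i = 1$: at the level of characters the weights of the appended $N$ steps disagree across the family (the final $N$ step carries weight $0$ for the path ending near the far corner but weight $n+1$ for the path ending innermost), which is precisely the obstruction discussed immediately before the statement.

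Finally I would read off the entries of the completed configuration. Since $s_i$ and $\widetilde{t}_j$ both lie on the diagonal, a Dyck path from $s_i$ to $\widetilde{t}_j$ is counted by an ordinary Catalan number $\Cat_m$, where $m$ is the common displacement (semilength); arranging the two nested families of diagonal points so that the start points run toward the origin while the terminal points run toward the far corner makes this displacement equal to $n+1+i+j$, so that $e(s_i, \widetilde{t}_j) = \Cat_{n+1+i+j}$. Applying the LGV lemma to $\widehat{\nilp}$ then identifies the signed count of completed families with $\det[\Cat_{n+1+i+j}]_{i,j=0}^{r-1}$, any reindexing needed to pass from a displacement of the form $-i-j$ to $+i+j$ reversing both rows and columns and so contributing a sign $(-1)^{2\binom{r}{2}} = 1$. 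Chaining the three steps gives
\[
\det[\Cat_{n+1+i+j}]_{i,j=0}^{r-1} = \dim V(r\fw_n).
\]

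The main obstacle is the bookkeeping in the middle step: one must verify that the forced prefixes and suffixes are genuinely determined by the endpoints (so that the completion is both injective and surjective onto all non-intersecting Dyck families), that they cannot create new intersections, and that the sign is unchanged. A cleaner alternative that sidesteps part of this is to run the argument in reverse: set up the diagonal endpoints first, apply the LGV lemma to obtain $\det[\Cat_{n+1+i+j}]$ directly, and then biject the non-intersecting Dyck families with King tableaux of the rectangular shape $r\fw_n$ through the column-reading map $\Xi$, invoking Theorem~\ref{thm:king_decomposition} to conclude equality with $\dim V(r\fw_n)$.
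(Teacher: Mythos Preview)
Your proposal is correct and matches the paper's reasoning. The main route you outline---specialize Theorem~\ref{thm:general_det_formula_Cn} to $\lambda = r\fw_n$ and then complete the partial paths to full Dyck paths as in Example~\ref{ex:JT_full_rect}---is precisely how the paper derives the corollary, and the ``cleaner alternative'' you sketch at the end (start from the Catalan Hankel determinant, apply the LGV lemma, and read columns via $\Xi_{n+1}$ to get King tableaux) is exactly the first alternative proof given in Appendix~\ref{app:alt_proofs}. The paper also records a second, purely computational alternative there, evaluating both sides via Krattenthaler's Hankel--Catalan determinant formula and the $q\to 1$ limit of Proposition~\ref{prop:Cn_ps}, which you do not mention but is not needed for correctness.
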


\section{Determinant formulas for type $B_n$ and $C_n$ decomposition multiplicities}
\label{sec:decomposition_multiplicities}

The starting point for our interpretation of decomposition multiplicities in terms of lattice paths begins with a linear algebra definition. A \defn{Hankel matrix} is a matrix $H = (h_{ij})_{i,j=0}^{n-1}$ such that $h_{ij} = f(i+j)$ for some fixed function $f \colon \Z \to \Z$. We then will use the LGV lemma to give an interpretation of non-intersecting lattice paths, where $f$ has a lattice path interpretation.

\begin{theorem}
\label{thm:Hankel_Catalan_det}
The decomposition multiplicity of $B(0)$ in $B(\fw_n)^{\otimes 2m}$ in type $B_n$ is the determinant of the $n \times n$ Hankel matrix
\[
H^B_{n,m} \seteq \left[ \Cat_{2n-i-j+m} \right]_{i,j=1}^n.
\]
\end{theorem}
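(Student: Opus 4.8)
The plan is to count the decomposition multiplicity as the number of highest weight elements of weight $0$ in the crystal $B(\fw_n)^{\otimes 2m}$, realize this set as a family of non-intersecting lattice paths, and then invoke the LGV lemma (Lemma~\ref{lemma:LGV}) to obtain the determinant. First I would use that the spin character factorizes as $\ch(\fw_n) = \prod_{i=1}^n (x_i^{1/2} + x_i^{-1/2})$, so the weight generating function of $B(\fw_n)^{\otimes 2m}$ splits as a product over the $n$ coordinates. Reading an element of $B(\fw_n)^{\otimes 2m}$ as an $n \times 2m$ array of signs, each row $i$ becomes a $\pm\tfrac12$ walk of length $2m$, and the weight-$0$ condition forces each such walk to return to its starting height.

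Next I would translate the highest weight condition into geometric constraints on these $n$ walks. Using the signature rule for $e_i$ with $i<n$ (which compares rows $i$ and $i+1$) and for $e_n$ (which reads row $n$ against the short-root wall), the condition $e_i b = 0$ for all $i$ becomes exactly a non-crossing/ballot condition between consecutive walks together with a positivity (wall) condition on the bottom walk. After the standard shift by $\rho$ — placing the walkers at the dominant heights $\rho_i = n - i + \tfrac12$ — this is precisely a family of $n$ mutually non-intersecting lattice paths confined above a wall. Equivalently, one applies the Weyl character formula in type $B_n$: expanding $\ch(\fw_n)^{2m}\cdot\delta$ over the hyperoctahedral group and using the factorization collapses the signed sum over signed permutations into the determinant $\det\bigl[e(s_i,t_j)\bigr]_{i,j=1}^n$, where $e(s_i,t_j)$ counts the single confined walks and $\delta$ is the type $B_n$ Weyl numerator.

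It then remains to evaluate the single-path count. In the wall-bounded (watermelon) arrangement — with the $n$ walkers pinned to the wall at the staggered, order-reversing times $2i$ and $4n+2m-2j$ — a single path from $s_i$ to $t_j$ is a Dyck path of semilength $2n-i-j+m$, so $e(s_i,t_j) = \Cat_{2n-i-j+m}$; the reflection principle recovers this as $\binom{2L}{L}-\binom{2L}{L-1}$ with $L = 2n-i-j+m$, and it is the order-reversing placement of the two endpoint families that makes the matrix Hankel in $i+j$. Applying Lemma~\ref{lemma:LGV} then gives $\det[\Cat_{2n-i-j+m}]_{i,j=1}^n = H^B_{n,m}$. (As a consistency check, the chamber model of the previous paragraph yields the entrywise-different but determinantally equal matrix $\bigl[\binom{2m}{m+i-j}-\binom{2m}{m+2n-i-j+1}\bigr]$, and one can also verify $\det H^B_{1,m} = \Cat_m$ directly, matching the $\mathfrak{sl}_2$ computation $[(\C^2)^{\otimes 2m}:B(0)] = \Cat_m$.)

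I expect the main obstacle to be the second step: rigorously matching the crystal-theoretic highest weight condition (via the signature rule, including the special behavior at the type $B_n$ short root $\alpha_n$ and the half-integral spin weights) with the non-intersecting-plus-wall condition on the lattice paths, and checking that the resulting endpoints are exactly those for which the single-path count is $\Cat_{2n-i-j+m}$ rather than a shifted ballot number. Verifying that the reflection term is the correct one, so that the determinant is genuinely Hankel in $i+j$, is the delicate bookkeeping; once the partial Dyck path model is in place, everything else is a direct application of Lemma~\ref{lemma:LGV}.
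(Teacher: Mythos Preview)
Your proposal is correct and follows essentially the same route as the paper: realize highest weight elements of weight $0$ in $B(\fw_n)^{\otimes 2m}$ as $n$-tuples of non-intersecting lattice paths bounded by a wall (reading each row of signs as a walk, with the signature rule for $i<n$ giving the non-intersecting condition and the $e_n$ condition giving the wall), and then apply the LGV lemma so that the single-path count becomes $\Cat_{2n-i-j+m}$. The paper's proof is exactly this bijection, phrased in the Catalan-graph picture of~\cite{MW00} (paths below the diagonal from adjacent diagonal sites) rather than your rotated walker-with-wall picture, and it does not use the Weyl character formula alternative you mention; the ``main obstacle'' you flag is precisely what the paper spends its two verification paragraphs on.
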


\begin{proof}
By~\cite[Thm.~1]{MW00}, the determinant of $H^B_{n,m}$ is given by the number of $n$ non-intersecting lattice paths that stay below the anti-diagonal from $n$ adjacent sites to $n$ adjacent sites on the anti-diagonal separated by $m$ places.
Thus, it is sufficient to define a bijection from the non-intersecting lattice paths and highest weight elements in $B(\fw_n)^{\otimes 2m}$.

We first note that if we run diagonals down from the upper-most initial and lower-most terminal point, everything not between these lines is fixed.
Consider a set of non-intersecting lattice paths $\nilp = (P^{(i)})_{i=1}^n$, and we truncate the paths to the non-fixed region. Hence, each path has length $2m$.
We label the paths/initial points from $1$ to $n$ starting from the lower left.
Let $\Psi(\nilp)$ denote the tensor product of spins, where the $i$-th row of the $a$-th factor is a $+$ (resp.~$-$) if the $a$-th step $P^{(i)}_a$ is an $E$ (resp.~$N$) step.

Note that $e_n b = 0$ if and only if we have the Yamanouchi condition on the $\pm$, which we can consider as the signature rule. We note that the Yamanouchi condition is equivalent to $p^{(n)}$ staying strictly below the diagonal.

Next, assume $i < n$. Note that the super antidiagonal of the $a$-th position of the $i$-th path corresponds to $i$ plus the coefficient of $\epsilon_i$ of the tensor product truncated to $a$ factors. Note that $P^{(i)}$ intersects $P^{(i+1)}$ at position $a$ if and only if $c_{i+1} - c_i < 0$ for
\[
c_1 \epsilon_1 + \cdots + c_n \epsilon_n = \wt(s_a \otimes \cdots \otimes s_1).
\]
Recall that $s_m \otimes \cdots \otimes s_1$ is highest weight if and only if $\wt(s_a \otimes \cdots \otimes s_1) \in P^+$ for all $1 \leq a \leq m$, and a weight $c_1 \epsilon_1 + \cdots + c_n \epsilon_n \in P^+$ if and only if $c_1 \geq c_2 \geq \cdots \geq c_n \geq 0$.
Therefore, the paths are non-intersecting if and only if the corresponding element is highest weight.
\end{proof}

\begin{example}
Consider type $B_3$, and let $\Phi$ denote the bijection given in the proof of Theorem~\ref{thm:Hankel_Catalan_det}. Under $\Phi$, we have
\[
\begin{tikzpicture}[baseline=68, scale=0.5]
\dyckgrid{10}
\draw[very thick, blue, line join=round] (3,3) -- (5,3) -- (5, 5) -- (6,5) -- (6,6) -- (7,6) -- (7,7);
\draw[very thick, blue, line join=round] (4,2) -- (6,2) -- (6,3) -- (7,3) -- (7,5) -- (8,5) -- (8,6);
\draw[very thick, blue, line join=round] (5,1) -- (8,1) -- (8,3) -- (9,3) -- (9,5);
\draw[very thick,-] (0,0) -- (10,10);
\foreach \i in {1,2}
{
  \draw[very thick, red] (\i, \i) -- +(6-2*\i, 0);
  \draw[very thick, red] (10-\i, 10-\i) -- +(0, -6+2*\i);
}
\draw[thick,dashed] (3,3) -- +(3, -3);
\draw[thick,dashed] (7,7) -- +(3, -3);
\foreach \i in {1,2,3}
{
  \draw[fill=red, color=red] (\i, \i) circle (0.12);
  \draw (\i, \i) node[anchor=south east] {$s_{\i}$};
  \draw[fill=red, color=red] (10-\i, 10-\i) circle (0.12);
  \draw (10-\i, 10-\i) node[anchor=south east] {$t_{\i}$};
}
\foreach \i in {1,2}
{
  \draw[fill=purple, color=purple] (3+\i, 3-\i) circle (0.12);
  \draw[fill=purple, color=purple] (7+\i, 7-\i) circle (0.12);
}
\end{tikzpicture}
\longmapsto
\young(-,-,-) \otimes \young(-,+,+) \otimes \young(+,-,-) \otimes \young(-,-,+) \otimes \young(-,+,-) \otimes \young(+,-,-) \otimes \young(+,+,+) \otimes \young(+,+,+)\ .
\]
\end{example}

\begin{remark}
An equivalent definition of the Hankel matrix used in Theorem~\ref{thm:Hankel_Catalan_det} as $H^B_{n,m} = (\Cat_{i+j+m})_{i,j=0}^{n-1}$. However, this indexing does not give a direct correlation between the LGV lemma path $P^{(i)}$ and the weight $\epsilon_i$.
\end{remark}

We remark that the decomposition multiplicity of $B(0)$ in $V(\fw_n)^{\otimes 2m}$ in type $B_n$ is the $m$-th number of~\cite[A006149]{OEIS} for $n = 3$,~\cite[A006150]{OEIS} for $n=4$, and ~\cite[A006151]{OEIS} for $n=5$.

Note that the proof of~\cite[Thm.~1]{MW00} is essentially a direct application of LGV lemma and the associated combinatorics of the lattice paths being Catalan numbers. We can generalize this to obtain the multiplicity of $B(\lambda)$ for general $\lambda \in P_{\Z}^+$ as both certain non-intersecting lattice paths and as a determinant of Catalan triangle numbers.

As an motivating example, we can reformulate Theorem~\ref{thm:Hankel_Catalan_det} using (the determinant of) the matrix
\[
H^B_{(n,0)} =\left[ \Cat_{(2n-i-j+m, j-i+m)} \right]_{i,j=1}^n,
\]
and note that $0 \in P_{\Z}^+$.
Additionally, note that the only non-intersecting lattice paths must come from the identity permutation. The same idea holds more generally and yields the following.

\begin{theorem}
\label{thm:triangular_Catalan_det}
For type $B_n$, let $\lambda = \sum_{i \in I} c_i \epsilon_i \in P_{\Z}^+$ with $c_n \in \Z$.
The decomposition multiplicity of $B(\lambda)$ in $B(\fw_n)^{\otimes 2m}$ is the determinant of the $n \times n$ matrix
\[
H^B_{(n,\lambda)} = \left[ \Cat_{(a(i,j),b(i,j))} \right]_{i,j=1}^n,
\]
where
\begin{align*}
a(i,j) & = 2n - i - j + m + c_j,
\\ b(i,j) & = j - i + m - c_j.
\end{align*}
\end{theorem}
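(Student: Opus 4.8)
The plan is to mirror the proof of Theorem~\ref{thm:Hankel_Catalan_det}, the only genuinely new feature being that the terminal points of the lattice paths are displaced by the coefficients $c_j$. First I would note that the decomposition multiplicity of $B(\lambda)$ in $B(\fw_n)^{\otimes 2m}$ equals the number of highest weight elements of weight $\lambda$, since each summand $B(\mu)$ contributes exactly one highest weight element $u_\mu$; the hypothesis $c_n \in \Z$ simply restricts attention to the integral weights that can occur in this tensor power. Thus it suffices to enumerate the highest weight elements $b = s_{2m} \otimes \cdots \otimes s_1$ of weight exactly $\lambda$.

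I would reuse the encoding $\Psi$ from the proof of Theorem~\ref{thm:Hankel_Catalan_det}: record $b$ as a family $\nilp = (P^{(1)}, \dotsc, P^{(n)})$ of lattice paths with $P^{(i)}$ starting at $s_i = (i,i)$, where the $a$-th step of $P^{(i)}$ is an $E$ step or an $N$ step according to whether the $i$-th coordinate of $s_a$ is $+$ or $-$. As shown there, $e_i b = 0$ for $i < n$ is equivalent to $P^{(i)}$ and $P^{(i+1)}$ being non-intersecting, while $e_n b = 0$ is equivalent to $P^{(n)}$ remaining below the diagonal; hence highest weight elements correspond precisely to families of non-intersecting lattice paths below the diagonal.

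The new bookkeeping is the weight. Reading off $\wt(b)$ coordinatewise, the coefficient of $\epsilon_i$ equals $\tfrac12$ times the number of $E$ steps minus the number of $N$ steps along $P^{(i)}$, so imposing $\wt(b) = \lambda$ forces $P^{(i)}$ to use exactly $m + c_i$ many $E$ steps and $m - c_i$ many $N$ steps. Consequently, in a non-intersecting family the path must terminate at $t_i \seteq (2n - i + m + c_i,\ i + m - c_i)$, whose $x$-coordinate incorporates the $2(n-i)$ many initial $E$ steps forced by non-intersection. Since $s_i = (i,i)$ lies on the diagonal $y = x$, translating it to the origin preserves the below-diagonal constraint, so the number of admissible paths from $s_i$ to the general terminal point $t_j = (2n - j + m + c_j,\ j + m - c_j)$ is exactly the Catalan triangle number $\Cat_{(a(i,j),\, b(i,j))}$ counting $a(i,j)$ East and $b(i,j)$ North steps, recovering $H^B_{(n,0)}$ when $\lambda = 0$. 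Dominance of $\lambda$ guarantees $a(i,j) \ge b(i,j) \ge 0$ on the diagonal, so these entries are genuine (possibly zero) Catalan triangle numbers.

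Finally I would invoke the LGV lemma (Lemma~\ref{lemma:LGV}) with unit edge weights. The step needing the most care is verifying that only the identity permutation contributes: using $c_1 \ge c_2 \ge \cdots \ge c_n$ one checks that the $x$-coordinate of $t_j$ is strictly decreasing and its $y$-coordinate strictly increasing in $j$, so the terminal points are totally ordered oppositely to the starting points along the diagonal, whence any non-identity permutation would force two North-East paths to cross --- exactly the non-permutability already seen in Theorem~\ref{thm:Hankel_Catalan_det}. Therefore $\det H^B_{(n,\lambda)}$ equals the positively signed number of non-intersecting families, which by $\Psi$ is the desired multiplicity. I expect the main obstacle to be pinning down the displaced terminal points $t_j$ so that they are simultaneously consistent with the weight constraint, with the Catalan-triangle reading of each matrix entry, and with the non-permutability argument.
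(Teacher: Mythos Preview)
Your proposal is correct and follows essentially the same approach as the paper: the paper simply remarks that the proof of Theorem~\ref{thm:Hankel_Catalan_det} generalizes, noting that one rewrites the Hankel matrix in Catalan-triangle form as $H^B_{(n,0)}$, that only the identity permutation contributes, and that ``the same idea holds more generally,'' without spelling out the details you supply. Your explicit identification of the displaced terminal points $t_j$ and the monotonicity check guaranteeing non-permutability are exactly what the paper leaves implicit.
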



\begin{remark}
For a $2m+1$ tensor factor analog of Theorem~\ref{thm:triangular_Catalan_det} (and Theorem~\ref{thm:Hankel_Catalan_det}), we simply add $\fw_n$ to $\lambda$. However, this does not change the entries of the matrix or the dimensions of the representations.
\end{remark}

\begin{example}
We want to consider the multiplicity of $B(\tfw_1+\tfw_3)$ in $B(\omega_3)^{\otimes 8}$ in type $B_3$. Note that $\tfw_1 + \tfw_3 = 2\epsilon_1 + \epsilon_2 + \epsilon_3$. One such non-intersecting lattice path and its image under the bijection of Theorem~\ref{thm:triangular_Catalan_det}
 is
\[
\begin{tikzpicture}[scale=0.5, baseline=68]
\dyckgrid{11}
\draw[very thick, blue, line join=round] (3,3) -- (5,3) -- (5,5) -- (6,5) -- (6,6) -- (8,6);
\draw[very thick, blue, line join=round] (4,2) -- (6,2) -- (6,3) -- (9,3) -- (9,5);
\draw[very thick, blue, line join=round] (5,1) -- (8,1) -- (8,2) -- (10,2) -- (10,3) -- (11,3);
\draw[very thick,-] (0,0) -- (11,11);
\draw[thick,dashed] (3,3) -- +(3, -3);
\draw[thick,dashed] (7,7) -- +(4, -4);
\foreach \i in {1,2}
{
  \draw[color=red,thick] (3-\i,3-\i) -- (3+\i,3-\i);
  \draw[fill=purple, color=purple] (3+\i, 3-\i) circle (0.12);
}
\foreach \i in {1,2,3}
{
  \draw[fill=red, color=red] (\i, \i) circle (0.12);
  \draw (\i, \i) node[anchor=south east] {$s_{\i}$};
}
\foreach \i in {1,2,4} { \draw[fill=red, color=red] (7+\i, 7-\i) circle (0.12); }
\draw (7+1, 7-1) node[anchor=south west] {$t_3$};
\draw (7+2, 7-2) node[anchor=south west] {$t_2$};
\draw (7+4, 7-4) node[anchor=south west] {$t_1$};
\end{tikzpicture}
\!\!\longmapsto
\young(+,-,+) \otimes \young(-,-,+) \otimes \young(+,+,-) \otimes \young(+,+,+) \otimes \young(-,+,-) \otimes \young(+,-,-) \otimes \young(+,+,+) \otimes \young(+,+,+)\ .
\]
\end{example}

Let $\overline{\lambda}$ denote the complement partition of $\lambda$ in an $\ell \times n$ box.
Specifically, we have $\overline{\lambda}_i = n - \lambda_{\ell+1-i}$.
We can reformulate Theorem~\ref{thm:general_det_formula_Cn} when specializing $q = 1$ as
\[
\dim V(\lambda) = \det \left[ \Cat_{(2n-1+i+j-\lambda_j, i-j+\lambda_j)} \right]_{i,j=1}^{\ell} = \det \left[ \Cat_{(\overline{a}(i,j), \overline{b}(i,j))} \right]_{i,j=1}^{\ell},
\]
where
\begin{align*}
\overline{a}(i,j) & = 2\ell-i-j+n+1+\overline{\lambda}_j,
\\ \overline{b}(i,j) & = j-i+n-\overline{\lambda}_j.
\end{align*}
We can compare this form for $\dim V(\lambda)$ with the result from Theorem~\ref{thm:triangular_Catalan_det}. Note that they almost agree under the substitutions:
\begin{center}
\begin{tabular}{c|ccc}
\hline
Theorem~\ref{thm:triangular_Catalan_det} & $n$ & $m$ & $c_j$
\\\hline
Theorem~\ref{thm:general_det_formula_Cn} & $\ell$ & $n+1$ & $\overline{\lambda}_j$
\\\hline
\end{tabular}
\end{center}
Yet, we can use Equation~\eqref{eq:extending_det} with $\lambda = n \fw_{m-1}$ in type $C_{m-1}$ so that they do agree, which yields the following.

\begin{corollary}
The decomposition multiplicity of $B(0)$ in $B(\fw_n)^{\otimes 2m}$ in type $B_n$ is equal to $\dim V(n \fw_{m-1})$ of type $C_{m-1}$.
\end{corollary}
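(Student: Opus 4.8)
The plan is to write both sides of the asserted equality as determinants of Catalan triangle numbers and to observe that, after the substitution recorded in the table above, the two matrices coincide up to a single unit shift in the lower index parameter, which is exactly the shift absorbed by Equation~\eqref{eq:extending_det}. Since all the needed determinant formulas are already in hand, the task is essentially to line up the two index conventions and then invoke the one non-formal ingredient.

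First I would record the multiplicity side. By Theorem~\ref{thm:Hankel_Catalan_det} together with its reformulation via $H^B_{(n,0)}$ (equivalently, the $\lambda = 0$, i.e.\ all $c_j = 0$, case of Theorem~\ref{thm:triangular_Catalan_det}), the decomposition multiplicity of $B(0)$ in $B(\fw_n)^{\otimes 2m}$ in type $B_n$ equals
\[
\det \left[ \Cat_{(2n-i-j+m,\; j-i+m)} \right]_{i,j=1}^n .
\]
Next I would compute the dimension side. Applying Theorem~\ref{thm:general_det_formula_Cn} in type $C_{m-1}$ to $\lambda = n\fw_{m-1}$, the partition is the rectangle $(n^{m-1})$, its conjugate is $\lambda' = ((m-1)^n)$, so the determinant has size $\ell = n$ and $\lambda'_{\ell+1-j} = m-1$ for every $j$. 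Substituting these values (with the type-$C$ rank equal to $m-1$) into $a(i,j)$ and $b(i,j)$ gives $a(i,j) = 2n-i-j+m$ and $b(i,j) = j-i+m-1$, whence
\[
\dim V(n\fw_{m-1}) = \det \left[ \Cat_{(2n-i-j+m,\; j-i+m-1)} \right]_{i,j=1}^n .
\]
This agrees with the multiplicity determinant in every entry except that the lower index is smaller by exactly $1$.

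Finally I would close this off-by-one discrepancy, which is the only genuinely non-bookkeeping step. Because $\lambda = n\fw_{m-1}$ has the rectangular form $\ell\fw_k$ (with $\ell = n$ and $k = m-1$) covered by Equation~\eqref{eq:extending_det}, the determinant is unchanged upon replacing $b(i,j)$ by $b(i,j)+1$; the underlying reason is that every non-intersecting family of lattice paths computing this determinant can only be extended by a single $N$ step, exactly as illustrated in Example~\ref{ex:JT_full_rect}. Performing this replacement turns the dimension determinant into the multiplicity determinant, yielding the claimed equality. I expect the main obstacle to be precisely this unit shift in the lower index: it is not a trivial reindexing of the matrix but relies essentially on the rectangular shape of $\lambda$ through Equation~\eqref{eq:extending_det}, and for a general weight the two determinant formulas would not agree.
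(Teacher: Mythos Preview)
Your proof is correct and follows essentially the same approach as the paper: both write each side as an $n\times n$ determinant of Catalan triangle numbers via Theorem~\ref{thm:triangular_Catalan_det} (at $\lambda=0$) and Theorem~\ref{thm:general_det_formula_Cn} (at $n\fw_{m-1}$ in type $C_{m-1}$), observe the off-by-one in the lower index, and close it with Equation~\eqref{eq:extending_det}. The only cosmetic difference is that the paper first records the complement-partition reformulation of Theorem~\ref{thm:general_det_formula_Cn} and a general substitution table before specializing, whereas you go directly to the rectangular weight; the content is the same.
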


Next, we give a type $C_n$ version of Theorem~\ref{thm:Hankel_Catalan_det}. In order to prove it, we first need to examine the crystal structure of $\bigwedge B(\omega_1)$ in type $C_n$. The elements of $\bigwedge^k B(\omega_1)$ are given by height $k$ tableaux since all elements must be distinct by semistandardness, and we choose representatives in the exterior product that are ordered (considering $B(\omega_1)$ as a poset). Similar to~\cite{S05}, removing all pairs $(i, \overline{\imath})$ from a column $T$ such that $k + 1 + p_i - p_{\overline{\imath}} > i$, where $p_x$ is the height of $x$ in $T$, gives a crystal isomorphism from $\bigwedge^k B(\omega_1)$ to KN tableaux.

\begin{remark}
\label{remark:KR_tableaux}
The tableaux for $\bigwedge B(\omega_1)$ can be considered as a tableaux model for the Kirillov--Reshetikhin crystal $\bigoplus_{k=1}^n B^{k,1}$ of type $A_{2n}^{(2)}$, where we consider $B^{0,1} = B(0)$,~\cite{OS08,FOS09} similar to the Kirillov--Reshetikhin tableaux  of~\cite{SchillingS15}.
\end{remark}

\begin{theorem}
\label{thm:Hankel_Catalan_type_C}
For type $C_n$, the multiplicity of $B(0)$ inside of $B = \left( \bigwedge B(\omega_1) \right)^{\otimes m}$ is the determinant of the $n \times n$ Hankel matrix
\[
H^C_{n,m+1} = \left[ \Cat_{m+1+i+j} \right]_{i,j=0}^{n-1}.
\]
\end{theorem}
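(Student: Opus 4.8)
The plan is to follow the template of the proof of Theorem~\ref{thm:Hankel_Catalan_det}, replacing the minuscule spin factors of type $B_n$ by the exterior-algebra factors of type $C_n$. First I would invoke \cite[Thm.~1]{MW00} to read off $\det H^C_{n,m+1}$ as the number of families $\nilp = (P^{(1)}, \dotsc, P^{(n)})$ of $n$ non-intersecting lattice paths that stay below the anti-diagonal, running from $n$ adjacent sites to $n$ adjacent sites separated by $m+1$ places; as in the type $B_n$ argument, non-intersection forces the identity permutation $P^{(i)} \colon s_i \to t_i$. After deleting the fixed portion outside the diagonals through the extremal endpoints, each path has length $2(m+1)$. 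It then suffices to build a bijection $\Psi$ between these families and the highest weight elements of weight $0$ in $B = \left( \bigwedge B(\omega_1) \right)^{\otimes m}$.

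For the bijection I would label the paths $1, \dotsc, n$ from the lower left, so that $P^{(r)}$ records the level-$r$ data across the $m$ tensor factors. Each factor is an exterior-algebra column, and at level $r$ there are exactly four possibilities: the column contains $r$ only (weight $+\epsilon_r$), $\bar r$ only (weight $-\epsilon_r$), both $r$ and $\bar r$ (weight $0$), or neither (weight $0$). I would encode these by the four two-step patterns of $P^{(r)}$ within that factor, matching the net diagonal displacement to the $\epsilon_r$-contribution: $(E,E) \mapsto r$ only, $(N,N) \mapsto \bar r$ only, and the two patterns $(E,N)$ and $(N,E)$ reserved for the two weight-$0$ cases. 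Thus the $m$ factors contribute $2m$ free steps to each path, and together with the two fixed boundary steps this gives the length $2(m+1)$ dictated by the Catalan index $m+1+i+j$.

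With the encoding in place, the two governing conditions translate as in the type $B_n$ case. The weight-$0$ condition is equivalent to each $P^{(r)}$ having zero net displacement, that is, returning to its prescribed terminal site among the $n$ adjacent endpoints. For the highest weight condition I would expand each column into its vector-representation reading word; since $B(\omega_1)$ is minuscule in type $C_n$, being highest weight is detected by the signature rule, and the running coefficient $c_r$ of $\epsilon_r$ after each prefix being weakly decreasing in $r$ and nonnegative, i.e.\ $c_1 \geq \dotsb \geq c_n \geq 0$, is exactly what is recorded by $P^{(r)}$ staying weakly above $P^{(r+1)}$ together with $P^{(n)}$ staying below the anti-diagonal. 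Reading the conditions together shows that non-intersection below the anti-diagonal is equivalent to the element being highest weight of weight $0$, and the Weyl-group invariance of the character guarantees the reweighting needed to put all single-path counts on the common Catalan graph.

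The hard part will be the crystal structure of $\bigwedge B(\omega_1)$, which, unlike the spin representation in type $B_n$, is not minuscule; consequently ``highest weight'' cannot be read off from dominance of the partial column-level weights alone. The delicate point is to use the genuine exterior-algebra crystal structure --- the isomorphism to Kashiwara--Nakashima tableaux obtained by deleting the pairs $(i, \bar\imath)$ described just before the statement --- and to verify that the two-step-per-level encoding is compatible with the signature rule at every intermediate step, not merely at factor boundaries. In particular, I expect the main work to lie in fixing the assignment of $(E,N)$ versus $(N,E)$ to the ``both'' and ``neither'' cases so that the strict increase of entries within a column and the non-intersection of the paths correspond precisely, and in accounting for the two extra fixed steps responsible for the shift from $m$ to $m+1$.
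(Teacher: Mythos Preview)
Your proposal is essentially the same approach as the paper's, and it is correct. The paper sets up the identical bijection: two steps of $P^{(r)}$ per tensor factor, with $EE\mapsto r$, $NN\mapsto \bar r$, and then fixes the remaining ambiguity by declaring $EN\mapsto\{r,\bar r\}$ and $NE\mapsto\emptyset$. The highest-weight verification is dismissed with ``similar to the proof of Theorem~\ref{thm:Hankel_Catalan_det}'', and the paragraph following the proof explains the choice via the virtualization map $C_n\hookrightarrow B_n$.

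What you flag as the hard part is exactly where the paper is terse, so it is worth saying why the specific assignment works. With $EN\mapsto$ ``both'' (equivalently: the first step of the pair records whether $r\in S_k$ and the second records whether $\bar r\in S_k$), the odd checkpoint of $P^{(r)}$ measures $c_r^{(k-1)}+[\![r\in S_k]\!]$, which is precisely the $\epsilon_r$-coefficient of the genuine reading-word prefix ``through all unbarred letters of column $k$''. Thus the path conditions at odd and even steps are checking dominance at two actual prefixes per factor, and a short case analysis shows these two suffice to force dominance at every letter-level prefix. So the non-minuscule worry dissolves: you never need factor-level minusculeness, only that the two checkpoints are honest prefixes of the $B(\omega_1)$-word. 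The opposite assignment $EN\mapsto\emptyset$ would not give a reading-word prefix at the odd step and the argument would not go through directly, confirming your instinct that the choice matters.
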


\begin{proof}
We first ignore the first $(n-i) + 1$ and last steps in the path $P^{(i)} \colon s_i \to t_i$ (they are forced to be $E$ and $N$ respectively).
Thus, we get a bijection between highest weight elements of $B$ and lattice paths as follows. Consider the non-forced path from $s_i$ to $t_i$. We consider the $(2k, 2k+1)$ steps of the path to the $k$-th column tableau by an $E N$ in row $i$ being an $i, \overline{\imath}$ pair being added, a $E E$ being an $i$, and $N N$ being an $\overline{\imath}$ to the column (sorting the result as necessary). A $N E$ contributes nothing. Note that a column of height $h$ might not be valid in terms of KN tableaux, but it comes from a $B(\omega_a) \subseteq \bigwedge^h B(\omega_1)$.

It is clear that this process is reversible. The proof that the result is a highest weight element is similar to the proof of Theorem~\ref{thm:Hankel_Catalan_det}.
\end{proof}

\begin{example}
Consider type $C_3$, and let $\Omega$ denote the bijection given in the proof of Theorem~\ref{thm:Hankel_Catalan_type_C}. Under $\Omega$, we have
\[
\begin{tikzpicture}[scale=0.5, baseline=68]
\dyckgrid{10}
\draw[very thick, blue, line join=round] (4,3) -- (5,3) -- (5, 5) -- (6,5) -- (6,6) -- (7,6) -- (6,6);
\draw[very thick, blue, line join=round] (5,2) -- (6,2) -- (6,3) -- (7,3) -- (7,5) -- (8,5) -- (8,5);
\draw[very thick, blue, line join=round] (6,1) -- (8,1) -- (8,3) -- (9,3) -- (9,4);
\draw[very thick,-] (0,0) -- (10,10);
\foreach \i in {1,2,3}
{
  \draw[very thick, red] (\i, \i) -- +(7-2*\i, 0);
  \draw[very thick, red] (10-\i, 10-\i) -- +(0, -7+2*\i);
}
\draw[thick,dashed] (3.5,3.5) -- +(3.5, -3.5);
\draw[thick,dashed] (6.5,6.5) -- +(3.5, -3.5);
\foreach \i in {1,2,3}
{
  \draw[fill=red, color=red] (\i, \i) circle (0.12);
  \draw (\i, \i) node[anchor=south east] {$s_{\i}$};
  \draw[fill=red, color=red] (10-\i, 10-\i) circle (0.12);
  \draw (10-\i, 10-\i) node[anchor=south east] {$t_{\i}$};
}
\foreach \i in {1,2,3}
{
  \draw[fill=purple, color=purple] (3+\i, 4-\i) circle (0.12);
  \draw[fill=purple, color=purple] (6+\i, 7-\i) circle (0.12);
}
\end{tikzpicture}
\longmapsto
\young(1,\one) \otimes \young(2,\otwo,\one) \otimes \young(1,2,3,\othree,\otwo) \ .
\]
\end{example}

We note that the proof of Theorem~\ref{thm:Hankel_Catalan_type_C} is related to the virtualization map $v \colon B(\fw_n) \to B(\fw_n)^{\otimes 2}$ (see, {\it e.g.},~\cite{K96,OSS03III,OSS03II,SchillingS15}) of $A_{2n}^{(2)}$ to $D_{n+1}^{(2)}$, where the scaling factors are $(\gamma_k)_{k=1}^n = (1, \dotsc, 1, 2)$. Indeed, using the wedge product tableaux from Remark~\ref{remark:KR_tableaux},
is given by the pairs in row $i$
\begin{align*}
++ & \longmapsto i, & -- & \longmapsto \overline{\imath}, \\
-+ & \longmapsto i\overline{\imath}, & +- & \longmapsto \emptyset,
\end{align*}
added to the column (and sorting as necessary).
Note that we also have to reverse the order of the tensor factors to connect it with our lattice path interpretation given in the proof. However, this could also be rectified by reversing the starting and terminal point labels.

Note that we can extend Theorem~\ref{thm:Hankel_Catalan_type_C} to compute the multiplicity of $B(\lambda)$ inside of $B$ in parallel to how we obtained Theorem~\ref{thm:triangular_Catalan_det} from Theorem~\ref{thm:Hankel_Catalan_det}.

\begin{theorem}
\label{thm:triangular_Catalan_det_type_C}
For type $C_n$, let $\lambda = \sum_{i \in I} c_i \epsilon_i \in P^+$.
The multiplicity of $B(\lambda)$ inside of $B = \left( \bigwedge B(\omega_1) \right)^{\otimes m}$ in type $C_n$ is the determinant of the $n \times n$ matrix
\[
H^C_{(n,\lambda)} = \left[ \Cat_{(a(i,j),b(i,j))} \right]_{i,j=0}^{n-1},
\]
where
\begin{align*}
a(i,j) & = 2n - i - j - 1 + m + c_j,
\\ b(i,j) & = j - i + m - c_j.
\end{align*}
\end{theorem}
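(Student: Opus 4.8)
The plan is to mirror the derivation of Theorem~\ref{thm:triangular_Catalan_det} from Theorem~\ref{thm:Hankel_Catalan_det}, adapting it to the type $C_n$ exterior-algebra setting established in Theorem~\ref{thm:Hankel_Catalan_type_C}. The Hankel case records the multiplicity of $B(0)$ as the determinant $\det[\Cat_{m+1+i+j}]_{i,j=0}^{n-1}$, which is exactly the determinant of $H^C_{(n,\lambda)}$ when $\lambda=0$, since then $a(i,j)+1 = 2n-i-j+m$ and $b(i,j)=j-i+m$, so that $\Cat_{(a(i,j),b(i,j))}$ reduces to a plain Catalan number along the antidiagonal (recall $\Cat_{(N,N)}=\Cat_{(N,N-1)}=\Cat_N$). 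So the first step is to fix the starting and terminal points. I would keep the same initial points $s_i$ and shift the terminal points $t_j$ by $c_j$ so that a path $P^{(i)}\colon s_i \to t_j$ records $c_j$ in the $\epsilon_j$-weight; concretely the endpoint separation along the antidiagonal changes from the symmetric Hankel configuration to one encoding $\lambda = \sum_i c_i\epsilon_i$.

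Second, I would count the paths. The edge-count interpretation from the proof of Theorem~\ref{thm:Hankel_Catalan_type_C} tells us that a path from $s_i$ to $t_j$ uses $b(i,j)$ North steps and the complementary East steps, weighted as a Catalan triangle number $\Cat_{(a(i,j),b(i,j))}$ with $a(i,j)$ the East-count in the non-forced region; so $e(s_i,t_j)=\Cat_{(a(i,j),b(i,j))}$ follows directly from identifying $\Dyck_{(a,b)}$ with partial Dyck words having $b$ North and $a$ East steps. Then the LGV lemma (Lemma~\ref{lemma:LGV}) gives $\det H^C_{(n,\lambda)} = \sum_{\nilp}\sgn(\nilp)\prod_i w(P^{(i)})$, and because the terminal points stay ordered along the antidiagonal, only the identity permutation contributes non-intersecting families, so the sign is $+1$ throughout and the determinant counts non-intersecting families with $P^{(i)}\colon s_i\to t_i$.

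Third, I would upgrade this count of families to a count of highest-weight elements of $B=(\bigwedge B(\omega_1))^{\otimes m}$ of weight $\lambda$. Here the bijection $\Omega$ of Theorem~\ref{thm:Hankel_Catalan_type_C} carries over verbatim: each non-intersecting family maps to a tensor of wedge-product columns via the $(2k,2k+1)$-step reading rule ($EN\mapsto i\overline{\imath}$, $EE\mapsto i$, $NN\mapsto \overline{\imath}$, $NE\mapsto\emptyset$ in row $i$). The key point is the translation between ``highest weight of weight $\lambda$'' and the endpoint geometry: as in the proof of Theorem~\ref{thm:Hankel_Catalan_det}, the super-antidiagonal position of the $a$-th step of path $i$ equals $i$ plus the $\epsilon_i$-coefficient of the truncated tensor weight, so $P^{(i)}$ and $P^{(i+1)}$ avoid each other exactly when $c_i\ge c_{i+1}$ at every prefix, i.e.\ the prefix weights are dominant, which is precisely the highest-weight condition; and the final endpoint heights recording $c_1\ge\cdots\ge c_n$ pin the overall weight to $\lambda$.

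The main obstacle I expect is bookkeeping the endpoint shifts so that the entries come out as $a(i,j)=2n-i-j-1+m+c_j$ and $b(i,j)=j-i+m-c_j$ rather than some off-by-one variant — in particular getting the forced-step truncation consistent with the $\lambda=0$ specialization to Theorem~\ref{thm:Hankel_Catalan_type_C} and verifying that shifting $t_j$ by $c_j$ adds $c_j$ to the East-count (hence to $a$) while subtracting $c_j$ from the North-count (hence from $b$), keeping $a(i,j)+b(i,j)=2n-i-j-1+2m$ path-length-consistent. Once the indices are matched, the argument is a routine transcription of the two earlier proofs, and the reversibility of $\Omega$ together with the uniqueness of the identity permutation closes the bijection.
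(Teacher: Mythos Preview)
Your proposal is correct and matches the paper's approach exactly: the paper's proof is simply the one-line remark that Theorem~\ref{thm:Hankel_Catalan_type_C} extends to general $\lambda$ ``in parallel to how we obtained Theorem~\ref{thm:triangular_Catalan_det} from Theorem~\ref{thm:Hankel_Catalan_det},'' which is precisely the shift-the-terminal-points-by-$c_j$ argument you outline. Your consistency check with the $\lambda=0$ case is slightly misstated (the individual entries $\Cat_{(a(i,j),b(i,j))}$ do not literally collapse to plain Catalan numbers; rather, the two determinants agree because both count the same non-intersecting families via the path-extension of Equation~\eqref{eq:extending_det}), but this does not affect the argument.
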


\begin{example}
We want to consider the multiplicity of $B(\fw_1+\fw_3)$ in $B^{\otimes 4}$ in type $C_3$. Note that $\fw_1 + \fw_3 = 2\epsilon_1 + \epsilon_2 + \epsilon_3$. One such non-intersecting lattice path and its image under the bijection of Theorem~\ref{thm:triangular_Catalan_det_type_C} is
\[
\begin{tikzpicture}[scale=0.5, baseline=82]
\dyckgrid{12}
\draw[very thick, blue, line join=round] (4,3) -- (6,3) -- (6,5) -- (7,5) -- (7,6) -- (9,6);
\draw[very thick, blue, line join=round] (5,2) -- (7,2) -- (7,3) -- (10,3) -- (10,5);
\draw[very thick, blue, line join=round] (6,1) -- (8,1) -- (8,2) -- (11,2) -- (11,3) -- (12,3);
\draw[very thick,-] (0,0) -- (12,12);
\draw[thick,dashed] (3.5,3.5) -- +(3.5, -3.5);
\draw[thick,dashed] (7.5,7.5) -- +(4.5, -4.5);
\foreach \i in {0,1,2}
{
  \draw[color=red,thick] (3-\i,3-\i) -- (3+\i+1,3-\i);
  \draw[fill=purple, color=purple] (4+\i, 3-\i) circle (0.12);
}
\foreach \i in {1,2,3}
{
  \draw[fill=red, color=red] (\i, \i) circle (0.12);
  \draw (\i, \i) node[anchor=south east] {$s_{\i}$};
}
\foreach \i in {2,3,5} { \draw[fill=red, color=red] (7+\i, 8-\i) circle (0.12); }
\draw (8+1, 8-2) node[anchor=south west] {$t_3$};
\draw (8+2, 8-3) node[anchor=south west] {$t_2$};
\draw (8+4, 8-5) node[anchor=south west] {$t_1$};
\end{tikzpicture}
\!\!\longmapsto
\young(3,\otwo) \otimes \young(1,2,3,\othree) \otimes \young(1,\othree,\one) \otimes \young(1,2,3)
\]
\end{example}

\begin{corollary}
We have that for any partition $\lambda$, the multiplicity of $B(\lambda)$ in $B^m$ in type $C_n$ is equal to $\dim V(\overline{\lambda}')$ in type $C_m$, where $\overline{\lambda}$ is the complement partition in an $\ell \times n$ box and $\mu'$ is the conjugate partition of $\mu$.
\end{corollary}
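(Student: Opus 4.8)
The plan is to mirror the argument already used for the type-$B_n$ corollary: express both sides as determinants of Catalan triangle numbers and check that, after swapping the roles of the ambient rank $n$ and the tensor power $m$ and passing to the complement-conjugate partition, the two matrices have identical entries. Fix a partition $\lambda$ with at most $n$ parts. Since $B = \left(\bigwedge B(\omega_1)\right)^{\otimes m}$ is built from $m$ single-column factors, any $B(\lambda)$ occurring in $B^m$ has $\lambda_1 \leq m$; hence $\lambda$ fits inside the $n$-row, $m$-column box and its complement $\overline{\lambda}$, with $\overline{\lambda}_i = m - \lambda_{n+1-i}$, is well-defined (if $\lambda_1 > m$ both sides vanish, so there is nothing to prove). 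Set $\nu = \overline{\lambda}'$, the asserted type-$C_m$ weight.

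First I would record the two determinant expressions. By Theorem~\ref{thm:triangular_Catalan_det_type_C}, the multiplicity of $B(\lambda)$ in $B^m$ equals $\det\bigl[\Cat_{(a(i,j),b(i,j))}\bigr]_{i,j=0}^{n-1}$ with $a(i,j)=2n-i-j-1+m+c_j$ and $b(i,j)=j-i+m-c_j$, where the $c_j$ are the parts of $\lambda$. For the right-hand side I would apply Theorem~\ref{thm:general_det_formula_Cn} to $\nu$ in type $C_m$. The crucial observation is that the conjugate of $\nu$ is $\nu' = (\overline{\lambda}')' = \overline{\lambda}$, which (after padding with trailing zero parts, which does not change the value of the Jacobi--Trudi-type determinant) has exactly $n$ parts, since it is the complement inside the $n$-row box. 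Consequently the determinant of Theorem~\ref{thm:general_det_formula_Cn} for $\dim V(\nu)$ also has size $n$, so the two matrices have the same shape and can be compared entrywise.

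The key step is then the entry comparison. Substituting rank $m$, size $\ell = n$, and conjugate parts $\nu'_{n+1-j} = \overline{\lambda}_{n+1-j} = m - \lambda_j$ into the formulas of Theorem~\ref{thm:general_det_formula_Cn}, I obtain
\begin{align*}
a(i,j) &= 2n - i - j + 2m + 1 - \overline{\lambda}_{n+1-j} = 2n - i - j + m + 1 + \lambda_j,
\\ b(i,j) &= j - i + \overline{\lambda}_{n+1-j} = j - i + m - \lambda_j.
\end{align*}
After reconciling the $0$-indexing of Theorem~\ref{thm:triangular_Catalan_det_type_C} with the $1$-indexing of Theorem~\ref{thm:general_det_formula_Cn} (shifting $i,j$ by one and identifying $c_j$ with the corresponding part $\lambda_j$), these are precisely the entries of the multiplicity matrix. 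Since the two $n \times n$ matrices coincide entry by entry, so do their determinants, which proves the claim; the $\lambda = \emptyset$ case recovers the consistency with Corollary~\ref{cor: Cn determinatal r omega_n}, where $\nu = n\fw_m$.

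The main obstacle is purely the combinatorial bookkeeping of the complement-and-conjugate operation: one must verify that $\nu' = \overline{\lambda}$ (so that Theorem~\ref{thm:general_det_formula_Cn}, which is phrased through the conjugate partition, applies directly), that this forces the determinant size to be $n$, and that the single identity $\overline{\lambda}_{n+1-j} = m - \lambda_j$ is exactly what converts one family of entries into the other. Conceptually, this matching is the algebraic shadow of a reflection symmetry of the Catalan graph: complementing rotates the family of non-intersecting lattice paths produced by the LGV lemma, while conjugating interchanges the $N$- and $E$-steps, and together they swap the roles of $n$ and $m$; making this symmetry of non-intersecting path families explicit would yield an alternative, purely combinatorial proof.
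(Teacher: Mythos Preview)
Your proof is correct and follows essentially the same route as the paper: both arguments express the multiplicity via Theorem~\ref{thm:triangular_Catalan_det_type_C} and the dimension via Theorem~\ref{thm:general_det_formula_Cn} specialized at $x_i=1$, and then match the two $n\times n$ determinants entry by entry under the substitution $n\leftrightarrow\ell$, $m\leftrightarrow n$, $c_j\leftrightarrow\overline{\lambda}_j$. Your write-up is in fact more explicit than the paper's, carrying out the index shift and the identity $\overline{\lambda}_{n+1-j}=m-\lambda_j$ in full, and your closing remark about the reflection symmetry of the Catalan graph is a nice conceptual gloss on the same bookkeeping.
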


\begin{proof}
The claim follows by comparing Theorem~\ref{thm:general_det_formula_Cn} with the specialization $x_i = 1$ and Theorem~\ref{thm:triangular_Catalan_det_type_C} using
\begin{center}
\begin{tabular}{c|ccc}
\hline
Theorem~\ref{thm:triangular_Catalan_det_type_C} & $n$ & $m$ & $c_j$
\\\hline
Theorem~\ref{thm:general_det_formula_Cn} & $\ell$ & $n$ & $\overline{\lambda}_j$
\\\hline
\end{tabular}
\end{center}
Recall also that in Theorem~\ref{thm:general_det_formula_Cn}, we take the conjugate of the partition $(\lambda_1, \dotsc, \lambda_{\ell})$.
\end{proof}

\section{Representation theoretic interpretations of combinatorial identities}
\label{sec:repr_identities}

\subsection{Type $C$ characters, $q$-binomials, and $q,t$-Catalan numbers}

We show that the principal specialization, when scaled by a power of $q$ to be polynomial, results in the Mahonian $q$-Catalan numbers.

By taking the (normalized) principal specialization, we can define $\wt_q(D) = \prod_N q^{w(N;D)}$, where $q^{\overline{\imath}} = q^{-i}$. Note that we can extend $w$ to a statistic on the entire $m \times n$ grid by considering $\overline{k} = -k$. In particular, the lower-left vertical step will be $-0$.

\begin{example}
\label{ex:specialization_statistic}
Consider the Dyck word $P = EENENEENEEN$. Then $\wt(P) = -1 - 2 + 4 - 5 = -4$.
The weighting $w$ on a full $7 \times 4$ grid, along with the particular Dyck word $P$ as a lattice path, is given by Figure~\ref{fig:path_statistic}.
\end{example}

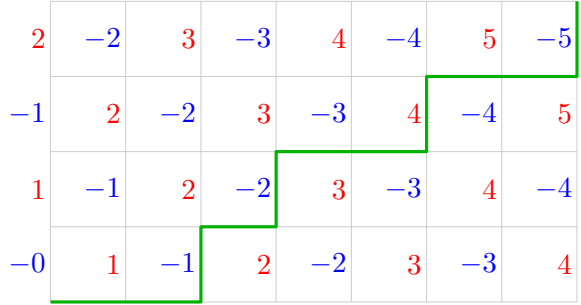
\begin{figure}
\begin{tikzpicture}[scale=1]
\draw[gray!40,very thin] (0,0) grid (7,4);
\draw (0,0.5) node[left=-2pt,blue] {$-0$};
\foreach \i in {0,...,2} \draw (2-\i,\i+0.5) node[left=-2pt,blue] {$-1$};
\foreach \i in {0,...,3} \draw (4-\i,\i+0.5) node[left=-2pt,blue] {$-2$};
\foreach \i in {0,...,3} \draw (6-\i,\i+0.5) node[left=-2pt,blue] {$-3$};
\foreach \i in {0,...,2} \draw (7-\i,\i+1.5) node[left=-2pt,blue] {$-4$};
\draw (7,3.5) node[left=-2pt,blue] {$-5$};
\foreach \i in {0,1} \draw (1-\i,\i+0.5) node[left=-2pt,red] {$1$};
\foreach \i in {0,...,3} \draw (3-\i,\i+0.5) node[left=-2pt,red] {$2$};
\foreach \i in {0,...,3} \draw (5-\i,\i+0.5) node[left=-2pt,red] {$3$};
\foreach \i in {0,...,3} \draw (7-\i,\i+0.5) node[left=-2pt,red] {$4$};
\foreach \i in {0,1} \draw (7-\i,\i+2.5) node[left=-2pt,red] {$5$};
\draw[-, very thick, black!30!green, line join=round] (0,0) -- (2,0) -- (2,1) -- (3,1) -- (3,2) -- (5,2) -- (5,3) -- (7,3) -- (7,4);
\end{tikzpicture}
\caption{An example of the weighting $w$ in a $7 \times 4$ grid and the lattice path from Example~\ref{ex:specialization_statistic}.}
\label{fig:path_statistic}
\end{figure}

\begin{theorem}
\label{thm:q_catalan_paths}
For $\g$ of type $C_{n-1}$, we have
\[
\nps(\fw_{n-1}) = q^{\binom{n}{2}} \ps(\fw_{n-1}) = \Cat_n(q).
\]
\end{theorem}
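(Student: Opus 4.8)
The plan is to evaluate $\ps(\fw_{n-1})$ directly from the Weyl-character-formula expression of Proposition~\ref{prop:Cn_ps}, specialized to type $C_{n-1}$ (so with the ``$n$'' there replaced by $m \seteq n-1$), and to show the resulting rational function is exactly $q^{-\binom{n}{2}}\Cat_n(q)$. Under the identification of Section~\ref{sec:crystals}, the weight $\fw_{n-1}$ in type $C_{n-1}$ is the single column $\lambda = (1^{n-1})$, so $\lvert\lambda\rvert = n-1$ and $n(\lambda) = \binom{n-1}{2}$. A short computation gives $n(\lambda) - m\lvert\lambda\rvert = \binom{n-1}{2} - (n-1)^2 = -\binom{n}{2}$, so the scalar prefactor in Proposition~\ref{prop:Cn_ps} is already $q^{-\binom{n}{2}}$. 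It therefore suffices to show that the two remaining products multiply to $\Cat_n(q)$.

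First I would simplify the single product. Since all $\lambda_i = 1$, the factor $\frac{1-q^{2(\lambda_i+m-i+1)}}{1-q^{2(m-i+1)}}$ telescopes to $\frac{1-q^{2n}}{1-q^2}$. For the double product, the hypothesis $\lambda_i = \lambda_j$ kills the first factor (it equals $\frac{1-q^{j-i}}{1-q^{j-i}} = 1$), leaving $\prod_{1\le i<j\le n-1}\frac{1-q^{2n-i-j+2}}{1-q^{2n-i-j}}$. Applying the involution $(i,j)\mapsto(n-j,\,n-i)$ on index pairs turns this into $\prod_{1\le i<j\le n-1}\frac{1-q^{i+j+2}}{1-q^{i+j}}$. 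Writing this as a ratio of products over pairs and cancelling the factors indexed by $2\le i<j\le n-1$ (which occur in both numerator and denominator) leaves only the pairs with $j=n$ upstairs and $i=1$ downstairs, giving $\frac{\prod_{\ell=n+2}^{2n-1}(1-q^\ell)}{\prod_{\ell=3}^{n}(1-q^\ell)}$. Multiplying by the single product absorbs $1-q^{2n}$ into the numerator and $1-q^2$ into the denominator, yielding $\frac{\prod_{\ell=n+2}^{2n}(1-q^\ell)}{\prod_{\ell=2}^{n}(1-q^\ell)}$. Converting $\Cat_n(q) = [2n]!_q/\bigl([n]!_q\,[n+1]!_q\bigr)$ into $(q;q)$-products shows it equals this same expression, which closes the main identity $q^{\binom{n}{2}}\ps(\fw_{n-1}) = \Cat_n(q)$.

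For the remaining equality $\nps(\fw_{n-1}) = q^{\binom{n}{2}}\ps(\fw_{n-1})$ it is enough to note that $\Cat_n(q)$ is a polynomial with nonzero constant term (indeed $\Cat_n(0)=1$), so the lowest power of $q$ appearing in $\ps(\fw_{n-1}) = q^{-\binom{n}{2}}\Cat_n(q)$ is $-\binom{n}{2}$; hence $\eta_{\fw_{n-1}} = -\binom{n}{2}$ and the normalizing factor is exactly $q^{\binom{n}{2}}$. The one genuinely nonroutine step is the cancellation collapsing the double product: the reindexing $(i,j)\mapsto(n-j,n-i)$ is what makes the telescoping transparent, and I would verify it against the cases $n=2,3$ (where one recovers $1+q^2$ and $1+q^2+q^3+q^4+q^6$) before trusting it in general.
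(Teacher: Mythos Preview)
Your proof is correct. The telescoping of the single product, the reindexing $(i,j)\mapsto(n-j,n-i)$ of the double product, and the cancellation leaving $\prod_{\ell=n+2}^{2n}(1-q^\ell)\big/\prod_{\ell=2}^{n}(1-q^\ell)$ all check out, and this expression is indeed $\Cat_n(q)$ once one cancels the factor $1-q$ coming from $[1]_q$ in the denominator of $[2n]!_q/([n]!_q[n+1]!_q)$.

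Your route, however, is quite different from the paper's. You compute $\ps(\fw_{n-1})$ directly from the closed form in Proposition~\ref{prop:Cn_ps} and simplify the resulting product. The paper instead proves the stronger $q,t$-statement (Theorem~\ref{thm:qt_polynomial}) by constructing a bijection $\Upsilon$ on Dyck paths that matches the positive/negative parts of the weighting $w$ with Stump's statistics $\maj_N$ and $\binom{n}{2}-\maj_E$, and then specializes $t=q^{-1}$ via Proposition~\ref{prop:specialized_stump}. Your argument is shorter and entirely algebraic, requiring nothing beyond the Weyl character formula; what it gives up is the bijective content and the $q,t$-refinement, which the paper needs anyway for its definition of $q,t$-Catalan triangle numbers and the symmetry $\Cat_{(n,k)}(q,t)=\Cat_{(n,k)}(t,q)$.
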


In order to show this, we separate the statistic $w(D)$ into a positive part and negative part. Thus, we now consider a $q,t$-analog by
\[
\sum_{D \in \Dyck_n} q^{w_+(D)} t^{w_-(D)},
\]
where $w_{\pm}(D)$ is the sum of the $\pm$ contributions to $w(D)$.

\begin{theorem}
\label{thm:qt_polynomial}
We have
\[
\Cat_n(q,t) = \sum_{D \in \Dyck_n} q^{w_+(D)} t^{w_-(D)}.
\]
\end{theorem}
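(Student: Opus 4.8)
The plan is to prove the identity by exhibiting a joint equidistribution: I would construct a statistic-preserving correspondence on $\Dyck_n$ carrying the pair $(w_+, w_-)$ to Stump's pair $\bigl(\maj_N, \binom{n}{2} - \maj_E\bigr)$. Before doing so, it is worth noting what is \emph{not} enough. Setting $t = q^{-1}$ and multiplying by $q^{\binom{n}{2}}$ turns the right-hand side into $\Cat_n(q)$ by Proposition~\ref{prop:specialized_stump}, while the left-hand side becomes $q^{\binom{n}{2}} \sum_D q^{w_+(D)-w_-(D)} = q^{\binom{n}{2}}\ps(\fw_{n-1}) = \nps(\fw_{n-1})$, which is $\Cat_n(q)$ by Theorem~\ref{thm:q_catalan_paths}. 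Thus the two generating functions already agree on the line $t = q^{-1}$, and the entire content of the theorem lies off that line; any valid proof must therefore track the exponents of $q$ and of $t$ separately rather than only their difference.

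First I would record the explicit shape of the weighting. For a Dyck word $D = D_1 \cdots D_{2n}$, a North step at word-position $p$ has $N_X + N_Y = p-1$, so its weight has magnitude $\lfloor p/2 \rfloor$ and is positive precisely when $p$ is even; hence $w_+(D)$ and $w_-(D)$ are governed entirely by the parities and heights of the North-step positions (with the forced last North step dropped as in Remark~\ref{rem:removed_fixed}). The natural bookkeeping device is to process $D$ in consecutive two-step blocks $(D_{2j-1}, D_{2j})$, into which the parity-determined signs organize cleanly: the first letter of a block sits at an odd position and feeds $w_-$, the second at an even position and feeds $w_+$. The task is then to reorganize this block data into the descent-based quantities, where the descent set $\Des(D)$ consists exactly of the valleys ($D_i = N$, $D_{i+1} = E$), $\maj_N$ sums the $N$-ranks at valleys, and $\maj_E$ sums the complementary $E$-counts.

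The key step is constructing a bijection $\Theta \colon \Dyck_n \to \Dyck_n$ together with a local lemma matching, block by block, the signed weight contribution of each North step in $D$ with the valley contribution to $\bigl(\maj_N, \binom{n}{2}-\maj_E\bigr)$ in $\Theta(D)$, so that positive weights become $N$-ranks at descents and the magnitudes of negative weights assemble into $\binom{n}{2}-\maj_E$ read as a co-major index. The main obstacle is matching \emph{both} exponents simultaneously: the signs attached to North steps are global data (absolute word-position parity), whereas $\maj_N$ and $\maj_E$ are sums over the local valley set, so the reconciliation is delicate and hinges on pinning down the exact conventions for $w_\pm$ (in particular whether all $n$ North steps or only the first $n-1$ enter, and how the forced first-$E$ and last-$N$ steps produce the constant $\binom{n}{2}$ shift in the $t$-exponent). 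As a hedge against the difficulty of an explicit $\Theta$, I would also prepare a recursive route: decompose Dyck paths by first return to the diagonal and show that $\sum_D q^{w_+(D)} t^{w_-(D)}$ and $\Cat_n(q,t)$ obey the same recursion with matching initial data, using the block structure to control how the weights transform when a path is split.
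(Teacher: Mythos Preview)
Your overall strategy---construct a bijection $\Theta$ on $\Dyck_n$ carrying $(w_+,w_-)$ to $(\maj_N,\binom{n}{2}-\maj_E)$---is exactly what the paper does, and your preliminary remark that the $t=q^{-1}$ specialization is already known and insufficient is correct. But the proposal stops short of actually building $\Theta$, and the two concrete mechanisms you sketch (a block-by-block local lemma, or a first-return recursion) are not how the paper proceeds and would be hard to make work: the bijection is genuinely global, and there is no evident first-return recursion for Stump's $\Cat_n(q,t)$.

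The missing structural observation is this: on any $D\in\Dyck_n$, the weights $w(N;D)$ of the first $n-1$ North steps are \emph{pairwise distinct} elements of $\{1,\overline{1},\dotsc,n-1,\overline{n-1}\}$ (this is exactly the content of the bijection $\Xi_n$ with single-column King tableaux, whose entries are strictly increasing). Hence one may define \emph{sets}
\[
X_+(D)=\{\,k : \text{some $N$ step has weight }k\,\},\qquad
X_-(D)=\{\,k : \text{some $N$ step has weight }\overline{k}\,\},
\]
with $\lvert X_+\rvert+\lvert X_-\rvert=n-1$ and $w_\pm(D)=\sum X_\pm(D)$ tautologically. The paper's bijection $\Upsilon$ then declares the valleys of $\Upsilon(D)$ to have horizontal coordinates $X_+(D)$ and vertical coordinates $\{1,\dotsc,n-1\}\setminus X_-(D)$ (each sorted increasingly and paired in order). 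Since $\maj_E$ and $\maj_N$ are literally the sums of the horizontal and vertical valley coordinates, one gets $\maj_E(\Upsilon(D))=\sum X_+(D)=w_+(D)$ and $\binom{n}{2}-\maj_N(\Upsilon(D))=\sum X_-(D)=w_-(D)$ immediately; the only thing left to check is well-definedness ($x_i\ge y_i$), which is a short induction. So the correspondence is not local in two-step blocks at all: the valley positions of $\Upsilon(D)$ are read off from the \emph{multiset-free} weight sets, and that distinctness is the crux you are missing.
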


\begin{proof}
Note that for any given path $D$, there can be at most one $N$ step that contributes a $k$ to $w_+(D)$ and similarly for a $\overline{k}$ to $w_-(D)$.
Thus, we can define sets
\begin{align*}
X_+(D) & \seteq \{ w_+(N, D) \mid \text{$N$ step of $D$ such that $N_X + N_Y$ is odd} \},
\\ X_-(D) & \seteq \{ \overline{w_-(N, D)} \mid \text{$N$ step of $D$ such that $N_X + N_Y$ is even} \},
\end{align*}
where the elements in $X_-(D)$ are unbarred letters (\textit{i.e.}, we consider $\overline{\overline{a}} = a$).
Note that we can recover $D$ knowing $X_{\pm}(D)$ since $X_{\pm}(D)$ uniquely determines a King tableaux.
Additionally note that $\lvert X_+(D) \rvert + \lvert X_-(D) \rvert = n-1$.

Recall that a Dyck path is determined by the positions of its valleys.
Thus, we define a map $\Upsilon \colon \Dyck_n \to \Dyck_n$ by having the valleys of the resulting Dyck path be given by $(x_i, y_i)$, where
\begin{align*}
\{x_1 < x_2 < \cdots < x_k\} & = X_+(D),
\\ \{y_1 < y_2 < \cdots < y_k \} & = \{1, \dotsc, n-1\} \setminus X_-(D).
\end{align*}
If $\Upsilon$ is well-defined, then it is clearly invertible.
In order to show $\Upsilon$ is well-defined, we require that $x_i \geq y_i$ for all $1 \leq i \leq k$.
By the definition of $w$ and the diagonal condition of a Dyck path, we can say a value $y_i$ was \defn{contributed} by a row $r$ where the $r$-th $N$ step is occurs to the right of an edge weighted by $\overline{y}_i$.
Note that the initial path of $D$ must be $E(EN)^{y_1-1}E^2$, which implies that $x_1 \geq y_1$.
It is a straightforward induction to see that any contributed $y_i$ must occur on the same row or below the $N$ step corresponding to $x_i$.
Thus, we have $x_i \geq y_i$ and $\Upsilon$ is well-defined.

Recall that $\maj_N(D)$ (resp.\ $\maj_E(D)$) is a sum over the vertical (resp.\ horizontal) positions of the peaks of $D$.
Therefore, we have $\maj_N\bigl(\eta(D) \bigr) = w_+(D)$ from Remark~\ref{rem:counting_NE}.
Additionally, we have $\binom{n}{2} - \maj_E\bigl(\eta(D) \bigr) = w_-(D)$ from Remark~\ref{rem:counting_NE} and that the sum of all possible values $\{1, \dotsc, n-1\}$ is $\binom{n}{2}$.
Hence, the claim follows.
\end{proof}

Theorem~\ref{thm:q_catalan_paths} is as an immediate consequence of Theorem~\ref{thm:qt_polynomial} and Proposition~\ref{prop:specialized_stump}.

\begin{example}
Suppose $n = 6$ and we have a path $D$ such that $X_+(D) = \{2, 5\}$ and $X_-(D) = \{1, 4, 5\}$. Then we construct $\Upsilon(D)$ as the Dyck path with the horizontal positions of the valleys being $2, 5$ with the corresponding vertical positions $2, 3$ coming from $[5] \setminus \{1, 4, 5\}$. Pictorially, we have
\[
\begin{tikzpicture}[scale=1,baseline=83pt]
\draw[gray!40,very thin] (0,0) grid (6,6);
\foreach \i in {0,...,2} \draw (2-\i,\i+0.5) node[right=-2pt,blue] {$\one$};
\foreach \i in {0,...,4} \draw (4-\i,\i+0.5) node[right=-2pt,blue] {$\otwo$};
\foreach \i in {0,...,5} \draw (6-\i,\i+0.5) node[right=-2pt,blue] {$\othree$};
\foreach \i in {0,...,3} \draw (6-\i,\i+2.5) node[right=-2pt,blue] {$\ofour$};
\foreach \i in {0,...,1} \draw (6-\i,\i+4.5) node[right=-2pt,blue] {$\overline{5}$};
\foreach \i in {0,1} \draw (1-\i,\i+0.5) node[right=-2pt,red] {$1$};
\foreach \i in {0,...,3} \draw (3-\i,\i+0.5) node[right=-2pt,red] {$2$};
\foreach \i in {0,...,5} \draw (5-\i,\i+0.5) node[right=-2pt,red] {$3$};
\foreach \i in {0,...,4} \draw (6-\i,\i+1.5) node[right=-2pt,red] {$4$};
\foreach \i in {0,...,2} \draw (6-\i,\i+3.5) node[right=-2pt,red] {$5$};
\draw[-, very thick, black!30!green,line join=round] (0,0) -- (2,0) -- (2,2) -- (6,2) -- (6,6);
\draw[-, very thick] (0,0) -- (6,6);
\end{tikzpicture}
\xrightarrow[\hspace{30pt}]{\Upsilon}
\begin{tikzpicture}[scale=1,baseline=83pt]
\draw[gray!40,very thin] (0,0) grid (6,6);
\draw[-, very thick, black!30!green, line join=round] (0,0) -- (2,0) -- (2,2) -- (5,2) -- (5,3) -- (6,3) -- (6,6);
\draw[-, very thick] (0,0) -- (6,6);
\fill[blue] (2,2) circle (0.08) node[anchor=south east, black] {(2,2)};
\fill[blue] (5,3) circle (0.08) node[anchor=south east, black] {(5,3)};
\end{tikzpicture}
\]
\end{example}

\begin{definition}
\label{def:qt_Catalan_triangle}
We define the \defn{$(n,k)$-th $(q,t)$-Catalan triangle number} as follows:
\[
\Cat_{(n,k)}(q,t) \seteq \sum_{D \in \Dyck_{n,k}} q^{w'_+(D)} t^{w'_-(D)}.
\]
\end{definition}

If we consider the Weyl group element that interchanges $i \leftrightarrow \overline{\imath}$ and use this to define a new weighting $\overline{w}'_{\pm}(D) = w'_{\mp}(D)$, then Remark~\ref{remark:weight_change} states that
\[
\Cat_{(n,k)}(q,t) = \sum_{D \in \Dyck_{n,k}} q^{\overline{w}'_+(D)} t^{\overline{w}'_-(D)} = \sum_{D \in \Dyck_{n,k}} q^{w'_-(D)} t^{w'_+(D)}.
\]
In other words, we have $\Cat_{(n,k)}(q,t) = \Cat_{(n,k)}(t,q)$; in particular, $\Cat_n(q,t) = \Cat_n(t,q)$.

\begin{remark}
We obtain the horizontal step approach by replacing each monomial $q^{\alpha} t^{\beta}$ by $q^{B-\alpha} t^{B-\beta}$, where $B = \binom{n}{2}$.
Furthermore, using the horizontal steps at $t = q^{-1}$, we obtain $\ps(\fw_{n-1})$ of type $C_{n-1}$.
\end{remark}

Our specialization of the character can be extended to general statistic on lattice paths in an $n \times m$ grid in a natural way, with the upper right horizontal step having weight $q^{-0}$. We say $-0$ because of the natural alternation of the sign as we go down or to the left. Let $R_{n,m}$ denote all paths given by $N$ and $E$ steps in an $n \times m$ rectangle, which we will consider as words in $\{N, E\}$ such that there are $n$ $N$'s and $m$ $E$'s appearing in the word. By slight abuse of notation, we also denote our extended statistic by $w' \colon R_{n,m} \to \Z$. Therefore, define
\[
w'_{n,m}(e) \seteq \begin{cases}
-(n - x + m - y - 1) / 2 & \text{if $x + y$ odd}, \\
 (n - x + m - y) / 2 & \text{if $x + y$ even},
\end{cases}
\]
where $e$ is an $E$ step beginning at $(x, y)$, and for $P \in R_{n,m}$, define
\begin{equation}
\label{eq:path_statistic}
w'(P) \seteq \sum_{e} w'_{n,m}(e)
\end{equation}
where the sum is over all $e \in P$ such that $e$ is an $E$ step. We call $w'_{n,m}(e)$ and $w'(P)$ the \defn{weight} of $e$ and $P$ respectively. We define the generating function
\begin{equation}
\label{eq:binomal_statistic}
\B_{n,m}(q) \seteq \sum_{P \in R_{n,m}} q^{w'(P)}.
\end{equation}

Before we can prove that this is a $q$-binomial coefficient up to a power of $q$, we first determine the valuation of $\B_{n,m}(q)$, the minimal power of $q$ occurring in $\B_{n,m}(q)$.

\begin{proposition}
The valuation of $\B_{n,m}(q)$ is
\[
v_{n,m} \seteq -\sum_{k=D_{n,m}}^{D_{n,m}+m-1} k,
\]
where $D_{n,m} \seteq \lfloor (n-m+1) / 2 \rfloor$.
\end{proposition}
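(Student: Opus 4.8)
The plan is to reduce the minimization of $w'(P)$ over all of $R_{n,m}$ to a one-line arithmetic problem, by exploiting the fact that the weight $w'_{n,m}(e)$ of an individual $E$-step $e$ depends only on the \emph{position} of $e$ within the word $P$, and not on the rest of $P$.

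First I would record the following observation. If $e$ is the $k$-th letter of $P$ (read left to right), then its initial vertex $(x,y)$ satisfies $x+y = k-1$, because every step---whether $N$ or $E$---increases $x+y$ by exactly one, starting from $(0,0)$. Hence both the quantity $n-x+m-y = (n+m)-(x+y) = (n+m)-(k-1)$ and the parity of $x+y$ are functions of $k$ alone. Substituting into the definition of $w'_{n,m}$, an $E$-step in position $k$ always carries the weight
\[
f(k) = \begin{cases} \tfrac12\bigl(n+m-k+1\bigr) & k \text{ odd}, \\[2pt] -\tfrac12\bigl(n+m-k\bigr) & k \text{ even}. \end{cases}
\]
Therefore $w'(P) = \sum_{k \in S} f(k)$, where $S \subseteq \{1,\dots,n+m\}$ is the set of positions occupied by $E$-steps. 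Since $R_{n,m}$ consists of \emph{all} words with $n$ letters $N$ and $m$ letters $E$ (there is no diagonal constraint), $S$ ranges over \emph{every} $m$-element subset of $\{1,\dots,n+m\}$.

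Consequently the valuation $v_{n,m} = \min_{P} w'(P)$ is simply the sum of the $m$ smallest values in the multiset $\{f(1),\dots,f(n+m)\}$. The key computation is to sort this multiset: the odd positions contribute the values $1,2,\dots$ while the even positions contribute $0,-1,-2,\dots$, so $\{f(k)\}$ is a single block of $n+m$ consecutive (half-)integers. Its bottom $m$ entries form the consecutive block $\{\,j - \tfrac{n+m}{2} : 1 \le j \le m\,\}$, realized concretely by the path $(NE)^{m} N^{\,n-m}$ when $n \ge m$. Summing these entries and re-indexing by $k = \tfrac{n+m}{2} - j$ (so that $j = 1,\dots,m$ becomes $k = D_{n,m}+m-1,\dots,D_{n,m}$) turns $v_{n,m}$ into $-\sum_{k=D_{n,m}}^{D_{n,m}+m-1} k$, with lower limit $\tfrac{n+m}{2}-m = \tfrac{n-m}{2} = \lfloor (n-m+1)/2 \rfloor$. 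This is exactly the asserted formula.

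I expect the only delicate point to be the bookkeeping rather than any real difficulty: one must check that the $m$ extremal weights genuinely form the consecutive bottom block (and are not split awkwardly between the odd- and even-position families), and one must track the parity of $n+m$ when identifying $\tfrac{n-m}{2}$ with the floor $D_{n,m}$---this parity count is precisely where the $\lfloor\,\cdot\,\rfloor$ in the statement originates. Everything else is a finite, routine summation.
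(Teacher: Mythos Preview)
Your approach is correct and genuinely different from the paper's. The paper argues by induction on $m$: it fixes a minimizing path for the smaller grid, prepends $E$ or $NE$ to produce a candidate in $R_{n,m+1}$, computes the new weight, and then rules out all competitors $N^aE\overline{P}$ with larger $a$. Your argument bypasses this case analysis entirely. Since $w'_{n,m}(e)$ depends only on the position $k$ of $e$ in the word and $R_{n,m}$ carries no diagonal constraint, minimizing $w'(P)$ is exactly the problem of choosing the $m$ smallest entries of $\{f(1),\dots,f(n+m)\}$; once you check this set is a run of $n+m$ consecutive integers, the minimum and a witnessing path drop out at once. This is both shorter and more transparent than the induction.

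One point to watch: you compute $f(k)$ using the case split on the parity of $x+y$ exactly as written in the displayed definition of $w'_{n,m}$. For $n+m$ odd this produces \emph{half}-integer weights, and then your final identification $\tfrac{n-m}{2}=\lfloor(n-m+1)/2\rfloor$ is off by $\tfrac12$. The paper's own proof of this proposition (see the stated value of $w'(e)$ for the lower-left edge when $n+m$ is odd) and the normalization that the upper-right edge carries weight $-0$ make clear that the intended split is on the parity of $n-x+m-y$; the two conditions agree when $n+m$ is even but swap when $n+m$ is odd. With that reading, $\{f(k)\}$ is always a block of consecutive integers, and your re-indexing lands precisely on $D_{n,m}=\lfloor(n-m+1)/2\rfloor$ in both parities. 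Separately, your witness $(NE)^m N^{\,n-m}$ tacitly assumes $n\ge m$; for $n<m$ the subset argument still yields the minimum, just realized by a different path.
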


\begin{proof}
We proceed by induction on $m$. Note that the case of $n = 0$ is trivial as $v_{n,m} = 0$ and the only path is $P = N^n$, which has $w'(P) = 0$. Suppose the claim holds for $m$. Fix some $n$. If $n-m$ is even,\footnote{Recall that the parity of $n-m$ is the same as for $n+m$.} then choose a path $P \in R_{n,m}$ such that $w'(P) = v_{n,m}$. Note that $D_{n,m} = D_{n,m+1}$. Therefore, we have
\begin{align*}
w'(EP) & = \frac{-(n-0+(m+1)-0-1)}{2} + w'(P) = -\frac{n+m}{2} + v_{n,m}
\\ & = -\frac{n+m}{2} - \sum_{k=D_{n,m}}^{D_{n,m}+m-1} k = -\sum_{k=D_{n,m}}^{D_{n,m}+m} k = -\sum_{k=D_{n,m+1}}^{D_{n,m+1}+(m+1)-1} k = v_{n,m+1}.
\end{align*}
To show there does not exist a path $P' \in R_{n,m+1}$ such that $w'(P') < w'(EP)$, we first write $P' = N^a E \overline{P}$. If $a = 0$, then $P' = P$, and so we assume $a > 0$. We can also assume that $\overline{P} \in R_{n-k,m}$ is a path such that $w'(\overline{P}) = v_{n-a,m}$. We note that $D_{n-a,m} \leq D_{n,m}$, and so $w'(P) = v_{n-a,m} \geq v_{n,m}$. Let $e$ be the first $E$ step in $P'$. If $a = 1$, then $w'_{n,m+1}(e) \geq 0$, which implies $w'(P) = v_{n,m}$. If $a > 1$, then $\lvert w'_{n,m+1}(e) \rvert \leq D_{n,m} + m$, and so $w'(P') = w'_{n,m+1}(e) + v_{n-a,m} \geq v_{n,m}$.

If $n-m$ is odd, then choose a path $P \in R_{n-1,m}$ such that $w'(P) = v_{n-1,m}$. Thus, we have
\begin{align*}
w'(NEP) & = \frac{(n - 0 + (m+1) - 1)}{2} + w'(P) = \frac{n+m}{2} + v_{n-1,m}
\\ & = \frac{n+m}{2} - \sum_{k=D_{n-1,m}}^{D_{n-1,m}+m-1} k = \frac{n+m}{2} - \sum_{k=D_{n,m+1}+1}^{D_{n,m+1}+m} k
\\ & = -\sum_{k=D_{n,m+1}}^{D_{n,m+1}+(m+1)-1} k = v_{n,m+1}.
\end{align*}
The proof that there does not exist a path $P' \in R_{n,m+1}$ such that $w'(P') < w'(EP)$ is similar to when $n-m$ is even except if $P' = E\overline{P} \in R_{n,m+1}$. In that case, we have $w'_{n,m+1}(e) \geq 0$. Since $D_{n,m} = D_{n,m-1} - 1$, the claim follows.
\end{proof}


\begin{theorem}
\label{thm:q_binomial_paths}
We have
\[
q^{-v_{m,n}} \B_{n,m}(q) = \qbinom{n+m}{n}{q}.
\]
\end{theorem}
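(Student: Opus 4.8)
The plan is to exploit the fact that $R_{n,m}$ consists of \emph{all} words with $n$ letters $N$ and $m$ letters $E$, with no diagonal constraint, so that the statistic $w'$ factorizes completely over the positions of the $E$-steps. Writing $L \seteq n+m$, an $E$-step occurring as the $i$-th letter of a word (for $0 \le i \le L-1$) begins at a lattice point $(x,y)$ with $x+y=i$; hence both $n-x+m-y = L-i$ and the parity of $x+y=i$ depend only on $i$, not on the rest of the word. Thus $w'_{n,m}(e)$ equals a function $\omega(i)$ of the position $i$ alone, and $w'(P) = \sum_{i \in S}\omega(i)$, where $S \subseteq \{0,\dotsc,L-1\}$ is the set of positions of the $E$-steps of $P$. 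Summing $q^{w'(P)}$ over all $P$ — equivalently over all $m$-subsets $S$ — immediately gives
\[
\B_{n,m}(q) = \sum_{\substack{S \subseteq \{0,\dotsc,L-1\} \\ |S|=m}} \prod_{i \in S} q^{\omega(i)} = e_m\bigl( q^{\omega(0)}, q^{\omega(1)}, \dotsc, q^{\omega(L-1)} \bigr),
\]
the $m$-th elementary symmetric polynomial in the $L$ quantities $q^{\omega(i)}$.

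Next I would identify the multiset of exponents $\{\omega(i)\}_{i=0}^{L-1}$. From the definition of $w'_{n,m}$ one reads off $\omega(i) = (L-i)/2$ or $\omega(i) = -(L-i-1)/2$ according to parity, and a short case check on $i \bmod 2$ shows that as $i$ runs over $0,\dotsc,L-1$, the values $\omega(i)$ range over a block of $L$ \emph{consecutive} integers with minimum $a \seteq -\lfloor (L-1)/2 \rfloor$. Hence the arguments $\{q^{\omega(i)}\}$ are exactly $q^a, q^{a+1}, \dotsc, q^{a+L-1}$, a geometric progression.

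I would then invoke the finite $q$-binomial theorem in the form $e_m(1, q, \dotsc, q^{L-1}) = q^{\binom{m}{2}} \qbinom{L}{m}{q}$, obtained by extracting the coefficient of $x^m$ from $\prod_{i=0}^{L-1}(1+q^i x)$. Scaling all arguments by $q^a$ multiplies $e_m$ by $q^{am}$, so
\[
\B_{n,m}(q) = q^{am + \binom{m}{2}} \qbinom{L}{m}{q} = q^{am + \binom{m}{2}} \qbinom{n+m}{n}{q}.
\]
It remains only to check the bookkeeping $am + \binom{m}{2} = v_{n,m}$. Using the valuation proposition in the form $v_{n,m} = -\bigl( m D_{n,m} + \binom{m}{2} \bigr)$ with $D_{n,m} = \lfloor (n-m+1)/2 \rfloor$, this reduces to $a = -D_{n,m} - (m-1)$, i.e.\ to the elementary identity $\lfloor (n+m-1)/2 \rfloor = \lfloor (n-m+1)/2 \rfloor + (m-1)$, which holds since $m-1 \in \Z$. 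Dividing by $q^{v_{n,m}}$ then yields $q^{-v_{n,m}} \B_{n,m}(q) = \qbinom{n+m}{n}{q}$.

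The main obstacle is the second step: correctly determining that the exponent multiset $\{\omega(i)\}$ forms an unbroken block of consecutive integers despite the sign alternation built into $w'$, and then matching its starting exponent $a$ with the valuation $v_{n,m}$. One should also be careful with the parity of $n+m$ when reading the two cases of $w'_{n,m}$, choosing the case split (equivalently, the parity of $n-x+m-y=L-i$) that keeps the weights integral; with that reading the block-of-consecutive-integers description is uniform in the parity of $n+m$, and the power-matching goes through as above. An alternative, more in the spirit of the valuation proof, is to induct on $m$ via one of the $q$-Pascal identities in~\eqref{eq:q_pascal} by conditioning on the first step; there the obstacle shifts to tracking how the weights rescale when the rectangle grows, and this is exactly what the elementary-symmetric-polynomial computation sidesteps.
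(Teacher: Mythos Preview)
Your proof is correct and takes a genuinely different route from the paper's.

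The paper's first proof is by induction: it conditions on whether the first step of $P\in R_{n,m}$ is horizontal or vertical, splits into cases according to the parity of $n+m$, and carefully tracks how the normalization $v_{n,m}$ shifts in each case, matching the resulting recursion to one of the two $q$-Pascal identities~\eqref{eq:pascal1}/\eqref{eq:pascal2}. (The paper also gives a second, bijective, proof for $n+m$ odd via the KN tableau model for $B(\tfw_m)$ in type $B_{(n+m-1)/2}$.) Your argument is more direct: the single observation that $w'_{n,m}(e)$ depends only on the antidiagonal $x+y=i$ collapses $\B_{n,m}(q)$ to the elementary symmetric polynomial $e_m(q^{\omega(0)},\dotsc,q^{\omega(L-1)})$, and once $\{\omega(i)\}$ is identified as a block of $L$ consecutive integers, the classical identity $e_m(1,q,\dotsc,q^{L-1})=q^{\binom{m}{2}}\qbinom{L}{m}{q}$ finishes in one stroke. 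This bypasses all the parity case analysis and the delicate $v_{n,m}$ bookkeeping that make up most of the paper's argument; the only arithmetic left is the one-line check $\lfloor(n+m-1)/2\rfloor=\lfloor(n-m+1)/2\rfloor+(m-1)$, which you handle correctly.

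The parity caveat you flag is real: as written, the formula for $w'_{n,m}$ gives half-integers when $n+m$ is odd, and your reading (case on the parity of $L-i=n-x+m-y$, so that the numerator is always even) is the natural fix. With that reading one checks, as you say, that for $L$ even the exponents are $\{1-L/2,\dotsc,L/2\}$ and for $L$ odd they are $\{-(L-1)/2,\dotsc,(L-1)/2\}$, in both cases with minimum $a=-\lfloor(L-1)/2\rfloor$, and your valuation match $am+\binom{m}{2}=v_{n,m}$ goes through.
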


\begin{proof}
We first note that for the lower left horizontal edge $e$, we have
\[
w'(e) = \begin{cases}
(n+m)/2 & \text{if $n+m$ is even,} \\
-(n+m-1)/2 & \text{if $n+m$ is odd.}
\end{cases}
\]
We show that our summation fits into the $q$-Pascal's triangle relation~\ref{eq:q_pascal}, where which identity we will use depends on the parity of $n$.
Consider a path $P \in R_{n,m}$, and let $\overline{P}$ denote that path with the first step $e$ removed. We split this into two cases based upon the first step.
\begin{enumerate}
\item The first step is horizontal, so $\overline{P} \in R_{n,m-1}$. In this case, the net difference in weight of the paths is $w'(e) + (v_{n,m} - v_{n,m-1})$.
\begin{enumerate}
\item If $n + m$ is even, then $D_{n,m-1} = D_{n,m} + 1$ and
\[
-v_{n,m} = -v_{n,m-1} + D_{n,m}.
\]
So the net change in weight is
\[
\frac{n+m}{2} + D_{n,m} = \frac{n+m}{2} + \frac{n - m}{2} = n.
\]
where for the first equality, we could remove the floor because $n + m$ is even.
\item If $n + m$ is odd, then $D_{n,m-1} = D_{n,m}$ and
\[
-v_{n,m} = -v_{n,m-1} + D_{n,m} + m - 1.
\]
Therefore, the net change in weight is
\begin{align*}
-\frac{n + m - 1}{2} + D_{n,m} + m - 1 & =  -\frac{n + m - 1}{2} + \frac{n - m + 1}{2} + m - 1
\\ & = -m + 1 + m - 1 = 0,
\end{align*}
where the first equality follows from the fact $n + m$ is odd.
\end{enumerate}

\item The first step is vertical, so $\overline{P} \in R_{n-1,m}$. In this case the net difference in weight of the paths is $v_{n,m} - v_{n-1,m}$.
\begin{enumerate}
\item If $n + m$ is even, then $D_{n-1,m} = D_{n,m}$ and $v_{n,m} = v_{n-1,m}$. So the net change in weight is $0$.
\item If $n + m$ is odd, then $D_{n-1,m} = D_{n,m} - 1$ and
\[
-v_{n,m} = -v_{n-1,m} + (D_{n,m} + m - 1) - D_{n-1,m} = -v_{n-1,m} + m.
\]
So the net change in weight is $m$.
\end{enumerate}
\end{enumerate}
Therefore, if $n + m$ is even, we have a combinatorial interpretation of Equation~\eqref{eq:pascal1}, and otherwise we use Equation~\eqref{eq:pascal2}. The boundary cases are straightforward, and the claim follows by induction.
\end{proof}

We also give an alternative proof of Theorem~\ref{thm:q_binomial_paths} for the case when $n + m$ is odd. In this case, it can be considered as the principal specialization of a type $B_n$ character.

\begin{proof}[Second proof of Theorem~\ref{thm:q_binomial_paths} with $n + m$ is odd]
Assume $m + n$ is odd.
Let
\[
w(P; \xx) \seteq \prod_N \begin{cases}
x_{(N_X + N_Y + 1) / 2} & \text{if $N_X + N_Y$ odd}, \\
x^{-1}_{(N_X + N_Y) / 2} & \text{if $N_X + N_Y$ even},
\end{cases}
\]
where we take the product over all $N$ steps of $P$.
We have
\[
\nps(\tfw_i) = \qbinom{2n+1}{i}{q}
\]
by Proposition~\ref{prop:q_binomial_repr}.
So it is sufficient to find a bijection $\phi$ between the KN tableaux for $B(\tfw_m)$ in type $B_r$, where $r = (n+m-1) / 2$, and lattice paths in an $m \times n$ grid such that $w\bigl( \phi(T); \xx \bigr) = \wt(T)$.

Let $T \in B(\tfw_m)$, and define a new tableaux $T^F$ by removing pairs of $0$'s in $T$ and then performing the filling map $F_m$ given by~\cite[\S3.4]{S05}. Note that this gives a bijection with the set $F(\tfw_m)$ of semistandard tableaux in the alphabet $1 \prec 2 \prec \cdots \prec n \prec 0 \prec \on \prec \cdots \prec \otwo \prec \one$ with shape $1^m$.
Next, if we sort the alphabet by $0 \prec' 1 \prec' \one \prec' 2 \prec' \otwo \prec' \cdots \prec' n \prec' \on$ and the tableau $T^F \in F(\tfw_m)$ accordingly to $T^s$, we can construct the lattice path by having the $i$-th $N$ step correspond to the $i$-th row in $T^s$. This is clearly bijective, and define the result to be $\phi(T)$. Moreover, all steps are clearly weight preserving, and hence the claim follows.
\end{proof}

Note that we can construct a $q,t$-analog of a binomial coefficient by ending $\B_{n,m}(q)$ to $\B_{n,m}(q,t)$ analogously to the $q,t$-Catalan numbers in Theorem~\ref{thm:qt_polynomial}. Likewise, from our construction we have $\B_{n,m}(q,q^{-1}) = \B_{n,m}(q)$ .

\subsection{Touchard identity and its generalizations}

Recall the classical Touchard identity on Catalan numbers:
\[
\Cat_{n+1}=\sum_{0 \le k \le n/2} \binom{n}{2k} 2^{n-2k} \Cat_k
= \sum_{0 \le k \le n/2} \binom{n}{n-2k} 2^{n-2k} \Cat_k.
\]

Using~\cite[Thm.~7.3]{KLO17} and our observation, we can obtain the following
identity, which can be considered as triangular analogue of Touchard's identity:

\begin{theorem}
\label{thm:Touchard_identity}
In type $C_n$, we have
\[
\dim V(\fw_s) = \Cat_{(2n-s+1,s)} = \sum_{0 \le i \le s/2} \binom{n}{s-2i}2^{s-2i} \Cat_{(n-s+i,i)}.
\]
\end{theorem}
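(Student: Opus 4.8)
The first equality $\dim V(\fw_s) = \Cat_{(2n-s+1,s)}$ is exactly the observation recorded just after the definition of the Catalan triangle numbers (take $i=s$ there), so the plan is to concentrate on the second equality. I would first convert everything into ordinary binomial coefficients via $\Cat_{(n,k)} = \binom{n+k}{k}-\binom{n+k}{k-1}$. Since two applications of Pascal's rule give
\[
\Cat_{(2n-s+1,s)} = \binom{2n+1}{s}-\binom{2n+1}{s-1} = \binom{2n}{s}-\binom{2n}{s-2},
\]
the statement to prove reduces to the purely combinatorial identity that $\binom{2n}{s}-\binom{2n}{s-2}$ equals the claimed sum.

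The combinatorial heart is to double-count $\binom{2n}{s}$ as the number of $s$-element subsets of the signed alphabet $\{1,\dots,n,\overline{n},\dots,\overline{1}\}$ underlying the vector crystal $B(\fw_1)$, organized by the number $i$ of values $a\in\{1,\dots,n\}$ that occur together with $\overline{a}$ (the ``paired'' values). Choosing the $s-2i$ unpaired values together with their signs, then the $i$ paired values from the remaining slots, and using the Vandermonde-type rearrangement $\binom{n}{i}\binom{n-i}{s-2i}=\binom{n}{s-2i}\binom{n-s+2i}{i}$, I would obtain
\[
\binom{2n}{s} = \sum_{0\le i\le s/2} \binom{n}{s-2i}\,2^{s-2i}\,\binom{n-s+2i}{i}.
\]

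Applying the same count to $\binom{2n}{s-2}$ and reindexing $i\mapsto i-1$ yields $\binom{2n}{s-2} = \sum_{0\le i\le s/2} \binom{n}{s-2i}\,2^{s-2i}\,\binom{n-s+2i}{i-1}$, so the difference collapses termwise to
\[
\binom{2n}{s}-\binom{2n}{s-2} = \sum_{0\le i\le s/2}\binom{n}{s-2i}\,2^{s-2i}\left[\binom{n-s+2i}{i}-\binom{n-s+2i}{i-1}\right],
\]
and the bracket is precisely $\Cat_{(n-s+i,i)}$ by the definition of the Catalan triangle number. Representation-theoretically, the classification by paired values reflects the symplectic structure on $V(\fw_1)$: the factor $\binom{n}{s-2i}\,2^{s-2i}$ records the choice of unpaired vector-crystal letters, while $\Cat_{(n-s+i,i)}$ counts the \emph{admissible} paired core, which is exactly the content of~\cite[Thm.~7.3]{KLO17} combined with our dimension observation. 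The main obstacle I anticipate is not the enumeration but the index bookkeeping: one must line up the reindexing so that subtracting $\binom{2n}{s-2}$ implements the Kashiwara--Nakashima admissibility correction that turns the unrestricted pair-count $\binom{n-s+2i}{i}$ into the Catalan triangle number $\Cat_{(n-s+i,i)}$.
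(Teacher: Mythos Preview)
Your proof is correct, but it takes a genuinely different route from the paper's argument.

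The paper proves the second equality representation-theoretically: it invokes the fact (from~\cite[Thm.~7.3]{KLO17} or~\cite[Thm.~17.5]{FH91}) that the dominant weight multiplicity satisfies $\dim V(\fw_s)_{\fw_{s-2i}} = \Cat_{(n-s+i,i)}$, and then observes that the type $C_n$ Weyl group orbit of $\fw_{s-2i} = \epsilon_1 + \cdots + \epsilon_{s-2i}$ has exactly $\binom{n}{s-2i}2^{s-2i}$ elements (signed permutations acting on the nonzero coordinates). Summing over orbits immediately yields the identity. Your proof, by contrast, is a self-contained binomial-coefficient computation: you rewrite $\Cat_{(2n-s+1,s)} = \binom{2n}{s} - \binom{2n}{s-2}$, expand each term via the ``paired/unpaired'' subset count on the $2n$-letter alphabet, and subtract so that the difference $\binom{n-s+2i}{i} - \binom{n-s+2i}{i-1}$ produces $\Cat_{(n-s+i,i)}$ termwise.

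What each buys: your argument is elementary and does not rely on the external weight-multiplicity input, so it stands on its own as a combinatorial identity. The paper's argument, on the other hand, explains \emph{why} the identity is a Touchard-type formula---each summand literally is (orbit size) $\times$ (weight multiplicity)---and this structural viewpoint is what lets the paper immediately generalize to the Motzkin and Riordan analogs in types $B_n$ and $D_n$ later in the same section. Your closing paragraph gestures toward this interpretation, but it is not actually used in your proof; the binomial manipulation already does all the work.
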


\begin{proof}
First, the fact that $\dim V(\fw_s) = \Cat_{(2n-s+1,s)}$ follows from
the description of King tableaux, and the fact that $\dim \ V(\fw_s)_{\fw_{s-2i}}=\Cat_{(n-s+i,i)}$
follows from~\cite[Thm.~7.3]{KLO17} or~\cite[Thm.~17.5]{FH91}.
Since every dominant weight space of $V(\fw_s)$ is one of
$V(\fw_s)_{\fw_{s-2i}}$, our assertion follows from the
$B$-type Weyl group $W_n$; that is,
the number of elements in the orbit of $\fw_{s-2i}$ is the same as
$\binom{n}{s-2i}2^{s-2i}$.
\end{proof}

Now we can express the Mahonian $q$-Catalan numbers $C_n(q)$ as follows:
\[
\Cat_n(q) = q^{\binom{n}{2}} \sum_{0 \le k \le n/2} \Cat_k \sum_{1 \le i_1 < \cdots < i_{2k} \le n}
\prod_{j \in [1,n] \setminus \{ i_1,\ldots,i_{2k}\} } (q^{-j}+q^{j}) = q^{\binom{n}{2}} \dim_q V(\fw_n).
\]
Moreover, we define new $(q,t)$-Catalan numbers by replacing $q^{-j}$ with $t^j$:
\begin{equation}\label{eq: q,t catalan number}
\Cat'_n(q,t) \seteq \sum_{0 \le k \le n/2} \Cat_k \sum_{1 \le i_1 <\cdots < i_{2k} \le n}
\prod_{j \in [1,n] \setminus \{ i_1,\ldots,i_{2k}\} } (q^{j}+t^{j}).
\end{equation}
We can also generalize the above definitions for $(q,t)$-Catalan triangle numbers:
\begin{equation}\label{eq: q,t catalan triangle}
\Cat'_{(2n-s+1,s)}(q,t) \seteq
\sum_{0 \le k \le s/2} \Cat_{(n-s+k,k)} \sum_{1 \le i_1 < \cdots < i_{s-2k} \le n}
\prod_{j \in \{ i_1,\ldots,i_{s-2k}\} } (q^{j}+t^{j}).
\end{equation}

\begin{remark}
\label{rem:qt_catalan_are_different}
We note that~\eqref{eq: q,t catalan number} (resp.~\eqref{eq: q,t catalan triangle}) is different than the $q,t$-Catalan (resp.\ triangle) number given by Definition~\ref{thm:qt_polynomial} (resp.~Definition~\ref{def:qt_Catalan_triangle}). For example, consider
\begin{align*}
\Cat'_4(q,t) & = q^6 + q^5 t + q^4 t^2 + 2 q^3 t^3 + q^2 t^4 + q t^5 + t^6 + q^3 + t^3 + q^2 + t^2 + q + t,
\\ \Cat_4(q,t) & = q^6 + q^5 t + q^4 t^2 + 2 q^3 t^3 + q^2 t^4 + q t^5 + t^6 + q^4 t + q^3 t^2 + q^2 t^3 + q t^4 + q^3 t + q t^3.
\end{align*}
\end{remark}

\begin{theorem}
\label{thm:column_ps_type_C}
In type $C_n$, we have
\[
\ps(\fw_s) = \Cat'_{(2n-s+1,s)}(q,q^{-1}).
\]
\end{theorem}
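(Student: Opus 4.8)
The plan is to expand the principal specialization of $\ch(\fw_s)$ over Weyl orbits of dominant weights and to recognize each orbit sum, after setting $t=q^{-1}$, as the inner product appearing in the definition~\eqref{eq: q,t catalan triangle} of $\Cat'_{(2n-s+1,s)}(q,t)$. First I would use the Weyl-group invariance of the character (recalled in Section~\ref{sec:crystals}): since all weights in a single $W_n$-orbit occur with the same multiplicity in $V(\fw_s)$, one may write
\[
\ch(\fw_s) = \sum_{\nu \text{ dominant}} \dim V(\fw_s)_{\nu} \sum_{\mu \in W_n \nu} x^{\mu}.
\]
The proof of Theorem~\ref{thm:Touchard_identity} already identifies the dominant weights occurring in $V(\fw_s)$ in type $C_n$ as exactly $\fw_{s-2i}$ for $0 \le i \le s/2$, each with multiplicity $\dim V(\fw_s)_{\fw_{s-2i}} = \Cat_{(n-s+i,i)}$. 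Thus the problem reduces entirely to computing the orbit sum $\sum_{\mu \in W_n \fw_{s-2i}} x^{\mu}$ under the principal specialization $x_j = q^{j}$, $x_j^{-1} = q^{-j}$.

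The key computation is the factorization of this orbit sum. In $\epsilon$-coordinates the fundamental weight is $\fw_{s-2i} = \epsilon_1 + \cdots + \epsilon_{s-2i}$, the $0/1$ vector with $s-2i$ ones. Because $W_n$ is the hyperoctahedral group acting by signed permutations of the $\epsilon_j$, specifying an element $\mu$ of the orbit is the same as choosing $s-2i$ positions $1 \le i_1 < \cdots < i_{s-2i} \le n$ together with an independent sign at each, say $\mu = \sum_{\ell} \delta_{\ell}\epsilon_{i_{\ell}}$ with $\delta_{\ell} \in \{+1,-1\}$. Under $x_j = q^j$ a position $i_{\ell}$ with $\delta_\ell = +1$ contributes the factor $q^{i_{\ell}}$ to $x^{\mu} = \prod_j q^{j\mu_j}$ and with $\delta_\ell=-1$ contributes $q^{-i_{\ell}}$, while unchosen positions contribute $1$. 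Summing over the two signs at each chosen position and then over all position sets gives
\[
\sum_{\mu \in W_n \fw_{s-2i}} q^{\sum_j j\mu_j} = \sum_{1 \le i_1 < \cdots < i_{s-2i} \le n} \; \prod_{j \in \{i_1,\dots,i_{s-2i}\}} \bigl(q^{j} + q^{-j}\bigr),
\]
which is precisely the inner sum of~\eqref{eq: q,t catalan triangle} evaluated at $t = q^{-1}$. Multiplying by $\Cat_{(n-s+i,i)}$ and summing over $i$ then yields $\ps(\fw_s) = \Cat'_{(2n-s+1,s)}(q,q^{-1})$.

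I expect no serious obstacle here: the representation-theoretic input (the list of dominant weights, their multiplicities $\Cat_{(n-s+i,i)}$, and the orbit sizes $\binom{n}{s-2i}2^{s-2i}$) is exactly what the proof of Theorem~\ref{thm:Touchard_identity} supplies, and the orbit-sum factorization is the standard signed-permutation computation in type $C_n$. The only point requiring genuine care is the bookkeeping of signs and exponents: checking that $x_j = q^j$, $x_j^{-1} = q^{-j}$ sends each $\pm\epsilon_{i_\ell}$ to $q^{\pm i_\ell}$ so that summing over the sign produces exactly $q^{i_\ell}+q^{-i_\ell}$, with no spurious leading power of $q$. This is why the identity holds on the nose, with the \emph{un}normalized $\ps(\fw_s)$ and without a normalizing factor, in contrast to statements such as Theorem~\ref{thm:q_catalan_paths} that involve $\nps$.
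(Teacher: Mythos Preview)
Your proof is correct and follows exactly the approach the paper has in mind: the paper states Theorem~\ref{thm:column_ps_type_C} immediately after defining $\Cat'_{(2n-s+1,s)}(q,t)$ and proving Theorem~\ref{thm:Touchard_identity}, treating it as an immediate consequence of the orbit decomposition and the hyperoctahedral orbit-sum factorization you spell out. Your write-up simply makes explicit the computation the paper leaves implicit.
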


The above observation can be extended to the $B_n$-type cases. First, the following
identity can be proved by using representation theory since the dimension of the weight space $V(\tfw_s)_{\tfw_k}$ in type $B_n$ 
is $\binom{n-k}{ \lfloor (s-k)/2 \rfloor }$ for $0 \le k \le s$ (see~\cite[Theorem 7.5]{KLO17} or~\cite[Theorem 19.2]{FH91}) and $\dim V(\tfw_s) = \binom{2n+1}{s}$  (it may be well-known to combinatorialists): For $1 \le s \le n$, we have
\[
\sum_{k=0}^{s} 2^k \binom{n}{k}\binom{n-k}{ \lfloor (s-k)/2 \rfloor } = \binom{2n+1}{s}
= \dim V(\tfw_s).
\]

Similarly, we can obtain the following identity since every weight multiplicity of weight space for $V(\tfw_s)$ in type $D_n$
is $\binom{n-k-\delta_{n,s}}{ (s-k)/2 }$ for $0 \le k \le s$ and $k \equiv s   \mod{2} $  (see~\cite[Theorem 7.5]{KLO17} or~\cite[Theorem 19.4]{FH91}) and
\begin{equation}
\label{eq:dim_column_type_D}
\dim V(\tfw_s) = \binom{2n -\delta_{s,n}}{s}
\end{equation}
(it may be well-known to
combinatorialists): For $1 \le s \le n$, we have
\[
\sum_{\substack{0 \le k \le s \\ k \equiv s \ {\rm mod} \ 2 }}
2^{k-\delta_{k,n}} \binom{n}{n-k}\binom{n-k-\delta_{n,s}}{ (s-k)/2 } = \binom{2n -\delta_{s,n}}{s}
= \dim V(\tfw_s).
\]

Let us consider the modules $V(\fw_n+\tfw_{n-s})$ $(0\le s \le n-1)$ over $B_n$. Then every weight multiplicity of $V(\fw_n+\tfw_{n-s})$
is a Motzkin triangle number:

\begin{theorem}[{\cite{KLO17}}]
\label{thm: wt Motzkin}
For any weight multiplicity of $V(\fw_n+\tfw_{n-s})$ $(0\le s \le n-1)$ of type $B_n$ is a Motzkin triangle number. More precisely,
\[
\dim V(\fw_n+\tfw_{n-s})_{\fw_n+\tfw_{n-m}} = \Mot_{(m,s)} \quad \text{ for } 0 \le s\le m \le n.
\]
\end{theorem}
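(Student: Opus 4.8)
The plan is to reduce everything to the dominant weights and then match the combinatorics of the weight multiplicities with the defining recurrence of the Motzkin triangle numbers. First I would pass to $\epsilon$-coordinates. Using $\fw_k = \epsilon_1 + \cdots + \epsilon_k$ for $k < n$ and $\fw_n = \tfrac12(\epsilon_1 + \cdots + \epsilon_n)$ in type $B_n$, a direct computation (valid uniformly for $s=0$ via $\tfw_n = 2\fw_n$) gives
\[
\fw_n + \tfw_{n-s} = \tfrac{3}{2}(\epsilon_1 + \cdots + \epsilon_{n-s}) + \tfrac{1}{2}(\epsilon_{n-s+1} + \cdots + \epsilon_n),
\]
so the highest weight has coordinate vector $(\tfrac32^{\,n-s}, \tfrac12^{\,s})$, and likewise $\fw_n + \tfw_{n-m}$ has vector $(\tfrac32^{\,n-m}, \tfrac12^{\,m})$, both dominant. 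Since the character of $V(\fw_n + \tfw_{n-s})$ is $W_n$-invariant and every weight is $W_n$-conjugate to a unique dominant one, it suffices to compute the multiplicities at dominant weights, and this already forces all weight multiplicities to be among the values $\Mot_{(m,s)}$. A standard argument (any dominant $\mu \preceq \fw_n + \tfw_{n-s}$ has coordinates in $\tfrac12 + \Z$ with $0 \le \mu_n \le \cdots \le \mu_1 \le \tfrac32$, hence each $\mu_i \in \{\tfrac12, \tfrac32\}$) shows that the dominant weights of $V(\fw_n + \tfw_{n-s})$ are exactly $\fw_n + \tfw_{n-m}$ for $s \le m \le n$, which also explains the range $0 \le s \le m \le n$.

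Next I would fix a combinatorial model for the crystal $B(\fw_n + \tfw_{n-s})$ and read off the weight-$(\fw_n + \tfw_{n-m})$ elements. The natural choice is a Kashiwara--Nakashima tableau model carrying a height-$n$ spin column, equivalently the semistandard spin rigid tableaux of~\cite{KLO17} developed in Section~\ref{sec:spin_rigid_tableaux}. It is crucial that the model enumerate the single Cartan component $B(\fw_n + \tfw_{n-s})$ and \emph{not} the full tensor product $B(\fw_n) \otimes B(\tfw_{n-s})$, whose weight-$\mu$ count is strictly larger. In such a model the weight-$\mu$ elements are sequences recording, row by row, how the target vector $(\tfrac32^{\,n-m}, \tfrac12^{\,m})$ is built from the highest weight $(\tfrac32^{\,n-s}, \tfrac12^{\,s})$; the $n-m$ top rows are frozen at $\tfrac32$, so all the freedom lives in the $m$ bottom rows where the target coordinate equals $\tfrac12$.

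With the model in hand, I would establish the three-term relation
\[
\Mot_{(m,s)} = \Mot_{(m-1,s)} + \Mot_{(m-1,s-1)} + \Mot_{(m-1,s+1)}
\]
directly for the multiplicities $d_{(m,s)} \seteq \dim V(\fw_n + \tfw_{n-s})_{\fw_n + \tfw_{n-m}}$, together with the base cases $d_{(s,s)} = 1$ (the highest weight space) and $d_{(m,s)} = 0$ for $m < s$ or $m > n$. Reading the $m$ bottom rows as a lattice path of $m$ steps, I would classify the contribution of the final such row as an up-, level-, or down-step, which splits the weight-$\mu$ elements into three classes matching $\Mot_{(m-1,s-1)}$, $\Mot_{(m-1,s)}$, and $\Mot_{(m-1,s+1)}$ respectively; this is precisely the last-step recurrence for Motzkin paths from $(0,0)$ to $(m,s)$. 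The non-negativity of the path height (the Motzkin condition) should be exactly the shadow of the Kashiwara--Nakashima admissibility and column-strictness constraints. The cleanest outcome would be to package this into an explicit bijection between weight-$\mu$ crystal elements and Motzkin paths ending at $(m,s)$; separately, a branching argument $B_n \downarrow B_{n-1}$ (deleting the first node, i.e.\ forgetting $\epsilon_1$) shows $d_{(m,s)}$ is independent of $n$ for $n \ge m$, confirming that a pure Motzkin-triangle answer is even possible.

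The main obstacle is precisely the spin column together with the admissibility bookkeeping: one must ensure the model counts only the component $B(\fw_n + \tfw_{n-s})$, and that the local three-way classification of rows matches the $U/H/D$ weighting of Motzkin paths \emph{and} that the admissibility constraints translate exactly into the ``stay weakly above $y=0$'' condition. Verifying that these combinatorial constraints line up — rather than producing an off-by-one in the height or a spurious fourth case — is the delicate heart of the argument; everything else is a routine reduction to dominant weights and an induction against the Motzkin triangle recurrence.
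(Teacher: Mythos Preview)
The paper does not prove this statement; it is quoted from \cite{KLO17} and used as input elsewhere (e.g.\ Corollary after Theorem~\ref{thm: main mz}). So there is no ``paper's own proof'' to compare against, and your proposal should be assessed on its own.

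Your reduction to dominant weights is correct: the $\epsilon$-coordinate computation is right, and the argument that every dominant $\mu \preceq \fw_n+\tfw_{n-s}$ has entries in $\{\tfrac12,\tfrac32\}$ and hence equals some $\fw_n+\tfw_{n-m}$ is clean. The stability claim (independence of $n$ once $n\ge m$) via Levi branching at node~$1$ is also fine.

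The genuine gap is in the recurrence step. The identity you want,
\[
d_{(m,s)} \;=\; d_{(m-1,s-1)} + d_{(m-1,s)} + d_{(m-1,s+1)},
\]
relates a weight multiplicity in $V(\fw_n+\tfw_{n-s})$ to weight multiplicities in the \emph{three different} modules $V(\fw_n+\tfw_{n-s+1})$, $V(\fw_n+\tfw_{n-s})$, $V(\fw_n+\tfw_{n-s-1})$. Your proposed mechanism (``classify the last of the $m$ bottom rows as $U/H/D$ and delete it'') operates inside a single crystal $B(\fw_n+\tfw_{n-s})$; deleting a row of a tableau there does not, in general, produce an element of $B(\fw_n+\tfw_{n-s\pm 1})$. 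So the three-way split you describe does not by itself furnish the cross-module recurrence --- you either need a direct bijection from weight-$\mu$ elements to Motzkin paths ending at $(m,s)$ (in which case the recurrence is for paths, not for $d_{(m,s)}$, and the entire content is in constructing that bijection), or you need an honest representation-theoretic identity linking the three modules. The paper's own machinery suggests one route: embed all three into $B(\fw_n)^{\otimes 3}$ via Lemma~\ref{lemma: tens3_Bn} and the SSRT model of Section~\ref{sec:rigid_tableaux}, where ``removing a row'' really does move you between components with different inner shapes. You flag this as ``the delicate heart of the argument,'' which is accurate, but as written the proposal stops exactly at the point where the actual work begins.
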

Then we can get an identity coming from the representation theory
\[
\dim V(3\fw_n) = 2^n \Cat_{n+1}  = \sum_{i=0}^n \Mot_i \binom{n}{i}2^{n},
\]
which arises from
\begin{subequations}
\begin{align}
\Cat_{n+1} = \sum_{i=0}^n \Mot_i \binom{n}{i}, \label{eq:catalan_from_motzkin} \\
\Cat_n = \sum_{i=0}^s \Rior_i\binom{n}{i}, \label{eq:catalan_from_riordan}
\end{align}
\end{subequations}
proved in~\cite[Sec.~5]{Bernhart97}. A bijective proof of the identity~\eqref{eq:catalan_from_motzkin} has been given in~\cite{Donaghey77} and~\cite[Sec.~3]{DY08}.

By taking a natural $q$-analog of Equation~\eqref{eq:catalan_from_motzkin}, we obtain
\[
\Cat_n(q) = \sum_{i=0}^n q^i \qbinom{n}{i}{q} \Mot'_i(q).
\]
Thus, we can define a new $q$-Motzkin number recursively by
\[
\Mot'_n(q) = q^{-n} \left( \Cat_n(q) - \sum_{i=0}^{n-1} q^i \qbinom{n}{i}{q} \Mot'_i(q) \right),
\qquad\qquad \Mot'_0(q) = 1
\]
We note that these are distinct from the $q$-Motzkin numbers in~\cite{BDLFP98,Cigler99}.

\begin{example}
We have
\begin{gather*}
\Mot'_0(q) = 1,
\qquad
\Mot'_1(q) = q,
\qquad
\Mot'_2(q) = q^2 (q^2 + 1),
\\
\Mot'_3(q) = q^5(q^4 + q^2 + q^1 + 1),
\qquad
\Mot'_4(q) = q^8(q^8 + q^6 + q^5 + 2 q^4 + q^3 + 2 q^2 + 1).
\end{gather*}
\end{example}

\begin{conjecture}
\label{conj:q_natural_motzkin}
We have $\Mot'_n(q) \in \Z_{\geq 0}[q]$.
\end{conjecture}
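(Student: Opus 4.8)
The plan is to exhibit $\Mot'_n(q)$ as a genuinely nonnegative combinatorial or representation-theoretic generating function. Since $\Mot'_n(q)$ is uniquely determined by the defining $q$-binomial transform $\Cat_{n+1}(q) = \sum_{i=0}^{n} q^i \qbinom{n}{i}{q} \Mot'_i(q)$ together with $\Mot'_0(q) = 1$, it suffices to produce any family lying in $\Z_{\geq 0}[q]$ that satisfies the same recurrence. I see two natural sources for such a family: a Mahonian-type statistic on Motzkin paths, and a graded weight multiplicity in type $B_n$.

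For the combinatorial route, I would first use Theorem~\ref{thm:q_catalan_paths} to rewrite the left-hand side as $\Cat_{n+1}(q) = q^{\binom{n+1}{2}} \ps(\fw_n)$ in type $C_n$, so that it is already the manifestly positive $w$-weighted sum over $\Dyck_{n+1}$ (equivalently over King tableaux) coming from the bijection $\Xi_{n+1}$. The task then is to refine the bijective proof of the classical identity $\Cat_{n+1} = \sum_i \binom{n}{i}\Mot_i$ due to Donaghey~\cite{Donaghey77} and Deng--Yan~\cite{DY08} into one that is compatible with this grading: each Dyck path in $\Dyck_{n+1}$ should split into a choice of $i$ marked positions, contributing exactly the factor $q^i \qbinom{n}{i}{q}$, together with a Motzkin core of length $i$ carrying the residual $q$-weight. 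That residual weight would define a statistic $\mathrm{st}$ on Motzkin paths with $\Mot'_i(q) = \sum_{M} q^{\mathrm{st}(M)} \in \Z_{\geq 0}[q]$.

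A structurally cleaner route uses representation theory. By Theorem~\ref{thm: wt Motzkin} with $s = 0$, the Motzkin number $\Mot_m = \Mot_{(m,0)}$ is the dominant weight multiplicity $\dim V(3\fw_n)_{\fw_n + \tfw_{n-m}}$ in type $B_n$ (with $\dim V(3\fw_n) = 2^n \Cat_{n+1}$). The appealing possibility is that $\Mot'_m(q)$, up to a power of $q$, coincides with the Lusztig $q$-analog of this weight multiplicity, i.e.\ the graded dimension associated to the Brylinski--Kostant (principal nilpotent) filtration on the weight space $V(3\fw_n)_{\fw_n + \tfw_{n-m}}$. If this identification holds, then $\Mot'_m(q) \in \Z_{\geq 0}[q]$ follows immediately from Lusztig's positivity theorem for $q$-analogs of weight multiplicities.

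I expect the main obstacle, common to both routes, to be reconciling the Mahonian grading carried by MacMahon's $\Cat_{n+1}(q)$ (the principal specialization of Theorem~\ref{thm:q_catalan_paths}) with the grading that is manifestly positive. In the combinatorial route, a naive $q$-weighting of the Donaghey/Deng--Yan bijection is not weight-homogeneous: one must show that the marked-position part contributes precisely $q^i\qbinom{n}{i}{q}$ and, crucially, that the residual Motzkin statistic is \emph{stable}, i.e.\ independent of the ambient $n$. In the representation-theoretic route, the difficulty is that the principal specialization does not a priori match the Brylinski filtration degree, so establishing that the two gradings agree on these weight spaces is the heart of the matter. Should a direct bijection prove elusive, a fallback is to apply $q$-binomial inversion to obtain an explicit signed expression $\Mot'_n(q) = \sum_{i=0}^{n} (-1)^{n-i} q^{e(n,i)} \qbinom{n}{i}{q} \Cat_{i+1}(q)$ and then construct a sign-reversing involution, compatible with the major-index statistic, on the associated model of Dyck paths with marked positions.
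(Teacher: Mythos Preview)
The statement you are trying to prove is labelled a \emph{conjecture} in the paper and is not proven there; no proof appears anywhere in the paper, and the authors also state the analogous Conjecture~\ref{conj:q_natural_motzkin_tri} for the triangle numbers as open. So there is no ``paper's own proof'' to compare against.

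What you have written is not a proof but a plan. Both routes you describe are reasonable strategies, but neither is carried out, and you yourself correctly identify the obstructions. In the combinatorial route, the Donaghey/Deng--Yan bijection is not, as stated, compatible with the major-index grading underlying $\Cat_{n+1}(q)$; you would need to exhibit the factorisation into a $q^i\qbinom{n}{i}{q}$ piece and an $n$-independent Motzkin statistic, and no such statistic is produced. In the representation-theoretic route, the suggestion that $\Mot'_m(q)$ is (up to a $q$-shift) a Lusztig $q$-weight multiplicity for $V(3\fw_n)_{\fw_n+\tfw_{n-m}}$ is an appealing guess, but it requires proving that the principal-specialization grading of Theorem~\ref{thm:q_catalan_paths} agrees with the Brylinski--Kostant filtration on these particular weight spaces, which is exactly the hard step and is not established. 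The sign-reversing involution fallback is likewise only a hope.

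In short: the paper leaves this open, and your proposal leaves it open as well. If you want to claim a proof, you must actually construct the statistic on Motzkin paths (and verify it satisfies the defining recursion), or prove the identification with Lusztig's $q$-analog, or carry out the involution; none of these is done here.
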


\begin{proposition}
For $0 \le s \le n$, we have
\begin{subequations} \label{eq:tri_Catalan_from_another triangles}
\begin{align}
\Cat_{(2n+1-s,s)} & = \sum_{i=0}^s \Mot_{(i+n-s,n-s)}\binom{n}{s-i}, \label{eq:tri_Catalan_from_Motzkin} \\
\Cat_{(2n-s,s)} & = \sum_{i=0}^s \Rior_{(i+n-s,n-s)}\binom{n}{s-i}. \label{eq:tri_Catalan_from_Riordan}
\end{align}
\end{subequations}
\end{proposition}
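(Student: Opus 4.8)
The plan is to read each side through classical representation theory, so that the identity becomes a statement about the dimension of a single irreducible together with its weight multiplicities and Weyl-orbit sizes, and then to reduce the remaining dimension equality to the Touchard expansion already proved in Theorem~\ref{thm:Touchard_identity}.

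For~\eqref{eq:tri_Catalan_from_Motzkin} I would take the irreducible $V(\fw_n+\tfw_s)$ in type $B_n$. Applying Theorem~\ref{thm: wt Motzkin} with its parameter taken to be $n-s$ and with $m=i+n-s$ identifies the representation as $V(\fw_n+\tfw_s)$, its dominant weights as $\fw_n+\tfw_{s-i}$ for $0\le i\le s$, and the multiplicities as $\dim V(\fw_n+\tfw_s)_{\fw_n+\tfw_{s-i}}=\Mot_{(i+n-s,n-s)}$. In $\epsilon$-coordinates $\fw_n+\tfw_{s-i}$ has $s-i$ entries equal to $\tfrac{3}{2}$ and the remaining entries equal to $\tfrac{1}{2}$; all entries are nonzero, so its $B_n$-Weyl orbit has exactly $2^n\binom{n}{s-i}$ elements (choose the positions of magnitude $\tfrac{3}{2}$, then all $2^n$ sign patterns). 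Summing orbit size times multiplicity over all dominant weights gives
\[
\dim V(\fw_n+\tfw_s)=2^n\sum_{i=0}^s\binom{n}{s-i}\Mot_{(i+n-s,n-s)},
\]
so~\eqref{eq:tri_Catalan_from_Motzkin} is equivalent to $\dim V(\fw_n+\tfw_s)=2^n\Cat_{(2n+1-s,s)}$.

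The hard part is this dimension equality. Rather than the Weyl dimension formula, I would expand the Motzkin triangle numbers back into Catalan triangle numbers using $\Mot_{(m,k)}=\sum_i\binom{m}{2i+k}\Cat_{(i+k,i)}$ and interchange the order of summation; the coefficient of $\Cat_{(n-s+i,i)}$ in the right-hand sum then becomes $\sum_j\binom{n}{s-j}\binom{j+n-s}{2i+n-s}$. After the reindexing $m=s-j$ this collapses, via the subset-of-a-subset identity $\binom{n}{m}\binom{n-m}{a-m}=\binom{n}{a}\binom{a}{m}$ with $a=s-2i$ together with $\sum_m\binom{a}{m}=2^a$, to $2^{s-2i}\binom{n}{s-2i}$. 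This is exactly the coefficient of $\Cat_{(n-s+i,i)}$ in the Touchard expansion of $\Cat_{(2n+1-s,s)}$ furnished by Theorem~\ref{thm:Touchard_identity}, so the two linear combinations of Catalan triangle numbers coincide term-by-term and~\eqref{eq:tri_Catalan_from_Motzkin} follows; the case $s=n$ recovers $\Cat_{n+1}=\sum_i\Mot_i\binom{n}{i}$ of~\eqref{eq:catalan_from_motzkin}.

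For~\eqref{eq:tri_Catalan_from_Riordan} I would run the entirely parallel computation in type $D_n$ with the half-spin weight $\fw_n+\tfw_s$: its dominant weights are again $\fw_n+\tfw_{s-i}$, the multiplicities are now the Riordan triangle numbers $\Rior_{(i+n-s,n-s)}$ (the type-$D$ analogue of Theorem~\ref{thm: wt Motzkin} developed in Section~\ref{sec:spin_rigid_tableaux}), and each $D_n$-Weyl orbit has $2^{n-1}\binom{n}{s-i}$ elements because only an even number of sign changes is permitted. This reduces~\eqref{eq:tri_Catalan_from_Riordan} to $\dim V(\fw_n+\tfw_s)=2^{n-1}\Cat_{(2n-s,s)}$ in type $D_n$, with the index $2n-s$ replacing the $B_n$ index $2n+1-s$; the case $s=n$ recovers~\eqref{eq:catalan_from_riordan}. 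I expect the main obstacle to be precisely these two dimension identities: the weight-multiplicity and orbit bookkeeping is routine, but equating the sums with $\Cat_{(2n+1-s,s)}$ and $\Cat_{(2n-s,s)}$ forces one either to invoke Theorem~\ref{thm:Touchard_identity} and the binomial identity above (and its Riordan analogue, where Lemma~\ref{lem:Mot_Rio_relation} is used to convert the relevant difference of Motzkin sums into Riordan sums) or to push through the classical Weyl dimension formula; the type-$D$ case additionally requires care with the parity constraint on weights and with the even-sign-change orbits, which is the one place where the type-$B$ and type-$D$ arguments genuinely differ.
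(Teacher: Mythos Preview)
Your argument for~\eqref{eq:tri_Catalan_from_Motzkin} is correct and genuinely different from the paper's. The representation-theoretic framing is in fact superfluous: what you actually do is expand each $\Mot_{(j+n-s,n-s)}$ as $\sum_i\binom{j+n-s}{2i+n-s}\Cat_{(i+n-s,i)}$, swap the order of summation, and collapse the inner sum. (A small slip: the identity you want is $\binom{n}{m}\binom{n-m}{b}=\binom{n}{b}\binom{n-b}{m}$ with $b=2i+n-s$, then sum over $m$; your stated form with $a=s-2i$ does not literally match, though the conclusion $\binom{n}{s-2i}2^{s-2i}$ is correct.) This lands exactly on Theorem~\ref{thm:Touchard_identity}, so~\eqref{eq:tri_Catalan_from_Motzkin} follows directly, without induction. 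The paper instead proves~\eqref{eq:tri_Catalan_from_Motzkin} and~\eqref{eq:tri_Catalan_from_Riordan} \emph{simultaneously} by induction on $n$, using the Catalan-triangle recurrence $\Cat_{(2n-s,s)}=\Cat_{(2n-s-1,s)}+\Cat_{(2n-s,s-1)}$ and Lemma~\ref{lem:Mot_Rio_relation} to pass back and forth between Motzkin and Riordan sums, with the base cases $k=s$ supplied by~\eqref{eq:catalan_from_motzkin} and~\eqref{eq:catalan_from_riordan}. Your route for~\eqref{eq:tri_Catalan_from_Motzkin} is cleaner and avoids the interleaving.

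For~\eqref{eq:tri_Catalan_from_Riordan}, however, there is a real gap. The ``entirely parallel'' computation does not go through as stated: there is no Riordan-to-Catalan-triangle expansion analogous to $\Mot_{(m,k)}=\sum_i\binom{m}{2i+k}\Cat_{(i+k,i)}$, and there is no Touchard-type identity in the paper for $\Cat_{(2n-s,s)}$ to land on. Your reduction to $\dim V(\fw_n+\tfw_s)=2^{n-1}\Cat_{(2n-s,s)}$ in type $D_n$ is fine (modulo the fact that the dominant weights split into two parity classes $\fw_n+\tfw_{s-i}$ and $\fw_{n-1}+\tfw_{s-i}$, which you gloss over; fortunately both have $D_n$-orbit size $2^{n-1}\binom{n}{s-i}$, so the conclusion survives). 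But you then offer only two unexecuted options: the Weyl dimension formula, which would require a type-$D$ computation the paper does not provide and which is not short; or ``the Riordan analogue via Lemma~\ref{lem:Mot_Rio_relation},'' which you do not spell out. In fact, once~\eqref{eq:tri_Catalan_from_Motzkin} is known for all parameters, the cleanest way to get~\eqref{eq:tri_Catalan_from_Riordan} is precisely the paper's inductive step: write $\Cat_{(2n-s,s)}=\Cat_{(2n-s-1,s)}+\Cat_{(2n-s,s-1)}$, apply~\eqref{eq:tri_Catalan_from_Motzkin} with parameters $(n-1,s)$ and $(n-1,s-1)$, and use Lemma~\ref{lem:Mot_Rio_relation} together with Pascal to reassemble into the Riordan sum. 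That is not the ``parallel'' you describe, and it is the missing idea.
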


\begin{proof}
We shall use an induction (with appealing to representation theory) for~\eqref{eq:tri_Catalan_from_Motzkin}
and~\eqref{eq:tri_Catalan_from_Riordan} together.
Our assertions for $\Cat_{(2k+1-s,s)}$ and $\Cat_{(2k-s,s)}$ are assumed to be true when $k<n$ and $k-s \ge 0$. The $k=s$ cases of~\eqref{eq:tri_Catalan_from_Motzkin}
and~\eqref{eq:tri_Catalan_from_Riordan} are~\eqref{eq:catalan_from_motzkin} and~\eqref{eq:catalan_from_riordan}, respectively.
Then we have
\begin{align*}
\Cat_{(2n-s,s)} & = \Cat_{(2n-s-1,s)} + \Cat_{(2n-s,s-1)} \allowdisplaybreaks  \\
&= \sum_{i=0}^s \Mot_{(i+n-1-s,n-1-s)}\binom{n-1}{s-i}+ \sum_{i=0}^{s-1} \Mot_{(i+n-s,n-s)}\binom{n-1}{s-1-i} \allowdisplaybreaks  \\
&= \binom{n-1}{s}\Mot_{(n-1-s,n-1-s)}+\sum_{i=1}^{s} \left(\Mot_{(i+n-1-s,n-1-s)}+\Mot_{(i+n-1-s,n-s)}\right)\binom{n-1}{s-i}\allowdisplaybreaks  \\
&= \binom{n-1}{s}\Rior_{(n-s,n-s)}+\sum_{i=1}^{s} \left(\Rior_{(i+n-s,n-s)}+\Rior_{(i+n-1-s,n-s)}\right)\binom{n-1}{s-i} \allowdisplaybreaks  \\
&= \sum_{i=0}^s \Rior_{(i+n-s,n-s)}\binom{n-1}{s-i} + \sum_{i=1}^s \Rior_{(i+n-1-s,n-s)}\binom{n-1}{s-i} \allowdisplaybreaks  \\
&= \sum_{i=0}^s \Rior_{(i+n-s,n-s)}\binom{n-1}{s-i} + \sum_{i=0}^{s-1} \Rior_{(i+n-s,n-s)}\binom{n-1}{s-1-i} \allowdisplaybreaks  \\
& = \Rior_{(n,n-s)}+\sum_{i=0}^{s-1} \Rior_{(i+n-s,n-s)}\left(\binom{n-1}{s-i} +\binom{n-1}{s-1-i} \right) \allowdisplaybreaks  \\
& = \sum_{i=0}^s \Rior_{(i+n-s,n-s)}\binom{n}{s-i},
\end{align*}
where we used Lemma~\ref{lem:Mot_Rio_relation} in the fourth equality.
The proof of $\Cat_{(2n+1-s,s)}$ is similar.
\end{proof}

Now we can obtain an interesting formula for $\dim V(\fw_n + \tfw_s)$ over $B_n$ and interpret Equation~\eqref{eq:tri_Catalan_from_Motzkin} by using Theorem~\ref{thm: wt Motzkin}
and considering Weyl group orbits:

\begin{corollary} We have
\[
\dim V(\fw_n + \tfw_s) = 2^n \Cat_{(2n+1-s,s)} = \sum_{i=0}^s \Mot_{(i+n-s,n-s)}\binom{n}{s-i}2^n
\qquad\qquad (0 \le s \le n).
\]
In particular
\[
\dim V(3\fw_n) = 2^n \Cat_{n+1} = \sum_{i=0}^s \Mot_{i}\binom{n}{i}2^n
\]
$($see also ~\cite[A003645]{OEIS}$)$.
\end{corollary}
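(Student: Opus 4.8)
The plan is to compute $\dim V(\fw_n + \tfw_s)$ by summing each dominant weight multiplicity against the size of its Weyl group orbit, exactly as in the proof of Theorem~\ref{thm:Touchard_identity}. This produces the rightmost expression directly, and the middle equality $2^n\Cat_{(2n+1-s,s)}$ then follows immediately from Equation~\eqref{eq:tri_Catalan_from_Motzkin}. First I would identify the relevant dominant weights: applying Theorem~\ref{thm: wt Motzkin} with its parameter set to $n-s$ (so that its highest weight $\fw_n + \tfw_{n-(n-s)}$ coincides with our $\fw_n + \tfw_s$), every dominant weight space is of the form $\fw_n + \tfw_{n-m}$ for $n-s \le m \le n$, with multiplicity $\dim V(\fw_n+\tfw_s)_{\fw_n+\tfw_{n-m}} = \Mot_{(m,n-s)}$. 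The case $s=0$ is the trivial statement $\dim V(\fw_n) = 2^n = 2^n\Cat_{(2n+1,0)}$.

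Next I would compute the Weyl orbit size of each such weight. In standard $\epsilon$-coordinates, $\fw_n = \tfrac12(\epsilon_1+\cdots+\epsilon_n)$ and $\tfw_{n-m} = \fw_{n-m} = \epsilon_1+\cdots+\epsilon_{n-m}$ (with $\tfw_0 = 0$ and $\tfw_n = 2\fw_n$ in the extreme cases), so
\[
\fw_n + \tfw_{n-m} = \tfrac32(\epsilon_1+\cdots+\epsilon_{n-m}) + \tfrac12(\epsilon_{n-m+1}+\cdots+\epsilon_n),
\]
the vector with $n-m$ entries equal to $\tfrac32$ and $m$ entries equal to $\tfrac12$. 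Since the type $B_n$ Weyl group $W_n$ acts by signed permutations of order $2^n n!$ and every coordinate above is nonzero, the stabilizer consists only of the permutations fixing the two blocks of equal absolute value; hence it has order $(n-m)!\,m!$ and the orbit has size $2^n n!/\bigl((n-m)!\,m!\bigr) = 2^n\binom{n}{m}$.

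Summing the dominant multiplicities against these orbit sizes and reindexing by $i = m-(n-s)$, using $\binom{n}{i+n-s} = \binom{n}{s-i}$, yields
\[
\dim V(\fw_n + \tfw_s) = \sum_{m=n-s}^{n} \Mot_{(m,n-s)}\, 2^n \binom{n}{m} = 2^n \sum_{i=0}^s \Mot_{(i+n-s,n-s)}\binom{n}{s-i},
\]
which is the rightmost expression. The remaining middle equality is exactly $2^n$ times Equation~\eqref{eq:tri_Catalan_from_Motzkin}, closing the chain. For the ``in particular'' statement I would set $s=n$, giving $\fw_n + \tfw_n = 3\fw_n$, $\Cat_{(n+1,n)} = \Cat_{n+1}$, and $\Mot_{(i,0)} = \Mot_i$, so the formula collapses to $\dim V(3\fw_n) = 2^n\Cat_{n+1} = \sum_{i=0}^n \Mot_i\binom{n}{i}2^n$.

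I expect no deep obstacle, since the argument assembles Theorem~\ref{thm: wt Motzkin}, the orbit count, and Equation~\eqref{eq:tri_Catalan_from_Motzkin}. The one point demanding care is the orbit computation: one must verify that \emph{all} $\epsilon$-coordinates of $\fw_n + \tfw_{n-m}$ are nonzero --- which holds precisely because the spin weight $\fw_n$ contributes a $\tfrac12$ to every coordinate --- so that no sign change stabilizes the weight and the orbit size is the clean $2^n\binom{n}{m}$ with no spurious powers of $2$. The rest is routine reindexing.
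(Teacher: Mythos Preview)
Your proposal is correct and follows essentially the same approach as the paper, which simply states that the formula is obtained by interpreting Equation~\eqref{eq:tri_Catalan_from_Motzkin} using Theorem~\ref{thm: wt Motzkin} and Weyl group orbits. You have merely filled in the details of the orbit computation and the reindexing, all of which are accurate.
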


\begin{proposition}
\label{prop:B_near_spin_q_char}
We have
\[
\nps(\fw_n + \tfw_s) = \frac{\Cat_{(2n+1-s,s)}(q)}{q^{n+1-s} + 1} \prod_{k=1}^{n+1} (q^k + 1).
\]
\end{proposition}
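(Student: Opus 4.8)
The plan is to compute $\ps(\fw_n+\tfw_s)$ directly from the Weyl-character principal specialization of Proposition~\ref{prop:Bn_ps} and then to identify the result with the right-hand side after normalizing away the leading power of $q$. First I would write $\lambda = \fw_n + \tfw_s$ in the $\epsilon$-coordinates of Section~\ref{sec:crystals}. Since $\fw_n = \tfrac12(\epsilon_1 + \cdots + \epsilon_n)$ and $\tfw_s = \epsilon_1 + \cdots + \epsilon_s$ for $s < n$ (with $\tfw_n = 2\fw_n$ recovering $\lambda = 3\fw_n$ when $s=n$), we get $\lambda_i = \tfrac32$ for $1 \le i \le s$ and $\lambda_i = \tfrac12$ for $s < i \le n$. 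Substituting these values into the three products of Proposition~\ref{prop:Bn_ps} is the starting point.

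Next I would simplify the three products separately. In the factor $\prod_{i<j}\frac{1-q^{\lambda_i-\lambda_j+j-i}}{1-q^{j-i}}$ the difference $\lambda_i - \lambda_j$ equals $1$ when $i \le s < j$ and $0$ otherwise, so all terms cancel except those straddling $s$; for fixed $i\le s$ the product over $j$ telescopes, and taking the product over $i$ yields exactly $\qbinom{n}{s}{q}$. The remaining two products --- the ``diagonal'' factor $\prod_i \frac{1-q^{2\lambda_i+2n-2i+1}}{1-q^{2n-2i+1}}$ and the ``long-root'' factor $\prod_{i<j}\frac{1-q^{\lambda_i+\lambda_j+2n-i-j+1}}{1-q^{2n-i-j+1}}$ --- should be combined. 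Here $\lambda_i+\lambda_j \in \{1,2,3\}$ depending on whether $0$, $1$, or $2$ of the indices $i,j$ lie in $[1,s]$, and the diagonal terms glue onto the long-root terms along $i=j$; thus the merged product splits naturally into three index regions in which the numerator exponent exceeds the denominator exponent by $3$, $2$, or $1$. Writing each region as a ratio $\prod_{v\in S}(1-q^{v+d})/\prod_{v\in S}(1-q^v)$ over a multiset $S$ of denominator exponents, the shared factors cancel and what survives telescopes into a ratio of $q$-factorials together with a product of the form $\prod(1+q^k)$.

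Finally I would match the outcome with the target. Using the closed form $\Cat_{(2n+1-s,s)}(q) = \frac{[2n+1]!_q\,[2n-2s+2]_q}{[s]!_q\,[2n+2-s]!_q}$, the identity $\frac{[2n-2s+2]_q}{q^{n+1-s}+1} = [n+1-s]_q$, and $\prod_{k=1}^{n+1}(q^k+1) = (q^2;q^2)_{n+1}/(q;q)_{n+1}$, I would rewrite the right-hand side as the same ratio of $q$-factorials produced by the left-hand computation, so the two agree as rational functions in $q$. The last step is to check the power of $q$: the prefactor exponent $n(\lambda)-n|\lambda| = \binom{s}{2} + \tfrac12\binom{n}{2} - n(s + \tfrac n2)$ together with the lowest exponent coming from the surviving product determines the valuation $\eta_\lambda$, and one verifies that $\nps(\fw_n+\tfw_s) = q^{-\eta_\lambda}\ps(\fw_n+\tfw_s)$ is precisely the stated polynomial.

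I expect the main obstacle to be the bookkeeping in the combined diagonal/long-root product: keeping track of the multiplicities of the denominator exponents in each of the three regions and of the exponent shifts $3,2,1$, so that the surviving factors collapse to clean $q$-factorials rather than an unwieldy product. The secondary nuisance is matching the half-integer powers of $q$ coming from the spin weight, which I would handle by computing the valuation explicitly rather than tracking it through each cancellation; small cases such as $n=2,\,s=1$ (where both sides equal $(1+q)^2(1+q^2)(1+q^3)$) are a useful check that the exponent bookkeeping is correct.
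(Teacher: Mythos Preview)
Your approach is essentially the same as the paper's: both compute $\ps(\fw_n+\tfw_s)$ directly from the product formula of Proposition~\ref{prop:Bn_ps} and compare with the closed form of $\Cat_{(2n+1-s,s)}(q)$ after simplifying. The paper organizes the computation into four pieces $\pi_1,\pi_2,\pi_3,\pi_4$ (splitting the $i<j$ products by whether $i,j\le s$, $i\le s<j$, or $s<i<j$) and reduces each to a ratio of $q$-Pochhammer symbols before recombining, whereas you merge the diagonal and long-root factors and telescope by region; these are only minor bookkeeping differences, and your observation that the short-root product telescopes to $\qbinom{n}{s}{q}$ is exactly the paper's computation of $\pi_4$ in disguise.
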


\begin{proof}
We have
\begin{equation}
\label{eq:B_near_RHS}
\begin{split}
\frac{\Cat_{(2n+1-s,s)}(q)}{q^{n+1-s} + 1} \prod_{k=1}^{n+1} (q^k + 1) & = \left( \prod_{i=1}^{2n+2-s} \dfrac{1-q^{s+i}}{1-q^i} \right) \dfrac{1-q^{n-s+1}}{1-q^{2n+2}} \cdot \dfrac{1}{1-q}\prod_{k=1}^{n+1} (q^k + 1)
\\ & = \dfrac{(q)_{2n+1} (1-q^{n-s+1})}{(q)_s (q)_{2n-s+2}} (-q; q)_{n+1}
\end{split}
\end{equation}
and
\[
\nps(\fw_n + \tfw_s) = \pi_1 \pi_2 \pi_3 \pi_4,
\]
where
\begin{align*}
\pi_1 & = \prod_{i=1}^s \dfrac{1-q^{3+2n-2i+1}}{1-q^{2n-2i+1}} \prod_{i=s+1}^n \dfrac{1-q^{1+2n-2i+1}}{1-q^{2n-2i+1}},
&
\pi_2 & = \prod_{1 \leq i < j \leq s} \dfrac{1-q^{3+2n-i-j+1}}{1-q^{2n-i-j+1}},
\\
\pi_3 & = \prod_{s < i < j \leq n} \dfrac{1-q^{1+2n-i-j+1}}{1-q^{2n-i-j+1}},
&
\pi_4 & = \prod_{1 \leq i \leq s < j \leq n} \dfrac{1-q^{j-i+1}}{1-q^{j-i}} \cdot \dfrac{1-q^{2+2n-i-j+1}}{1-q^{2n-i-j+1}}.
\end{align*}
Define $\qPdf{n} \seteq (1 - q^n) (1 - q^{n-2}) \cdots$, and note that $\qPdf{n}$ to $(q)_n$ is what the double factorial $n!!$ is to the usual factorial $n!$. 
Next, we have
\begin{align*}
\pi_1 & = \dfrac{(1-q^{2n+2}) (1-q^{2n}) \cdots (1-q^{2n-2s+4}) (1-q^{2n-2s}) (1-q^{2n-2s-2}) \cdots (1-q^2)}{(1-q^{2n-1}) (1-q^{2n-3}) \cdots (1-q)}
\\ & = \dfrac{\qPdf{2n+2}}{\qPdf{2n-1} (1-q^{2n-2s+2})},
%
\allowdisplaybreaks  \\ \pi_2 & = \prod_{2 \leq j \leq s} \dfrac{(1-q^{2n-j+3}) \cdots (1-q^{2n-2j+5})}{(1-q^{2n-j}) \cdots (1-q^{2n-2j+2})} \prod_{1 \leq j \leq s-1} \dfrac{(1-q^{2n-j+2}) \cdots (1-q^{2n-2j+3})}{(1-q^{2n-j-1}) \cdots (1-q^{2n-2j})}
\allowdisplaybreaks  \\ & = \prod_{1 \leq j \leq s-1} \dfrac{(1-q^{2n-j+1}) (1-q^{2n-j+1}) (1-q^{2n-j})}{(1-q^{2n-2j+2}) (1-q^{2n-2j+1}) (1-q^{2n-2j})}
\allowdisplaybreaks  \\ & = \dfrac{(q)_{2n+1}}{(q)_{2n-s+2}} \cdot \dfrac{(q)_{2n}}{(q)_{2n-s+1}} \cdot \dfrac{(q)_{2n-2s+2}}{(q)_{2n}} \cdot \dfrac{(q)_{2n-1}}{(q)_{2n-s}} \cdot \dfrac{\qPdf{2n-2s}}{\qPdf{2n-2}},
\allowdisplaybreaks  \\ & = \dfrac{(q)_{2n+1} (q)_{2n-2s+2} (q)_{2n-1} \qPdf{2n-2s}}{(q)_{2n-s+2} (q)_{2n-s+1} (q)_{2n-s} \qPdf{2n-2}},
%
\allowdisplaybreaks  \\ \pi_3 & = \prod_{s+2 \leq j \leq n} \dfrac{(1-q^{2n-s-j+1}) \cdots (1-q^{2n-2j+3})}{(1-q^{2n-s-j}) \cdots (1-q^{2n-2j+2})} = \prod_{s+2 \leq j \leq n} \dfrac{1-q^{2n-s-j+1}}{1-q^{2n-2j+2}}
 \\ & = \dfrac{(q)_{2n-2s-1}}{(q)_{n-s} \qPdf{2n-2s-2}},
%
\allowdisplaybreaks  \\ \pi_4 & = \prod_{s+1 \leq j \leq n} \dfrac{(1-q^j) \cdots (1-q^{j-s+1})}{(1-q^{j-1}) \cdots (1-q^{j-s})} \cdot \dfrac{(1-q^{2n-j+2}) \cdots (1-q^{2n-j-s+3})}{(1-q^{2n-j}) \cdots (1-q^{2n-j-s+1})}
\allowdisplaybreaks  \\ & = \dfrac{(q)_n}{(q)_s (q)_{n-s}} \prod_{s+1 \leq j \leq n} \dfrac{(1-q^{2n-j+2}) (1-q^{2n-j+1})}{(1-q^{2n-j-s+2}) (1-q^{2n-j-s+1})}
\allowdisplaybreaks  \\ & = \dfrac{(q)_n}{(q)_s (q)_{n-s}} \cdot \dfrac{(q)_{2n-s+1} (q)_{n-s+1} (q)_{2n-s} (q)_{n-s}}{(q)_{n+1} (q)_{2n-2s+1} (q)_{n} (q)_{2n-2s}}
\allowdisplaybreaks  \\ & = \dfrac{(q)_{2n-s+1} (q)_{n-s+1} (q)_{2n-s}}{(q)_{s} (q)_{n+1} (q)_{2n-2s+1} (q)_{2n-2s}}.
\end{align*}
Thus, we can simplify the product $\pi_1 \pi_2 \pi_3 \pi_4$ to obtain
\begin{equation}
\label{eq:B_near_LHS_simplified}
\nps(\fw_n + \tfw_s) = \dfrac{(q)_{2n+1} \qPdf{2n-2} (1 - q^{n-s+1})}{(q)_{2n-s+2} (q)_{s} (q)_{n+1}}
= \dfrac{(-q;q)_{n+1} (q)_{2n+1} (1 - q^{n-s+1})}{(q)_{2n-s+2} (q)_{s}}.
\end{equation}
Hence, the claim follows from noting that~\eqref{eq:B_near_LHS_simplified} and~\eqref{eq:B_near_RHS} are equal.
\end{proof}

\begin{corollary}
\label{cor:catalan_q2}
For type $B_n$, we have
\[
C_{n+1}(q) \prod_{k=1}^n (1 + q^k) = \nps(3\fw_n).
\]
\end{corollary}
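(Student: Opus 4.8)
The plan is to recognize the corollary as the $s = n$ specialization of Proposition~\ref{prop:B_near_spin_q_char} and then reconcile the two distinct $q$-analogs of the Catalan numbers that appear on the two sides. First I would observe that in type $B_n$ we have $\tfw_n = 2\fw_n$, so that $3\fw_n = \fw_n + 2\fw_n = \fw_n + \tfw_n$; hence $3\fw_n$ is exactly the weight $\fw_n + \tfw_s$ with $s = n$. Substituting $s = n$ into Proposition~\ref{prop:B_near_spin_q_char} (noting that $q^{n+1-s} + 1 = q+1$, so there is no degeneracy) gives
\[
\nps(3\fw_n) = \frac{\Cat_{(n+1,n)}(q)}{q+1} \prod_{k=1}^{n+1} (q^k + 1).
\]

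The second step is to relate the $q$-Catalan triangle number $\Cat_{(n+1,n)}(q)$ appearing here to the Mahonian $q$-Catalan number $\Cat_{n+1}(q)$ in the statement; these are genuinely different, as stressed after the definition of $\Cat_{(n,k)}(q)$. Using the closed forms $\Cat_{(n+1,n)}(q) = \frac{[2n+1]!_q\, [2]_q}{[n]!_q\, [n+2]!_q}$ and $\Cat_{n+1}(q) = \frac{1}{[n+2]_q} \qbinom{2n+2}{n+1}{q}$, a direct cancellation of $q$-factorials yields the ratio
\[
\frac{\Cat_{(n+1,n)}(q)}{\Cat_{n+1}(q)} = \frac{[2]_q\, [n+1]_q}{[2n+2]_q} = \frac{1+q}{1+q^{n+1}},
\]
where the last equality uses $[2]_q = 1+q$ and the factorization $[2n+2]_q = (1+q^{n+1})[n+1]_q$.

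Finally I would substitute $\Cat_{(n+1,n)}(q) = \frac{1+q}{1+q^{n+1}} \Cat_{n+1}(q)$ and peel off the top factor of the product, $\prod_{k=1}^{n+1}(q^k + 1) = (1+q^{n+1}) \prod_{k=1}^{n}(q^k + 1)$, so that the factors $(1+q)$ and $(1+q^{n+1})$ cancel completely, leaving
\[
\nps(3\fw_n) = \Cat_{n+1}(q) \prod_{k=1}^n (1 + q^k),
\]
which is the claim (here $C_{n+1}(q)$ denotes $\Cat_{n+1}(q)$, consistent with the earlier use of $C_n(q)$ for the Mahonian $q$-Catalan number). The computation is entirely routine, so there is no real conceptual obstacle; the only delicate point is the careful bookkeeping of the $q$-factorial identity in the second step. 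As a cross-check, one could instead start from the fully simplified intermediate expression~\eqref{eq:B_near_LHS_simplified} with $s = n$, namely $\nps(\fw_n + \tfw_n) = \frac{(-q;q)_{n+1}(q)_{2n+1}(1-q)}{(q)_{n+2}(q)_n}$, and verify directly via $q$-Pochhammer manipulation that it equals $\Cat_{n+1}(q)\prod_{k=1}^n(1+q^k)$, which gives the same result by an independent route.
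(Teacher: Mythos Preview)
Your proof is correct and follows essentially the same route as the paper: both specialize Proposition~\ref{prop:B_near_spin_q_char} at $s=n$ and then invoke the identity $\Cat_{(n+1,n)}(q) = \dfrac{1+q}{1+q^{n+1}}\,\Cat_{n+1}(q)$ (the paper states the equivalent form $\Cat_n(q) = \dfrac{1+q^n}{1+q}\,\Cat_{(n,n-1)}(q)$ without derivation, whereas you derive it from the closed formulas). Your extra cross-check via~\eqref{eq:B_near_LHS_simplified} is not needed but is a valid sanity check.
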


\begin{proof}
Using
\[
\Cat_n(q) = \dfrac{1+ q^n}{1 + q} \Cat_{(n,n-1)}(q),
\]
the claim follows from Proposition~\ref{prop:B_near_spin_q_char} with $s = n$.
\end{proof}

%

We give a natural $q$-analog of Equation~\eqref{eq:tri_Catalan_from_Motzkin} in parallel to $\Mot'_n(q)$ from Equation~\eqref{eq:catalan_from_motzkin}.
We define recursively
\[
\Mot'_{(n,n-s)}(q) = \Cat_{(2n+1-s,s)}(q) - \sum_{i=0}^{s-1} q^{n(s-i)} \qbinom{n}{s-i}{q} \Mot'_{(i+n-s,n-s)}(q),
\qquad\quad
\Mot'_{(0,0)}(q) = 1,
\]
which implies that
\[
\Cat_{(2n+1-s,s)}(q) = \sum_{i=0}^s q^{n(s-i)} \qbinom{n}{s-i}{q} \Mot'_{(i+n-s,n-s)}(q).
\]
Note that $\Mot'_{(n,0)}(q) \neq q^v \Mot'_n(q)$ for some $v \in \ZZ$. For example, we have
\begin{align*}
\Mot'_{(4,0)}(q) & = q^8 \left( q^8 + q^6 + q^5 + 2 q^4 + q^3 + 2 q^2 + 1 \right),
\\ \Mot'_4(q) & = q^6 + q^5 + 2 q^4 + 2 q^3 + q^2 + q + 1,
\end{align*}
and note that $q^{-8} \Mot'_{(4,0)}$ and $\Mot'_4(q)$ are irreducible polynomials.
Similar to Conjecture~\ref{conj:q_natural_motzkin}, we conjecture that these $q$-Motzkin triangle numbers are positive.

\begin{conjecture}
\label{conj:q_natural_motzkin_tri}
We have $\Mot'_{(n,n-s)}(q) \in \Z_{\geq 0}[q]$.
\end{conjecture}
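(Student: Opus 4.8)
The plan is to establish Conjecture~\ref{conj:q_natural_motzkin_tri} by producing a combinatorial interpretation of $\Mot'_{(n,n-s)}(q)$ as a generating function for Motzkin triangle paths under an explicit nonnegative statistic, which would make membership in $\Z_{\geq 0}[q]$ manifest. The guiding principle is to $q$-ify the $q=1$ identity~\eqref{eq:tri_Catalan_from_Motzkin} together with its known bijective proofs~\cite{Donaghey77,DY08}. Concretely, the defining recursion is equivalent to the expansion
\[
\Cat_{(2n+1-s,s)}(q) = \sum_{i=0}^s q^{n(s-i)} \qbinom{n}{s-i}{q} \Mot'_{(i+n-s,n-s)}(q),
\]
so it suffices to realize the left-hand side as a weighted sum over partial Dyck paths, partition these paths into blocks indexed by $i$, and identify block $i$ with the product of a rectangular-path factor $q^{n(s-i)}\qbinom{n}{s-i}{q}$ and a Motzkin-path factor.

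First I would fix a Mahonian-type statistic realizing $\Cat_{(2n+1-s,s)}(q) = \sum_D q^{\mathrm{stat}(D)}$ over the partial Dyck paths in $\Dyck_{(2n+1-s,s)}$; the area/major-index weightings used for Theorem~\ref{thm:q_catalan_paths} extend naturally to the triangle setting and give such an interpretation, since $\Cat_{(n,k)}(q)$ is itself a positive polynomial. Next I would define a ``Motzkin core'' map $\Phi$ sending each such $D$ to a pair consisting of (a) a Motzkin triangle path contributing to $\Mot_{(i+n-s,n-s)}$ and (b) a lattice path inside an $(s-i)\times(n-s+i)$ rectangle, in parallel with the $q=1$ decomposition obtained by deleting the inserted up/horizontal excursions. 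The essential requirement is that $\mathrm{stat}$ splits additively as an induced statistic on the core plus the rectangular weight, the latter summing to exactly $q^{n(s-i)}\qbinom{n}{s-i}{q}$; if so, then $\Mot'_{(i+n-s,n-s)}(q)$ equals the generating function of the cores under the induced statistic, and hence lies in $\Z_{\geq 0}[q]$.

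The hard part will be producing this additive splitting. At $q=1$ the decomposition is combinatorially flexible, but here the statistic must factor through $\Phi$ on the nose, and the shift $q^{n(s-i)}$ together with the $q$-binomial weight must emerge precisely from the positions of the deleted steps; matching these powers is where a naive $q$-analog of Donaghey's bijection breaks down. Because the recursion itself is \emph{alternating} (it subtracts the lower terms $\Mot'_{(i+n-s,n-s)}(q)$), one cannot deduce positivity directly from the induction hypothesis, so a sign-reversing involution or an explicit lattice-path model is genuinely needed. I would first test the approach on the diagonal slice $n-s=0$, which is the ordinary $q$-Motzkin case of Conjecture~\ref{conj:q_natural_motzkin}: settling that simpler conjecture is a natural prerequisite, and its resolution would likely supply the statistic required for the full triangular version.
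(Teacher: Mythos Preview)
The statement you are attempting is labeled a \emph{Conjecture} in the paper, and indeed the paper does not prove it; the authors explicitly list finding a combinatorial interpretation of $\Mot'_{(n,k)}(q)$ among their open problems. So there is no paper proof to compare your proposal against.

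Your write-up is candid about its status: it is a research strategy, not a proof. You correctly identify the structural obstruction --- the defining recursion is alternating, so positivity cannot be read off inductively, and any combinatorial interpretation must produce an additive splitting of a Mahonian-type statistic through a Dyck-to-Motzkin decomposition that also reproduces the precise shift $q^{n(s-i)}\qbinom{n}{s-i}{q}$. You also correctly note that the $n-s=0$ slice already contains Conjecture~\ref{conj:q_natural_motzkin}, which the paper likewise leaves open; so your proposed prerequisite is itself unresolved. One further caveat: you assert that $\Cat_{(2n+1-s,s)}(q)$ admits a natural area/major-index interpretation on $\Dyck_{(2n+1-s,s)}$ ``since $\Cat_{(n,k)}(q)$ is itself a positive polynomial,'' but the paper's $q$-Catalan triangle numbers are defined algebraically and no such lattice-path statistic is supplied there either, so this step would also need to be worked out. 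In short, the plan is reasonable as a line of attack, but none of its key steps are currently established, and the paper does not furnish them.
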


\begin{example}
We have
\[
[ \Mot'_{(n,r)} ]_{n,r=0}^3 =
\begin{bmatrix}
 1 \\
 1 & 1 \\
 q + 1 & q + 1 & 1 \\
 q^3 + q^2 + q + 1 & q^3 + 2q^2 + q + 1 & q^2 + q + 1 & 1
\end{bmatrix}.
\]
\end{example}

\begin{theorem}[{\cite{KLO17}}]
\label{thm: wt Riordan}
For any weight multiplicity of $V(\fw_{n+1}+\tfw_{n+1-s})$ in type $D_{n+1}$ is a Riordan triangle number. More precisely,
\[
\dim V(\fw_{n+1}+\tfw_{n+1-s})_{\mu} = \Rior_{(m+1,s)} \quad \text{ for } 0 \le s\le m \le n,
\]
where
\[
\mu = \begin{cases}
\fw_{n+1}+\tfw_{n-m} & \text{if } m \not\equiv s \mod{2}, \\
\fw_{n}+\tfw_{n-m} & \text{if } m \equiv s \mod{2}.
\end{cases}
\]
\end{theorem}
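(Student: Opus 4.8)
The plan is to deduce the type $D_{n+1}$ statement from the type $B_n$ result of Theorem~\ref{thm: wt Motzkin} through the branching rule $D_{n+1}\downarrow B_n$, and then to pass from Motzkin to Riordan triangle numbers by Lemma~\ref{lem:Mot_Rio_relation} together with an induction on $m$. The organizing observation is that the parity condition in the statement is exactly the condition for $\mu$ to occur as a weight at all: the two candidate weights $\fw_{n+1}+\tfw_{n-m}$ and $\fw_n+\tfw_{n-m}$ differ by $\epsilon_{n+1}$, which does not lie in the $D_{n+1}$ root lattice $Q$ (it has odd coordinate sum), so of the two exactly one satisfies $\lambda-\mu\in Q$ and hence is a weight of $V(\lambda)$, where $\lambda=\fw_{n+1}+\tfw_{n+1-s}$. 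Thus for each $m$ there is a single relevant multiplicity, which I will call $r(m)$, and the claim is $r(m)=\Rior_{(m+1,s)}$.

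First I would work out the restriction $V(\lambda)|_{B_n}$. Writing $\lambda$ in $\epsilon$-coordinates as $(\tfrac32^{\,n+1-s},\tfrac12^{\,s})$ and applying the spin branching rule, the only free coordinate is the single boundary entry, giving
\[
V(\lambda)\big|_{B_n} \iso V^{B_n}(\fw_n+\tfw_{n-s}) \oplus V^{B_n}(\fw_n+\tfw_{n-s+1}).
\]
Here both half-spin weights $\fw_{n+1}$ and $\fw_n$ of $D_{n+1}$ restrict to the single spin weight $\fw_n$ of $B_n$, so both $\fw_{n+1}+\tfw_{n-m}$ and $\fw_n+\tfw_{n-m}$ project to the common $B_n$ weight $\fw_n+\tfw_{n-m}$. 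Comparing dimensions of this $B_n$ weight space on the two sides and invoking Theorem~\ref{thm: wt Motzkin} for each summand gives its value as $\Mot_{(m,s)}+\Mot_{(m,s-1)}$.

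The subtle point is which $D_{n+1}$ weights lie above $\fw_n+\tfw_{n-m}$. Since every coordinate of a weight of $V(\lambda)$ is bounded in absolute value by $\tfrac32$, the $\epsilon_{n+1}$-coordinate can only be $\pm\tfrac12$ or $\pm\tfrac32$, and the root-lattice parity selects exactly two of these four. The two half-integer choices are precisely $\fw_{n+1}+\tfw_{n-m}$ and $\fw_n+\tfw_{n-m}$, contributing $r(m)$ (one of them is $0$). The two $\pm\tfrac32$ choices are $W$-conjugate (by the permutation part of the Weyl group, which reorders the coordinates, together with an even sign change) to the weights $\fw_{n+1}+\tfw_{n-m+1}$ and $\fw_n+\tfw_{n-m+1}$, so by Weyl invariance of multiplicities they contribute $r(m-1)$. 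Hence, in every parity case,
\[
r(m)+r(m-1)=\Mot_{(m,s)}+\Mot_{(m,s-1)}.
\]

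Finally I would combine this with Lemma~\ref{lem:Mot_Rio_relation}, which says $\Rior_{(m+1,s)}+\Rior_{(m,s)}=\Mot_{(m,s)}+\Mot_{(m,s-1)}$, to obtain $r(m)+r(m-1)=\Rior_{(m+1,s)}+\Rior_{(m,s)}$, and then induct on $m$. The base case is $m=s$: here $r(s-1)$ is the multiplicity at $\fw_{n+1}+\tfw_{n-s+1}=\lambda$, which is the highest weight, so $r(s-1)=1=\Rior_{(s,s)}$, and the recursion yields $r(s)=\Rior_{(s+1,s)}$; the inductive step then gives $r(m)=\Rior_{(m+1,s)}$ for all $s\le m\le n$. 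The main obstacle is the bookkeeping just described---correctly identifying the $\pm\tfrac32$ contributions as $r(m-1)$ via Weyl invariance, which is what makes the recursion close---together with the degenerate cases $s\in\{0,1\}$ and $m=n$, where $\tfw$ collapses (e.g.\ $\tfw_{n+1}=2\fw_{n+1}$ and $\tfw_0=0$) and the second branching summand must be treated separately; these I expect to be routine but require care. An alternative that sidesteps the branching altogether is a direct weight-preserving bijection between $V(\lambda)_\mu$, modeled by the semistandard spin rigid tableaux of~\cite{KLO17}, and Riordan paths ending at $(m+1,s)$, with the parity condition reflecting the two half-spin column shapes.
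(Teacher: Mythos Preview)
The paper does not prove this theorem: it is quoted from \cite{KLO17}, so there is no in-paper argument to compare against. Your proposal is a correct and self-contained derivation. The branching $V(\lambda)|_{B_n}\cong V^{B_n}(\fw_n+\tfw_{n-s})\oplus V^{B_n}(\fw_n+\tfw_{n-s+1})$ follows from the standard $SO(2n+2)\downarrow SO(2n+1)$ interlacing rule applied to $\lambda=(\tfrac32^{\,n+1-s},\tfrac12^{\,s})$, and the identification of the fibre over the $B_n$ weight $\fw_n+\tfw_{n-m}$ with the four values $x\in\{\pm\tfrac12,\pm\tfrac32\}$ is exactly right; the root-lattice parity (coordinate sum of $\lambda-\mu$ even) selects two of them, and your Weyl-conjugacy computation sending the $\pm\tfrac32$ weight to the dominant $m{-}1$ weight checks out in both parity cases. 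The resulting recursion $r(m)+r(m-1)=\Mot_{(m,s)}+\Mot_{(m,s-1)}$ together with Lemma~\ref{lem:Mot_Rio_relation} (shifted by one in $m$) and the base $r(s-1)=1=\Rior_{(s,s)}$ gives the claim.

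By contrast, the argument in \cite{KLO17} (and implicitly the viewpoint of Section~\ref{sec:spin_rigid_tableaux} here) works directly in type $D_{n+1}$ via the semistandard spin rigid tableaux model, essentially the bijective alternative you sketch at the end. Your branching approach is shorter and reuses Theorem~\ref{thm: wt Motzkin} and Lemma~\ref{lem:Mot_Rio_relation} already available in the paper, at the cost of having to track the degenerate boundary cases ($s=0$, where the second branching summand vanishes and $\Mot_{(m,-1)}=0$ fills in; and $m=n$). Those boundary checks are indeed routine.
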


Now we can also obtain an interesting formula for $\dim V(\fw_n + \tfw_s)$ over $D_n$ and interpret Equation~\eqref{eq:tri_Catalan_from_Riordan} by using Theorem~\ref{thm: wt Riordan}
and considering Weyl group orbits:

\begin{corollary}
In type $D_n$, we have
\[
\dim V(\fw_n + \tfw_s) = 2^{n-1} \Cat_{(2n-s,s)} = \sum_{i=0}^s \Rior_{(i+n-s,n-s)}\binom{n}{s-i}2^{n-1}
\qquad\qquad (0 \le s \le n).
\]
In particular
\[
\dim V(3\fw_n) = 2^{n-1} \Cat_{n} = \sum_{i=0}^s \Rior_{i}\binom{n}{i}2^{n-1}.
\]
\end{corollary}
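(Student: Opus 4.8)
The plan is to mimic the proof of the preceding type $B_n$ corollary, replacing Motzkin triangle numbers by Riordan triangle numbers (Theorem~\ref{thm: wt Riordan}) and the order $2^n$ by $2^{n-1}$. Since the character is invariant under the Weyl group $W$ of type $D_n$, the dimension is the sum over dominant weights of the orbit size times the weight multiplicity:
\[
\dim V(\fw_n + \tfw_s) = \sum_{\mu} \lvert W \cdot \mu \rvert \cdot \dim V(\fw_n + \tfw_s)_{\mu},
\]
the sum being over the dominant weights $\mu$ occurring in $V(\fw_n + \tfw_s)$. So the two tasks are to identify the dominant multiplicities and to compute the orbit sizes.

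For the multiplicities, I would use the coordinates $\fw_n = \tfrac{1}{2}(\epsilon_1 + \cdots + \epsilon_n)$, $\fw_{n-1} = \tfrac{1}{2}(\epsilon_1 + \cdots + \epsilon_{n-1} - \epsilon_n)$, and $\tfw_k = \epsilon_1 + \cdots + \epsilon_k$. Since weights of $V(\fw_n + \tfw_s)$ differ from the highest weight by integer vectors (roots of $D_n$ are $\pm\epsilon_i\pm\epsilon_j$), every weight has all coordinates in $\tfrac{1}{2} + \Z$, hence nonzero. Reading Theorem~\ref{thm: wt Riordan} on $D_n$ (its statement on $D_{n+1}$ with index parameter set to $n-s$), the dominant weights apart from the highest weight are $\fw_n + \tfw_{s-i}$ or $\fw_{n-1} + \tfw_{s-i}$, the choice being fixed by the stated parity condition, each with multiplicity $\Rior_{(i+n-s,\,n-s)}$ for $1 \le i \le s$ (corresponding to $m = i+n-s-1$ in the theorem's range $n-s \le m \le n-1$). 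The highest weight $\fw_n + \tfw_s$ itself contributes multiplicity $1 = \Rior_{(n-s,n-s)}$, which will serve as the $i = 0$ term; note it lies just outside the theorem's range and so must be supplied separately.

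For the orbit sizes, observe that because all coordinates are nonzero, no $\mu$ is fixed by any sign change, so the stabilizer of either $\fw_n + \tfw_k$ or $\fw_{n-1} + \tfw_k$ consists only of permutations preserving the blocks of equal absolute value $3/2$ (multiplicity $k$) and $1/2$ (multiplicity $n-k$). Hence the full signed-permutation (type $B_n$) orbit has size $2^n\binom{n}{k}$. Since $W$ acts by even sign changes and every coordinate is nonzero, $W$ preserves the parity of the number of negative coordinates, so $\mu$ and its image under a single sign change lie in distinct $W$-orbits; thus the $B_n$-orbit splits evenly and $\lvert W \cdot \mu\rvert = 2^{n-1}\binom{n}{k}$ for both families. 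With $k = s-i$ this gives $\lvert W \cdot \mu\rvert = 2^{n-1}\binom{n}{s-i}$, independent of the parity-determined choice of $\mu$.

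Assembling the contributions yields
\[
\dim V(\fw_n + \tfw_s) = \sum_{i=0}^{s} \Rior_{(i+n-s,\,n-s)}\binom{n}{s-i} 2^{n-1},
\]
and Equation~\eqref{eq:tri_Catalan_from_Riordan} rewrites the sum as $2^{n-1}\Cat_{(2n-s,s)}$, giving the first displayed identity. The ``in particular'' case is the specialization $s = n$, where $\fw_n + \tfw_n = 3\fw_n$, $\Cat_{(n,n)} = \Cat_n$, $\Rior_{(i,0)} = \Rior_i$, and $\binom{n}{n-i} = \binom{n}{i}$. I expect the main obstacle to be bookkeeping rather than conceptual: reconciling the index shift between the $D_{n+1}$ form of Theorem~\ref{thm: wt Riordan} and the $D_n$ corollary, tracking the parity that selects $\fw_n$ versus $\fw_{n-1}$ (while verifying this does not change the orbit size), and confirming that the highest weight is precisely the extra $i = 0$ term.
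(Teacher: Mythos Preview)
Your proposal is correct and follows exactly the approach the paper indicates: using Theorem~\ref{thm: wt Riordan} for the dominant weight multiplicities, computing the type $D_n$ Weyl group orbit sizes as $2^{n-1}\binom{n}{s-i}$ (noting that all coordinates are half-integers, hence nonzero, so the $B_n$-orbit splits evenly), and then invoking Equation~\eqref{eq:tri_Catalan_from_Riordan}. The bookkeeping you flag---the index shift from $D_{n+1}$ to $D_n$, the parity choice between $\fw_n$ and $\fw_{n-1}$, and the separate $i=0$ highest-weight term---is handled correctly.
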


Similarly using Equation~\eqref{eq:tri_Catalan_from_Riordan}, we define recursively
\[
\Rior'_{(n,n-s)}(q) = \Cat_{(2n-s,s)}(q) - \sum_{i=0}^{s-1} q^{(n-1)(s-i)} \qbinom{n}{s-i}{q} \Rior'_{(i+n-s,n-s)}(q),
\qquad\quad
\Rior'_{(0,0)}(q) = 1,
\]
which implies that
\[
\Cat_{(2n-s,s)}(q) = \sum_{i=0}^s q^{(n-1)(s-i)} \qbinom{n}{s-i}{q} \Rior'_{(i+n-s,n-s)}(q).
\]
Similar to Conjecture~\ref{conj:q_natural_motzkin_tri}, we conjecture that these $q$-Riordan triangle numbers are positive.

\begin{conjecture}
We have $\Rior'_{(n,n-s)}(q) \in \Z_{\geq 0}[q]$.
\end{conjecture}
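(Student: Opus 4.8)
The plan is to establish positivity by realizing $\Rior'_{(n,n-s)}(q)$ as a generating function over a combinatorial set with a $\Z_{\geq 0}$-valued statistic: any such generating function automatically lies in $\Z_{\geq 0}[q]$, which would settle the conjecture. The natural candidate set is the collection of Riordan paths ending at $(n,n-s)$, equivalently the basis of the relevant dominant weight space of $V(\fw_n+\tfw_s)$ in type $D_n$ furnished by the semistandard spin rigid tableaux of~\cite{KLO17}. The task is then to identify the correct statistic $\mathrm{stat}$ so that $\Rior'_{(n,n-s)}(q)=\sum_P q^{\mathrm{stat}(P)}$ and to verify that it is compatible with the defining recursion.

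To pin down the statistic I would use the representation-theoretic anchor already available in the $B_n$ case. Namely, I would compute $\nps(\fw_n+\tfw_s)$ in type $D_n$ by the same Weyl-character-formula manipulation used in Proposition~\ref{prop:B_near_spin_q_char}; this is manifestly an element of $\Z_{\geq 0}[q]$ because every principal specialization of a character is a finite sum of monomials $q^k$. Decomposing $\ch(\fw_n+\tfw_s)$ into dominant weight spaces and their type-$D_n$ Weyl orbits, the principal specialization of each orbit sum should contribute a factor of the shape $q^{(n-1)(s-i)}\qbinom{n}{s-i}{q}$, exactly the coefficients appearing in $\Cat_{(2n-s,s)}(q)=\sum_{i=0}^s q^{(n-1)(s-i)}\qbinom{n}{s-i}{q}\Rior'_{(i+n-s,n-s)}(q)$. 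Matching orbit-by-orbit would force the graded multiplicity of the dominant weight space indexed by $i$ to equal $\Rior'_{(i+n-s,n-s)}(q)$, and the grading that the principal specialization induces on that single weight space would be the sought statistic.

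The hard part will be precisely this orbit-by-orbit matching, since the recursion involves a subtraction while the principal specialization a priori only records the \emph{total} graded dimension, not its decomposition across Weyl orbits. To separate the orbits one needs an \emph{intrinsic} grading on each dominant weight space---a charge- or energy-type statistic on the spin rigid tableaux underlying Theorem~\ref{thm: wt Riordan}, in the spirit of a Lusztig/Kostka--Foulkes $q$-analog of weight multiplicity---together with a proof that the principal specialization factors as the product of this intrinsic grading with the $q$-orbit sum. Establishing such a factorization is equivalent to proving a graded refinement of the $q$-binomial transform relating $\Cat_{(2n-s,s)}(q)$ to the $\Rior'_{(i+n-s,n-s)}(q)$, and this is the genuine obstacle.

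A possible shortcut, reducing the conjecture to the Motzkin case, is to prove a $q$-analog of Lemma~\ref{lem:Mot_Rio_relation}: if one can show $\Rior'_{(n,n-s)}(q)+q^{a}\,\Rior'_{(n-1,n-s)}(q)=\Mot'_{(n-1,n-s)}(q)+q^{b}\,\Mot'_{(n-1,n-s-1)}(q)$ for suitable exponents $a,b$, then positivity of $\Rior'_{(n,n-s)}(q)$ would follow by induction from Conjecture~\ref{conj:q_natural_motzkin_tri} together with the base cases. I expect the exponents $a,b$ can be read off from the principal specializations, but proving the identity itself still appears to require the same graded weight-multiplicity input as the direct approach.
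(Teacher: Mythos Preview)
The statement you are attempting to prove is presented in the paper as an open \emph{conjecture}; the paper gives no proof of it. So there is no ``paper's own proof'' to compare against, and your proposal should be read as a research plan rather than as a solution.

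As a plan, what you wrote is honest about its own gaps. The central difficulty you identify is real: the defining recursion for $\Rior'_{(n,n-s)}(q)$ involves a subtraction, so positivity does not follow from the fact that $\Cat_{(2n-s,s)}(q)\in\Z_{\geq 0}[q]$. Your idea of interpreting the recursion as an orbit-by-orbit decomposition of a type $D_n$ principal specialization is plausible in spirit, but note that the type $D_n$ Weyl orbit of $\tfw_{s-i}$ has size $2^{s-i-\delta_{s-i,n}}\binom{n}{s-i}$, and its $q$-orbit sum under the principal specialization is not literally $q^{(n-1)(s-i)}\qbinom{n}{s-i}{q}$; you would first need to verify that the coefficients in the defining recursion genuinely arise this way, which already seems to require nontrivial work. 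Your proposed shortcut via a $q$-analog of Lemma~\ref{lem:Mot_Rio_relation} only reduces the problem to Conjecture~\ref{conj:q_natural_motzkin_tri}, which the paper also leaves open, so it does not yield an unconditional proof.

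In short: you have not proved the conjecture, and neither has the paper.
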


\begin{example}
We have
\[
[ \Rior'_{(n,r)} ]_{n,r=0}^4 =
\begin{bmatrix}
 1 \\
 0 & 1 \\
 1 & 1 & 1 \\
 1 & q^2 + q + 1 & q + 1 & 1 \\
 q^4 + q^2 + 1 & q^4 + q^3 + 2q^2 + q + 1 & q^4 + q^3 + 2q^2 + q + 1  & q^2 + q + 1 & 1
\end{bmatrix}.
\]
\end{example}

\subsection{Principal specializations by branching rules}

Next we prove some principal specializations by using the branching rules for the representations. This technique has been used before, \textit{e.g.},~\cite{Rains06}, and so the subsequent proofs may also be well-known to experts.

\begin{proposition}
\label{prop:likely_stanley}
In type $A_n$, we have
\[
\ps(\varpi_n + \varpi_k) = q^{\binom{n}{2} + \binom{k}{2}} [n-k+1]_q \qbinom{n+2}{k}{q},
\]
where we consider $\varpi_0 = 0$.
\end{proposition}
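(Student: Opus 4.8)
The plan is to recognize that in type $A_n$ (that is, $\mathfrak{sl}_{n+1}$, with characters in the $n+1$ variables $x_1,\dots,x_{n+1}$) the dominant weight $\fw_n+\fw_k$ corresponds to the partition $\lambda = (2^k,1^{n-k})$, padded to the $(n+1)$-part sequence $(2^k,1^{n-k},0)$. Hence $\ch(\fw_n+\fw_k)$ is the Schur polynomial $s_\lambda(x_1,\dots,x_{n+1})$, and the principal specialization is $\ps(\fw_n+\fw_k) = s_\lambda(1,q,\dots,q^n)$. I would then invoke the hook--content (Stanley) specialization formula, which is the type-$A$ analogue of Propositions~\ref{prop:Bn_ps} and~\ref{prop:Cn_ps},
\[
s_\lambda(1,q,\dots,q^{N-1}) = q^{n(\lambda)} \prod_{(i,j)\in\lambda} \frac{1 - q^{N+j-i}}{1-q^{h(i,j)}},
\qquad N = n+1,
\]
where $n(\lambda)=\sum_i(i-1)\lambda_i$ and $h(i,j)$ is the hook length. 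This reduces the whole statement to a finite product over the cells of a very structured shape.

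The first step is to compute $n(\lambda)$: since $\lambda$ has $k$ rows of length $2$ and $n-k$ rows of length $1$, a direct count gives $n(\lambda)=2\binom{k}{2}+\bigl(\binom{n}{2}-\binom{k}{2}\bigr)=\binom{n}{2}+\binom{k}{2}$, which already produces the claimed prefactor $q^{\binom{n}{2}+\binom{k}{2}}$. Next I would catalogue the contents and hook lengths: the first column contributes contents $0,-1,\dots,-(n-1)$ and hook lengths $n-i+2$ for $i\le k$ and $n-i+1$ for $i>k$, while the second column contributes contents $1,0,\dots,2-k$ and hook lengths $k,k-1,\dots,1$. Collecting the numerator and denominator into $q$-Pochhammer symbols $(q)_m=\prod_{r=1}^m(1-q^r)$ gives
\[
\prod_{(i,j)\in\lambda}\frac{1-q^{N+j-i}}{1-q^{h(i,j)}} = \frac{(q)_N (q)_{N+1}}{(q)_{N-n}(q)_{N-k+1}} \cdot \frac{(q)_{n-k+1}}{(q)_{n+1}(q)_{n-k}(q)_k}.
\]
Setting $N=n+1$ and simplifying with $(q)_{n-k+1}=(1-q^{n-k+1})(q)_{n-k}$ then collapses this to $[n-k+1]_q\,\qbinom{n+2}{k}{q}$, which completes the proof.

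The only real work is the telescoping bookkeeping of the Pochhammer products, and the one genuine subtlety is insisting on $N=n+1$ rather than $n$: the shape $(2^k,1^{n-k})$ has only $n$ rows but lives inside $\mathfrak{sl}_{n+1}$, and it is precisely the extra variable $q^n$ that upgrades $\qbinom{n}{k}{q}$ into $[n-k+1]_q\qbinom{n+2}{k}{q}$. I would sanity-check the boundary case $k=0$, where $\lambda=(1^n)$ and the claim becomes $e_n(1,q,\dots,q^n)=q^{\binom{n}{2}}[n+1]_q$. An alternative consistent with the branching-rule method advertised before the proposition is to peel off the variable $q^n$ via the $\mathfrak{gl}_{n+1}\downarrow\mathfrak{gl}_n$ branching rule; for this shape only four interlacing partitions occur, namely $(2^k,1^{n-k})$, $(2^k,1^{n-k-1})$, $(2^{k-1},1^{n-k+1})$, and $(2^{k-1},1^{n-k})$, yielding a four-term recursion in $(n,k)$ that one resolves by induction. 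The hook--content route is cleaner and avoids this case analysis, so I would present that and merely remark on the branching alternative.
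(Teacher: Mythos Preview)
Your proof is correct and genuinely different from the paper's. The paper proves the identity by induction on $n$ via the branching rule $A_n \searrow A_{n-1}$: it writes down the four-term decomposition of $B(\fw_n+\fw_k)$ under restriction, feeds in the known $\ps(\fw_k)=q^{\binom{k}{2}}\qbinom{n}{k}{q}$ for the single-column pieces and the inductive hypothesis for the two-column pieces, and then closes the recursion with the $q$-Pascal identity~\eqref{eq:pascal2}. This is exactly the ``alternative consistent with the branching-rule method'' that you sketch at the end of your proposal but chose not to carry out.

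Your route through the hook--content formula is the one the paper explicitly flags after its proof as an alternative (citing~\cite[Thm.~7.21.2]{ECII}). What you gain is brevity and the avoidance of an inductive case analysis: once $n(\lambda)=\binom{n}{2}+\binom{k}{2}$ is read off, the remaining product telescopes in one line. What the paper's proof buys is thematic coherence with the surrounding subsection, which is expressly about deriving principal specializations from branching rules rather than from closed-form character specializations; the authors emphasize that their argument is ``representation theoretic'' precisely to contrast it with the hook--content shortcut you take.

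One small remark on presentation: your intermediate expression with $(q)_{N-n}$ in the denominator is correct once one sets $N=n+1$, giving $(q)_1=1-q$, but it would be clearer to substitute $N=n+1$ before writing the Pochhammer bookkeeping, since the general-$N$ form with $(q)_{N-n}$ in the denominator looks momentarily alarming.
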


\begin{proof}
The case for $A_1$ is a straightforward computation. We proceed by induction.
We have
\begin{equation}
\label{eq:branching_target}
\begin{split}
\nps(\varpi_n + \varpi_k) & = q^{\eta_{n,k}} [n-k+1]_q \qbinom{n+2}{k}{q}
\\ & = q^{\eta_{n,k}} [n-k+1]_q \left( \qbinom{n+1}{k-1}{q} + q^{n+2-k} \qbinom{n+1}{k-1}{q} \right),
\end{split}
\end{equation}
where $\eta_{n,k} = \binom{n}{2} + \binom{k}{2}$.
Next, by the branching rule $A_n \to A_{n-1}$, we have
\[
B(\varpi_n + \varpi_k) \to B(\varpi_n + \varpi_k) \oplus B(\varpi_n + \varpi_{k-1}) \oplus B(\varpi_{n-1} + \varpi_k) \oplus B(\varpi_{n-1} + \varpi_{k-1})
\]
Taking the principal specializations of the above, we have
\begin{equation}
\label{eq:ch_spec_branching}
q^{\binom{n}{2}} \ps(\varpi_k) + q^{\binom{n}{2}} q^n \ps(\varpi_{k-1}) + q^n \ps(\varpi_{n-1} + \varpi_k) + q^n q^n \ps(\varpi_{n-1} + \varpi_{k-1}),
\end{equation}
where the $q^n$ factors are from fixing a box with an $n+1$ and $q^{\binom{n}{2}}$ from the column $1 2 \cdots n$.

In type $A_{n-1}$, we have
\begin{equation}
\label{eq:spec_char_column_type_A}
\ps(\varpi_k) = q^{\binom{k}{2}} \qbinom{n}{k}{q}
\end{equation}
by either applying branching rules $A_k \searrow A_{k-1}$ or~\cite[Thm.~7.21.2]{ECII}.
Substituting~\eqref{eq:spec_char_column_type_A} and by our induction hypothesis into~\eqref{eq:ch_spec_branching}, we obtain
\begin{align*}
\ps(\varpi_k) & = q^{\binom{n}{2}} q^{\binom{k}{2}} \qbinom{n}{k}{q} + q^n q^{\binom{n}{2}} q^{\binom{k-1}{2}} \qbinom{n}{k-1}{q} + q^n  q^{\eta_{n-1,k}} [n-k]_q \qbinom{n+1}{k}{q}
\\ & \hspace{20pt} + q^n q^n  q^{\eta_{n-1,k-1}} [n-k+1]_q \qbinom{n+1}{k-1}{q}.
\end{align*}
Thus, by using $\binom{n-1}{2} + (n-1) = \binom{n}{2}$, we obtain
\begin{equation}
\label{eq:branch_simplified1}
\begin{aligned}
\ps(\varpi_k) & =q^{\eta_{n,k}} \qbinom{n}{k}{q} + q^{n-k} q^{\eta_{n,k}} \qbinom{n}{k-1}{q} + q q^{\eta_{n,k}}[n-k]_q \qbinom{n+1}{k}{q}
\\ & \hspace{20pt} + q^{n+2-k} q^{\eta_{n,k}} [n-k+1]_q \qbinom{n+1}{k-1}{q}.
\end{aligned}
\end{equation}
Next, we apply the $q$-Pascal triangle identity~\eqref{eq:pascal2} to~\eqref{eq:branch_simplified1} and factoring out $q^{\eta_{n,k}}$ to obtain:
\[
q^{\eta_{n,k}} \left( \qbinom{n+1}{k}{q} + q [n-k]_q \qbinom{n+1}{k}{q} + q^{n+2-k}  [n-k+1]_q \qbinom{n+1}{k-1}{q} \right),
\]
Next, note that $[n-k+1]_q = q [n-k]_q + 1$, and so we have
\[
q^{\eta_{n,k}} \left( [n-k+1]_q \qbinom{n+1}{k}{q} + q^{n+2-k}  [n-k+1]_q \qbinom{n+1}{k-1}{q} \right),
\]
which equals~\eqref{eq:branching_target} as desired.
\end{proof}

Proposition~\ref{prop:likely_stanley} can alternatively be proven using~\cite[Thm.~7.21.2]{ECII}. Note that our proof is representation theoretic as we are using the branching rule $A_n \searrow A_{n-1}$.

When $q=1$ of Proposition~\ref{prop:likely_stanley}, this corresponds to negative of the reverse of the triangular array of~\cite[A055137]{OEIS} with removing the $0$ and $1$ portions.

\begin{proposition}
\label{prop:q_binomial_repr}
In type $B_n$, we have
\[
\nps(\tfw_i) = \qbinom{2n+1}{i}{q}.
\]
\end{proposition}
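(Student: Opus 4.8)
The plan is to sidestep the explicit product formula of Proposition~\ref{prop:Bn_ps} and instead exploit the fact that in type $B_n$ the fundamental representations are exterior powers of the vector representation. Concretely, for $1 \le i \le n$ the exterior powers $\bigwedge^i V(\fw_1)$ are irreducible (the invariant form of $\mathfrak{so}_{2n+1}$ is symmetric, so it admits no equivariant contraction $\bigwedge^i \to \bigwedge^{i-2}$, in contrast to the symplectic case), and comparing highest weights gives $\bigwedge^i V(\fw_1) \iso V(\tfw_i)$, where at the top exterior power the highest weight is $\epsilon_1 + \cdots + \epsilon_n = 2\fw_n = \tfw_n$. Hence $\ch(\tfw_i)$ is the $i$-th elementary symmetric polynomial $e_i$ evaluated at the $2n+1$ monomials $x^{\wt(b)}$ for $b \in B(\fw_1)$.

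First I would record that under the principal specialization $x_k \mapsto q^k$ the weights of $B(\fw_1)$, namely $\epsilon_1, \dotsc, \epsilon_n, 0, -\epsilon_n, \dotsc, -\epsilon_1$, become the $2n+1$ distinct powers $q^{-n}, q^{-n+1}, \dotsc, q^{n-1}, q^n$. Therefore
\[
\ps(\tfw_i) = e_i\bigl(q^{-n}, q^{-n+1}, \dotsc, q^{n-1}, q^n\bigr).
\]
Next I would apply the standard evaluation of elementary symmetric polynomials on a geometric progression,
\[
e_i\bigl(1, q, \dotsc, q^{m-1}\bigr) = q^{\binom{i}{2}} \qbinom{m}{i}{q},
\]
after pulling the common factor $q^{-n}$ out of each argument. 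With $m = 2n+1$ this gives
\[
\ps(\tfw_i) = q^{-ni} \, q^{\binom{i}{2}} \qbinom{2n+1}{i}{q} = q^{\binom{i}{2} - ni} \qbinom{2n+1}{i}{q}.
\]
Since $\qbinom{2n+1}{i}{q} \in \Z_{\geq 0}[q]$ has nonzero constant term, the valuation is exactly $\eta_{\tfw_i} = \binom{i}{2} - ni$; dividing it out yields $\nps(\tfw_i) = \qbinom{2n+1}{i}{q}$, as claimed.

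The only genuinely nonformal point is the identification $\bigwedge^i V(\fw_1) \iso V(\tfw_i)$ for all $1 \le i \le n$, in particular getting the factor of $2$ in $\tfw_n = 2\fw_n$ correct at the top exterior power; everything afterward is the textbook $q$-elementary-symmetric identity. An alternative, purely computational route is to substitute $\lambda = (1^i)$ directly into Proposition~\ref{prop:Bn_ps}: the first product telescopes to $(1-q^{2n+1})/(1-q^{2n-2i+1})$, and the double product breaks into the blocks $k,l \le i$, $k \le i < l$, and $k,l > i$, each of which telescopes; collecting the surviving factors reproduces $q^{\binom{i}{2}-ni}\qbinom{2n+1}{i}{q}$. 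The obstacle on that route is entirely bookkeeping — managing the reindexing of the several telescoping products — which is exactly what the exterior-power argument allows one to bypass.
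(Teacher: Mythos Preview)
Your proof is correct and takes a genuinely different route from the paper's. The paper proceeds by induction on $n$ via the branching rule $B_n \searrow B_{n-1}$: it applies the $q$-Pascal identity~\eqref{eq:pascal1} twice to write $\qbinom{2n+1}{i}{q}$ as a four-term sum, then matches this against the principal specialization by constructing an explicit weight-preserving bijection on KN column tableaux (shift all entries by one and append $\overline{n}$ and/or $\overline{n-1}$ as needed). Your argument bypasses the induction entirely by invoking the isomorphism $V(\tfw_i) \iso \bigwedge^{i} V(\fw_1)$ and the closed form $e_i(1,q,\dotsc,q^{m-1}) = q^{\binom{i}{2}}\qbinom{m}{i}{q}$.

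What each buys: your route is shorter and conceptually cleaner, but it imports the irreducibility of the exterior powers of the standard $\mathfrak{so}_{2n+1}$-module as a black box; the paper's proof stays entirely inside the tableau/crystal framework and illustrates the branching-rule technique that the surrounding subsection is explicitly devoted to. The paper in fact acknowledges (in the remark immediately following) that the result is a well-known consequence of Proposition~\ref{prop:Bn_ps}, which is essentially your alternative computational route.
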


\begin{proof}
Note that $\nps(\tfw_i) = q^{\eta_{n,i}} \ps(\tfw_i)$, where $\eta_{n,i} = \binom{n+1}{2}-\binom{n+1-i}{2}$.
The claim holds for $B_2$ (and $B_1 = A_1$) by a straightforward computation.
Applying the $q$-Pascal's triangle relation~\eqref{eq:pascal1} twice, we obtain
\[
\qbinom{2n+1}{i}{q} = q^i q^i \qbinom{2n-1}{i}{q} + q^i \qbinom{2n-1}{i-1}{q} + q^{i-1} \qbinom{2n-1}{i-1}{q} + \qbinom{2n-1}{i-2}{q}.
\]
We show that this agrees with the principal specialization under the branching rule $B_n \to B_{n-1}$. For some crystal $B$ and $T \in B$, let
\[
\wt_{\ps}(T) = \wt(T) \bigr|_{x_1^{\pm1}=q^{\pm1}, \dotsc, x_n^{\pm1}=q^{\pm n}}
\]
be the principal specialization of the weight of $T$, and note that $\ps(B) = \sum_{T \in B} \wt_{\ps}(T)$. We denote the fundamental weights of type $B_{n-1}$ by $\{\zeta_i \mid i \in I_{B_{n-1}} \}$.

Assume $i < n$.
We first show that
\begin{equation}
\label{eq:binom_principal_spec_version}
\begin{split}
\ps(\fw_i) & = q^i \ps(\zeta_i) + q^{-(n-1)} q^{i-1} \ps(\zeta_{i-1}) + q^{-n} q^{i-1} \ps(\zeta_{i-1})
\\ & \hspace{20pt} + q^{-(n-1)} q^{-n} q^{i-2} \ps(\zeta_{i-2}).
\end{split}
\end{equation}
For this proof, we equate $\ok \equiv -k$ and we write the single column KN tableaux as sets. We define a map $\phi_m \colon B(\zeta_m) \to B(\fw_k)$ by $\{t_1, \dotsc, t_m\} \mapsto \{t_1+1, \dotsc, t_m+1\}$. Note that under the principal specialization, we have
$
\wt_{\ps}\bigl( \phi_m(T) \bigr) = q^m \wt_{\ps}(T).
$
Note that $\on$ and $\overline{n-1}$ cannot appear in the image under $\phi_m$, but every other set of size $m$ appears.
Thus, we can extend this to a weight preserving bijection by adding $\on$ and/or $\overline{n-1}$ to $\phi_m$ to obtain Equation~\eqref{eq:binom_principal_spec_version}.

Next, we rewrite Equation~\eqref{eq:binom_principal_spec_version} as
\begin{align*}
q^{\eta_{n,i}} \ps(\fw_i) & = q^i q^i q^{\eta_{n-1,i}} \ps(\zeta_i) + q^i q^{\eta_{n-1,i-1}} \ps(\zeta_{i-1})
\\ & \hspace{20pt} + q^{i-1} q^{\eta_{n-1,i-1}} \ps(\zeta_{i-1}) + q^{\eta_{n-2,i-2}} \ps(\zeta_{i-2}),
\end{align*}
and the claim follows by induction.

Note that for $i = n$, the branching rule $\pm$-diagram instead has a column with a $0$ instead of a blank column.
\end{proof}

\begin{proposition}
\label{prop:nps_type_B_spin}
For $\g$ of type $B_n$, we have
\[
\nps(\fw_n) = q^{\binom{n+1}{2}} \ps(\fw_n) = \prod_{k=1}^n (1 + q^k).
\]
\end{proposition}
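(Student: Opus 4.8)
The plan is to bypass the Weyl-character-formula expression of Proposition~\ref{prop:Bn_ps} entirely and instead read off $\ps(\fw_n)$ directly from the explicit crystal of the spin representation. Recall that $B(\fw_n) = \{+,-\}^n$ with $\wt(s_1,\dots,s_n) = \frac12(s_1\epsilon_1 + \cdots + s_n\epsilon_n)$, so that $\lvert B(\fw_n)\rvert = 2^n$ and the character factors completely over the coordinates. This is the natural route because the spin character is a product of two-term factors, so the normalization collapses it to $\prod_{k=1}^n(1+q^k)$ with essentially no algebra.

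First I would substitute $x_i = q^i$ into $\ch(\fw_n) = \sum_{b} x^{\wt(b)}$. Since each coordinate $s_i$ may be chosen independently in $\{+,-\}$ and contributes $x_i^{\pm 1/2} = q^{\pm i/2}$, the sum factors as
\[
\ps(\fw_n) = \sum_{(s_1,\dots,s_n) \in \{+,-\}^n} \prod_{i=1}^n q^{s_i\, i/2} = \prod_{i=1}^n \left( q^{i/2} + q^{-i/2} \right).
\]
Next I would identify the valuation: the lowest power of $q$ is attained uniquely by the all-minus (lowest weight) element, giving $\eta_{\fw_n} = -\frac12\sum_{i=1}^n i = -\frac12\binom{n+1}{2}$. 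Hence the normalizing factor is $q^{-\eta_{\fw_n}} = \prod_{i=1}^n q^{i/2}$, and distributing one factor of $q^{i/2}$ into the $i$-th term of the product collapses each binomial:
\[
\nps(\fw_n) = \prod_{i=1}^n q^{i/2}\left(q^{i/2} + q^{-i/2}\right) = \prod_{i=1}^n (1 + q^i),
\]
which is the claimed identity, manifestly a polynomial in $q$ with constant term $1$ as required.

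The computation has no real obstacle; the only point demanding care is the appearance of half-integer powers of $q$ coming from the half-integral spin weights, which is exactly why the normalizing exponent is $\frac12\binom{n+1}{2} = n(n+1)/4$ rather than an integer. As an alternative consistent with the inductive style of Propositions~\ref{prop:q_binomial_repr} and~\ref{prop:likely_stanley}, one could instead peel off the last coordinate via the branching rule $B_n \to B_{n-1}$, under which $B(\fw_n)$ restricts to two copies of $B(\fw_{n-1})$ distinguished by $s_n = \pm$; this yields the recursion $\ps_{B_n}(\fw_n) = (q^{n/2}+q^{-n/2})\,\ps_{B_{n-1}}(\fw_{n-1})$ and hence the product formula by induction on $n$.
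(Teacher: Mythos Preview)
Your proof is correct and takes essentially the same approach as the paper: both read off the character directly from the explicit crystal $B(\fw_n) = \{+,-\}^n$ with $\wt(s_1,\dots,s_n) = \tfrac12\sum s_i\epsilon_i$. The paper packages the computation as a bijection $(s_1,\dots,s_n) \mapsto \{n+1-i \mid s_i = +\}$ with strict partitions whose generating function is $\prod_{k=1}^n(1+q^k)$, while you factor the sum coordinatewise as $\prod_i(q^{i/2}+q^{-i/2})$ and then normalize; these are two phrasings of the same identity. Your observation that the normalizing exponent is $\tfrac12\binom{n+1}{2} = n(n+1)/4$ (rather than $\binom{n+1}{2}$, which appears in the displayed statement) is accurate and worth flagging, since the half-integral spin weights force half-integer powers of $q$ in $\ps(\fw_n)$ before normalization.
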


\begin{proof}
Note that the coefficient of $q^m$ occurring in $\prod_{k=1}^n (1 + q^k)$ is precisely the number of strict partitions of $m$. For any element $(s_1, \dotsc, s_n) \in B(\fw_n)$, we construct a corresponding strict partition by $\{n+1-i \mid s_i = +\}$, and it is clear this is a bijection. Since an $i \in S \in B(\fw_n)$ contributes weight $\epsilon_i/2$ and $i \notin S$ contributes $-\epsilon_i/2$, the claim follows.
\end{proof}

\begin{remark}
Proposition~\ref{prop:q_binomial_repr} and Proposition~\ref{prop:nps_type_B_spin} are well-known consequences of, \textit{e.g.}, Proposition~\ref{prop:Bn_ps}.
Furthermore, in~\cite[Sec.~3]{Hughes77}, Hughes showed that
\[
\dim_q V(\fw_n) = \prod_{k=1}^n (1 + q^k),
\]
where $\dim_q V$ is the $q$-dimension (\textit{e.g.}, see~\cite[Sec.~10]{kac90}).
\end{remark}

We note that the normalized principal specialization of the characters of type $D_n$ are \emph{not} $q$-binomial coefficients. For example
\begin{align*}
\nps(\tfw_2) & = q^{14} + q^{13} + 2 q^{12} + q^{11} + 2 q^{10} + 2 q^9 + 3 q^8 \\
   & \hspace{20pt} + 4 q^7 + 3 q^6 + 2 q^5 + 2 q^4 + q^3 + 2 q^2 + q + 1, \\
\qbinom{2\cdot4}{2}{q} & = q^{12} + q^{11} + 2 q^{10} + 2 q^9 + 3 q^8 + 3 q^7 \\
  & \hspace{20pt} + 4 q^6 + 3 q^5 + 3 q^4 + 2 q^3 + 2 q^2 + q + 1,
\end{align*}
However, by applying the branching rule $B_n \to D_n$ corresponding to $SO(2n) \to SO(2n-1)$, we obtain the following relation on normalized principal specializations.

\begin{theorem}
Let $\{\zeta_i \mid i \in I_{D_n} \}$ denote the fundamental weights of type $D_n$.
For $s < n$,
\[
\nps(\tfw_s) = \nps(\widetilde{\zeta}_s) + q^{n-s-1} \nps(\widetilde{\zeta}_{s-1}),
\]
where we consider $\tfw_0 = 0$. Moreover, we have
\[
\nps(\tfw_n)
 = q \nps(\widetilde{\zeta}_{n-1}) + q^{1+(-1)^n} \nps(\widetilde{\zeta}_n^{+}) + q^{1-(-1)^n} \nps(\widetilde{\zeta}_n^{-}),
 \]
where $\widetilde{\zeta}_n^+ = 2\zeta_n$ and $\widetilde{\zeta}_n^- = 2\zeta_{n-1}$.
\end{theorem}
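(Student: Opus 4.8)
The plan is to prove both identities simultaneously by pushing the branching rule $B_n \downarrow D_n$ (restriction of $SO(2n+1)$-modules to $SO(2n)$) through the principal specialization, exactly in the spirit of the proof of Proposition~\ref{prop:q_binomial_repr}. The key starting point is that $\dim V(\tfw_s) = \binom{2n+1}{s}$ together with the column description identifies $V(\tfw_s)$ with the $s$-th exterior power $\bigwedge^s B(\fw_1)$ of the vector representation, realized by single columns of height $s$ in the alphabet $1 \prec \cdots \prec n \prec 0 \prec \on \prec \cdots \prec \one$. Writing $V_B$ and $V_D$ for the type $B_n$ and $D_n$ vector representations, the type $B_n$ vector crystal decomposes as the type $D_n$ one together with the extra weight-zero letter $0$, so I would show on the level of these columns that keeping or deleting the letter $0$ induces the module decomposition.

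For $s < n$ I would establish the decomposition
\[
\bigwedge\nolimits^s V_B \;\cong\; \bigwedge\nolimits^s V_D \;\oplus\; \bigwedge\nolimits^{s-1} V_D,
\]
obtained by splitting height-$s$ columns according to whether they contain $0$: a column omitting $0$ is a height-$s$ column in $D_n$, while deleting $0$ from a column containing it yields a height-$(s-1)$ column in $D_n$. Since $\bigwedge^k V_D = V(\widetilde\zeta_k)$ for every $k \le n-1$, this is precisely $V(\widetilde\zeta_s) \oplus V(\widetilde\zeta_{s-1})$. The correspondence is weight preserving because $0$ contributes weight $0$, so the only change under specialization is the normalizing power of $q$: it is the difference between the valuation of $\ps(\tfw_s)$ in type $B_n$ and that of $\ps(\widetilde\zeta_{s-1})$ in type $D_n$, each read off from the most negative column (filled with the barred letters of largest absolute value, with $0$ inserted where needed). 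The $\widetilde\zeta_s$ summand keeps its normalization unchanged because the minimal $B_n$ and $D_n$ columns coincide, while the valuation count for the $\widetilde\zeta_{s-1}$ summand produces the attached power of $q$.

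The case $s = n$ is where the real work lies, and it is the step I expect to be the main obstacle. Here $\bigwedge^{n-1} V_D = V(\widetilde\zeta_{n-1})$ still, but $\bigwedge^n V_D$ is no longer irreducible: the full-height $D_n$ columns split into the two Hodge-type components $V(\widetilde\zeta_n^+) = V(2\zeta_n)$ and $V(\widetilde\zeta_n^-) = V(2\zeta_{n-1})$. I would separate these by the parity of the number of barred entries in a full column (equivalently, the product of the signs in $\{+,-\}^n$), which is exactly the invariant distinguishing the two halves of $\bigwedge^n$. The subtlety, and the source of the $(-1)^n$ in the exponents, is that the globally minimal column $\one\,\otwo\cdots\on$ — the one carrying the smallest specialization exponent $-\binom{n+1}{2}$ — has exactly $n$ barred entries, so it lands in $V(\widetilde\zeta_n^+)$ or $V(\widetilde\zeta_n^-)$ according to the parity of $n$. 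Hence the two halves have valuations differing by $2$: the half containing this column has valuation $-\binom{n+1}{2}$, and the other has valuation $-\binom{n+1}{2}+2$, realized by flipping the single barred letter $\one$ to $1$. Comparing each of these against the $B_n$ valuation $-\binom{n+1}{2}$ of $\ps(\tfw_n)$ reads off the parity-dependent power of $q$ attached to each half, and the $\widetilde\zeta_{n-1}$ summand is handled exactly as in the $s<n$ case.

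Finally I would assemble the three weighted summands into $\nps(\tfw_n)$ and cross-check against $\nps(\tfw_n) = \qbinom{2n+1}{n}{q}$ from Proposition~\ref{prop:q_binomial_repr}, as well as against a small explicit case such as $B_3$ to pin down the signs in the parity-dependent exponents. The entire difficulty is concentrated in the $s=n$ parity bookkeeping: correctly decomposing $\bigwedge^n V_D$ into its two halves and identifying which half contains the minimal-weight column, since everything else reduces to a routine comparison of valuations computed from extremal columns.
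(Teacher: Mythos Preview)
Your proposal is correct and follows essentially the same approach as the paper: both arguments realize the branching $B_n\downarrow D_n$ on single-column tableaux, split according to the presence of the letter $0$, further split the height-$n$ columns by the parity of barred entries to separate $V(2\zeta_n)$ from $V(2\zeta_{n-1})$, and then read off the powers of $q$ from the valuations of the extremal columns. The only cosmetic difference is that you work with the exterior-power (subset) basis where $0$ appears at most once, whereas the paper works with KN tableaux (where $0$ can repeat in a column) and uses the replacement $00\leftrightarrow \on\,n$ to pass between the $B_n$ and $D_n$ models; at the level of characters and principal specializations these two bookkeeping schemes are equivalent.
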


\begin{proof}
We consider the KN tableaux representation. We replace pairs
\[
\young(0,0) \mapsto \young(\on,n)
\]
occurring in the KN tableaux of type $B_n$. This is a bijection since such pairs cannot occur in a type $B_n$ tableaux (but they can in a type $D_n$ tableaux). Thus, we can take the KN tableaux with an even number of $0$ entries and consider them as type $D_n$ tableaux. For those with an odd number of $0$ entries, we replace all but the top most $0$ and then remove this entry. The result is a height $s-1$ type $D_N$ column.
Note that for $\eta_s = \binom{n+1}{2} - \binom{n+1-s}{2}$, we have $\nps(\tfw_s) = q^{\eta_s} \ps(\tfw_s)$.
Moreover, note that $q^{n-s-1} q^{\eta_{s-1}} = q^{\eta_s}$. Thus, the claim follows for $s < n$.

In the case $s = n$, then we also have to consider the parity by the number of barred entries. By further subdividing the height $n$ columns (which must have an even number of $0$ entries) into those with an even or odd number of barred entries. Finally, note that $\nps(\widetilde{\zeta}_b^{\pm}) = q^{\eta^{\pm}} \ps(\widetilde{\zeta}_n^{\pm})$ for $\eta^{\pm} = \eta_{n-1} \pm (-1)^n$, and the claim follows.
\end{proof}

In contrast, there does not appear to be a simple formula for branching $D_{n+1} \to B_n$ (corresponding to $SO(2n+1) \to SO(2n)$). For example, from type $D_4 \to B_3$, the branching rule is $B(\widetilde{\zeta}_s) \mapsto B(\tfw_s) + B(\tfw_{s-1})$ for $s < n$. Moreover, we have the following:
\begin{align*}
q^7 \ps_D(\tfw_2) & = q^{14} + q^{13} + 2 q^{12} + q^{11} + 2 q^{10} + 2 q^9 + 3 q^8
\\ & \hspace{20pt} + 4 q^7 + 3 q^6 + 2 q^5 + 2 q^4 + q^3 + 2 q^2 + q + 1
 \allowdisplaybreaks  \\ & = [2]_{q^2}^2 (q^{10} + q^9 - q^7 + q^6 + 3 q^5 + q^4 - q^3 + q + 1),
\allowdisplaybreaks  \\ q^3 \ps_B(\tfw_1) & = q^6 + q^5 + q^4 + q^3 + q^2 + q + 1 = [7]_q,
\allowdisplaybreaks  \\ q^5 \ps_B(\tfw_2) & = q^{10} + q^9 + 2 q^8 + 2 q^7 + 3 q^6 + 3 q^5 + 3 q^4 + 2 q^3 + 2 q^2 + q + 1
 \allowdisplaybreaks  \\ & = (q^2 - q + 1) [3]_q [7]_q = [3]_{q^2} [7]_q,
\end{align*}
There does not appear to be a relation between these polynomials that specializes to the branching rule at $q = 1$.

\subsection{Other identities}

In this section, we give some other identities that do not appear to come from a $\mathfrak{so}_n$ character identity.

The following proposition is a specialization of~\cite[Thm.~2.1]{Okada09}, which is a $\mathfrak{o}_{2n+1}$ character identity. The proof we give is based on~\cite{Cigler09}.

\begin{proposition}
\label{prop:determinant_double_spin_B} \cite[Lemma 3.3]{BKW16}
Consider the Hankel matrix
\[
BH_{r,n} \seteq \left[ \binom{2(n+i+j)+1}{n+i+j} \right]_{i,j=0}^{r-1}
\]
For type $B_n$, we have
\[
\dim V(r \tfw_n) = \det BH_{r,n}.
\]
\end{proposition}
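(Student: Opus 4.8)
The plan is to recognize this as the $x_i = 1$ specialization of a type-$B$ Jacobi--Trudi formula, entirely parallel to Theorem~\ref{thm:general_det_formula_Cn}, and to prove it by the same LGV-plus-tableaux mechanism. First observe that $r\tfw_n = 2r\fw_n$ corresponds to the rectangular partition $(r^n)$ (that is, $r$ full-height columns, each of height $n$). Moreover, specializing Proposition~\ref{prop:q_binomial_repr} at $q = 1$ gives $\dim V(\tfw_m) = \binom{2m+1}{m}$ in type $B_m$, so each entry
\[
\binom{2(n+i+j)+1}{n+i+j} = \dim V(\tfw_{n+i+j}) \quad \text{in type } B_{n+i+j}.
\]
Thus $BH_{r,n}$ is a matrix whose $(i,j)$ entry is the dimension of a single double-spin column representation of growing rank, exactly as in Theorem~\ref{thm:general_det_formula_Cn} (where the entries were single-column type-$C$ characters), and the assertion becomes the statement that this determinant computes the dimension of the rectangular-shape module $V(r\tfw_n)$.

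Next I would give the lattice-path interpretation. The binomial $\binom{2N+1}{N}$ counts unconstrained monotone $NE$-lattice paths using $N$ north steps and $N+1$ east steps; dropping the ``stay below the diagonal'' constraint present for Catalan numbers is precisely the combinatorial shadow of working in $\mathfrak{o}_{2n+1}$ rather than $\mathfrak{sp}_{2n}$. I would place the initial points $\mathbf{s} = (s_1, \dotsc, s_r)$ on one anti-diagonal and the terminal points $\mathbf{t} = (t_1, \dotsc, t_r)$ on another, with the spacings chosen so that $e(s_i, t_j) = \binom{2(n+i+j)+1}{n+i+j}$, making $BH_{r,n} = [e(s_i, t_j)]_{i,j=0}^{r-1}$ a genuine path-count matrix (this is the free-path analogue of the MW00 setup~\cite{MW00} used in the proof of Theorem~\ref{thm:Hankel_Catalan_det}). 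Applying the LGV lemma (Lemma~\ref{lemma:LGV}), the anti-diagonal placement forces every non-intersecting family to come from the identity permutation, so $\det BH_{r,n}$ equals the number of families of $r$ mutually non-intersecting such paths.

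Finally I would convert non-intersecting families into tableaux. After discarding the forced initial and terminal segments (as in the proof of Theorem~\ref{thm:Hankel_Catalan_det}), each path encodes a strictly increasing height-$n$ KN column over the type-$B_n$ alphabet $1 \prec \cdots \prec n \prec 0 \prec \overline{n} \prec \cdots \prec \overline{1}$ by letting its $n$ north steps select the $n$ letters of the column; this realizes the set of columns as $B(\tfw_n)$ and recovers $\dim V(\tfw_n) = \binom{2n+1}{n}$. The $r$ paths then become the $r$ columns of a filling of the rectangle $(r^n)$, and the non-intersecting condition translates into the KN-semistandard (column-strict) condition across adjacent columns, exactly as the non-intersecting condition encoded the highest-weight/dominance condition in Theorem~\ref{thm:triangular_Catalan_det}. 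Hence the family count equals the number of type-$B_n$ KN tableaux of shape $r\tfw_n$, which is $\dim V(r\tfw_n)$.

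The hard part will be the last step: verifying that ``non-intersecting'' matches the type-$B$ KN admissibility/semistandardness for the double-spin rectangle exactly (and that only the identity permutation survives in the LGV sum), since the $0$-letter and the $\{k, \overline{k}\}$ pairing make the $B_n$ column conditions more delicate than the plain semistandard case. A completely different route, following Cigler~\cite{Cigler09}, avoids tableaux altogether: one checks that the plain Hankel determinants $\det[\binom{2(i+j)+1}{i+j}]_{i,j=0}^{r-1}$ are all equal to $1$ (so the associated Jacobi continued-fraction weights satisfy $\lambda_k = 1$), evaluates the shifted determinant $\det[\binom{2(n+i+j)+1}{n+i+j}]$ as an explicit product via the orthogonal-polynomial machinery, and matches this product against Weyl's dimension formula for the rectangular weight $(r^n)$ in type $B_n$; here the obstacle is instead the shifted Hankel evaluation. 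This mirrors how Corollary~\ref{cor: Cn determinatal r omega_n} was obtained and connects to the $\mathfrak{o}_{2n+1}$ identity of Okada~\cite{Okada09}.
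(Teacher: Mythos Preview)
Your alternative route via Cigler's Hankel evaluations plus the Weyl dimension formula is essentially what the paper does: it converts $\det BH_{r,n}$ into a $\binom{2k}{k}$-Hankel determinant via~\cite[Eq.~(43)]{Cigler09}, evaluates the latter as a product via~\cite[Eq.~(42)]{Cigler09}, and then checks directly that this product equals the closed form for $\dim V(r\tfw_n)$ coming from Proposition~\ref{prop:Bn_ps} at $q=1$.

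Your primary LGV-plus-tableaux approach, however, contains a genuine gap, and in fact the paper explicitly flags the bijective approach as an open problem (see Section~\ref{sec:problems}). The issue is your assertion that ``the anti-diagonal placement forces every non-intersecting family to come from the identity permutation.'' That is true for the Catalan setup of Theorem~\ref{thm:Hankel_Catalan_det} because the paths are constrained to stay weakly below the diagonal; once you drop that constraint to get the free binomial $\binom{2N+1}{N}$, it fails. The paper exhibits an explicit non-intersecting family associated to a transposition (the picture immediately before the open problem on Proposition~\ref{prop:determinant_double_spin_B}), so the LGV sum genuinely has cancelling terms. Consequently the bijection ``non-intersecting family $\leftrightarrow$ KN tableau of shape $(r^n)$'' cannot be a plain bijection: one would need a sign-reversing involution whose fixed points are exactly the identity-permutation families, and constructing that involution is precisely what the paper leaves open. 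Your later caveat that verifying only the identity permutation survives is ``the hard part'' understates the situation---it is not merely hard to verify, it is false as stated, and the required repair is not known.
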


\begin{proof}
By using~\cite[Eq.~(43)]{Cigler09} and~\cite[Eq.~(42)]{Cigler09}, we have
\begin{align*}
\det BH_{r,n}
& = \frac{1}{2^r} \det \left[ \binom{2i+2j+2(n+1)}{i+j+(n+1)} \right]_{i,j=0}^{r-1}
\allowdisplaybreaks  \\ & = \frac{1}{2^r} 2^{r-1+(n+1)} \prod_{j=0}^n \prod_{i=1}^j \frac{2r + j + i - 1}{j+i} = 2^n \prod_{j=0}^n \prod_{i=1}^j \frac{2r + j + i - 1}{j+i}
\allowdisplaybreaks  \\ & = 2^n \prod_{1 \leq i \leq j \leq n} \frac{2r + j + i - 1}{j+i}.
\end{align*}
From Proposition~\ref{prop:Bn_ps} at $q=1$ (see also~\cite{Okada09}), we have
\[
\dim V(r\tfw_n) = \prod_{1\leq i \leq j \leq n} \frac{2r+i+j-1}{i+j-1}.
\]
By direct computation, we have
\[
\prod_{1\leq i \leq j \leq n} \frac{2r+i+j-1}{i+j-1} \cdot \frac{i+j}{i+j}
= 2^n \prod_{1 \leq i \leq j \leq n} \frac{2r + j + i - 1}{j+i},
\]
and the claim holds.
\end{proof}

We remark that for a fixed $n$, the sequence $\bigl( \dim V(r \fw_n) \bigr)_{r=1}^{\infty}$ corresponds to the $n$-th diagonal of~\cite[A102539]{OEIS}.


There is not a natural $q$-analog of Proposition~\ref{prop:determinant_double_spin_B} because of the following. Consider $r = 2$ and $n = 2$ for the following determinant
\[
\left\lvert \begin{matrix}
\qbinom{2n+1}{n}{q} & \qbinom{2(n+1)+1}{n+1}{q} \\
\qbinom{2(n+1)+1}{n+1}{q} & \qbinom{2(n+2)+1}{n+2}{q}
\end{matrix} \right\rvert
= d_+ - d_-,
\]
where
\begin{align*}
d_+ & = \qbinom{5}{2}{q} \qbinom{9}{4}{q}
\\ & = q^{26} + 2q^{25} + 5q^{24} + 9q^{23} + 16q^{22} + 24q^{21} + 36q^{20} + 48q^{19} + 63q^{18} + 76q^{17}
\\ & \hspace{20pt} + 90q^{16} + 99q^{15} + 107q^{14} + 108q^{13} + 107q^{12} + 99q^{11} + 90q^{10} + 76q^{9}
\\ & \hspace{20pt} + 63q^{8} + 48q^{7} + 36q^{6} + 24q^{5} + 16q^{4} + 9q^{3} + 5q^{2} + 2q + 1 \allowdisplaybreaks \\
d_-  & = \qbinom{7}{3}{q} \qbinom{7}{3}{q}
\\ & = q^{24} + 2q^{23} + 5q^{22} + 10q^{21} + 18q^{20} + 28q^{19} + 43q^{18} + 58q^{17} + 76q^{16} + 92q^{15}
\\ & \hspace{20pt} + 106q^{14} + 114q^{13} + 119q^{12} + 114q^{11} + 106q^{10} + 92q^{9} + 76q^{8} + 58q^{7}
\\ & \hspace{20pt} + 43q^{6} + 28q^{5} + 18q^{4} + 10q^{3} + 5q^{2} + 2q + 1
\end{align*}
Note that $d_+ - q^k d_-$ contains a negative term for any $k \in \Z$. Hence, such a determinant form with some power of $q$ in each entry cannot result in the principal specialization of $V(r \tfw_n)$.

\begin{remark}
As an alternative approach to proving Proposition~\ref{prop:determinant_double_spin_B}, we can consider using~\cite[Thm.~11]{Krat99}.
Thus, we have that
\[
\det \left[ \binom{2(n+i+j)+1}{n+i+j} \right]_{i,j=0}^{r-1} = \mu_0^r b_1^{r-1} b_2^{r-2} \dotsm b_{r-2}^2 b_{r-1},
\]
where we consider the generating function and J-fraction
\begin{align*}
\sum_{k=0}^{\infty} \binom{2k+2n+1}{k+n} x^k & = \frac{1}{\sqrt{1-4x}} \left( \frac{1 - \sqrt{1-4x}}{2x} \right)^{2n+1}
\\ & =  \frac{\mu_0}{\displaystyle 1 + a_0 x - \frac{b_1 x^2}{\displaystyle 1 + a_1 x -\frac{b_2 x^2}{\displaystyle 1 + a_2 x - \cdots}}}
\end{align*}
For $n = 1$, we have $\mu_0 = 3$ and expect $b^{(1)}_i = (2i+5)(2i+1) / (2i+1)^2$.
However, the validity of Proposition~\ref{prop:determinant_double_spin_B} implies that $\mu_0 = 10$ and
\[
\left( b^{(2)}_i \right)_{i=1}^{13} = \left( \frac{7}{20}, \frac{24}{35}, \frac{275}{336}, \frac{728}{825}, \frac{525}{572}, \frac{2992}{3185}, \frac{5187}{5440}, \frac{2800}{2907}, \frac{12903}{13300}, \frac{19000}{19481}, \frac{9009}{9200}, \frac{37352}{38025}, \frac{50375}{51156} \right)
\]
for $n = 2$. There does not appear to be a simple expression for $\left( b^{(n)}_i \right)_{i=1}^{\infty}$. However, we believe $b_i^{(n)} < 1$ and $\lim_{i \to \infty} b_i^{(n)} = 1$, likely at an exponential rate, for all $n$.
\end{remark}

\begin{corollary}
For type $D_{n+1}$, we have
\[
\det \left[\binom{2(n+i+j)+1}{n+i+j}\right]_{i,j=0}^{r-1} = \dim V(2r \fw_n) = \dim V(2r \fw_{n+1}).
\]
\end{corollary}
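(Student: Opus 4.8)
The plan is to chain three dimension equalities. First, Proposition~\ref{prop:determinant_double_spin_B} identifies the determinant on the left-hand side, $\det BH_{r,n}$, with $\dim V(r\tfw_n)$ in type $B_n$ (for the same $n$), and recall that $\tfw_n = 2\fw_n$ there. So it suffices to prove that $\dim V(r\tfw_n)$ in type $B_n$ equals $\dim V(2r\fw_{n+1})$ in type $D_{n+1}$, and then that $\dim V(2r\fw_{n+1}) = \dim V(2r\fw_n)$ in type $D_{n+1}$.

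For the first of these, I would compare the closed-form products coming from the Weyl dimension formula. In type $B_n$ we already have (from the proof of Proposition~\ref{prop:determinant_double_spin_B}, i.e.\ Proposition~\ref{prop:Bn_ps} at $q=1$)
\[
\dim V(r\tfw_n) = \prod_{1 \le i \le j \le n} \frac{2r+i+j-1}{i+j-1};
\]
substituting $a = i-1$ rewrites this as $\prod_{0 \le a < j \le n} \frac{2r+a+j}{a+j}$. In type $D_{n+1}$ the positive roots are $\epsilon_i \pm \epsilon_j$ for $1 \le i < j \le n+1$, and with $\lambda = 2r\fw_{n+1} = r(\epsilon_1 + \cdots + \epsilon_{n+1})$ and $\rho = \sum_{i=1}^{n+1}(n+1-i)\epsilon_i$ the difference roots contribute trivially while the sum roots give
\[
\dim V(2r\fw_{n+1}) = \prod_{1 \le i < j \le n+1} \frac{2r + 2(n+1) - i - j}{2(n+1) - i - j}.
\]
The reindexing $(i,j) \mapsto (n+1-j,\, n+1-i)$ sends this product to $\prod_{0 \le i < j \le n} \frac{2r+i+j}{i+j}$, which is exactly the reindexed type $B_n$ product above. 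Hence the two dimensions agree.

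For the remaining equality, I would invoke the order-two diagram automorphism of $D_{n+1}$, which interchanges the two spin nodes and hence $\fw_n \leftrightarrow \fw_{n+1}$; it induces a linear isomorphism $V(2r\fw_{n+1}) \cong V(2r\fw_n)$, so the two dimensions coincide. (Alternatively, one can recompute the Weyl product for $\lambda = 2r\fw_n = r(\epsilon_1 + \cdots + \epsilon_n - \epsilon_{n+1})$ and observe that it is unchanged under the sign flip of the last coordinate.) Combining the three steps yields the claim. I expect the only delicate point to be the bookkeeping in the two Weyl-dimension products---tracking the ranges $i \le j$ versus $i < j$, confirming that the half-integer entries of $\rho$ in type $B_n$ cancel ratio-by-ratio, and checking that the two reindexings land on the identical product---but this is routine rather than a genuine obstacle.
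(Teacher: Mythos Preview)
Your argument is correct. Both you and the paper start from Proposition~\ref{prop:determinant_double_spin_B} to identify the determinant with $\dim V(r\tfw_n)$ in type $B_n$, but you then diverge.

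The paper's proof is bijective at the crystal level: it observes that the spin crystal $B(\zeta_n)$ in type $B_n$ and the half-spin crystal $B(\fw_n)$ in type $D_{n+1}$ are both realized by $\pm$-sequences, the latter just carrying one extra coordinate whose sign is forced by the parity constraint. Appending that forced parity bit gives a bijection $B(\zeta_n)\to B(\fw_n)$ which extends to tensor powers and hence to $B(k\zeta_n)\to B(k\fw_n)$; the second equality comes from the same construction with the opposite parity, landing in $B(\fw_{n+1})$.

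Your route instead computes both sides directly from the Weyl dimension formula, matches the two products by the reindexing $(i,j)\mapsto(n{+}1{-}j,\,n{+}1{-}i)$, and then invokes the diagram automorphism of $D_{n+1}$ for the last equality. This is entirely valid and arguably cleaner as a verification: it needs no crystal combinatorics and the bookkeeping you flag is indeed routine. What it gives up is the explicit bijection; the paper's argument explains \emph{why} the dimensions agree by exhibiting a concrete set-level map, whereas yours certifies the numerical coincidence. Either is acceptable here.
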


\begin{proof}
Let $\{\zeta_i \mid i \in I_{B_n} \}$ denote the fundamental weights of type $B_n$.
We define a map $\phi \colon B(\zeta_n) \to B(\fw_n)$ by adding an additional sign such that the parity is correct. The map $\phi$ is a bijection (and a $\{1, \dotsc, n\}$-crystal isomorphism), and this extends to tensor products; in particular, this induces a bijection $B(k\zeta_n) \to B(k\fw_n)$. This proves the first equality.

Note that there exists a similar bijection to $B(\fw_{n+1})$ by taking the opposite parity. This shows the second equality.
\end{proof}

From~\eqref{eq:dim_column_type_D}, it is straightforward to construct a bijection from the tableaux of~\cite[Thm.~6.1]{Proctor94} to subsets of size $s$ of the corresponding alphabet since the $m$-protection condition of a $(2n)$-orthogonal tableau from~\cite{Proctor94} is vacuously true. Hence, we have a description using lattice paths from $(0,0)$ to $(2n-\delta_{s,n}-s, s)$.
An equivalent construction can be given using the tableaux of~\cite[Def.~3.5]{KW93}.

However, we note that for type $D_4$, we have
\begin{align*}
\nps(\tfw_{n-1}) & = q^{18} + q^{17} + q^{16} + 2q^{15} + 2q^{14} + 4q^{13} + 5q^{12} + 5q^{11} + 5q^{10} + 4q^{9}
\\ & \hspace{20pt} + 5q^{8} + 5q^{7} + 5q^{6} + 4q^{5} + 2q^{4} + 2q^{3} + q^{2} + q + 1
\\ \qbinom{8}{3}{q} & = q^{15} + q^{14} + 2q^{13} + 3q^{12} + 4q^{11} + 5q^{10} + 6q^{9} + 6q^{8}
\\ & \hspace{20pt} + 6q^{7} + 6q^{6} + 5q^{5} + 4q^{4} + 3q^{3} + 2q^{2} + q + 1,
\end{align*}
so there is not a natural $q$-analog of~\eqref{eq:dim_column_type_D}. Moreover, the corresponding ratio does not appear to be simple.
Additionally, we have
\[
840 = \dim V\bigl(2(\omega_3 + \omega_4)\bigr) \neq \det \begin{bmatrix}
\displaystyle \binom{2\cdot4}{4-1} & \displaystyle \binom{2\cdot5}{5-1} \vspace{5pt} \\
\displaystyle \binom{2\cdot5}{5-1} & \displaystyle \binom{2\cdot6}{6-1} \end{bmatrix}
= 252,
\]
so there is not a natural extension of~\eqref{eq:dim_column_type_D} with $s = n-1$ to $r\tfw_{n-1}$ using determinants of $r \times r$ matrices.
We note that from~\cite[Thm.~2.1]{Okada09}, we have in type $D_n$
\[
\det \left[ \binom{2(n+i+j)}{n+i+j} \right]_{i,j=0}^{r-1} = 2^r \dim V(2r\fw_n).
\]

The following result has appeared in~\cite[Thm.~2.1]{Okada09} and is likely well-known to experts.

\begin{proposition}
\label{prop:all_spin_rectangles}
Let $\g$ be of type $B_n$. We have
\[
\dim V(r\fw_n) = \prod_{1\leq i \leq j \leq n} \dfrac{r+i+j-1}{i+j-1}.
\]
\end{proposition}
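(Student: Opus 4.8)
The plan is to read the formula straight off the $q=1$ specialization of Proposition~\ref{prop:Bn_ps}, in exactly the manner already used in the proof of Proposition~\ref{prop:determinant_double_spin_B}. Using the identification of dominant weights with (possibly half-integer) $\epsilon$-coordinates from Section~\ref{sec:crystals}, the spin fundamental weight is $\fw_n = \tfrac12(\epsilon_1 + \cdots + \epsilon_n)$, so that $\lambda = r\fw_n$ corresponds to $\lambda_i = r/2$ for every $1 \le i \le n$. Setting $q = 1$ turns each factor $\frac{1-q^a}{1-q^b}$ into $\frac{a}{b}$ and kills the prefactor $q^{n(\lambda) - n|\lambda|}$, so $\dim V(r\fw_n)$ becomes the product of the three pieces coming from the three products in Proposition~\ref{prop:Bn_ps}, which I would evaluate separately.

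First I would evaluate the three pieces at $\lambda_i = r/2$. The product $\prod_{i=1}^n \frac{2\lambda_i + 2n-2i+1}{2n-2i+1}$ becomes $\prod_{i=1}^n \frac{r + 2n-2i+1}{2n-2i+1}$, which after the reindexing $i \mapsto n+1-i$ is $\prod_{i=1}^n \frac{r+2i-1}{2i-1}$; this is exactly the diagonal ($i=j$) part of the target, upon writing $i+j-1 = 2i-1$. The middle product $\prod_{i<j}\frac{\lambda_i-\lambda_j+j-i}{j-i}$ collapses to $1$ since $\lambda_i = \lambda_j$. The last product $\prod_{i<j}\frac{\lambda_i+\lambda_j+2n-i-j+1}{2n-i-j+1}$ becomes $\prod_{1\le i<j\le n}\frac{r+2n-i-j+1}{2n-i-j+1}$, which must then be matched to the strictly off-diagonal ($i<j$) part $\prod_{1\le i<j\le n}\frac{r+i+j-1}{i+j-1}$ of the target.

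The only genuine step is this last matching, and I would carry it out with the order-reversing involution $(i,j)\mapsto (n+1-j,\,n+1-i)$ on the set of pairs with $i<j$. This is a bijection of that set to itself, and it sends $i+j-1$ to $(n+1-j)+(n+1-i)-1 = 2n+1-i-j = 2n-i-j+1$; hence the target off-diagonal product equals the computed one term by term. Combining with the diagonal part established above then yields $\dim V(r\fw_n) = \prod_{1\le i\le j\le n}\frac{r+i+j-1}{i+j-1}$.

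The main point requiring care — and essentially the only obstacle — is the legitimacy of applying Proposition~\ref{prop:Bn_ps} to the half-integer weight $r\fw_n$ when $r$ is odd, since the principal specialization itself then involves fractional powers of $q$. This is harmless at $q=1$, where each factor is the well-defined ratio of the corresponding exponents, i.e. the Weyl dimension formula, valid for all dominant weights. As a consistency check, for even $r = 2r'$ the result reads $\prod \frac{2r'+i+j-1}{i+j-1} = \dim V(r'\tfw_n)$, recovering the even-multiple formula already invoked in the proof of Proposition~\ref{prop:determinant_double_spin_B}; the computation above simply shows the same closed form persists uniformly for all $r$, odd or even.
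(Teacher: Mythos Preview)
Your proof is correct and shares its starting point with the paper: both take the $q\to 1$ limit of Proposition~\ref{prop:Bn_ps} with $\lambda_i = r/2$, arriving at
\[
\dim V(r\fw_n) = \prod_{i=1}^n \dfrac{r+2n-2i+1}{2n-2i+1} \prod_{1\le i<j\le n} \dfrac{r+2n-i-j+1}{2n-i-j+1}.
\]
The divergence is entirely in how this is matched to the target. The paper introduces the auxiliary functions $F(n) = \prod_{i=1}^n i!$ and $\Phi(n) = n!\,(n-2)!\,(n-4)!\cdots$, rewrites each side as a ratio of such products, and shows both equal $\Phi(2n+r-1)\Phi(r-2)F(n-1)\big/\bigl(F(r+n-1)\Phi(2n-1)\bigr)$. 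Your route is considerably shorter: the diagonal reindexing $i\mapsto n+1-i$ and the off-diagonal involution $(i,j)\mapsto(n+1-j,n+1-i)$ match the two products term by term to the $i=j$ and $i<j$ parts of the target, with no factorial bookkeeping at all. Your argument is more transparent and avoids the paper's somewhat heavy manipulations; the paper's approach, on the other hand, produces an explicit closed form in terms of $F$ and $\Phi$, which could be useful if one wanted to compare with other dimension formulas expressed the same way. Your remark about the half-integer case is apt and handled correctly: the $q\to 1$ limit is just the Weyl dimension formula, valid for all dominant weights.
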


\begin{proof}
Let
\begin{equation}
\label{eq:simplified_factorials}
F(n) = \prod_{i=1}^n i!,
\qquad\qquad
\Phi(n) = n! \cdot (n-2)! \cdot (n-4)! \cdots.
\end{equation}
From Proposition~\ref{prop:Bn_ps} as $q \to 1$, we have
\begin{align*}
\dim V(r\fw_n) & = \prod_{i=1}^n \dfrac{r + 2n-2i+1}{2n-2i+1} \prod_{1 \leq i < j \leq n} \dfrac{r+2n-i-j+1}{2n-i-j+1}
\allowdisplaybreaks \\ & =  \prod_{i=1}^n \dfrac{r + 2n-2i+1}{2n-2i+1} \prod_{2 \leq j \leq n} \dfrac{(r+2n-j) \cdots (r+2n-2j+2)}{(2n-j) \cdots (2n-2j+2)}
\allowdisplaybreaks \\ & = \dfrac{(r + 2n-1) \cdots (r + 3) (r + 1)}{(2n-1) \cdots 3 \cdot 1} \prod_{j=1}^{n-1} \dfrac{(r+2n-j-1) \cdots (r+2n-2j)}{(2n-j-1) \cdots (2n-2j)}
\allowdisplaybreaks \\ & = \dfrac{(r + 2n-1)!!}{(r-1)!! (2n-1)!!} \prod_{j=1}^{n-1} \dfrac{(r+2n-j-1)! (2n-2j-1)!}{(r+2n-2j-1)! (2n-j-1)!}
\allowdisplaybreaks \\ & = \dfrac{(r + 2n-1)!!}{(r-1)!! (2n-1)!!} \cdot \dfrac{F(2n+r-2) \Phi(r-1) F(n-1) \Phi(2n-3)}{F(r+n-1) \Phi(r
+2n-3) F(2n-2)}
\allowdisplaybreaks \\ & = \dfrac{(r + 2n-1)!!}{(r-1)!! (2n-1)!!} \cdot \dfrac{\Phi(2n+r-2) \Phi(r-1) F(n-1)}{F(r+n-1) \Phi(2n-2)}
\allowdisplaybreaks \\ & = \dfrac{\Phi(2n+r-1) \Phi(r-2) F(n-1)}{F(r+n-1) \Phi(2n-1)}.
\end{align*}
Moreover, we have
\begin{align*}
 \prod_{1\leq i \leq j \leq n} \dfrac{r+i+j-1}{i+j-1} & = \prod_{j=1}^n \dfrac{(r+2j-1) \cdots (r+j)}{(2j-1) \cdots j} = \prod_{j=1}^n\dfrac{(r+2j-1)! (j-1)!}{(r+j-1)! (2j-1)!}
 \\ & = \dfrac{\Phi(r+2n-1)F(r-1) F(n-1)}{\Phi(r-1)F(r+n-1)\Phi(2n-1)}
 \\ & = \dfrac{\Phi(r+2n-1) \Phi(r-2) F(n-1)}{F(r+n-1)\Phi(2n-1)},
\end{align*}
and so the claim follows.
\end{proof}

\section{Semistandard rigid tableaux}
\label{sec:rigid_tableaux}

In this section, we recall the notion of semistandard rigid tableaux from~\cite{KLO17} and then define a crystal structure directly on semistandard rigid tableaux.
We begin with the definition in type $B_n$ and then give some results. We then describe an extension to type $C_n$.

\subsection{Definition and type $B_n$}

In this section, we assume that $\g$ is of type $B_n$.
A \defn{strict partition} is a partition such that all parts are distinct (equivalently, it is a proper set of positive integers). We can identify elements of the spin representation $B(\fw_n)$ with strict partitions $\nu$ such that $\nu_1 \leq n$ (equivalently, all subsets of $\{1, \dotsc, n\}$) by
\begin{equation}
\label{eq:spin_isomorphism}
(s_1, \dotsc, s_n) \mapsto \{n+1-i \mid s_i = -\}.
\end{equation}
Note that we are counting $-$ from the bottom rather than from the top. Thus we define a crystal structure on strict partitions by this identification. Explicitly, for a strict partition $\nu$, we have
\begin{align*}
e_{n-i}(\nu) & = \begin{cases}
(\nu \setminus \{i+1\}) \cup \{i\} & \text{if $i+1 \in \nu$ and $i \notin \nu$}, \\
0 & \text{otherwise},
\end{cases}
\\ f_{n-i}(\nu) & = \begin{cases}
(\nu \setminus \{i\}) \cup \{i+1\} & \text{if $i \in \nu$ and $i+1 \notin \nu$}, \\
0 & \text{otherwise},
\end{cases}
\end{align*}
where we append/remove $0$ to $\nu$ as necessary. See Figure~\ref{fig:strict3} for an example.

We identify the reverse of an $m$-fold tensor product of $B(\fw_n)$ (equivalently strict partitions) with a sequence of strict partitions denoted by $\mathcal{SP}_{n,m}$. We use this to induce a crystal structure on $\mathcal{SP}_{n,m}$. Thus, we have the following by the identification~\eqref{eq:spin_isomorphism}.

\begin{proposition}
\label{prop:SSRT_hw}
Suppose a sequence of strict partitions $T$ is a highest weight element of weight $\fw$. Then the closure of $T$ under the crystal operators is isomorphic to $B(\lambda)$. Moreover, we have $\mathcal{SP}_{n,m} \iso B(\fw_n)^{\otimes m}$.
\end{proposition}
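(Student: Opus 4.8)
The plan is to reduce the entire statement to the single fact that the identification~\eqref{eq:spin_isomorphism}, which I will call $\phi$, is a crystal isomorphism from $B(\fw_n)$ onto the set of strict partitions in $\{1,\dots,n\}$ equipped with the operators $e_{n-i}, f_{n-i}$ displayed above. Granting this, the ``Moreover'' assertion is immediate and the first assertion is a formal consequence of the general theory of crystals, so I would prove the second sentence first and then deduce the first. Throughout I read the weight in the first sentence as a dominant weight $\lambda$, matching the target $B(\lambda)$.

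First I would verify the single-factor isomorphism by comparing the operators entry-by-entry. Writing $\nu = \phi(s_1,\dots,s_n)$, the convention in~\eqref{eq:spin_isomorphism} gives $s_i = -$ precisely when $n+1-i \in \nu$. Hence for an index $j = n-i < n$ one has $s_{n-i} = -$ iff $i+1 \in \nu$ and $s_{n-i+1} = -$ iff $i \in \nu$, so the spin rule sending $(s_j, s_{j+1}) = (-,+)$ to $(+,-)$ under $e_j$ is exactly the displayed condition ``$i+1 \in \nu$ and $i \notin \nu$'' with output $(\nu \setminus \{i+1\}) \cup \{i\}$; the boundary case $j = n$, where $e_n$ flips $s_n = -$ to $+$, becomes the deletion of $1$ from $\nu$ once one uses that adjoining $0$ is vacuous. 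The computation for the $f_i$ is the same with the arrows reversed, and the weight matches via $\wt(s_1,\dots,s_n) = \tfrac12 \sum_{i} \epsilon_i - \sum_{k \in \nu} \epsilon_{n+1-k}$. Since both crystals are seminormal, $\varepsilon_i$ and $\varphi_i$ then agree automatically, so $\phi$ is a crystal isomorphism $B(\fw_n) \iso \mathcal{SP}_{n,1}$.

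Next I would apply $\phi$ in each tensor factor. Because $\mathcal{SP}_{n,m}$ is defined as the reverse of the $m$-fold tensor product of $B(\fw_n)$ carried over to sequences of strict partitions, functoriality of the tensor product of crystals yields $\mathcal{SP}_{n,m} \iso B(\fw_n)^{\otimes m}$, which is the ``Moreover'' claim. For the first claim, $B(\fw_n)^{\otimes m}$ is the crystal basis of the integrable module $V(\fw_n)^{\otimes m}$ and therefore decomposes as a disjoint union of highest weight crystals $B(\lambda)$; as the crystal is seminormal, each connected component is determined by its unique highest weight element. Consequently the closure of a highest weight element $T$ of weight $\lambda$ under the $e_i, f_i$ is the connected component through $T$, which is isomorphic to $B(\lambda)$.

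The proof is bookkeeping, and the hard part will be keeping the conventions straight rather than any genuine difficulty: the index shift $j \leftrightarrow n-j$ between the spin coordinates and the parts of $\nu$, the ``counting $-$ from the bottom'' convention in~\eqref{eq:spin_isomorphism}, and the reversal of the tensor factors built into $\mathcal{SP}_{n,m}$ all have to be tracked simultaneously so that the operators induced on sequences of strict partitions coincide on the nose with the tensor-product operators. Once the single-factor isomorphism has been checked with these conventions, no further obstacle remains.
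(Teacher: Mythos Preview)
Your argument is correct and is exactly the paper's approach: the paper states the proposition with only the one-line justification ``by the identification~\eqref{eq:spin_isomorphism}'', and you have simply unpacked that remark by checking that~\eqref{eq:spin_isomorphism} intertwines the spin crystal operators with the displayed operators on strict partitions, then transported this factorwise and invoked the standard decomposition of tensor products of highest weight crystals. There is no divergence in strategy; you have merely written out the details the paper leaves implicit, and your handling of the $\fw$/$\lambda$ discrepancy in the statement is the intended reading.
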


\begin{example}
Let $\varpi = \sum_{i \in I} c_i \fw_i$. The sequence of strict partitions
\begin{align}\label{eq: def Ow}
O_{\varpi} = \bigl( \underbrace{\emptyset, \dotsc, \emptyset}_K, \underbrace{(1), \dotsc, (1)}_{c_{n-1}}, \dotsc, \underbrace{(n-1, \dotsc, 1), \dotsc, (n-1, \dotsc, 1)}_{n_1} \bigr),
\end{align}
for $K = c_1 + \cdots + c_n$ corresponds to a highest weight crystal of highest weight $\varpi$. Hence the closure of $O_{\fw}$, which we denote by $R(\fw)$, under the induced Kashiwara operators is isomorphic to $B(\varpi)$.
\end{example}

\begin{figure}
\[
\begin{tikzpicture}[xscale=1.3,baseline=-4]
\node (e) at (0,0) {$\emptyset$};
\node (1) at (1.5,0) {$(1)$};
\node (2) at (3,0) {$(2)$};
\node (3) at (4.5,1) {$(3)$};
\node (21) at (4.5,-1) {$(2,1)$};
\node (31) at (6,0) {$(3,1)$};
\node (32) at (7.75,0) {$(3,2)$};
\node (321) at (9.5,0) {$(3,2,1)$};
\draw[->,blue] (e) to node[above]{\tiny$3$} (1);
\draw[->,magenta] (1) to node[above]{\tiny$2$} (2);
\draw[->,red] (2) to node[above,sloped]{\tiny$1$} (3);
\draw[->,blue] (3) to node[above,sloped]{\tiny$3$} (31);
\draw[->,blue] (2) to node[below,sloped]{\tiny$3$} (21);
\draw[->,red] (21) to node[below,sloped]{\tiny$1$} (31);
\draw[->,magenta] (31) to node[above]{\tiny$2$} (32);
\draw[->,blue] (32) to node[above]{\tiny$3$} (321);
\end{tikzpicture}
\]
\caption{The crystal of strict partitions $B(\fw_3)$ in type $B_3$.}
\label{fig:strict3}
\end{figure}
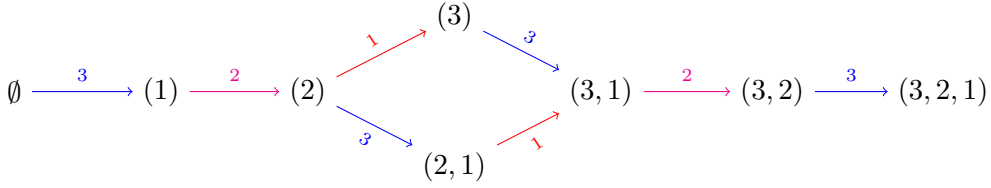

For a tuple $\lambda = (\lambda_1,\lambda_2,\ldots,\lambda_k)$, $1 \le u \le k$ and $1 \le s < k$, we define a tuple
\[
\lambda_{> s} \seteq (\lambda_{s+1},\lambda_{s+2},\ldots,\lambda_k).
\]
For a positive integer $m$, we denote by $\uplambda(m)$ the strict partition given by
\[
\uplambda(m) = (m,m-1,\ldots,2,1),
\]
and call $\uplambda (m)$ the \defn{$m$-th staircase partition}. We also set $\uplambda(m)=(0)$ for any non-positive integer $m$. More generally, for $ a \ge b \ge 1$, we denote by $\uplambda(a;b)$ the strict partition given by
\[
\uplambda(a;b)=(a,a-1,\ldots,b).
\]

Next, we define the notion of a \defn{semistandard rigid tableau of shape $\mu / \eta$} (in short SSRT) to be a skew-tableaux of shape $\mu / \eta$ that is weakly decreasing along columns and strictly decreasing along rows.
SSRTs with $m$ rows and max entry $n$ are in bijection with $\mathcal{SP}_{n,m}$. Indeed, consider a sequence of strict partition $T = (\tau^{(1)},\tau^{(2)},\ldots,\tau^{(l-1)},\tau^{(l)})$. We construct two partitions $\mu = (\mu_1,\ldots,\mu_\ell) \supset \eta = (\eta_1,\ldots,\eta_{\ell-1})$ in the following way: for each $1 \le i \le l-1$, there exists a unique $t_i \in \Z_{\ge 0}$ such that
\[
\tau^{(i)} \supset \tau^{(i+1)}_{>t_i} \text{ and } \tau^{(i)} \not \supset \tau^{(i+1)}_{>(t_i-1)} \text{ for } 1 \le i \le k-1.
\]
Then we set
\[
\eta_i = \displaystyle\sum_{s=i}^{\ell-1} t_s \ \ (1 \le i \le \ell-1) \quad \text{ and } \quad \mu_j = \eta_j +\ell(\tau^{(j)})\ \ (1 \le j \le \ell).
\]
Moreover, we can consider the entries of the $i$-th row of the corresponding SSRT as $\tau^{(i)}$.
Thus, we identify sequences of strict partitions and SSRTs.
For a given partition $\eta$ with $\ell(\eta)<m$, we denote by $R_{n,m}(\eta)$ the set of all SSRTs whose lengths (resp. max entries) are less than or equal to $m$ (resp. $n$) and
inner shapes are the same as $\eta$.

\begin{example}
For $n \ge 5$ and $m\ge 5$, the sequence of strict partitions
\[
T=\bigl( (5,4,3),(5,4),(5,4,3,2),(5,4,3,2,1) \bigr)
\]
corresponds to a skew-tableaux of shape $(6,5,5,5)/(3,3,1)$ as follows:
\[
\young(\cdot\cdot\cdot543,\cdot\cdot\cdot54,\cdot5432,54321).
\]
Note that $T \in R_{n,m}\bigl( (3,3,1) \bigr)$.
\end{example}

\begin{lemma}[{\cite[Lemma 6.1]{KLO17}}]
\label{lemma: general hw B}
For a sequence of strict partitions
\[
T=\left( \tau^{(1)},\tau^{(2)},\ldots,\tau^{(\ell-1)},\tau^{(\ell)} \right),
\]
such that $T$ is a highest weight element, then $\tau^{(1)}=\emptyset$ and $\tau^{(2)}=\uplambda(s)$ for some $s \in \Z_{\ge 0}$.
\end{lemma}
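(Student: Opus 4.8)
The plan is to transport the statement across the isomorphism $\mathcal{SP}_{n,m} \iso B(\fw_n)^{\otimes m}$ of Proposition~\ref{prop:SSRT_hw} and argue entirely with the crystal tensor product rule from Section~\ref{sec:crystals}. Under the reverse identification, the sequence $T = (\tau^{(1)}, \dotsc, \tau^{(\ell)})$ corresponds to the tensor $\tau^{(\ell)} \otimes \dotsb \otimes \tau^{(2)} \otimes \tau^{(1)}$, so that $\tau^{(1)}$ is the \emph{rightmost} factor and $\tau^{(2)}$ the next one. Throughout I would use spin coordinates: by~\eqref{eq:spin_isomorphism} a strict partition $\nu$ is the element $(s_1, \dotsc, s_n) \in B(\fw_n)$ with $s_i = -$ exactly when $n+1-i \in \nu$, and on this (minuscule-type) spin crystal one has $\varepsilon_i, \varphi_i \in \{0,1\}$, with $\varepsilon_i = 1$ iff $(s_i, s_{i+1}) = (-,+)$ for $i < n$ and $\varepsilon_n = 1$ iff $s_n = -$.

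First I would pin down $\tau^{(1)}$ using the standard property that a highest weight tensor $c \otimes d$ has $d$ highest weight (see, \textit{e.g.},~\cite{BS17}): if $e_i d \neq 0$, then $\varphi_i(d) \ge 1$ and the tensor rule gives $e_i(c \otimes d) = c \otimes e_i d \neq 0$ when $\varepsilon_i(c) \le \varphi_i(d)$, and $e_i(c \otimes d) = e_i c \otimes d \neq 0$ when $\varepsilon_i(c) > \varphi_i(d)$ (as then $\varepsilon_i(c) \ge 1$), contradicting highest weight; hence $e_i d = 0$ for all $i$. Taking $d = \tau^{(1)}$ shows $\tau^{(1)}$ is a highest weight element of $B(\fw_n)$, and the unique such element is $(+, \dotsc, +)$, which is $\emptyset$ under~\eqref{eq:spin_isomorphism}. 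Taking instead $d = \tau^{(2)} \otimes \tau^{(1)}$ shows that $\tau^{(2)} \otimes \emptyset$ is itself highest weight.

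For $\tau^{(2)}$ I would then compute directly. Writing $b = \tau^{(2)}$ and using $\varepsilon_i(b \otimes \emptyset) = \max\bigl(\varepsilon_i(\emptyset),\, \varepsilon_i(b) - \langle h_i, \wt(\emptyset)\rangle\bigr)$ together with $\wt(\emptyset) = \fw_n$ and $\langle h_i, \fw_n \rangle = \delta_{i,n}$, the highest weight condition $\varepsilon_i(b \otimes \emptyset) = 0$ becomes $\max(0, \varepsilon_i(b)) = 0$ for $i < n$, and is automatic at $i = n$ since $\varepsilon_n(b) \le 1$. Thus $\varepsilon_i(b) = 0$ for every $i < n$, which forbids the pattern $(s_i, s_{i+1}) = (-,+)$ and forces $b = (+^a, -^{\,n-a})$ for some $0 \le a \le n$. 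Translating back through~\eqref{eq:spin_isomorphism}, the minus signs occupy positions $a+1, \dotsc, n$, so $\tau^{(2)} = \{1, \dotsc, n-a\} = \uplambda(n-a)$, i.e.\ $\tau^{(2)} = \uplambda(s)$ with $s = n-a$, as claimed.

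The only genuinely delicate points are bookkeeping ones, which I would flag explicitly rather than expect to be serious obstacles. First, the reverse convention must be applied consistently so that $\tau^{(1)}$ really is the rightmost factor; this is exactly what makes $O_{\fw}$ highest weight, with its $\emptyset$'s listed first. Second, the boundary index $i = n$ must be handled separately in both steps, because $\emptyset$ has $\varphi_n = 1$ while $\varphi_i = \varepsilon_i = 0$ for $i < n$. It is precisely the term $\langle h_n, \fw_n\rangle = 1$ that absorbs the otherwise-problematic $i=n$ contribution and explains why $\tau^{(2)}$ is permitted to terminate in an arbitrary block of minuses. Since nothing beyond the tensor product rule and the dictionary between spins, strict partitions, and staircase partitions is needed, I expect the proof to be short once these conventions are aligned.
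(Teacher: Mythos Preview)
The paper does not supply its own proof of this lemma; it is simply quoted from \cite[Lemma~6.1]{KLO17}. Your argument is a correct, self-contained derivation via the tensor product rule and the spin--strict-partition dictionary, and it is essentially the standard way one would establish such a statement.

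One small slip: from $e_i d \neq 0$ you conclude $\varphi_i(d) \ge 1$, but what actually follows is $\varepsilon_i(d) \ge 1$. This does not affect the argument, since your subsequent case split on $\varepsilon_i(c) \le \varphi_i(d)$ versus $\varepsilon_i(c) > \varphi_i(d)$ is exhaustive and handles both cases correctly without ever using the claimed bound on $\varphi_i(d)$. You should simply delete that clause. Everything else---the identification of the rightmost factor as $\tau^{(1)}$, the computation $\langle h_i, \fw_n\rangle = \delta_{i,n}$, the conclusion that $\varepsilon_i(\tau^{(2)}) = 0$ for $i < n$ forces the sign pattern $(+^a, -^{n-a})$, and the translation back to $\uplambda(n-a)$---is correct.
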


\begin{proposition}
\label{prop: same inner shape}
The set $R_{n,m}(\eta)$ is closed under the Kashiwara operators. In particular, $R_{n,m}(\eta)$ is a subcrystal consisting of connected components of $\mathcal{SP}_{n,m}$.
\end{proposition}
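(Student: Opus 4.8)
The plan is to show that every Kashiwara operator preserves the inner shape $\eta$; since $e_i$ and $f_i$ are mutually inverse partial maps, it suffices to treat $f_i$. Working through the isomorphism $\mathcal{SP}_{n,m} \iso B(\fw_n)^{\otimes m}$ of Proposition~\ref{prop:SSRT_hw}, the operator $f_i$ alters exactly one tensor factor $\tau^{(k)}$, the one singled out by the signature rule. It is convenient to rewrite the offsets through column heights: for a strict partition $\tau$ let $a_c(\tau)$ denote the number of parts of $\tau$ that are $\geq c$, i.e.\ the height of the $c$-th column of its Young diagram. Checking the containment $\tau^{(i)} \supset \tau^{(i+1)}_{>t_i}$ column by column (deleting the top $t_i$ rows lowers each column height by at most $t_i$) shows $t_i = \max_{c \geq 1} \bigl( a_c(\tau^{(i+1)}) - a_c(\tau^{(i)}) \bigr)$, always taken to be at least $0$. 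Since only $\tau^{(k)}$ changes and each $t_i$ depends solely on the pair $(\tau^{(i)},\tau^{(i+1)})$, only $t_{k-1}$ and $t_k$ can possibly move, so it suffices to show these two are preserved.

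Next I would pin down exactly how $f_i$ shifts a single box. For $i < n$ the operator replaces the part $n-i$ of $\tau^{(k)}$ by $n-i+1$ (the flip $(s_i,s_{i+1}) = (+,-) \mapsto (-,+)$ under the identification with $\{+,-\}^n$), which raises $a_{c_0}(\tau^{(k)})$ by exactly $1$ for $c_0 = n-i+1$ and leaves all other column heights fixed; for $i = n$ the operator adjoins a new part $1$, raising $a_{c_0}(\tau^{(k)})$ by $1$ with $c_0 = 1$. Hence the $c_0$-term of $t_{k-1} = \max_c (a_c(\tau^{(k)}) - a_c(\tau^{(k-1)}))$ increases by $1$, while the $c_0$-term of $t_k = \max_c (a_c(\tau^{(k+1)}) - a_c(\tau^{(k)}))$ decreases by $1$. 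Therefore $t_{k-1}$ can only rise, and only if column $c_0$ attains the maximum defining $t_{k-1}$; dually $t_k$ can only fall, and only if column $c_0$ is the \emph{unique} maximizer defining $t_k$.

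The heart of the argument is to exclude both possibilities using the signature rule, and this is the step I expect to be the main obstacle. The idea is that a maximizing column $c_0$ for the height difference between consecutive factors corresponds precisely to an uncancelled sign of the neighbouring factor in the $i$-reduced signature: if $c_0$ achieved the maximum for $t_{k-1}$, then $\tau^{(k-1)}$ would contribute a $+$ cancelling the $-$ of $\tau^{(k)}$, so $f_i$ could not act on $\tau^{(k)}$; and if $c_0$ were the unique maximizer for $t_k$, the signature would route $f_i$ to $\tau^{(k+1)}$ instead. I would make this rigorous by translating each condition into membership of the values $n-i$ and $n-i+1$ (or of $1$, when $i=n$) in $\tau^{(k\pm1)}$, expressing these through $a_{c_0}$ and $a_{c_0-1}$, and matching them against the $(+-)$-cancellation that produces the acted-upon $-$; this reduces to a short case analysis on whether $\tau^{(k\pm1)}$ contains these two values, together with the boundary cases $k=1$ and $k=\ell$ where one neighbour is absent. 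Once $t_{k-1}$ and $t_k$ are seen to be unchanged, every $\eta_i = \sum_{s \geq i} t_s$ is unchanged, so $f_i T \in R_{n,m}(\eta)$; thus $R_{n,m}(\eta)$ is stable under all $e_i$ and $f_i$ and is a union of connected components of $\mathcal{SP}_{n,m}$.
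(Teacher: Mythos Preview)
Your overall plan matches the paper's: show that a Kashiwara operator acting on the single affected factor $\tau^{(k)}$ leaves both adjacent offsets $t_{k-1},t_k$ unchanged, ruling out each bad configuration by a signature argument on the neighbouring factor. The paper streamlines by reducing to $m=2$ and running the case analysis for $e_i$; your column-height formula $t_i=\max\bigl(0,\max_c(a_c(\tau^{(i+1)})-a_c(\tau^{(i)}))\bigr)$ is an equally clean way to organise the same computation directly for general $m$.

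Your signature sketch in step~3 has the orientation reversed, however. Under the paper's identification $(\tau^{(1)},\ldots,\tau^{(m)})\leftrightarrow\tau^{(m)}\otimes\cdots\otimes\tau^{(1)}$, the factor $\tau^{(k-1)}$ sits to the \emph{right} of $\tau^{(k)}$ in the signature word, and $\tau^{(k+1)}$ to the left. So the constraint from ``$f_i$ acts on $\tau^{(k)}$'' is $\varphi_i(\tau^{(k-1)})=0$ (a $-$ immediately to the right of the acted-upon $-$ would itself be uncancelled and further right) and $\varepsilon_i(\tau^{(k+1)})=0$ (a $+$ immediately to the left would cancel the acted-upon $-$); it is \emph{not} that $\tau^{(k-1)}$ contributes a $+$. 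Once corrected, your case analysis does close: from $d_{c_0}=t_{k-1}$ and $d_{c_0-1}\le t_{k-1}$ one gets $n-i\in\tau^{(k-1)}$, and then $\varphi_i(\tau^{(k-1)})=0$ forces $n-i+1\in\tau^{(k-1)}$ as well, whence $d_{c_0+1}=d_{c_0}+1>t_{k-1}$, contradicting maximality. The $t_k$ side and the $i=n$ boundary are handled the same way using $\varepsilon_i(\tau^{(k+1)})=0$.
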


\begin{proof}
It is enough to show when $m=2$ and $\eta = (n-k)$ for some $0 \le k \le n$. Fix $T = (\tau^{(1)},\tau^{(2)}) \in R_{n,2}\bigl((n-k)\bigr)$. Now we shall prove that
$e_i(T) \in R_{n,2}\bigl((n-k)\bigr)$ if $e_i(T) \ne 0$. Assume to the contrary that
$e_i(T) \not \in R_{n,2}\bigl((n-k)\bigr)$. Then, for $i<n$, one of the following happens:
\begin{itemize}
\item[{\rm (a)}] $\tau^{(1)}_s=\tau^{(2)}_{s+n-k}=n+1-i$ and $e_i(T)^{(1)}_s = n-i$, or
\item[{\rm (b)}] $\tau^{(2)} \ne e_i(T)^{(2)}$ and $e_i(T)^{(1)} \supseteq e_i(T)^{(2)}_{>n-k-1}$.
\end{itemize}
Consider {\rm (a)} first. In that case $\sig_i(\tau^{(1)})=-$. However, since $T$ is a SSRT, we have $n+1-i > \tau^{(1)}_{s+1} \ge \tau^{(2)}_{s+1+n-k}$ and hence $\sig_i(\tau^{(2)})=-$.
Thus we have a contradiction by the tensor product rule.

Now let us consider {\rm (b)}. In that case, there exists $s \in \Z_{\ge 0}$ such that
\[
\tau^{(1)}_{s+1}=n-i, \ \tau^{(2)}_{s+n-k}=n+1-i \ \text{ and } \ \tau^{(2)}_{s+n-k+1} < n-i.
\]
Since $\sig_i(\tau^{(1)})= \cdot$, we have $\tau^{(1)}_{s}=n+1-i$ and hence $\tau^{(2)}_{s+n-k-1} > \tau^{(1)}_{s}$ which yields a contradiction for the assumption that $T \in R_{n,2}\bigl((n-k)\bigr)$.

Let $i=n$.
Since $\tilde{e}_n(T) \ne 0$, we have $\sig_n(T)=(-,-)$ or  $\sig_n(T)=(-,+)$. Note that if $\tau^{(2)} = \uplambda(n-k)$, then $\tau^{(1)} = \emptyset$, and hence $\sig_n(T) = (+,-) = (\cdot,\cdot)$.
Then our assertion for $i=n$ can be easily checked.
\end{proof}

\subsubsection{Fundamental representation cases: $B(\tfw_k)$}

\begin{theorem} \label{thm: main fw}
The subcrystal $R_{n,2}\bigl( (n-k) \bigr)$ is isomorphic to $B(\tfw_k)$ of type $B_n$. Moreover, $R_{n,2}\bigl( (n-k) \bigr) = R(\tfw_k)$.
\end{theorem}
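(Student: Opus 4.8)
The plan is to exploit the fact that $R_{n,2}\bigl((n-k)\bigr)$ is already a union of connected components by Proposition~\ref{prop: same inner shape}, hence a disjoint union of irreducible highest weight subcrystals, one for each highest weight element it contains. Both assertions then follow at once provided I show that $R_{n,2}\bigl((n-k)\bigr)$ contains exactly one highest weight element and that this element is $O_{\tfw_k}$. Indeed, a unique highest weight element forces $R_{n,2}\bigl((n-k)\bigr)$ to be the single connected component generated by it, which is by definition $R(\tfw_k)$, and the Example defining $O_{\varpi}$ (see~\eqref{eq: def Ow}) already identifies $R(\tfw_k) \iso B(\tfw_k)$.

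First I would classify the highest weight elements. By Lemma~\ref{lemma: general hw B}, every highest weight element of $\mathcal{SP}_{n,2}$ has the form $T = (\emptyset, \uplambda(s))$ for some $s \in \Z_{\geq 0}$. I then compute the inner shape of such a $T$ directly from the defining recipe for the $t_i$: with $\tau^{(1)} = \emptyset$ and $\tau^{(2)} = \uplambda(s) = (s, s-1, \dotsc, 1)$ one has $\tau^{(2)}_{>t} = \uplambda(s-t)$, so $\emptyset \supset \tau^{(2)}_{>t}$ holds precisely when $t \geq s$; hence $t_1 = s$ and the inner shape is $\eta_1 = (s)$. Requiring this to equal $(n-k)$ forces $s = n-k$, so the unique candidate highest weight element is $(\emptyset, \uplambda(n-k))$. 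Matching against~\eqref{eq: def Ow}, this is exactly $O_{\tfw_k}$: for $k < n$ we have $\tfw_k = \fw_k$ and $O_{\fw_k} = (\emptyset, \uplambda(n-k))$, while for $k = n$ we have $\tfw_n = 2\fw_n$ and $O_{2\fw_n} = (\emptyset, \emptyset) = (\emptyset, \uplambda(0))$.

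Combining these, $R_{n,2}\bigl((n-k)\bigr)$ has the single highest weight element $O_{\tfw_k}$, so it coincides with the irreducible component $R(\tfw_k) \iso B(\tfw_k)$, giving both the isomorphism and the equality $R_{n,2}\bigl((n-k)\bigr) = R(\tfw_k)$. The step I expect to require the most care is the inner-shape computation, together with verifying that $(\emptyset, \uplambda(n-k))$ genuinely lies in $R_{n,2}\bigl((n-k)\bigr)$ and carries weight $\tfw_k$; the boundary value $k = n$, where the inner shape degenerates to $\emptyset$ and the relevant weight is $2\fw_n$, is the case to watch. As a consistency check, letting $k$ range over $0, 1, \dotsc, n$ partitions $\mathcal{SP}_{n,2} \iso B(\fw_n)^{\otimes 2}$ into the $n+1$ inner-shape classes $(n), (n-1), \dotsc, (0)$, which should recover exactly the decomposition~\eqref{eq:2tens_B}.
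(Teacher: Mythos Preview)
Your proposal is correct and follows essentially the same approach as the paper's first proof: use Proposition~\ref{prop: same inner shape} to reduce to classifying highest weight elements, invoke Lemma~\ref{lemma: general hw B} to see they are all of the form $(\emptyset,\uplambda(s))$, and then match the inner shape $(s)$ against $(n-k)$ to pin down the unique one. Your write-up simply expands the inner-shape computation and the identification with $O_{\tfw_k}$ in more detail than the paper does, and your consistency check against~\eqref{eq:2tens_B} is exactly the content of the paper's second proof.
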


\begin{proof}[First proof]
By Proposition~\ref{prop:SSRT_hw}, Proposition~\ref{prop: same inner shape}, and Lemma~\ref{lemma: general hw B}, any $T \in R_{n,2}\bigl( (n-k) \bigr)$ is connected to $(\emptyset,\uplambda(s))$ for some $s$. Note that the inner shape of $(\emptyset,\uplambda(s))$ is $s$, and so we must have $s = n-k$.
\end{proof}

\begin{proof}[Second proof]
By Proposition~\ref{prop:SSRT_hw} and~\eqref{eq:2tens_B}, $R_{n,2}\bigl( (n-k) \bigr)$ must be connected since it is stable under the Kashiwara operators by Proposition~\ref{prop: same inner shape}.
\end{proof}

\subsubsection{Almost spin representation cases}

\begin{definition}
We call an integral dominant weight $\fw$ \defn{almost spin} if
\[
\fw = (\ell-1) \fw_n +\tfw_k
\]
for some $1 \le k \le n$. In particular, we call $(\ell+1)\fw_n$ \defn{pure spin}.
\end{definition}

For an almost spin $\fw = (\ell-1) \fw_n +\tfw_k$, the element $O_\fw$ from~\eqref{eq: def Ow} is
\[
O_\fw= \left( O_{\fw_{n}},\ldots,O_{\fw_{n}},O_{\fw_{k}} \right) = (\underbrace{\emptyset,\ldots,\emptyset}_{\text{$\ell$-times}},\uplambda(n-k)).
\]
Now we fix an almost spin weight $\fw = (\ell-1) \fw_n +\tfw_k$. We denote by $R^\mathrm{as}(k) \seteq R_{n,\ell+1}\bigl((n-k)^{\ell}\bigr)$.

\begin{theorem}
\label{thm: main as}
Let $\fw = (\ell-1) \fw_n +\tfw_k$.
The subcrystal $R^\mathrm{as}(\ell, k)$ is isomorphic to $B(\fw)$ of type $B_n$. Moreover, $R^\mathrm{as}(\ell, k) = R(\fw)$.
\end{theorem}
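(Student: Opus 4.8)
The plan is to mirror the first proof of Theorem~\ref{thm: main fw}: reduce the statement to identifying the highest weight elements of $R^{\mathrm{as}}(\ell,k) = R_{n,\ell+1}\bigl((n-k)^\ell\bigr)$ and showing there is exactly one. By Proposition~\ref{prop: same inner shape}, $R^{\mathrm{as}}(\ell,k)$ is a subcrystal of $\mathcal{SP}_{n,\ell+1} \iso B(\fw_n)^{\otimes(\ell+1)}$, hence a disjoint union of connected components. Since $B(\fw_n)^{\otimes(\ell+1)}$ is a direct sum of highest weight crystals, each component has a unique highest weight element and, by Proposition~\ref{prop:SSRT_hw}, is isomorphic to $B(\fw')$ where $\fw'$ is the weight of that element. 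Because $O_\fw \in R^{\mathrm{as}}(\ell,k)$ is a highest weight element of weight $\fw$ (by the discussion following~\eqref{eq: def Ow}), it then suffices to prove that $O_\fw$ is the \emph{only} highest weight element of $R^{\mathrm{as}}(\ell,k)$; this immediately yields $R^{\mathrm{as}}(\ell,k) = R(\fw) \iso B(\fw)$.

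The heart of the argument, and the step I expect to require the most care, is this uniqueness claim. Let $T = (\tau^{(1)},\ldots,\tau^{(\ell+1)})$ be a highest weight element with inner shape $(n-k)^\ell$. First I would unwind what this inner shape forces through the integers $t_i$ of the construction: the identity $\eta = (n-k)^\ell$ is equivalent to $t_1 = \cdots = t_{\ell-1} = 0$ and $t_\ell = n-k$. The vanishing $t_i = 0$ translates into the containments $\tau^{(i)} \supseteq \tau^{(i+1)}$, giving a descending chain $\tau^{(1)} \supseteq \cdots \supseteq \tau^{(\ell)}$. Applying Lemma~\ref{lemma: general hw B} to $T$ gives $\tau^{(1)} = \emptyset$, and the chain then collapses all of the top $\ell$ rows, so that $\tau^{(1)} = \cdots = \tau^{(\ell)} = \emptyset$. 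Feeding $\tau^{(\ell)} = \emptyset$ back into the relation defining $t_\ell = n-k$ shows $\ell(\tau^{(\ell+1)}) = n-k$.

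It remains to pin down the bottom row $\tau^{(\ell+1)}$, a strict partition of length exactly $n-k$ with entries in $\{1,\dotsc,n\}$. Since each empty row is the highest weight element of $B(\fw_n)$ and $\langle \fw_n, \alpha_i^\vee\rangle = 0$ for $i < n$, the empty rows contribute nothing to the $i$-signature of $T$ for every $i < n$; hence the highest weight condition forces $e_i \tau^{(\ell+1)} = 0$ for all $i < n$. Reading $\tau^{(\ell+1)}$ as a spin element $(s_1,\dotsc,s_n)$ via~\eqref{eq:spin_isomorphism}, this says there is no adjacent pair $(s_i,s_{i+1}) = (-,+)$ with $i < n$, i.e.\ all the $+$ entries precede all the $-$ entries. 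As $\tau^{(\ell+1)}$ has exactly $n-k$ minus signs, this forces $(s_1,\dotsc,s_n) = (+^k, -^{\,n-k})$, which is precisely $\uplambda(n-k)$. Therefore $T = O_\fw$, proving uniqueness and hence the theorem. I note that a second proof in the style of the second proof of Theorem~\ref{thm: main fw} would instead invoke a tensor power decomposition of $B(\fw_n)^{\otimes(\ell+1)}$; this is less convenient here, since that decomposition is considerably more involved than~\eqref{eq:2tens_B}, so I favor the highest weight analysis above.
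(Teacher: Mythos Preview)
Your proof is correct and follows essentially the same strategy as the paper: use Proposition~\ref{prop: same inner shape} to reduce to identifying highest weight elements, then show $O_\fw$ is the unique one. The paper's execution iterates Lemma~\ref{lemma: general hw B} together with the inner-shape constraint row by row, whereas you first extract the full containment chain $\tau^{(1)} \supseteq \cdots \supseteq \tau^{(\ell)}$ directly from $\eta=(n-k)^\ell$ and then apply the lemma once; both routes arrive at the same conclusion.
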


\begin{proof}
By Proposition~\ref{prop: same inner shape}, it is enough to show that $R^\mathrm{as}(\ell,k)$ is connected by the Kashiwara operators $e_i$ and $f_i$.
By Lemma~\ref{lemma: general hw B}, any $T \in R^\mathrm{as}(\ell,k)$ is connected to a highest weight element
\[
(\emptyset,\uplambda(s),\tau^{(3)},\ldots,\tau^{(\ell)},\tau^{(\ell+1)}).
\]
Note that for the inner shape, either the first row is $n-k+s$ or the second row is $n-k-s$. Therefore, by the definition of $R^\mathrm{as}(\ell,k)$, we must have $s=0$. By iterating this argument, we also have $\tau^{(k)} = \emptyset$ for $3 \le k \le \ell$ and $\tau^{(\ell+1)} = \uplambda(n-k)$. Thus, the assertion follows.
\end{proof}

\begin{figure}[p]
\[
\input{rigid_tableaux_ex}
\]
\caption{The crystal of semistandard rigid tableaux $R(\tfw_2)$ (left) and $R(\tfw_3)$ (right) in type $B_3$.}
\label{fig:rigid_tableaux2}
\end{figure}

\begin{remark}
\label{rem:bijective_spin_rectangles}
We can also give a bijective proof of Proposition~\ref{prop:all_spin_rectangles}. Indeed, we recall from~\cite{Gordon83} that
\[
\prod_{1 \leq i \leq j \leq n} \dfrac{r+i+j-1}{i+j-1}
\]
equals the number of semistandard Young tableaux with at most $r$ columns and entries in $\{1, \dotsc, n\}$.
Note that any SSRT in $R(r\fw_n)$ is precisely the dual conjugate of such a semistandard tableau. Hence, Theorem~\ref{thm: main as} implies Proposition~\ref{prop:all_spin_rectangles}. More explicitly, we reflect each entry $t$ about the $y=-x$ line and replace it with $n+1-t$. As an example, note the tableaux for $R(\tfw_3)$ in Figure~\ref{fig:rigid_tableaux2}.
\end{remark}

\subsection{Motzkin triangle numbers}

In this section, we shall give various realization of Motzkin triangular numbers $\Mot_{(m,s)}$ in term of SSRTs.

For $0\le s \le n-1$ and any highest weight element $T^{\natural} =(\tau^{(1)},\tau^{(2)},\tau^{(3)})$ of highest weight $\fw_n+\tfw_{n-s}$, let us denote by $R(T^{\natural})$ the closure under $e_i$ and $f_i$ containing $T^{\natural}$.
Recall that a tableau $T$ is \defn{standard} if $T$ has $n$ boxes and each $i \in \{1, \dotsc, n\}$ appears exactly once in $T$.
Define the subset
\[
S(T^{\natural})_m \seteq \{  T \in R(T^{\natural}) \mid \text{$\Sh(T) \vdash m$ and $T$ is standard} \}.
\]
Then Theorem~\ref{thm: wt Motzkin} can be restated as follows:
\[
\lvert S(T^{\natural})_m \rvert = \Mot_{(m,s)}.
\]

In this section, for each $(m,s)$, we shall show that there are distinct $(s+1)$-many sets of standard rigid tableaux
whose cardinalities are the same as $\Mot_{(m,s)}$. In this section we fix $\fw=\fw_n+\tfw_{n-s}$.

We shall start with the following lemma which is easy to check:

\begin{lemma}
\label{lemma: tens3_Bn}
In type $B_n$, we have
\[
B(\fw_n)^{\otimes 3} \iso \bigoplus_{s=0}^{n} B(\fw_n+\tfw_{n-s})^{\oplus s+1}.
\]
Furthermore, for $\fw = \fw_n+\tfw_{n-s}$, the highest crystal elements are given by
\[
O^t_{\fw} = (\emptyset, \uplambda(t), \uplambda(s;t+1)) \quad \text{ for } \quad 0 \le t \le s.
\]
\end{lemma}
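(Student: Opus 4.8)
The plan is to reduce to the two-fold decomposition \eqref{eq:2tens_B} and then tensor on one more copy of the spin crystal, exploiting the fact that $B(\fw_n)$ is \emph{minuscule} in type $B_n$: its $2^n$ weights $\bigl((\pm\tfrac{1}{2}),\dots,(\pm\tfrac{1}{2})\bigr)$ form a single Weyl-group orbit and are multiplicity free. Consequently, for any dominant $\mu$ the highest weight elements of $B(\fw_n)\otimes B(\mu)$ are exactly the $c\otimes u_\mu$ with $c\in B(\fw_n)$ and $\mu+\wt(c)$ dominant, each generating a single summand $B(\mu+\wt(c))$. First I would apply \eqref{eq:2tens_B} to the two rightmost tensor factors (equivalently, the first two entries $\tau^{(1)},\tau^{(2)}$ in the strict-partition description of $\mathcal{SP}_{n,3}$ from Proposition~\ref{prop:SSRT_hw}) to obtain $B(\fw_n)^{\otimes 3}\iso\bigoplus_{i=0}^n B(\fw_n)\otimes B(\tfw_i)$, leaving the leftmost spin factor exposed and minuscule.

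Next I would decompose each factor. In the standard $\epsilon$-coordinates $\tfw_i=\bigl(1^i,0^{n-i}\bigr)$ and the weights of $B(\fw_n)$ are $\nu=\bigl((\pm\tfrac{1}{2}),\dots,(\pm\tfrac{1}{2})\bigr)$; the sum $\tfw_i+\nu$ is dominant (weakly decreasing and nonnegative) precisely when every coordinate beyond the $i$-th equals $+\tfrac{1}{2}$ and the first $i$ coordinates take the form $\bigl((\tfrac{3}{2})^{a},(\tfrac{1}{2})^{i-a}\bigr)$ for some $0\le a\le i$. Since $\bigl((\tfrac{3}{2})^{a},(\tfrac{1}{2})^{n-a}\bigr)=\fw_n+\tfw_a$, the minuscule rule yields $B(\fw_n)\otimes B(\tfw_i)\iso\bigoplus_{a=0}^{i}B(\fw_n+\tfw_a)$. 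Summing over $i$, the multiplicity of $B(\fw_n+\tfw_a)$ is $\#\{i:a\le i\le n\}=n-a+1$; setting $a=n-s$ gives multiplicity $s+1$ and the asserted decomposition.

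For the ``furthermore'' I would identify the $s+1$ copies of $B(\fw_n+\tfw_{n-s})$ with the indices $i=n-t$, $t=0,\dots,s$ (those $i\ge n-s$), and exhibit their highest weight elements explicitly. Using \eqref{eq:spin_isomorphism}, a strict partition $\nu$ has weight $\wt(\nu)=\fw_n-\sum_{k\in\nu}\epsilon_{n+1-k}$, so a short computation gives $\wt(O^t_\fw)=3\fw_n-\sum_{i=n-s+1}^{n}\epsilon_i=\fw_n+\tfw_{n-s}$ for every $t$, independent of $t$. It then remains to check that each $O^t_\fw=(\emptyset,\uplambda(t),\uplambda(s;t+1))$ is highest weight: the pair $(\emptyset,\uplambda(t))$ is the highest weight element of the summand $B(\tfw_{n-t})$ of $B(\fw_n)^{\otimes 2}$ by Theorem~\ref{thm: main fw}, and applying the signature rule with the exposed left spin factor $\uplambda(s;t+1)$ shows $e_j O^t_\fw=0$ for all $j$, realizing $O^t_\fw$ as $c_t\otimes u_{\tfw_{n-t}}$ inside $B(\fw_n)\otimes B(\tfw_{n-t})$. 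Since the $O^t_\fw$ are visibly distinct and the decomposition already caps the multiplicity at $s+1$, these are precisely the highest weight vectors of the $s+1$ summands.

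The main obstacle is the bookkeeping in the two highest weight verifications, namely justifying the minuscule tensor rule (equivalently, pinning down the highest weight elements of $B(\fw_n)\otimes B(\mu)$ and confirming the dominance analysis of $\tfw_i+\nu$) and checking $e_j O^t_\fw=0$ for all $j$; the latter reduces to a finite signature computation on three nested staircase partitions, with Lemma~\ref{lemma: general hw B} forcing $\tau^{(1)}=\emptyset$ and $\tau^{(2)}=\uplambda(t)$ so that only the behaviour of $\tau^{(3)}=\uplambda(s;t+1)$ must be controlled. As a consistency check one can verify the dimension identity $\sum_{s=0}^{n}(s+1)\dim V(\fw_n+\tfw_{n-s})=8^n$. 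Alternatively, one may bypass the minuscule language entirely and enumerate the highest weight elements of $\mathcal{SP}_{n,3}$ directly by the signature rule, which recovers the same list $\{O^t_\fw\}$.
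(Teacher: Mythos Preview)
Your argument is correct. The paper does not actually prove this lemma; it is introduced with ``We shall start with the following lemma which is easy to check,'' so there is no proof to compare against beyond an implicit appeal to a direct signature-rule verification on $\mathcal{SP}_{n,3}$.

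Your route via the minuscule property of $B(\fw_n)$ is more conceptual than what the paper implicitly has in mind. The paper's framing (Lemma~\ref{lemma: general hw B}, Proposition~\ref{prop: same inner shape}, and the surrounding discussion) suggests one is expected to list the highest weight elements of $\mathcal{SP}_{n,3}$ by hand using the signature rule---exactly the alternative you mention in your last sentence. Your approach instead first isolates the two-fold decomposition~\eqref{eq:2tens_B} and then exploits the standard fact that tensoring a minuscule crystal against $B(\mu)$ produces one summand for each weight $\nu$ with $\mu+\nu$ dominant. This gives the multiplicity count $s+1$ without enumerating anything, and the identification of $O^t_\fw$ with the element $c_t\otimes u_{\tfw_{n-t}}$ (where $c_t$ is the spin element with $-$'s in positions $n-s+1,\dots,n-t$) falls out of the dominance analysis rather than a case check. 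The trade-off is that your argument requires justifying the minuscule tensor rule, whereas the paper's implicit approach needs no such input but is pure bookkeeping. Both are routine; yours explains \emph{why} the multiplicity is $s+1$.
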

Note that for $\fw = \fw_n+\tfw_{n-s}$, the inner shape of $O^t_{\fw}$ is $(s,s-t)$. Next, we denote $R^t(\fw_n+\tfw_{n-s}) \seteq R_{n,3}\bigl((s,s-t)\bigr)$.

\begin{theorem} \label{thm: main mz}
For $0\le s \le n$, the subcrystal $R^t(\fw_n+\tfw_{n-s})$ is isomorphic to $B(\fw_n+\tfw_{n-s})$ of type $B_n$.
\end{theorem}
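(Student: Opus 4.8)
The plan is to follow the template of the proof of Theorem~\ref{thm: main as}. Since Proposition~\ref{prop: same inner shape} guarantees that $R^t(\fw_n+\tfw_{n-s}) = R_{n,3}\bigl((s,s-t)\bigr)$ is a union of connected components of $\mathcal{SP}_{n,3} \iso B(\fw_n)^{\otimes 3}$, and since each connected component of a highest weight crystal contains exactly one highest weight element whose closure is the corresponding $B(\mu)$ by Proposition~\ref{prop:SSRT_hw}, it suffices to show that $R_{n,3}\bigl((s,s-t)\bigr)$ contains a \emph{unique} highest weight element and that this element is $O^t_{\fw}$. Connectedness, and hence the identification $R_{n,3}\bigl((s,s-t)\bigr) = R(\fw) \iso B(\fw_n+\tfw_{n-s})$, then follows at once.

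First I would constrain any highest weight element $T=(\tau^{(1)},\tau^{(2)},\tau^{(3)})$ lying in $R_{n,3}\bigl((s,s-t)\bigr)$. By Lemma~\ref{lemma: general hw B} we have $\tau^{(1)}=\emptyset$ and $\tau^{(2)}=\uplambda(a)$ for some $a\ge 0$. Next I would compute the inner shape $\eta=(\eta_1,\eta_2)$ directly from its definition via the integers $t_1,t_2$ and the nesting conditions $\tau^{(i)}\supset\tau^{(i+1)}_{>t_i}$. Because $\tau^{(1)}=\emptyset$, the index $t_1$ is forced to equal $\ell(\tau^{(2)})=a$, while $t_2$ is the minimal number of leading parts of $\tau^{(3)}$ that must be dropped before the remainder fits inside $\tau^{(2)}$; thus $\eta_2 = t_2$ and $\eta_1 = a + t_2$. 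Imposing $\eta=(s,s-t)$ then yields $t_2=s-t$ and $a=t$, so that $\tau^{(2)}=\uplambda(t)$ is pinned down.

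It remains to pin down $\tau^{(3)}$, and here I would invoke the complete classification of highest weight elements of $\mathcal{SP}_{n,3}\iso B(\fw_n)^{\otimes 3}$ supplied by Lemma~\ref{lemma: tens3_Bn}: every highest weight element has the form $O^{t'}_{\fw'}=(\emptyset,\uplambda(t'),\uplambda(s';t'+1))$ with $\fw'=\fw_n+\tfw_{n-s'}$ and $0\le t'\le s'\le n$, and its inner shape is $(s',s'-t')$. Since the assignment $(s',t')\mapsto(s',s'-t')$ is injective, the only highest weight element of inner shape $(s,s-t)$ is $O^{t}_{\fw}$ itself, which also matches the value $\tau^{(2)}=\uplambda(t)$ found above and forces $\tau^{(3)}=\uplambda(s;t+1)$. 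This establishes uniqueness of the highest weight element, hence connectedness, and the theorem follows.

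I expect the only genuinely delicate step to be the inner-shape computation in the second paragraph: one must verify carefully from the definition of the inner shape that a highest weight element with $\tau^{(1)}=\emptyset$ indeed has $\eta_1=\ell(\tau^{(2)})+t_2$ and $\eta_2=t_2$, so that the constraint $\eta=(s,s-t)$ genuinely recovers $(a,t_2)=(t,s-t)$ with no ambiguity. Once this is in hand, the injectivity of the inner-shape map on the classified highest weight elements leaves no room for a second highest weight element of the same inner shape, and everything else is a direct application of Proposition~\ref{prop: same inner shape}, Proposition~\ref{prop:SSRT_hw}, and Lemma~\ref{lemma: tens3_Bn}, exactly as in the almost-spin case of Theorem~\ref{thm: main as}.
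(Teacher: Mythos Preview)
Your proposal is correct and takes essentially the same approach as the paper: both use Proposition~\ref{prop: same inner shape} to see that $R^t(\fw_n+\tfw_{n-s})$ is a union of connected components, and both use the classification of highest weight elements in Lemma~\ref{lemma: tens3_Bn} together with the observation that these highest weights have pairwise distinct inner shapes $(s,s-t)$. The paper simply phrases the last step as a pigeonhole argument (there are $\sum_{s}(s+1)$ components and the same number of nonempty inner-shape classes), whereas you spell out the injectivity of $(s,t)\mapsto(s,s-t)$; your paragraph~2 computation via Lemma~\ref{lemma: general hw B} is correct but redundant once you invoke the full classification of Lemma~\ref{lemma: tens3_Bn} in paragraph~3.
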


\begin{proof}
Since $R^t(\fw_n+\tfw_{n-s})$ is stable under the Kashiwara operators by Proposition~\ref{prop: same inner shape}, Lemma~\ref{lemma: tens3_Bn} and the pigeonhole principle implies that the same argument of the second proof of Theorem~\ref{thm: main fw} holds.
\end{proof}

Note that $R^0(\fw_n+\tfw_{n-s}) = R(O_{\fw_n+\tfw_{n-s}})$ in the previous section. Then the following corollary follows from
Theorem~\ref{thm: wt Motzkin}.

\begin{corollary} For $0\le s \le n-1$,
let $S^t(\fw_n+\tfw_{n-s})_m$ be the subset of $R^t(\fw_n+\tfw_{n-s})$ satisfying the following properties:
$T \in S^t(\fw_n+\tfw_{n-s})_m$ if $\Sh(T) \vdash m$ and $T$ is standard. Then we have
\[
\lvert S^t(\fw_n+\tfw_{n-s})_m \rvert = \Mot_{(m,s)}.
\]
\end{corollary}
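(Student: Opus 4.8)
The plan is to realize $S^t(\fw_n+\tfw_{n-s})_m$ as a single weight space of the crystal $R^t(\fw_n+\tfw_{n-s})$ and then transport the count through the crystal isomorphism of Theorem~\ref{thm: main mz} to $B(\fw_n+\tfw_{n-s})$, where Theorem~\ref{thm: wt Motzkin} supplies the value $\Mot_{(m,s)}$. The crux is that the defining condition ``$T$ standard with $\Sh(T)\vdash m$'' is intrinsic and records exactly one weight, independently of the inner shape $(s,s-t)$, so that the $s+1$ models $R^t$ for $0\le t\le s$ are all enumerated in the same way.

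First I would pin down the weight characterization: for $T\in\mathcal{SP}_{n,3}$ with rows the strict partitions $\tau^{(1)},\tau^{(2)},\tau^{(3)}$, the identification~\eqref{eq:spin_isomorphism} together with the spin-weight rule gives that a value $v$ occurring in exactly $c_v$ of the three rows contributes $\tfrac12(3-2c_v)\epsilon_{n+1-v}$ to $\wt(T)$. Being standard with $\Sh(T)\vdash m$ means the entry set is the initial segment $\{1,\dotsc,m\}$, each value occurring once (this is the reading of ``standard of shape $\vdash m$'' that makes the $t=0$ restatement of Theorem~\ref{thm: wt Motzkin} correct), so $c_v=1$ for $v\le m$ and $c_v=0$ for $v>m$. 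This yields
\[
\wt(T)=\tfrac32\sum_{j=1}^{n-m}\epsilon_j+\tfrac12\sum_{j=n-m+1}^{n}\epsilon_j=\fw_n+\tfw_{n-m},
\]
and the converse is equally direct. Since only the row contents $\tau^{(i)}$ enter this computation and never the horizontal alignment, the equivalence ``standard with $\Sh(T)\vdash m$'' $\Longleftrightarrow$ ``$\wt(T)=\fw_n+\tfw_{n-m}$'' holds verbatim inside every $R^t(\fw_n+\tfw_{n-s})=R_{n,3}\bigl((s,s-t)\bigr)$.

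The corollary then follows at once: by Theorem~\ref{thm: main mz} the isomorphism $R^t(\fw_n+\tfw_{n-s})\iso B(\fw_n+\tfw_{n-s})$ preserves weights, so
\[
\lvert S^t(\fw_n+\tfw_{n-s})_m\rvert=\dim V(\fw_n+\tfw_{n-s})_{\fw_n+\tfw_{n-m}}=\Mot_{(m,s)},
\]
the last equality being Theorem~\ref{thm: wt Motzkin}; the case $t=0$ reproduces the restatement already recorded. I expect the only genuine obstacle to be the weight characterization of the second paragraph—specifically, fixing the precise meaning of ``standard of shape $\vdash m$'' as having entry set the initial segment $\{1,\dotsc,m\}$, and tracking the bottom-up sign convention in~\eqref{eq:spin_isomorphism}—after which the statement is purely a matter of weight-preservation under the crystal isomorphism furnished by Theorem~\ref{thm: main mz}.
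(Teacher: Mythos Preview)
Your proposal is correct and follows essentially the same approach as the paper: the paper's argument is simply that by Theorem~\ref{thm: main mz} the subcrystal $R^t(\fw_n+\tfw_{n-s})$ is isomorphic to $B(\fw_n+\tfw_{n-s})$, so the count reduces to the weight-multiplicity statement of Theorem~\ref{thm: wt Motzkin}, via the already-recorded restatement $\lvert S(T^{\natural})_m\rvert=\Mot_{(m,s)}$ for any highest weight element $T^{\natural}$. You have supplied the weight computation explicitly (that ``standard with $\Sh(T)\vdash m$'' is equivalent to $\wt(T)=\fw_n+\tfw_{n-m}$), which the paper leaves implicit in its restatement; otherwise the logic is identical.
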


\begin{example}
\mbox{}
\begin{enumerate}
\item For $m=5$, $s=3$ and $t=3$, the elements in $S^3(\fw_n+\fw_{n-3})_5$ are
\begin{align*}
&\young(\cdot\cdot\cdot,543,21), \ \young(\cdot\cdot\cdot,542,31), \ \young(\cdot\cdot\cdot,541,32), \ \young(\cdot\cdot\cdot,532,41), \ \young(\cdot\cdot\cdot,531,42),  \young(\cdot\cdot\cdot1,542,3), \ \young(\cdot\cdot\cdot1,543,2), \  \young(\cdot\cdot\cdot2,543,1), \ \\
&\young(\cdot\cdot\cdot1,532,4), \ \young(\cdot\cdot\cdot2,5431), \  \young(\cdot\cdot\cdot3,5421), \  \young(\cdot\cdot\cdot4,5321), \  \young(\cdot\cdot\cdot5,4321), \   \young(\cdot\cdot\cdot21,543),
\end{align*}
which shows that
\[
\lvert S^3(\fw_n+\fw_{n-3})_5 \rvert = \Mot_{(5,3)}=14.
\]
\item For $m=5$, $s=3$ and $t=2$, the elements in $S^2(\fw_n+\fw_{n-3})_5$ are
\begin{align*}
&\young(\cdot\cdot\cdot21,\cdot43,5), \  \young(\cdot\cdot\cdot,\cdot43,521), \ \young(\cdot\cdot\cdot,\cdot53,421), \ \young(\cdot\cdot\cdot,\cdot42,531), \ \young(\cdot\cdot\cdot,\cdot52,431), \ \young(\cdot\cdot\cdot,\cdot54,321), \ \young(\cdot\cdot\cdot2,\cdot431,5), \\
 & \young(\cdot\cdot\cdot3,\cdot421,5), \  \young(\cdot\cdot\cdot4,\cdot321,5), \ \young(\cdot\cdot\cdot5,\cdot321,4), \young(\cdot\cdot\cdot2,\cdot43,51), \ \young(\cdot\cdot\cdot1,\cdot42,53), \ \young(\cdot\cdot\cdot1,\cdot43,52),  \ \young(\cdot\cdot\cdot1,\cdot52,43),
\end{align*}
which also yields
\[
\lvert S^2(\fw_n+\fw_{n-3})_5 \rvert = \Mot_{(5,3)} = 14.
\]
\item For $m=5$, $s=3$ and $t=1$, the elements in $S^1(\fw_n+\fw_{n-3})_5$ are
\begin{align*}
&\young(\cdot\cdot\cdot21,\cdot\cdot3,54), \ \young(\cdot\cdot\cdot2,\cdot\cdot31,54), \ \young(\cdot\cdot\cdot3,\cdot\cdot21,54), \ \young(\cdot\cdot\cdot4,\cdot\cdot21,53), \
\young(\cdot\cdot\cdot1,\cdot\cdot3,542), \ \young(\cdot\cdot\cdot1,\cdot\cdot4,532), \ \young(\cdot\cdot\cdot2,\cdot\cdot3,541), \\
&  \young(\cdot\cdot\cdot1,\cdot\cdot5,432), \ \young(\cdot\cdot\cdot5,\cdot\cdot21,43), \ \young(\cdot\cdot\cdot3,\cdot\cdot4,521), \ \young(\cdot\cdot\cdot2,\cdot\cdot4,531), \
\young(\cdot\cdot\cdot2,\cdot\cdot5,431), \ \young(\cdot\cdot\cdot3,\cdot\cdot5,421), \ \young(\cdot\cdot\cdot4,\cdot\cdot5,321), \
\end{align*}
which results in
\[
\lvert S^2(\fw_n+\fw_{n-3})_5 \rvert = \Mot_{(5,3)} = 14.
\]
\end{enumerate}
\end{example}

\begin{remark}
Theorem~\ref{thm: wt Motzkin} and Lemma~\ref{lemma: tens3_Bn} imply the identity
\begin{align}\label{eq: Motzkin}
 3^m =\sum_{s=0}^m (s+1) \Mot_{(m,s)}
\end{align}
without appealing to Schur--Weyl duality.
Indeed, note that the number of strict partitions $(\tau^{(1)},\tau^{(2)},\tau^{(3)})$ satisfying $\tau^{(1)} \cup \tau^{(2)} \cup \tau^{(3)}=\uplambda(m)$ is $3^m$.
\end{remark}

\subsection{Extension to type $C_n$}

Now we assume that $\g$ is of type $C_n$ for this section.
Recall that there exists a virtualization map on highest weight crystals from type $C_n$ to $B_n$ given by
\[
e_i \mapsto \virtual{e}_i^{\gamma_i},
\qquad\qquad
f_i \mapsto \virtual{f}_i^{\gamma_i},
\qquad\qquad
\fw_i \mapsto \gamma_i \virtual{\fw}_i,
\]
where $\gamma_i = 1+\delta_{in}$ (see, \textit{e.g.},~\cite{K96,OSS03III,OSS03II,SchillingS15}) and for each object $X$ in type $C_n$, we write the corresponding object in type $B_n$ as $\virtual{X}$. Thus, we can realize a highest weight crystal $B(\lambda)$ as a subset of semistandard rigid tableaux $R(O_{\lambda})$.

\begin{proposition}
\label{prop:virtual_rigid}
Let $\g$ be of type $C_n$ and $v \colon B(r\fw_n) \to R(O_{r\tfw_n})$ be the virtualization map. Then we have
\[
v\bigl( B(r\fw_n) \bigr) = \left\{ T \in R\left(O_{r\tfw_n} \right) \mid \text{$\Sh(T)$ is an even partition} \right\}.
\]
\end{proposition}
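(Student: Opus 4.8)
The plan is to combine the general theory of virtual crystals with a direct analysis of how the Kashiwara operators change the shape of a semistandard rigid tableau. Recall that $v$ sends the highest weight element to $O_{r\tfw_n}$ and satisfies $v\circ f_i = f_i^{\gamma_i}\circ v$ and $v\circ e_i = e_i^{\gamma_i}\circ v$ with $\gamma_i = 1+\delta_{in}$, where on the right we use the operators of $R(O_{r\tfw_n})\iso B(r\tfw_n)$ of type $B_n$. Hence $v\bigl(B(r\fw_n)\bigr)$ is exactly the connected subcrystal generated from $O_{r\tfw_n}$ by the virtual operators $f_i^{\gamma_i}$ and $e_i^{\gamma_i}$. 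One cannot simply invoke the local divisibility conditions $\gamma_i \mid \varepsilon_i(T),\varphi_i(T)$: these cut out a strictly larger set than the image (already in type $C_2$), so the component of $O_{r\tfw_n}$ must be identified by hand.

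By Proposition~\ref{prop: same inner shape} every tableau in $R(O_{r\tfw_n})$ has the same empty inner shape as $O_{r\tfw_n}$; thus all tableaux under consideration are of straight shape with $2r$ rows and entries at most $n$, and $\Sh(T)$ is the partition of row lengths. The shape is even precisely when every column has even height, i.e.\ when the $2r$ rows group into $r$ pairs of equal length. Because $\gamma_i=1$ for $i<n$, the operators $e_i,f_i$ with $i<n$ only change an entry inside a row (between the values $n-i$ and $n-i+1$) and so preserve $\Sh(T)$ verbatim; the only operators that alter the shape are $e_n,f_n$, which delete resp.\ append a box with entry $1$ at the end of a row. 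The proposition therefore reduces to the following two assertions about the node-$n$ signature of an even-shaped $T$:
\begin{enumerate}
\item[(L1)] $\varepsilon_n(T)$ and $\varphi_n(T)$ are even;
\item[(L2)] $\Sh(f_n^2 T)$ and $\Sh(e_n^2 T)$ are even whenever $f_n^2 T\neq 0$ resp.\ $e_n^2 T\neq 0$.
\end{enumerate}

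Granting (L1) and (L2), both inclusions are immediate. Since $O_{r\tfw_n}$ has empty (even) shape, and the even-shaped tableaux are stable under every virtual operator---by shape-preservation for $i<n$ and by (L2) for $i=n$---the subcrystal generated from $O_{r\tfw_n}$ lies in the even-shaped locus, giving $v(B(r\fw_n))\subseteq\{\,T:\Sh(T)\text{ even}\,\}$. For the reverse inclusion I would induct downward on the weight. Let $T$ be even-shaped with $T\neq O_{r\tfw_n}$, so $e_i T\neq 0$ for some $i$. If $i<n$ then $e_i T$ is again even-shaped; otherwise $\varepsilon_i(T)=0$ for all $i<n$, whence $\varepsilon_n(T)>0$ and, by (L1), $\varepsilon_n(T)\ge 2$, so $e_n^2 T\neq 0$ is even-shaped by (L2). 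In either case one moves, via a virtual operator, to an even-shaped tableau closer to the highest weight; iterating reaches the unique highest weight element $O_{r\tfw_n}$ of the irreducible $R(O_{r\tfw_n})$, so $T\in v(B(r\fw_n))$.

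The crux is the signature lemma (L1)--(L2), and this is where I expect the real work. The difficulty is that a single $f_n$ creates one new box and so destroys evenness; one must show that the next $f_n$ is forced to land at the end of a row of the same length, completing a height-two column. This would be proved by recording which row realizes the rightmost unmatched $-$ before and after the first application, using semistandardness together with the fact that equal-length rows occur in pairs; the parity statement (L1) comes out of the same bracketing of the $+/-$ contributions. As a cross-check (and an alternative route to (L2)), one may pass to the dual-conjugate semistandard Young tableaux of Remark~\ref{rem:bijective_spin_rectangles}, where the even-shape condition translates into a condition on tableaux with at most $2r$ columns, and then compare cardinalities with $\dim V(r\fw_n)=\lvert B(r\fw_n)\rvert$ in type $C_n$.
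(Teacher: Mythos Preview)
The paper states this proposition without proof (it only points to Figure~\ref{fig:rigid_virtual} as an example), so there is no argument to compare against. Your strategy is correct and is a natural way to establish the result: the reduction to (L1)--(L2) is exactly right, and the two inclusions follow from them as you indicate.

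What is missing is the proof of (L1)--(L2); you only gesture at ``the same bracketing.'' Here is the observation that makes both immediate. With empty inner shape, $\Sh(T)$ is even precisely when $\ell(\tau^{(2k-1)})=\ell(\tau^{(2k)})$ for every $k$. A row $\tau$ contributes $+$ to the $n$-signature iff $1\in\tau$, i.e.\ iff its last entry is $1$. When $\ell(\tau^{(2k-1)})=\ell(\tau^{(2k)})=\ell$, the column condition gives $\tau^{(2k-1)}_\ell\ge\tau^{(2k)}_\ell\ge1$, so $1\in\tau^{(2k-1)}$ forces $1\in\tau^{(2k)}$. Hence, reading the $n$-signature from $\tau^{(2r)}$ to $\tau^{(1)}$, each consecutive pair contributes $++$, $--$, or $+-$ (never $-+$). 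The $+-$ pairs cancel internally, and the reduction of the remaining word is the doubling of the ``super-signature'' obtained by replacing each pair by a single symbol; this gives (L1). For (L2), the rightmost unmatched $-$ therefore sits at the right half of some surviving $--$ block, i.e.\ at row $2k_0-1$, and the second rightmost at row $2k_0$. Since $\hat f_n$ converts the rightmost $-$ to $+$ without disturbing the other matchings, the two applications of $\hat f_n$ add a box to rows $2k_0-1$ and $2k_0$ respectively, so $\Sh(\hat f_n^2T)$ is again even; the argument for $\hat e_n^2$ is symmetric.

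With (L1)--(L2) in hand your inclusions go through verbatim, and the cardinality cross-check you mention is not needed. Your caution about the divisibility characterization is also well placed: already for $C_2$ there are odd-shaped tableaux (e.g.\ $((2),\emptyset)$ in $R(O_{\tfw_2})$) with $\varepsilon_n,\varphi_n$ both even, so the aligned-string condition alone does not cut out the virtual image.
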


\begin{figure}
\[
\input{rigid_tableaux_virtual_ex}
\]
\caption{The crystal of semistandard rigid tableaux $R(\fw_2)$ (left) and $R(\fw_3)$ (right) in type $C_3$.}
\label{fig:rigid_virtual}
\clearpage
\end{figure}

For an example of Proposition~\ref{prop:virtual_rigid}, see Figure~\ref{fig:rigid_virtual}.
Thus, from~\cite[Eq.~(2)]{dSCV86} and taking the dual conjugate tableau similar to Remark~\ref{rem:bijective_spin_rectangles}, we have a bijective proof of the following alternative formula to Corollary~\ref{cor: Cn determinatal r omega_n}.

\begin{proposition}
In type $C_n$, we have
\[
\dim V(r\fw_n) = \prod_{1\leq i \leq j \leq n} \dfrac{2r+i+j}{i+j}.
\]
\end{proposition}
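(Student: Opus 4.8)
The plan is to prove the identity bijectively, by combining the virtualization description of $B(r\fw_n)$ from Proposition~\ref{prop:virtual_rigid} with the classical product enumeration of de Sainte-Catherine and Viennot. First I would use Proposition~\ref{prop:virtual_rigid} to replace the representation-theoretic quantity by a tableau count: since the virtualization map $v\colon B(r\fw_n)\to R(O_{r\tfw_n})$ is injective and cardinality preserving, we have
\[
\dim V(r\fw_n) = \bigl\lvert \{\, T\in R(O_{r\tfw_n}) \mid \Sh(T)\text{ is an even partition}\,\}\bigr\rvert,
\]
so it suffices to enumerate the even-shape semistandard rigid tableaux lying in $R(O_{r\tfw_n})$.

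Next I would transport this count to ordinary semistandard Young tableaux using the dual-conjugate bijection already employed in Remark~\ref{rem:bijective_spin_rectangles}: reflecting each SSRT about the line $y=-x$ and replacing every entry $t$ by $n+1-t$ identifies the SSRTs in $R(r\tfw_n)$ with the semistandard Young tableaux having at most $r$ columns and entries in $\{1,\dots,n\}$. The key bookkeeping step is to track what the condition ``$\Sh(T)$ even'' becomes under this reflection: an all-even shape is carried to a shape in which each column length occurs with even multiplicity (equivalently, the conjugate shape is even). Thus the even-shape SSRTs correspond exactly to the semistandard Young tableaux with entries in $\{1,\dots,n\}$, at most $r$ columns, and column lengths occurring in equal pairs---precisely the family enumerated by~\cite[Eq.~(2)]{dSCV86}, whose product formula is $\prod_{1\le i\le j\le n}\frac{2r+i+j}{i+j}$. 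Combining the two displays yields the claim.

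The main obstacle I anticipate is matching conventions at the interface: one must verify carefully that the ``even partition'' constraint coming out of Proposition~\ref{prop:virtual_rigid} is transported by the $y=-x$ reflection to exactly the evenness constraint under which de Sainte-Catherine and Viennot prove their formula, and that their bounds on the number of rows/columns (and the role of $n$ as the alphabet size) line up with ours. Since this is purely a combinatorial compatibility check, it is routine once the conventions are fixed, but it is where all the care is needed. As an independent confirmation---and an alternative, non-bijective route---one can instead specialize Proposition~\ref{prop:Cn_ps} at $q=1$ with $\lambda=r\fw_n=(r,\dots,r)$ and simplify the resulting Weyl-dimension product directly, exactly as in the proof of Proposition~\ref{prop:all_spin_rectangles}; this also matches the determinantal form of Corollary~\ref{cor: Cn determinatal r omega_n}.
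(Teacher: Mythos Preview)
Your proposal is correct and matches the paper's argument exactly: the paper deduces the proposition from Proposition~\ref{prop:virtual_rigid} together with the dual-conjugate bijection of Remark~\ref{rem:bijective_spin_rectangles} and the product formula~\cite[Eq.~(2)]{dSCV86}. Your anticipated obstacle---checking that the even-shape constraint on SSRTs becomes the parity constraint in the de~Sainte-Catherine--Viennot enumeration under the reflection---is indeed the only thing to verify, and your alternative route via Proposition~\ref{prop:Cn_ps} at $q=1$ is also a valid (though non-bijective) backup.
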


\section{Semistandard spin rigid tableaux}
\label{sec:spin_rigid_tableaux}

In this section, we assume $\g$ is of type $D_n$ and use the notation of semistandard spin rigid tableaux from~\cite{KLO17}. We will describe the corresponding crystal structure on semistandard spin rigid tableaux and give some applications.

\subsection{Definition}

A \defn{colored integer} is an integer with a color, gray or white. For example,
\begin{itemize}
\item $\gn{3}$ denotes a gray integer whose quantity $|\gn{3}| = 3$ and
\item $\nn{3}$ denotes a white integer whose quantity $|\nn{3}| = 3$.
\end{itemize}

\begin{remark}
When we do not want to tell the color of a colored integer, we write it without circle; that is, $\cn{n}$ can be
$\gn{n}$ or $\nn{n}$.  Also we assign an integer $\clr(\cn{n})=1$ if $\cn{n}$ is gray and $\clr(\cn{n})=0$ if $\cn{n}$ is white.
\end{remark}

\begin{definition}
For colored integers $\cn{j}$ and $\cn{k}$, we construct two partial orders:
\begin{enumerate}
\item $\cn{j} \succeq \cn{k}$ if and only if their colors coincide $(\clr(\cn{j})=\clr(\cn{k}))$ and $|\cn{j}| \ge |\cn{k}|$;
\item $\cn{j} > \cn{k}$ if and only if $|\cn{j}| > |\cn{k}|$.
\end{enumerate}
\end{definition}

\begin{definition} For a sequence of colored integers $\uptau=(\uptau_1,\uptau_2,\ldots,\uptau_s)$, we say
$\uptau$ a \defn{alternating strict partition (ASP)} if
\begin{enumerate}
\item $\clr(\uptau_i) \ne \clr(\uptau_{i+1})$ for all $i$,
\item $\uptau_i > \uptau_{i+1}$.
\end{enumerate}
\end{definition}

\begin{definition} For two $\ASP$s $\blam$ and $\bmu$, we denote by $\blam \supseteq \bmu$, if
$\blam_i \succeq \bmu_i$ for each $1 \le i \le \min\{ \ell(\blam),\ell(\bmu) \}$.
\end{definition}

Take a ASP $\btau= (\btau_1 > \btau_2 > \ldots > \btau_\ell)$ such that $n \ge \btau_1$ and $i \in I=\{ 1,\ldots,n,n+1\}$
We define ASPs $e_i(\btau)$ and $f_i(\btau)$ as follows:
\begin{align*}
&e_i(\btau) \seteq \begin{cases}
(\btau_1,\btau_2,\ldots,\cn{n}-\cn{i},\ldots,\btau_\ell) & \text{ if $i<n$, $\cn{n}-\cn{i}+\cn{1}$ is a part and $\cn{n}-\cn{i}$ is not}, \\
(\btau_1,\btau_2,\ldots,\btau_{\ell-1})& \text{ if $i=n+1$ and  $\btau_\ell=\nn{1}$}, \\
(\btau_1,\btau_2,\ldots,\btau_{\ell-1})& \text{ if $i=n$ and  $\btau_\ell=\gn{1}$}, \\
0 & \text{otherwise},
\end{cases} \allowdisplaybreaks \\
& f_i(\btau) \seteq \begin{cases}
(\btau_1,\btau_2,\ldots,\cn{n}+\cn{1}-\cn{i},\ldots,\btau_\ell) & \text{ if $i<n$, $\cn{n}-\cn{1}$ is a part and $\cn{n}-\cn{i}+\cn{1}$ is not}, \\
(\btau_1,\btau_2,\ldots,\btau_\ell,\nn{1})& \text{ if $i=n+1$, $\clr(\btau_\ell)$ is gray and $\cn{1}$ is not a part}, \\
(\btau_1,\btau_2,\ldots,\btau_\ell,\gn{1})& \text{ if $i=n$, $\clr(\btau_\ell)$ is white and $\cn{1}$ is not a part}, \\
0 & \text{otherwise}.
\end{cases}
\end{align*}

One can easily check the following theorem from the spin representations of type $D_{n+1}$.

\begin{theorem}
The set of ASPs $\{ \btau \mid \btau_1 \le n \text{ and } \clr(\btau_1)=0 \}$ $($resp. $\{ \btau \mid \btau_1 \le n \text{ and } \clr(\btau_1)=1 \})$ with Kashiwara operators is isomorphic to
$B(\fw_{n+1})$ $($resp. $B(\fw_{n}))$ over $D_{n+1}$.
\end{theorem}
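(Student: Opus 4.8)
The plan is to transport the whole question to the $\{+,-\}^{n+1}$ model of the two spin crystals of type $D_{n+1}$ and then verify that the operator rules on $\ASP$s match the spin operators under an explicit bijection. Recall that $B(\fw_{n+1})$ (resp.\ $B(\fw_n)$) is realized by the tuples $(s_1, \dotsc, s_{n+1}) \in \{+,-\}^{n+1}$ with $\prod_i s_i = +$ (resp.\ $\prod_i s_i = -$), that $\wt(s_1,\dotsc,s_{n+1}) = \tfrac12\sum_i s_i \epsilon_i$, and that the Kashiwara operators act on adjacent entries, with the two fork nodes $n$ and $n+1$ acting on the pair $(s_n,s_{n+1})$ by $(-,+)\leftrightarrow(+,-)$ and $(-,-)\leftrightarrow(+,+)$ respectively.

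First I would observe that, since the colors alternate, fixing $\clr(\btau_1)$ forces the color of every part, so an $\ASP$ in either set is determined by its strictly decreasing sequence of absolute values, a strict partition with all parts in $\{1,\dotsc,n\}$. Accordingly I define $\Phi(\btau) = (s_1,\dotsc,s_{n+1})$ by declaring, for $1 \le i \le n$, that $s_i = -$ exactly when $n+1-i$ is a part of $\btau$, and then choosing $s_{n+1}$ to be the unique sign making $\prod_i s_i = +$ if $\clr(\btau_1) = 0$ and $\prod_i s_i = -$ if $\clr(\btau_1) = 1$. Because the first $n$ signs range over all of $\{+,-\}^n$ as the part set ranges over subsets of $\{1,\dotsc,n\}$ and $s_{n+1}$ is then determined, $\Phi$ is a bijection onto the prescribed parity class; it is weight-preserving once one notes that a part of value $v$ contributes its sign in position $n+1-v$, which also pins down the $\ASP$ weight to be $\tfrac12\sum_i s_i \epsilon_i$.

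The heart of the argument is that $\Phi$ intertwines the crystal operators. For $i<n$, the operator $e_i$ lowers a part $n-i+1$ to $n-i$ exactly when $n-i+1$ is a part and $n-i$ is not; since value $v$ occupies position $n+1-v$, this is the condition $(s_i,s_{i+1}) = (-,+)$, and the outcome is $(s_i,s_{i+1}) = (+,-)$, precisely the generic $D_{n+1}$ rule (with $f_i$ the reverse), and the output is again an $\ASP$ because only one absolute value changes with its color untouched, so alternation is automatic and strictness survives as the neighboring parts are already separated from the affected value. For the two fork nodes I would carry out the parity bookkeeping explicitly: writing $\ell$ for the number of parts, $\ell$ equals the number of minus signs among $s_1,\dotsc,s_n$, so in the color-$0$ class the bottom part $\btau_\ell$ is white iff $\ell$ is odd iff $s_{n+1} = -$, and gray iff $s_{n+1} = +$. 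Hence $f_n$, which appends $\gn{1}$ when $\btau_\ell$ is white, realizes $(s_n,s_{n+1}) = (+,-)\mapsto(-,+)$; $f_{n+1}$, which appends $\nn{1}$ when $\btau_\ell$ is gray, realizes $(+,+)\mapsto(-,-)$; and the removal rules for $e_n,e_{n+1}$ give the inverse moves $(-,+)\mapsto(+,-)$ and $(-,-)\mapsto(+,+)$. The color-$1$ class is handled identically with the parities reversed, landing in $B(\fw_n)$.

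The one genuinely delicate point, and the main obstacle, is the empty $\ASP$: it is the unique element annihilated by every $e_i$, and, lacking a top part, it must be placed in each of the two sets by the vacuous reading of the color condition. It serves as the common highest weight vector, corresponding to $(+,\dotsc,+)$ of weight $\fw_{n+1}$ in the color-$0$ crystal and to $(+,\dotsc,+,-)$ of weight $\fw_n$ in the color-$1$ crystal, so that both its weight and its single outgoing fork operator ($f_{n+1}$ versus $f_n$) are read off from the ambient color class; once this convention is fixed the operator rules above apply uniformly. With the operator correspondence established across all of $I$, each set is closed under the Kashiwara operators and connected with the correct highest weight, hence is isomorphic to $B(\fw_{n+1})$ (resp.\ $B(\fw_n)$), which completes the proof.
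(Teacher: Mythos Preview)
Your proof is correct and is exactly the verification the paper leaves to the reader: the paper does not supply a proof here, writing only ``One can easily check the following theorem from the spin representations of type $D_{n+1}$.'' Your explicit bijection $\Phi$ is the $D_{n+1}$-analog of the identification the paper gives in~\eqref{eq:spin_isomorphism} for type $B_n$, with the extra sign $s_{n+1}$ fixed by the parity class, and your operator check (including the fork bookkeeping and the convention for the empty $\ASP$, which the paper handles by introducing the tagged symbols $\nn{\emptyset}$ and $\gn{\emptyset}$) is precisely the intended ``easy check.''
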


For later use, we denoted by $O_{\fw_{n+1}} \seteq \nn{$\emptyset$}$ and $O_{\fw_n} \seteq \gn{$\emptyset$}$, which can be understood as the highest weight elements of $B(\fw_{n+1})$ and $B(\fw_{n})$, respectively.
By the tensor product rule of crystals, we can define the crystal structure on the sequence of ASPs.

From Equation~\eqref{eq:2tens_Dpp} and~\eqref{eq:2tens_Dpm}, any highest weight crystal $B(\lambda)$ appears as a subcrystal of $B(\fw_{n})^{\otimes a} \otimes B(\fw_{n+1})^{\otimes b}$ for some $a,b \in \Z_{\ge 0}$. Thus any
$B(\lambda)$ can be realized as certain set of sequence of ASPs.

For a positive colored integer $\cn{m}$, we denote by $\uplambda(\cn{m})$ the ASP given by
\[
\uplambda (\cn{m})=(\cn{m},\cn{m}-\cn{1},\ldots,\cn{2},\cn{1}).
\]
More generally, for $ a \ge b \ge 1$, we
 we denote by $\uplambda(\cn{a};\cn{b})$ the ASP given by
\[
\uplambda(\cn{a};\cn{b})=(\cn{a},\cn{a}-\cn{1},\ldots,\cn{b})
\]
such that if $j \equiv k \mod{2}$, then $\clr(\cn{j}) = \clr(\cn{k})$.
If $b>a$, then we define $\uplambda(\cn{a};\cn{b}) = \uplambda(\cn{0})$ with $\clr(\cn{a}) = \clr(\cn{0})$.
Note that if $a\equiv b \mod{2}$, then $\clr(\cn{a}) = \clr(\cn{b})$.

 Next, we define the notion of a \defn{semistandard spin rigid tableau of shape $\mu / \eta$} (in short SSSRT) to be a skew-tableaux of shape $\mu / \eta$ that is weakly decreasing along columns and strictly decreasing along rows. As we did for SSRTs, we can assign two partitions $\mu \supset \eta$ for each SSSRT, in the following way:
for each $1 \le i \le \ell-1$, there exists a unique $t_i \in \Z_{\ge 0}$ such that
\[
\text{$\btau^{(i)} \supseteq \btau^{(i+1)}_{>t_i}$ and $\btau^{(i)} \not \supseteq \btau^{(i+1)}_{> t_i-2}$ for $1 \le i \le k-1$,}
\]
Then we set
\[
\eta_i = \displaystyle\sum_{s=i}^{\ell-1} t_s \ \ (1 \le i \le \ell-1) \quad \text{ and } \quad \mu_j = \eta_j +\ell(\btau^{(j)})\ \ (1 \le j \le \ell).
\]
Moreover, we can consider the entries of the $i$-th row of the corresponding SSSRT as $\btau^{(i)}$.
Thus, we identify sequences of ASPs and SSSRTs.

\begin{example}
The sequence of ASPs given by $\left(\left(\nn{$4$},\gn{$3$},\nn{$2$}\right),\left(\nn{$5$},\gn{$3$}\right) \right)$ is a SSSRT of shape $(5,2) \setminus (2)$:
\[
\begin{ytableau}
\cdot& *(gray!40)  \cdot &  4 & *(gray!40) 3&  2\\
5& *(gray!40) 3
\end{ytableau}
\]
\end{example}

Hence we can understand any sequence of strict ASPs as a SSSRT $T$ of certain shape $\mu / \eta$.

\begin{lemma}[{\cite[Lemma 6.1]{KLO17}}]
\label{lemma: general hw D}
For a sequence of ASPs
\[
T=\left( \ntau{1},\ntau{2},\ldots,\ntau{\ell-1},\ntau{\ell} \right),
\]
such that $T$ is a highest weight element, then $\ntau{1}=\emptyset$ and $\ntau{2}=\uplambda(\cn{s})$ for some $s \in \Z_{\ge 0}$.
\end{lemma}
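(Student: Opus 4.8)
The plan is to mirror the proof of the type-$B$ analogue (Lemma~\ref{lemma: general hw B}), adapting it to colored integers and to the two fork colors $n$ and $n+1$ of type $D_{n+1}$. Recall that $T$ being a highest weight element means $e_i T = 0$ for all $i \in I = \{1, \ldots, n, n+1\}$, and that the crystal structure on a sequence of ASPs is the one induced from the tensor product $B(\fw_n)^{\otimes a} \otimes B(\fw_{n+1})^{\otimes b}$ under the reverse identification, so that $\ntau{1}$ is the innermost (rightmost) tensor factor and $\ntau{\ell}$ the outermost. The crucial simplification is that the half-spin crystals $B(\fw_n)$ and $B(\fw_{n+1})$ are minuscule, hence $\varepsilon_i(b), \varphi_i(b) \in \{0,1\}$ for every ASP $b$ and every $i$; thus in the signature rule each tensor factor contributes at most one sign at each color $i$, and all the work becomes an analysis of the reduced signature $\sig_i(T)$.

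First I would show $\ntau{1} = \emptyset$. Since $\ntau{1}$ is the innermost factor, its signs occupy the rightmost end of every signature string. A $+$ contributed by $\ntau{1}$ at a color $i$ (that is, $\varepsilon_i(\ntau{1}) = 1$) can never be deleted, because cancellation removes only $(+-)$ pairs and there is no factor to the right of $\ntau{1}$ to supply the matching $-$; hence such a $+$ persists in $\sig_i(T)$ and gives $e_i T \neq 0$. As $T$ is highest weight, this forces $\varepsilon_i(\ntau{1}) = 0$ for every $i \in I$, so $\ntau{1}$ is the highest weight element of its half-spin crystal, namely the empty ASP $\emptyset$ (either $O_{\fw_{n+1}}$ or $O_{\fw_n}$).

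Next I would pin down $\ntau{2}$. From the explicit $f_i$-rule, the empty ASP admits a lowering operator only at a single fork color ($n$ or $n+1$, depending on its color), so it supplies a $-$ in the signature only at that one fork color, and $\varphi_i(\emptyset) = 0$ for all $i < n$. Now fix $i < n$: a $+$ contributed by $\ntau{2}$ at color $i$ can only be cancelled by a $-$ to its right, and the only factor to the right of $\ntau{2}$ is $\ntau{1} = \emptyset$, which supplies no $-$ at color $i$ (the factors $\ntau{3}, \ldots, \ntau{\ell}$ lie to its left and can cancel only the $-$ signs of $\ntau{2}$). Hence such a $+$ survives and would give $e_i T \neq 0$, forcing $\varepsilon_i(\ntau{2}) = 0$, i.e. $e_i \ntau{2} = 0$, for every $i < n$. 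The operator $e_i$ with $i < n$ lowers a part of magnitude $n-i+1$ to magnitude $n-i$ precisely when the latter is absent; letting $v = n-i+1$ range over $\{2, \ldots, n\}$, the condition $e_i \ntau{2} = 0$ for all $i<n$ says exactly that whenever a magnitude $v \ge 2$ occurs in $\ntau{2}$ so does magnitude $v-1$. As the magnitudes of an ASP are strictly decreasing and at most $n$, this downward-closure forces them to be an initial segment $\{s, s-1, \ldots, 1\}$ (or to be empty), and the alternating-color rule then fixes the coloring to be that of $\uplambda(\cn{s})$. Thus $\ntau{2} = \uplambda(\cn{s})$ for some $s \in \Z_{\ge 0}$, with $s = 0$ giving $\ntau{2} = \emptyset$.

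The main obstacle is the careful bookkeeping of the colored integers and the two fork nodes: one must verify which of $e_n, e_{n+1}$ interacts with the empty ASP and with the smallest part of $\ntau{2}$ (distinguishing the case where $\ntau{2}$ reaches magnitude $1$ from the case where it does not), and confirm that the color-alternation constraint of an ASP is automatically compatible with a complete staircase rather than secretly obstructing some $e_i$ with $i < n$. This is exactly the colored refinement of the computation carried out for type $B$ in Lemma~\ref{lemma: general hw B}; alternatively, the statement may simply be quoted from \cite[Lemma 6.1]{KLO17}.
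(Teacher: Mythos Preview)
Your argument is correct and self-contained. The paper itself does not supply a proof of this lemma at all: it is stated with attribution to \cite[Lemma 6.1]{KLO17} and then used directly. So there is no ``paper's own proof'' to compare against; you have effectively reconstructed the argument that the paper chose to outsource. Your final remark anticipating this is on point.

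Your approach is the natural one and is exactly the colored analogue of the type-$B$ case (Lemma~\ref{lemma: general hw B}), which the paper likewise states without proof. The minuscule property of the half-spin crystals is the right simplifying observation: it reduces the signature analysis to single $\pm$ signs per factor, so the innermost factor $\ntau{1}$ is forced to be highest weight, and then only the colors $i<n$ are needed to force $\ntau{2}$ to be a staircase (the fork colors $n,n+1$ need not be analyzed for $\ntau{2}$, since the downward-closure condition on magnitudes alone determines the shape, and the alternating-color constraint fixes the coloring up to the color of the top part). One minor cosmetic point: you might note explicitly that the color of $\cn{s}$ in $\uplambda(\cn{s})$ is not determined by the highest-weight condition alone---it depends on which half-spin factor sits in position $2$---but the lemma as stated allows either color, so this is consistent.
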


\subsection{Riordan triangle numbers}
In this section, we assume $\g$ is of type $D_{n+1}$.

\begin{definition}
We say that a composition $\lambda$ of $m$ is \defn{almost even} when it satisfies one of the following conditions:
\begin{itemize}
\item If $m$ is odd, then it contains one odd part and the other parts are even.
\item If $m$ is even, then it contains two odd parts and the other parts are even.
\end{itemize}
We write $\lambda \Vdash_0 m$ to denote an almost even composition $\lambda$ of $m$.
\end{definition}

\begin{remark}\label{rmk:weight consideration}
Note that the weight of $\nn{$k$}$ and $\gn{$k$}$ are given as follows:
\[
\wt\bigl(\nn{$k$}\bigr) = \epsilon_{k}-\epsilon_{n+1} \qquad \text{ and } \qquad \wt\bigl(\gn{$k$}\bigr) = \epsilon_{k}+\epsilon_{n+1}.
\]
\end{remark}

\subsubsection{The pure spin case $B(\fw_{n+1})^{\otimes 3}$}

The following is a straightforward computation similar to Lemma~\ref{lemma: tens3_Bn}.

\begin{lemma}
In type $D_{n+1}$, we have
\begin{align} \label{eq: tens3}
B(\fw_{n+1})^{\otimes 3} \iso \bigoplus_{s=0}^{\lfloor (n+1)/2\rfloor} B(\fw_{n+1}+\tfw_{n+1-2s})^{\oplus s+1}
\bigoplus_{s=1}^{\lfloor n/2\rfloor} B(\fw_n+\fw_{n-2s})^{\oplus s}.
\end{align}
\end{lemma}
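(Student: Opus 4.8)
The plan is to enumerate the highest weight elements of $B(\fw_{n+1})^{\otimes 3}$ directly via the signature rule, exactly as in the proof of Theorem~\ref{thm:Hankel_Catalan_det}. Realize $B(\fw_{n+1})$ as the set of sign sequences $s = (s_1, \dots, s_{n+1}) \in \{+,-\}^{n+1}$ with an even number of $-$'s, with $\wt(s) = \tfrac{1}{2}\sum_{j} s_j \epsilon_j$. Since this half-spin crystal is minuscule, a tensor $s^{(3)} \otimes s^{(2)} \otimes s^{(1)}$ is a highest weight element if and only if every prefix weight $\wt(s^{(a)} \otimes \cdots \otimes s^{(1)})$ (for $a = 1,2,3$) is dominant, where $\sum_i c_i \epsilon_i$ is dominant exactly when $c_1 \ge \cdots \ge c_n \ge |c_{n+1}|$. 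Because the multiplicity of $B(\lambda)$ equals the number of such highest weight elements of weight $\lambda$, it suffices to count them.

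First I would dispose of the first two factors. Dominance of $\wt(s^{(1)})$ forces $s^{(1)} = (+,\dots,+)$, since $\fw_{n+1}$ is the only dominant weight of $B(\fw_{n+1})$. Then dominance of $\wt(s^{(2)} \otimes s^{(1)}) = \tfrac12(1 + s^{(2)}_1, \dots, 1 + s^{(2)}_{n+1})$ forces $s^{(2)} = (+^{p}, -^{n+1-p})$ with $n+1-p$ even; writing $p = n+1-2t$ recovers $\beta := \wt(s^{(2)}\otimes s^{(1)}) = (1^p, 0^{n+1-p}) = \tfw_{n+1-2t}$, and hence Equation~\eqref{eq:2tens_Dpp}.

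The heart of the computation is the third factor. Setting $\gamma := \beta + \wt(s^{(3)})$, its coordinates lie in $\{\tfrac32, \tfrac12, -\tfrac12\}$, with $\tfrac32$ possible only in the first $p$ positions and $-\tfrac12$ only after position $p$; dominance (in particular $c_n \ge |c_{n+1}|$) forbids two coordinates equal to $-\tfrac12$, so $\gamma$ is either $(\tfrac32^{a}, \tfrac12^{b})$ or $(\tfrac32^{a}, \tfrac12^{b}, -\tfrac12)$. In the first case $\gamma = \fw_{n+1} + \tfw_{n+1-2s}$ with $a = n+1-2s$, the constraint that $s^{(3)}$ has an even number of $-$'s reads $p - a \equiv 0 \pmod 2$, and for a fixed such weight the admissible $t$ satisfy $0 \le t \le s$, giving multiplicity $s+1$. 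In the second case $\gamma = \fw_n + \fw_{n-2s}$ with $a = n-2s$, the parity constraint reads $p-a \equiv 1 \pmod 2$, and the admissible $t$ satisfy $1 \le t \le s$, giving multiplicity $s$. Summing over $s$ then yields the stated decomposition.

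The main obstacle is bookkeeping rather than conceptual: I must justify the prefix-dominance characterization of highest weight elements for the half-spin crystal at the fork nodes $n$ and $n+1$ (this is where minusculeness is used and where the $|c_{n+1}|$ in the dominance condition enters), and then track the parity of the number of $-$'s together with the index ranges so that precisely the weights $\fw_{n+1}+\tfw_{n+1-2s}$ and $\fw_n+\fw_{n-2s}$ survive with the correct multiplicities $s+1$ and $s$. In particular the boundary term $s = 0$ gives the Cartan component $B(3\fw_{n+1})$ with multiplicity $1$, while the a priori possible weights $\fw_{n+1}+\tfw_n$ and $2\fw_n + \fw_{n+1}$ are excluded, the former by parity and the latter by the range constraint $a \le p$, which is exactly what makes the two families line up with the claim.
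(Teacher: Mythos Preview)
Your argument is correct and is exactly the ``straightforward computation'' the paper leaves to the reader: enumerate highest weight elements of $B(\fw_{n+1})^{\otimes 3}$ using the prefix-dominance criterion for tensor powers of a minuscule crystal (the same criterion invoked in the proof of Theorem~\ref{thm:Hankel_Catalan_det}), and read off the resulting weights and multiplicities. The paper records the outcome of this enumeration in Lemma~\ref{lem: hw 3}, phrased in the ASP language rather than in $\pm$-sequences, but the underlying computation is identical to yours.
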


\begin{example} \label{ex: D4 3tensor}
Let us consider $B(\fw_4)$ over $D_4$. Then we have
\[
B(\fw_{4})^{\otimes 3} \iso \left( B(\fw_{4})^{\oplus 3} \oplus B(\fw_{2}+\fw_{4})^{\oplus 2} \oplus B(3\fw_{4}) \right) \bigoplus
\left( B(\fw_{1}+\fw_{3}) \right),
\]
where corresponding highest weight elements can be described as follows:
\begin{align*}
\ytableausetup{smalltableaux}
B(\fw_{4})^{\oplus 3}  & \leftrightarrow \begin{ytableau}
\cdot& *(gray!40) \cdot &  \cdot & *(gray!40) \cdot \\
\cdot& *(gray!40) \cdot &  \cdot & *(gray!40) \cdot \\
3 & *(gray!40) 2 &  1 \\
\end{ytableau}\ , \
\begin{ytableau}
\cdot& *(gray!40) \cdot &  \cdot& *(gray!40) \cdot \\
\cdot& *(gray!40) \cdot &  1\\
3 & *(gray!40) 2 \\
\end{ytableau}\ , \
\begin{ytableau}
\cdot& *(gray!40) \cdot &  \cdot& *(gray!40) \cdot \\
3 & *(gray!40) 2 &  1 \\
\end{ytableau}\ ,
&
B(\fw_{2}+\fw_{4})^{\oplus 2}  &  \leftrightarrow \begin{ytableau}
\cdot& *(gray!40) \cdot \\
1
\end{ytableau}\ , \
\begin{ytableau}
\cdot& *(gray!40) \cdot \\
\cdot& *(gray!40) \cdot \\
1
\end{ytableau}\ ,
\\
 B(\fw_{1}+\fw_{3}) & \leftrightarrow \begin{ytableau}
\cdot& *(gray!40) \cdot &  \cdot& *(gray!40) \cdot \\
\cdot& *(gray!40) \cdot &  1\\
2 \\
\end{ytableau}\ , \
&
B(3\fw_{4})   & \leftrightarrow \emptyset.
\end{align*}
Hence we have two highest weight elements whose weights are different but inner shapes coincide:
\[
\begin{ytableau}
\cdot& *(gray!40) \cdot &  \cdot& *(gray!40) \cdot \\
\cdot& *(gray!40) \cdot &  1\\
3 & *(gray!40) 2 \\
\end{ytableau}\ ,
\qquad\qquad
\begin{ytableau}
\cdot& *(gray!40) \cdot &  \cdot& *(gray!40) \cdot \\
\cdot& *(gray!40) \cdot &  1\\
2 \\
\end{ytableau}\ .
\]
\end{example}

\begin{example}
For a connected component of $ \begin{ytableau}
\cdot& *(gray!40) \cdot \\
\cdot& *(gray!40) \cdot \\
1
\end{ytableau}$, we know $\dim V(\fw_{2}+\fw_{4})_{\fw_4}=6=\Rior_{(4,2)}$ is represented by following semistandard spin rigid tableaux:
\[
\begin{ytableau}
\cdot& *(gray!40) \cdot &  2& *(gray!40)1 \\
\cdot& *(gray!40) \cdot  \\
3
\end{ytableau}\ , \quad
\begin{ytableau}
\cdot& *(gray!40) \cdot &  3& *(gray!40)1 \\
\cdot& *(gray!40) \cdot  \\
2
\end{ytableau}\ , \quad
\begin{ytableau}
\cdot& *(gray!40) \cdot &  3& *(gray!40)2 \\
\cdot& *(gray!40) \cdot  \\
1
\end{ytableau}\ , \quad
\begin{ytableau}
\cdot& *(gray!40) \cdot &  1 \\
\cdot& *(gray!40) \cdot  \\
3 & *(gray!40) 2
\end{ytableau}\ , \quad
\begin{ytableau}
\cdot& *(gray!40) \cdot &  2 \\
\cdot& *(gray!40) \cdot  \\
3 & *(gray!40) 1
\end{ytableau}\ , \quad
\begin{ytableau}
\cdot& *(gray!40) \cdot &  3 \\
\cdot& *(gray!40) \cdot  \\
2 & *(gray!40) 1
\end{ytableau}\ .
\]
For a connected component of $ \begin{ytableau}
\cdot& *(gray!40) \cdot \\
1
\end{ytableau}$, we know $\dim V(\fw_{2}+\fw_{4})_{\fw_4}=6=\Rior_{(4,2)}$ is represented by following semistandard spin rigid tableaux:
\[
\begin{ytableau}
\cdot& *(gray!40) \cdot \\
3 & *(gray!40)2 \\
1
\end{ytableau}\ , \quad
\begin{ytableau}
\cdot& *(gray!40) \cdot \\
3 & *(gray!40)1 \\
2
\end{ytableau}\ , \quad
\begin{ytableau}
\cdot& *(gray!40) \cdot & 1 \\
3 & *(gray!40)2
\end{ytableau}\ , \quad
\begin{ytableau}
\cdot& *(gray!40) \cdot & 3 \\
2 & *(gray!40)1
\end{ytableau}\ , \quad
\begin{ytableau}
\cdot& *(gray!40) \cdot & 2 \\
3 & *(gray!40)1
\end{ytableau}\ , \quad
\begin{ytableau}
\cdot& *(gray!40) \cdot & 2 &  *(gray!40)1\\
3
\end{ytableau}\ .
\]
Note that inner shapes for $ \begin{ytableau}
\cdot& *(gray!40) \cdot \\
\cdot& *(gray!40) \cdot \\
1
\end{ytableau}$
and $ \begin{ytableau}
\cdot& *(gray!40) \cdot \\
1
\end{ytableau}$
are unique in Example~\ref{ex: D4 3tensor}, respectively.
 \end{example}

\begin{lemma}
\label{lem: hw 3}
\hfill
\begin{enumerate}
\item[{\rm (1)}] The $(s+1)$-many highest weight elements of weight $\fw_{n+1}+\tfw_{n+1-2s}$ in $B(\fw_{n+1})^{\tens 3}$ for $0 \le s \le \lfloor (n+1)/2\rfloor$ are given by
\[
O^0(s,t) \seteq \left(\nn{$\emptyset$},  \uplambda\left(\nn{$2t$-$1$}\right), \uplambda\left(\nn{$2s$-$1$},\gn{$2t$}\right) \right) \quad \text{ for } 0 \le t \le s,
\]
whose inner shape is $(2s,2s-2t)$ for $0 \le t \le s$, as a standard spin rigid tableau.
\item[{\rm (2)}] The $s$-many highest weight elements of weight $\fw_{n}+\fw_{n-2s}$ in $B(\fw_{n+1})^{\tens 3}$ for $1 \le s \le \lfloor n/2\rfloor$ are given by
\[
O^1(s,t) \seteq \left(\nn{$\emptyset$},  \uplambda\left(\nn{$2t$-$1$}\right), \uplambda\left(\nn{$2s$},\nn{$2t$}\right) \right) \quad \text{ for } 1 \le t \le s,
\]
whose inner shape is $(2s+2,2s-2t+2)$ for $1 \le t \le s$, as a standard spin rigid tableau.
\end{enumerate}
\end{lemma}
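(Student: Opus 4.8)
The plan is to prove Lemma~\ref{lem: hw 3} by directly verifying that each proposed sequence of ASPs is a highest weight element, and then matching these elements to the summands of the decomposition in~\eqref{eq: tens3}. Since Lemma~\ref{lemma: general hw D} already tells us that any highest weight element $T = (\ntau{1}, \ntau{2}, \ntau{3})$ must satisfy $\ntau{1} = \emptyset$ and $\ntau{2} = \uplambda(\cn{s})$ for some $s$, the main task is to enumerate all possible third components $\ntau{3}$ that keep $T$ highest weight and to organize them by the resulting highest weight.

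First I would verify the highest weight condition $e_i(T) = 0$ for all $i \in I = \{1, \dotsc, n+1\}$ directly from the signature rule and the explicit formulas for $e_i$ on ASPs. For the tensor product $\ntau{3} \otimes \ntau{2} \otimes \ntau{1}$, the condition $\emptyset = \ntau{1}$ forces all the action to be governed by $\ntau{2}$ and $\ntau{3}$; using $\ntau{2} = \uplambda(\cn{2t-1})$ and the two candidate forms $\ntau{3} = \uplambda(\nn{2s-1}, \gn{2t})$ or $\uplambda(\nn{2s}, \nn{2t})$, I would check that the reduced signature $\sig_i$ has no uncancelled $+$ for each $i < n$ (handled by the color-alternation and strict-decrease conditions of an ASP) and for the two spin nodes $i = n, n+1$ (handled by the color $\clr(\btau_\ell)$ of the last part and the parity constraints). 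The key computation here is that the ``staircase interface'' between $\ntau{2}$ and $\ntau{3}$ produces exactly cancelling pairs, which is where the specific parities $2t-1$, $2t$, $2s-1$, $2s$ are forced.

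Next I would compute the weight of each $O^\varepsilon(s,t)$ using Remark~\ref{rmk:weight consideration}, namely $\wt(\nn{k}) = \epsilon_k - \epsilon_{n+1}$ and $\wt(\gn{k}) = \epsilon_k + \epsilon_{n+1}$, and confirm that it equals $\fw_{n+1} + \tfw_{n+1-2s}$ in case (1) and $\fw_n + \fw_{n-2s}$ in case (2), independently of $t$. This explains why there are $(s+1)$-many (resp.\ $s$-many) distinct highest weight elements of the \emph{same} weight: they are distinguished by $t$, which only affects the inner shape and not the weight. I would then read off the inner shape from the SSSRT identification, where the overlap parameter $t_i$ between consecutive rows is determined by the largest suffix of $\ntau{i+1}$ contained (under $\supseteq$) in $\ntau{i}$; carrying out this count for the staircases gives inner shape $(2s, 2s-2t)$ in case (1) and $(2s+2, 2s-2t+2)$ in case (2). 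Finally, a dimension/multiplicity bookkeeping using Lemma~\eqref{eq: tens3} confirms that these are \emph{all} the highest weight elements: the multiplicities $s+1$ and $s$ exactly account for the ranges $0 \le t \le s$ and $1 \le t \le s$ respectively, so no candidate is missed.

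The hard part will be the signature-rule verification at the two spin nodes $i = n$ and $i = n+1$, where the color constraints and the $\cn{1}$-part conditions in the definitions of $e_n, e_{n+1}$ interact subtly with the alternating-color requirement of an ASP; in particular one must track carefully whether the last part of each component is gray or white and whether a part $\cn{1}$ is present, since these determine which of the four cases in the $e_i$ formula applies. A secondary subtlety is confirming that the $\supseteq$ relation used to extract the inner shape is computed with the correct parity-shifted offset ($\ntau{i} \not\supseteq \ntau{i+1}_{>t_i - 2}$ rather than $t_i - 1$ as in the type $B_n$ SSRT case), which is the feature that produces even inner shapes and hence connects to the Riordan (rather than Motzkin) triangle numbers in the subsequent applications.
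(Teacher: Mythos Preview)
Your proposal is correct and matches the paper's approach: the paper does not supply a written proof of this lemma at all, treating it as a direct verification analogous to Lemma~\ref{lemma: tens3_Bn}, and your plan (check the signature rule at each node, compute the weight via Remark~\ref{rmk:weight consideration}, read off the inner shape, and use the multiplicity count from~\eqref{eq: tens3} to confirm completeness) is exactly the computation the paper leaves implicit. Your identification of the spin-node cases $i=n,n+1$ and the $t_i-2$ offset in the SSSRT shape as the points requiring care is accurate, though in practice these checks are routine once the ASP conventions are fixed.
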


\begin{corollary} \label{cor: unique}
Let $\eta$ be $(2s)$ or $(2s,2s)$.
There exists a unique highest weight element with weight $\fw_{n+1}+\tfw_{n+1-2s}$ in $B(\fw_{n+1})^{\tens 3}$ for $0 \le s \le \lfloor (n+1)/2\rfloor$ whose inner shape is $\eta$.
\end{corollary}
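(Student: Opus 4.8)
The plan is to read the statement directly off of Lemma~\ref{lem: hw 3}, so the proof is almost entirely bookkeeping. First I would observe that the prescribed weight $\fw_{n+1}+\tfw_{n+1-2s}$ is distinct from every weight $\fw_n+\fw_{n-2s'}$ appearing in the second family of highest weight elements in the decomposition~\eqref{eq: tens3}, because the two involve different fundamental spin weights (namely $\fw_{n+1}$ versus $\fw_n$, which have distinct coefficients). Consequently, the only highest weight elements of $B(\fw_{n+1})^{\tens 3}$ of weight $\fw_{n+1}+\tfw_{n+1-2s}$ are the $(s+1)$ elements $O^0(s,t)$ for $0 \le t \le s$ furnished by Lemma~\ref{lem: hw 3}(1); the elements $O^1(s',t')$ of part~(2) are automatically excluded by this weight comparison.

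Next I would invoke the inner-shape computation already recorded in Lemma~\ref{lem: hw 3}(1): the element $O^0(s,t)$ has inner shape $(2s,\,2s-2t)$. The decisive point is that the assignment $t \mapsto (2s,\,2s-2t)$ is injective on $\{0,1,\dotsc,s\}$, since the second part $2s-2t$ strictly decreases from $2s$ at $t=0$ to $0$ at $t=s$. Therefore prescribing the inner shape pins down at most one value of $t$, hence at most one highest weight element of the required weight.

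It then remains to match the two prescribed shapes. The one-row shape $\eta=(2s)$ occurs precisely when the second part vanishes, i.e.\ $2s-2t=0$, which forces $t=s$ and yields the unique element $O^0(s,s)$. The two-equal-rows shape $\eta=(2s,2s)$ occurs precisely when $2s-2t=2s$, i.e.\ $t=0$, which yields the unique element $O^0(s,0)$. In each case exactly one highest weight element of weight $\fw_{n+1}+\tfw_{n+1-2s}$ realizes the prescribed inner shape $\eta$, which is the assertion.

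Since all of the substantive content—the explicit list of highest weight elements together with their inner shapes—is already supplied by Lemma~\ref{lem: hw 3}, I do not expect a genuine obstacle. The only step requiring care is the very first one, namely confirming that the second family $O^1(s',t')$ cannot contribute an additional element with the prescribed shape; this is settled by the weight comparison rather than by any shape argument, so it is worth stating explicitly even though it is immediate.
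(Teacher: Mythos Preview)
Your proposal is correct and matches the paper's approach: the paper states this as an immediate corollary of Lemma~\ref{lem: hw 3} without further proof, and your argument is precisely the bookkeeping that reads the claim off from that lemma's explicit list of highest weight elements and their inner shapes.
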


\begin{definition} For each $0 \le s \le \lfloor (n+1)/2 \rfloor$ and $x=0,1$, we define a subset $R^x(\fw_{n+1}+\fw_{n+1-2s})$ of SSSRTs consisting of $T = (\ntau{1},\ntau{2},\ntau{3})$ satisfying the following conditions:
\begin{enumerate}
\item[{\rm (a)}] $\max\{ \ntau{k}_1 \mid 1 \le k \le 3 \} \le n$ and $\clr(\ntau{k}_1)=0$ for $1 \le k \le 3$,
\item[{\rm (b)}] the inner shape of $T$ is $(2s,\delta_{0x}2s)$.
\end{enumerate}
\end{definition}

Alternatively, the definition of $R^x(\fw_{n+1}+\fw_{n+1-2s})$ $(x=0,1)$ can be written as the following form: The subset of SSSRTs consisting of $T=(\ntau{1},\ntau{2},\ntau{3})$ such that
\begin{enumerate}
\item[{\rm (a)}] $\max\{ \ntau{k}_1 \mid 1 \le k \le 3 \} \le n$,
\item[{\rm (b)}] $\begin{cases}
\ntau{2} \supseteq \ntau{3},  \ \ntau{1} \supseteq \ntau{2}_{> 2s} \text{ and } \ntau{1} \not\supseteq \ntau{2}_{> 2s-2} & \text{ if } x =1, \\
\ntau{1} \supseteq \ntau{2},  \ \ntau{2} \supseteq \ntau{3}_{> 2s} \text{ and } \ntau{2} \not\supseteq \ntau{3}_{> 2s-2} & \text{ if } x =0.
\end{cases}
$
\end{enumerate}

\begin{theorem}
\label{thm: main Rd}
The subcrystal $R^x(\fw_{n+1}+\fw_{n+1-2s})$ is isomorphic to $B(\fw_{n+1}+\fw_{n+1-2s})$.
\end{theorem}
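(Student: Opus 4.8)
The plan is to follow the pattern of the proof of Theorem~\ref{thm: main mz}: show first that $R^x(\fw_{n+1}+\fw_{n+1-2s})$ is a union of connected components of $B(\fw_{n+1})^{\otimes 3}$, and then use the decomposition~\eqref{eq: tens3} together with Corollary~\ref{cor: unique} to see that it is a single component. I would begin by observing that condition~(a) in the definition is exactly the condition that each $\ntau{k}$ is an $\ASP$ whose largest entry is white and of quantity at most $n$, i.e.\ that each factor lies in $B(\fw_{n+1})$ and hence $T \in B(\fw_{n+1})^{\otimes 3}$. Since the Kashiwara operators preserve $B(\fw_{n+1})^{\otimes 3}$, condition~(a) is automatically stable.

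Next I would establish the type $D_{n+1}$ analogue of Proposition~\ref{prop: same inner shape}: the inner shape of an SSSRT is unchanged by $e_i$ and $f_i$ whenever the result is nonzero. Granting this, condition~(b) is stable as well, so $R^x(\fw_{n+1}+\fw_{n+1-2s})$ is closed under the crystal operators and is therefore a union of connected components of $B(\fw_{n+1})^{\otimes 3}$.

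It then remains to count the highest weight elements lying in $R^x$. By Lemma~\ref{lemma: general hw D} any such element has the prescribed form, and by the inner-shape invariance it must have inner shape $(2s,\delta_{0x}2s)$. Comparing with Lemma~\ref{lem: hw 3}, the highest weight elements of $B(\fw_{n+1})^{\otimes 3}$ carry inner shapes $(2s',2s'-2t)$ on the weight $\fw_{n+1}+\fw_{n+1-2s'}$ summands and $(2s'+2,2s'-2t+2)$ on the weight $\fw_n+\fw_{n-2s'}$ summands; a direct check shows that the only one whose inner shape equals $(2s,2s)$ (case $x=0$) or $(2s)$ (case $x=1$) is $O^0(s,0)$, respectively $O^0(s,s)$, both of weight $\fw_{n+1}+\fw_{n+1-2s}$. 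This is precisely Corollary~\ref{cor: unique}, supplemented by the observation that the shapes arising in Lemma~\ref{lem: hw 3}\,(2) always have a second row of size at least $2$ and so never equal $(2s)$ nor $(2s,2s)$. Hence $R^x$ contains exactly one highest weight element; being a union of connected components, it is a single one, and so $R^x(\fw_{n+1}+\fw_{n+1-2s}) \iso B(\fw_{n+1}+\fw_{n+1-2s})$, exactly as in the second proof of Theorem~\ref{thm: main fw}.

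The main obstacle will be the inner-shape invariance of the second step, the $D_{n+1}$ analogue of Proposition~\ref{prop: same inner shape}. Relative to the type $B_n$ argument one must now track the colors $\clr(\cn{k})$ through the operators, and in particular handle the two spin nodes $e_n,e_{n+1}$ (and $f_n,f_{n+1}$), which append or delete a colored $\cn{1}$ of prescribed color at the tail of an $\ASP$. The case analysis of how these moves interact with the containment relation $\succeq$ defining the inner shape—paralleling cases~(a) and~(b) in the proof of Proposition~\ref{prop: same inner shape}, but with the extra color bookkeeping—will be the technical heart of the argument.
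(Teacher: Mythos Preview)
Your proposal is correct and follows essentially the same approach as the paper: the paper also reduces the theorem to two lemmas, one establishing stability of $R^x$ under the Kashiwara operators (via the $D_{n+1}$ analogue of Proposition~\ref{prop: same inner shape}, applied to consecutive pairs of factors) and one establishing connectedness by combining Lemma~\ref{lemma: general hw D} with Corollary~\ref{cor: unique}. The only minor difference is in presentation: you classify the highest weight elements of given inner shape via Lemma~\ref{lem: hw 3} (in the style of the first proof of Theorem~\ref{thm: main fw}), whereas the paper argues, as in Theorem~\ref{thm: main as}, that every element is connected to one explicit highest weight element and then invokes Corollary~\ref{cor: unique}.
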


It is straightforward to see that Theorem~\ref{thm: main Rd} holds by the following two lemmas.

\begin{lemma} \label{lem: stable Rd}
$R^x(\fw_{n+1}+\fw_{n+1-2s})$ is stable under the Kashiwara operators $e_i$ and $f_i$. In particular, $R^x(\fw_{n+1}+\fw_{n+1-2s})$ is a subcrystal consisting of connected components.
\end{lemma}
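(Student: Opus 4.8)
The plan is to follow the template of the proof of Proposition~\ref{prop: same inner shape}, adapting it to the alternating-color setting of ASPs. The second assertion is immediate from the first: any subset of a crystal that is closed under every $e_i$ and $f_i$ is automatically a union of connected components, so it suffices to prove stability. Moreover, by crystal axiom (3), $e_i b = b'$ if and only if $b = f_i b'$, so it is enough to treat the operators $e_i$; the statement for $f_i$ follows formally. Fix $T = (\ntau{1}, \ntau{2}, \ntau{3}) \in R^x(\fw_{n+1}+\fw_{n+1-2s})$ and suppose $e_i(T) \neq 0$. Membership in $R^x$ is the conjunction of the maximal-entry condition (a) with the containment conditions (b) between consecutive factors, and by the signature rule $e_i$ modifies exactly one tensor factor $\ntau{k}$. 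Condition (a) is clearly preserved, since $e_i$ never raises a quantity above $n$ nor turns a white first part gray, so I would assume for contradiction that $e_i(T)$ violates one of the containment conditions and localize the argument to the pair of adjacent factors involved, exactly as Proposition~\ref{prop: same inner shape} reduces to the two-row case.

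The heart of the argument splits according to $i$. For $i < n$, the operator $e_i$ replaces a single entry of quantity $\cn{n}-\cn{i}+\cn{1}$ by one of quantity $\cn{n}-\cn{i}$ in the chosen factor $\ntau{k}$. I would then examine the two ways that a containment $\ntau{k} \supseteq \ntau{k'}_{>t}$, or its strict failure $\ntau{k} \not\supseteq \ntau{k'}_{>t-2}$, could break, in direct parallel with cases (a) and (b) of Proposition~\ref{prop: same inner shape}: in each case the entry that changed, together with the SSSRT condition that rows are strictly decreasing and columns weakly decreasing with respect to $\succeq$, forces the reduced signature $\sig_i$ of the neighboring factor to contribute a $-$, contradicting the fact that the signature rule selected $\ntau{k}$ for the action of $e_i$. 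For $i \in \{n, n+1\}$, the operator $e_i$ merely deletes the last part $\nn{1}$ or $\gn{1}$ of some factor; here I would use that deleting a final $\cn{1}$ can only shorten a factor, and that the staircase boundary forced by the inner shape $(2s)$ or $(2s,2s)$ via Lemma~\ref{lem: hw 3} and Corollary~\ref{cor: unique} keeps the relevant containment intact, again producing a contradiction were it to fail.

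The main obstacle I anticipate is bookkeeping the color constraints together with the shift by $2$ that appears in the ASP containment $\ntau{k} \not\supseteq \ntau{k'}_{>t-2}$: unlike the SSRT case, the relation $\succeq$ requires matching colors in addition to the quantity inequality, so I must verify at the boundary positions $2s-1$, $2s$, and $2s+1$ that the colors produced by $e_i$ align with those of the neighboring factor. Once the color parity of the staircase entries is pinned down from the shape of $O^x(s,t)$ in Lemma~\ref{lem: hw 3}, each potential violation collapses to the same signature-rule contradiction as in Proposition~\ref{prop: same inner shape}, and the lemma follows.
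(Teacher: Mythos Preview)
Your approach is essentially the paper's: the published proof is a one-liner saying to repeat the argument of Proposition~\ref{prop: same inner shape} for each adjacent pair $(\ntau{1},\ntau{2})$ and $(\ntau{2},\ntau{3})$, which is exactly the reduction you carry out in more detail. One small point: in the $i\in\{n,n+1\}$ case you invoke Lemma~\ref{lem: hw 3} and Corollary~\ref{cor: unique}, but those concern the classification of highest weight elements and are not needed (nor really usable) for the local stability check; the containment conditions in (b) together with the signature rule already suffice, just as in the $i=n$ paragraph of Proposition~\ref{prop: same inner shape}.
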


\begin{proof}
By applying the similar argument in the proof of Proposition \ref{prop: same inner shape} twice for $(\ntau{1},\ntau{2})$ and $(\ntau{2},\ntau{3})$, one can check our assertion.
\end{proof}

\begin{lemma}
\label{lem: connected Rd}
The subcrystal $R^x(\fw_{n+1}+\fw_{n+1-2s})$ is connected.
\end{lemma}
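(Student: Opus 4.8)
The plan is to follow the pattern of the second proof of Theorem~\ref{thm: main fw} and of Theorem~\ref{thm: main as}, namely to reduce connectedness to the \emph{uniqueness} of the highest weight element inside $R^x(\fw_{n+1}+\fw_{n+1-2s})$. By Lemma~\ref{lem: stable Rd}, the set $R^x(\fw_{n+1}+\fw_{n+1-2s})$ is stable under the Kashiwara operators $e_i$ and $f_i$, so it is a disjoint union of connected components of $B(\fw_{n+1})^{\otimes 3}$, and each such component contains exactly one highest weight element. Consequently it suffices to show that $R^x(\fw_{n+1}+\fw_{n+1-2s})$ contains a single highest weight element.

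Concretely, I would take an arbitrary $T \in R^x(\fw_{n+1}+\fw_{n+1-2s})$ and apply the raising operators $e_i$ repeatedly until reaching a highest weight element $T^{\natural}$. Stability keeps $T^{\natural}$ inside $R^x(\fw_{n+1}+\fw_{n+1-2s})$, and since membership in $R^x(\fw_{n+1}+\fw_{n+1-2s})$ is governed solely by the inner shape, the inner shape of $T^{\natural}$ is again $(2s,\delta_{0x}2s)$. By Lemma~\ref{lem: hw 3}, every highest weight element of $B(\fw_{n+1})^{\otimes 3}$ is one of the $O^0(s',t)$, with inner shape $(2s',2s'-2t)$, or one of the $O^1(s',t)$, with inner shape $(2s'+2,2s'-2t+2)$. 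Matching $(2s,\delta_{0x}2s)$ against these two lists forces $T^{\natural}=O^0(s,s)$ when $x=1$ (inner shape $(2s)$) and $T^{\natural}=O^0(s,0)$ when $x=0$ (inner shape $(2s,2s)$); in particular the $O^1$ family is excluded, because the equality $2s'-2t+2=0$ would require $t=s'+1$, which lies outside the range $1 \le t \le s'$. This is exactly the uniqueness recorded in Corollary~\ref{cor: unique}, so every element of $R^x(\fw_{n+1}+\fw_{n+1-2s})$ flows up to one and the same highest weight element, and the subcrystal is connected.

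The main obstacle I anticipate is the bookkeeping in the colored-integer (ASP) setting rather than any deep crystal-theoretic input. I must make sure that the inner shape is genuinely a crystal invariant on sequences of ASPs, which is precisely what the proof of Lemma~\ref{lem: stable Rd} provides by running the argument of Proposition~\ref{prop: same inner shape} twice, on the pairs $(\ntau{1},\ntau{2})$ and $(\ntau{2},\ntau{3})$. I also need the inner-shape computations of Lemma~\ref{lem: hw 3} to be exact, in particular the $t$-dependence $2s'-2t$ (respectively $2s'-2t+2$) together with the coloring constraints inherited from Lemma~\ref{lemma: general hw D}. Once these facts are verified, the matching of inner shapes is a finite and routine check, and no further argument is required.
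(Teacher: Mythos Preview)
Your proposal is correct and follows essentially the same approach as the paper: both reduce connectedness to the uniqueness of the highest weight element inside $R^x(\fw_{n+1}+\fw_{n+1-2s})$ by invoking stability (Lemma~\ref{lem: stable Rd}) and then matching the forced inner shape $(2s,\delta_{0x}2s)$ against the classification of highest weight elements, landing on Corollary~\ref{cor: unique}. The only cosmetic difference is that the paper phrases the reduction as ``a similar argument to Theorem~\ref{thm: main as} using Lemma~\ref{lemma: general hw D}'' and writes out the two target elements explicitly, whereas you go directly through the inner-shape list in Lemma~\ref{lem: hw 3} and check by hand that the $O^1$ family is excluded.
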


\begin{proof}
Each element in $R^x(\fw_{n+1}+\fw_{n+1-2s})$ is connected to
\[
\begin{cases}
\left( \nn{$\emptyset$},  \nn{$\emptyset$}, \uplambda\left(\nn{$2s$-$1$}\right) \right)   & \text{ if } x=0, \\[10pt]
\left(\nn{$\emptyset$},   \uplambda\left(\nn{$2s$-$1$}\right), \nn{$\emptyset$} \right)   & \text{ if } x=1,
\end{cases}  \]
whose inner shapes are $(2s,2s)$ and $(2s)$, respectively, by a similar argument given in the proof of Theorem~\ref{thm: main as} using Lemma~\ref{lemma: general hw D} in place of Lemma~\ref{lemma: general hw B}.
Thus our assertion follows from Corollary~\ref{cor: unique}.
\end{proof}

Now, the following corollary follows from Theorem~\ref{thm: wt Riordan}.

\begin{corollary}
\label{cor:riordan_triangle_set}
For $m \ge 2s-1$,
let $S^x(\fw_{n+1}+\fw_{n+1-2s})_m$ be a subset of $R^x(\fw_{n+1}+\fw_{n+1-2s})$ consisting of $T=(\ntau{1},\ntau{2},\ntau{3})$ such that
\[
\ntau{1}\cup \ntau{2} \cup \ntau{3} = \uplambda(m)   \qquad \text{ and } \qquad \bigl( \ell(\ntau{1}), \ell(\ntau{2}), \ell(\ntau{3}) \bigr) \Vdash_0 m.
\]
Then we have
\[
\lvert S^x(\fw_{n+1}+\fw_{n+1-2s})_m \rvert = \Rior_{(m+1,2s)}.
\]
\end{corollary}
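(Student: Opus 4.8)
The plan is to identify $S^x(\fw_{n+1}+\fw_{n+1-2s})_m$ with a single dominant weight space of the crystal $B(\fw_{n+1}+\fw_{n+1-2s})$ and then read off its cardinality from Theorem~\ref{thm: wt Riordan}. By Theorem~\ref{thm: main Rd}, the subcrystal $R^x(\fw_{n+1}+\fw_{n+1-2s})$ (which is singled out among the components of $B(\fw_{n+1})^{\otimes 3}$ by the inner-shape condition~(b)) is isomorphic to $B(\fw_{n+1}+\fw_{n+1-2s})$, so it suffices to show that $S^x(\fw_{n+1}+\fw_{n+1-2s})_m$ is exactly the set of $T \in R^x(\fw_{n+1}+\fw_{n+1-2s})$ with
\[
\wt(T) = \mu \seteq \begin{cases} \fw_{n+1}+\tfw_{n-m} & \text{if $m$ is odd}, \\ \fw_{n}+\tfw_{n-m} & \text{if $m$ is even}, \end{cases}
\]
the weight appearing in Theorem~\ref{thm: wt Riordan} after the substitution $s \mapsto 2s$. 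Granting this identification, Theorem~\ref{thm: wt Riordan} yields $\lvert S^x(\fw_{n+1}+\fw_{n+1-2s})_m \rvert = \dim V(\fw_{n+1}+\tfw_{n+1-2s})_{\mu} = \Rior_{(m+1,2s)}$, as claimed. The boundary value $m = 2s-1$ lies just outside the range $2s \le m$ of the theorem and should be handled directly: there $\uplambda(m)$ forces a unique such tableau (cf.~Corollary~\ref{cor: unique}), matching $\Rior_{(2s,2s)} = 1$.

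First I would check the forward inclusion $S^x \subseteq \{\, T : \wt(T) = \mu \,\}$ by a direct weight computation using Remark~\ref{rmk:weight consideration}. Fix $T = (\ntau{1},\ntau{2},\ntau{3}) \in S^x(\fw_{n+1}+\fw_{n+1-2s})_m$. The standardness condition $\ntau{1}\cup\ntau{2}\cup\ntau{3} = \uplambda(m)$ means each value $1,\dotsc,m$ occurs exactly once among the entries, so the $\epsilon_1,\dotsc,\epsilon_n$ part of $\wt(T)$ is completely determined and independent of $T$. Since every $\ntau{k}$ has white top (condition~(a)) and the colors of an ASP strictly alternate, a row of length $\ell$ has one more white than gray entry when $\ell$ is odd and equally many otherwise; as $\wt(\nn{j})$ and $\wt(\gn{j})$ differ only in the sign of $\epsilon_{n+1}$, the $\epsilon_{n+1}$-coefficient of $\wt(T)$ depends solely on the number of odd-length rows. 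The almost-even condition $(\ell(\ntau{1}),\ell(\ntau{2}),\ell(\ntau{3})) \Vdash_0 m$ forces exactly one odd-length row when $m$ is odd and exactly two when $m$ is even, which pins the $\epsilon_{n+1}$-coefficient to precisely the value realized by $\mu$ in the respective parity. Hence $\wt(T) = \mu$ for every $T \in S^x$.

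The converse inclusion is the crux, and the main obstacle. Here I would argue that any $T \in R^x(\fw_{n+1}+\fw_{n+1-2s})$ with $\wt(T) = \mu$ is automatically standard and almost even: the $\epsilon_k$-coefficient of $\mu$ for $1 \le k \le n$ records the total multiplicity of the value $k$ among the entries, and one verifies that $\mu$ forces this multiplicity to be $1$ for $k \le m$ and $0$ for $m < k \le n$, whence $\ntau{1}\cup\ntau{2}\cup\ntau{3} = \uplambda(m)$; the $\epsilon_{n+1}$-coefficient then forces the correct number of odd-length rows, i.e.\ the almost-even condition. The delicate part is making this step fully rigorous, since it requires the exact spin-weight bookkeeping across the three tensor factors — carefully tracking the contributions of the empty ASPs and the relation between the entry sequence and the induced shape $\mu/\eta$ — to confirm that the content and color balance dictated by the weight $\mu$ coincide with, and only with, the two defining conditions of $S^x$. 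Once both inclusions are in hand, the identification of $S^x(\fw_{n+1}+\fw_{n+1-2s})_m$ with the $\mu$-weight space is complete, and the cardinality count follows from Theorem~\ref{thm: wt Riordan} as above.
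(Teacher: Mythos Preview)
Your approach is correct and is essentially the same as the paper's: the paper states only that the corollary ``follows from Theorem~\ref{thm: wt Riordan}'' and remarks afterward that the almost-even condition encodes the relevant weight via Remark~\ref{rmk:weight consideration}. You have simply unpacked this one-line argument, identifying $S^x(\fw_{n+1}+\fw_{n+1-2s})_m$ with the $\mu$-weight space of $R^x(\fw_{n+1}+\fw_{n+1-2s})\cong B(\fw_{n+1}+\fw_{n+1-2s})$ and then invoking Theorem~\ref{thm: wt Riordan}; your care with the converse inclusion and the boundary case $m=2s-1$ (where $\mu$ is the highest weight itself, so the weight space is one-dimensional, matching $\Rior_{(2s,2s)}=1$) goes beyond what the paper writes but is in the same spirit.
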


We remark that in Corollary~\ref{cor:riordan_triangle_set}, the almost even condition appears by the weight consideration as per Remark~\ref{rmk:weight consideration}.

\subsubsection{The mixed spin case $B(\fw_{n}) \otimes B(\fw_{n+1})^{\otimes 2}$}

A straightforward computation yields:

\begin{lemma}
\label{lemma: tens3 odd}
We have
\[
B(\fw_{n}) \otimes B(\fw_{n+1})^{\otimes 2} \iso \bigoplus_{s=0}^{\lfloor n/2\rfloor} B(\fw_{n+1}+\tfw_{n-2s})^{\oplus s+1}
\oplus \bigoplus_{s=1}^{\lfloor (n+1)/2\rfloor} B(\fw_n+\fw_{n-2s-1})^{\oplus s}.
\]
\end{lemma}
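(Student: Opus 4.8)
The plan is to determine the decomposition by enumerating the highest weight elements of the crystal $B(\fw_n) \tens B(\fw_{n+1})^{\tens 2}$, exactly as in the pure spin case treated in Lemma~\ref{lem: hw 3}. Since each factor is the crystal of a finite-dimensional irreducible, the tensor product is a disjoint union of connected components, and each component is isomorphic to $B(\mu)$, where $\mu$ is the weight of its unique highest weight element. Thus it suffices to list the highest weight elements, record their weights, and collect equal weights to read off the multiplicities.

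First I would realize the three factors as sequences of alternating strict partitions: the two copies of $B(\fw_{n+1})$ are the ASPs with white leading entry and the single copy of $B(\fw_n)$ is the ASPs with gray leading entry. A triple $(\ntau{1}, \ntau{2}, \ntau{3})$ is a highest weight element precisely when $e_i$ annihilates it for every $i \in I$, which by the signature rule becomes a system of order and color conditions among the three ASPs. Solving these conditions is the analog of Lemma~\ref{lem: hw 3}: I expect the solutions to split into two families according to the color (equivalently, the $\epsilon_{n+1}$-parity, cf.\ Remark~\ref{rmk:weight consideration}) of the leading entry of the resulting highest weight ASP. One family should yield the weights $\fw_{n+1} + \tfw_{n-2s}$ and the other the weights $\fw_n + \fw_{n-2s-1}$, with an inner-shape parameter $t$ playing the role it does in Lemma~\ref{lem: hw 3} and producing the multiplicities $s+1$ and $s$ respectively.

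As a completeness certificate I would verify that the total dimension of the claimed right-hand side equals $2^{3n} = \dim V(\fw_n)\,\dim V(\fw_{n+1})^2$; this pins down that no highest weight element has been omitted. As corroboration one can also cross-check via associativity, tensoring the known two-fold decomposition \eqref{eq:2tens_Dpm} on the right by $B(\fw_{n+1})$, or \eqref{eq:2tens_Dpp} on the left by $B(\fw_n)$, so that the product of characters $\ch(\fw_n)\,\ch(\fw_{n+1})^2$ matches the sum of the characters of the claimed summands.

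The main obstacle will be the careful bookkeeping of the signature rule across three factors together with the weight computation: distinguishing the two families requires tracking the $\pm\epsilon_{n+1}$ contributions (white versus gray) in order to decide whether the leading fundamental weight is $\fw_{n+1}$ or $\fw_n$, and to get the $2s$ versus $2s+1$ shift in the index correct. I would also need to handle the boundary cases where $n-2s$ or $n-2s-1$ becomes $0$ or negative (using $\tfw_0 = 0$ and discarding terms with a negative fundamental index), so that the ranges of $s$ in the two sums come out exactly as stated.
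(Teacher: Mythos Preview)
Your proposal is correct and matches the paper's approach: the paper records this lemma as ``a straightforward computation'' (parallel to Lemma~\ref{lemma: tens3_Bn}), and immediately afterward in Lemmas~\ref{lem: hw 3 odd 100} and~\ref{lem: hw 3 odd 001} carries out exactly the enumeration of highest weight elements in the ASP model that you describe, with the two families distinguished by the color data and indexed by the inner-shape parameter $t$. Your suggested cross-checks via the dimension count $2^{3n}$ and via associativity with the two-fold decompositions~\eqref{eq:2tens_Dpp} and~\eqref{eq:2tens_Dpm} are sound and would make the argument self-contained.
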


\begin{example}
Let us consider $B(\fw_3) \otimes B(\fw_4)^{\otimes 2}$ over $D_4$. Then we have
\[
B(\fw_3) \otimes B(\fw_4)^{\otimes 2} \iso B(\fw_{3})^{\oplus 2} \oplus B(\fw_{1}+\fw_{4})^{\oplus 2} \oplus B(\fw_2+\fw_{3}) \oplus
 B(\fw_{3}+2\fw_4).
\]
where corresponding highest weight vector can be described as follows by taking $\clr(\ntau{1}_1)=1$:
\begin{align*} 
B(\fw_{3})^{\oplus 2}  & \leftrightarrow \begin{ytableau}
\cdot& *(gray!40) \cdot &  \cdot & *(gray!40) \cdot &  \cdot \\
\cdot& *(gray!40) \cdot &  \cdot & *(gray!40) \cdot \\
3 & *(gray!40) 2 &  1 \\
\end{ytableau}\ , \
\begin{ytableau}
\cdot& *(gray!40) \cdot &  \cdot & *(gray!40) \cdot &  \cdot \\
\cdot& *(gray!40) \cdot &  2 & *(gray!40) 1 \\
3  \\
\end{ytableau}\ ,
& B(\fw_{1}+\fw_{4})^{\oplus 2} & \leftrightarrow
\begin{ytableau}
\cdot& *(gray!40) \cdot &  \cdot \\
\cdot& *(gray!40) \cdot \\
2 & *(gray!40)  1 \\
\end{ytableau}\ , \
\begin{ytableau}
\cdot& *(gray!40) \cdot &  \cdot \\
2 & *(gray!40)  1 \\
\end{ytableau}
\\
B(\fw_{2}+\fw_{3}) & \leftrightarrow
\begin{ytableau}
\cdot& *(gray!40) \cdot &  \cdot \\
\cdot& *(gray!40) \cdot \\
1 \\
\end{ytableau}\ ,
&
B(\fw_{3}+2\fw_4)   & \leftrightarrow \begin{ytableau}
\cdot
\end{ytableau}\ .
\end{align*}
By taking $\clr(\ntau{3}_1)=1$, the highest weight vector can be described as follows:
\begin{align*} 
B(\fw_{3})^{\oplus 2}  & \leftrightarrow \begin{ytableau}
 *(gray!40) \cdot &  \cdot & *(gray!40) \cdot & \cdot & *(gray!40) \cdot \\
 *(gray!40) \cdot &  \cdot & *(gray!40) \cdot & 1 \\
*(gray!40) 3 & 2  \\
\end{ytableau}\ , \
\begin{ytableau}
 *(gray!40) \cdot &  \cdot & *(gray!40) \cdot & \cdot & *(gray!40) \cdot \\
 *(gray!40) \cdot &  3 & *(gray!40) 2 & 1
\end{ytableau}\ ,
&
B(\fw_{1}+\fw_{4})^{\oplus 2} & \leftrightarrow
\begin{ytableau}
*(gray!40) \cdot &  \cdot &*(gray!40) \cdot \\
*(gray!40) \cdot &  \cdot &*(gray!40) \cdot \\
*(gray!40) 2 &   1 \\
\end{ytableau}\ , \
\begin{ytableau}
*(gray!40) \cdot &  \cdot &*(gray!40) \cdot \\
*(gray!40) \cdot &  1 \\
*(gray!40) 2  \\
\end{ytableau}\ ,
\\
 B(\fw_{2}+\fw_{3}) &  \leftrightarrow
\begin{ytableau}
*(gray!40) \cdot&  \cdot &  *(gray!40) \cdot \\
*(gray!40) \cdot& 1
\end{ytableau}
&
B(\fw_{3}+2\fw_4) & \leftrightarrow
\begin{ytableau}
 *(gray!40) \cdot \\
  *(gray!40) \cdot
\end{ytableau}\ .
\end{align*}
\end{example}

\begin{lemma}
\label{lem: hw 3 odd 100}
Suppose $\clr(\ntau{1}_1)=1$.
\begin{enumerate}
\item[{\rm (1)}] The $(s+1)$-many highest weight elements of weight $\fw_{n+1}+\tfw_{n-2s}$ for $0 \le s \le \lfloor n/2\rfloor$ are given by
\[
\mathscr{O}^0_{(1,0,0)}(s,t) \seteq \left(\gn{$\emptyset$},  \uplambda\left(\nn{$2t$}\right), \uplambda\left(\nn{$2s$},\gn{$2t$+$1$}\right) \right) \quad \text{ for } 0 \le t \le s,
\]
whose inner shape is $(2s+1,2s-2t)$ for $0 \le t \le s$, as a standard spin rigid tableau.
\item[{\rm (2)}] The $s$-many highest weight elements of weight $\fw_{n}+\fw_{n-2s+1}$ for $1 \le s \le \lfloor (n+1)/2\rfloor$ are given by
\[
\mathscr{O}^1_{(1,0,0)}(s,t) \seteq \left(\gn{$\emptyset$},  \uplambda\left(\nn{$2t$-$2$}\right), \uplambda\left(\nn{$2s$-$1$},\nn{$2t$-$1$}\right) \right) \quad \text{ for } 1 \le t \le s,
\]
whose inner shape is $(2s+1,2s-2t+2)$ for $1 \le t \le s$, as a standard spin rigid tableau.
\end{enumerate}
\end{lemma}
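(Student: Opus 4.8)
The plan is to argue in direct parallel with the pure spin case of Lemma~\ref{lem: hw 3}: I would take the branching multiplicities supplied by Lemma~\ref{lemma: tens3 odd} as the target count, exhibit the two displayed families $\mathscr{O}^0_{(1,0,0)}(s,t)$ and $\mathscr{O}^1_{(1,0,0)}(s,t)$ as genuine highest weight elements of the asserted weights and inner shapes, and then match cardinalities to conclude that the lists are exhaustive. Throughout I would work with the explicit crystal operators $e_i$ on sequences of ASPs together with the signature rule, tracking weights by means of Remark~\ref{rmk:weight consideration}.

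Under the standing hypothesis $\clr(\ntau{1}_1) = 1$, the leftmost factor lies in $B(\fw_n)$ and the remaining two lie in $B(\fw_{n+1})$, so we are enumerating highest weight vectors of a prescribed color pattern in $B(\fw_n) \otimes B(\fw_{n+1})^{\otimes 2}$. As a structural sanity check, I would first note that Lemma~\ref{lemma: general hw D} forces $\ntau{1} = O_{\fw_n}$ and $\ntau{2} = \uplambda(\cn{a})$ for some colored staircase, while the requirement that $\ntau{2}$ begin with a white entry fixes all of its colors via the alternating condition; this pins down the first two components and leaves only the height parameter $a$ and the third row $\ntau{3}$ to be identified.

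The core of the argument is then a verification together with a count. I would check directly that each $\mathscr{O}^0_{(1,0,0)}(s,t)$ (for $0 \le t \le s$) and each $\mathscr{O}^1_{(1,0,0)}(s,t)$ (for $1 \le t \le s$) is annihilated by every $e_i$, separating the color-preserving nodes $i < n$ from the two spin nodes $i = n, n+1$. Using Remark~\ref{rmk:weight consideration} I would accumulate the $\pm\epsilon_{n+1}$ contributions of the white and gray parts and confirm that the first family carries weight $\fw_{n+1} + \tfw_{n-2s}$ and the second carries weight $\fw_n + \fw_{n-2s+1}$, and reading the inner shapes off the SSSRT recipe gives $(2s+1, 2s-2t)$ and $(2s+1, 2s-2t+2)$ respectively. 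Counting the index $t$ produces exactly $s+1$ elements in case (1) and $s$ in case (2); since these match the multiplicities recorded in Lemma~\ref{lemma: tens3 odd}, and the multiplicity of an irreducible summand equals the number of highest weight vectors of its highest weight, the manifestly distinct elements we have listed are all of them.

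I expect the main obstacle to be the case analysis at the two spin nodes $i = n$ and $i = n+1$, where the Kashiwara operators on ASPs change the color of the bottom entry and append or delete the part $\cn{1}$. Applying the signature rule across all three factors --- in particular controlling how the gray highest weight factor $O_{\fw_n}$ on the left interacts with the two white staircases to its right --- is where the delicate bookkeeping lies. The parity of the colored staircase heights is exactly what separates the two families and what reflects the almost-even condition behind the Riordan numbers, so ensuring that this dichotomy lines up with the weights $\tfw_{n-2s}$ versus $\fw_{n-2s+1}$, without an index shift, is the step most in need of care.
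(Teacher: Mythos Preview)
Your approach is correct and is precisely the intended verification: the paper states this lemma without proof, treating it as a straightforward parallel to Lemma~\ref{lem: hw 3} (which is likewise unproved) and to Lemma~\ref{lemma: tens3 odd} (introduced as ``a straightforward computation''). Your plan---exhibit the families, check they are highest weight via the signature rule and the ASP crystal operators, compute weights and inner shapes, and then match cardinalities against Lemma~\ref{lemma: tens3 odd}---is exactly the direct check the authors leave to the reader.
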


\begin{lemma}
\label{lem: hw 3 odd 001}
Suppose $\clr(\ntau{3}_1)=1$.
\begin{enumerate}
\item[{\rm (1)}] The $(s+1)$-many highest weight elements of weight $\fw_{n+1}+\tfw_{n-2s}$ for $0 \le s \le \lfloor n/2\rfloor$ are given by
\[
\mathscr{O}^0_{(0,0,1)}(s,t) \seteq \left(\nn{$\emptyset$},  \uplambda\left(\nn{$2t$-$1$}\right), \uplambda\left(\gn{$2s$},\gn{$2t$}\right) \right) \quad \text{ for } 0 \le t \le s,
\]
whose inner shape is $(2s+1,2s-2t+1)$ for $0 \le t \le s$, as a standard spin rigid tableau.
\item[{\rm (2)}] The $s$-many highest weight elements of weight $\fw_{n}+\fw_{n-2s+1}$ for $1 \le s \le \lfloor (n+1)/2\rfloor$ are given by
\[
\mathscr{O}^1_{(0,0,1)}(s,t) \seteq \left(\nn{$\emptyset$},  \uplambda\left(\nn{$2t$-$1$}\right), \uplambda\left(\gn{$2s$-$1$},\nn{$2t$}\right) \right) \quad \text{ for } 1 \le t \le s,
\]
whose inner shape is $(2s+1,2s-2t+1)$ for $1 \le t \le s$, as a standard spin rigid tableau.
\end{enumerate}
\end{lemma}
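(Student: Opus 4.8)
The plan is to enumerate all highest weight elements of $B(\fw_n) \otimes B(\fw_{n+1})^{\otimes 2}$ in the realization where the gray factor $B(\fw_n)$ occupies the third slot (so $\clr(\ntau{3}_1) = 1$), and then to match them against the decomposition of Lemma~\ref{lemma: tens3 odd}. This is the exact mirror of the argument for Lemma~\ref{lem: hw 3 odd 100}, where the gray factor sits in the first slot, so I expect the same computation to go through once the color conventions are transposed. The counting half of the argument is essentially free: Lemma~\ref{lemma: tens3 odd} records the full list of irreducible summands together with their multiplicities, so once the families $\mathscr{O}^0_{(0,0,1)}(s,t)$ and $\mathscr{O}^1_{(0,0,1)}(s,t)$ are shown to be distinct highest weight elements with the asserted weights, comparing with that list forces that no others occur.

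First I would verify each listed sequence is highest weight, i.e.\ that $e_i T = 0$ for all $i \in I = \{1, \dotsc, n, n+1\}$, using the explicit operators $e_i$ on ASPs together with the tensor product (signature) rule. By Lemma~\ref{lemma: general hw D}, any highest weight $T = (\ntau{1}, \ntau{2}, \ntau{3})$ satisfies $\ntau{1} = \emptyset$ and $\ntau{2} = \uplambda(\cn{s})$ for some integer; since $\ntau{1}$ and $\ntau{2}$ lie in the white factors $B(\fw_{n+1})$, this pins down $\ntau{1} = \nn{$\emptyset$}$. It then remains to solve for the admissible third component, which I expect to be exactly the colored staircases $\uplambda(\gn{$2s$},\gn{$2t$})$ in part (1) and $\uplambda(\gn{$2s$-$1$},\nn{$2t$})$ in part (2); the alternating condition of an ASP together with the requirement that $e_n$ and $e_{n+1}$ annihilate $T$ should leave no other possibilities.

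Next I would read off the weights from Remark~\ref{rmk:weight consideration}, namely $\wt\bigl(\nn{$k$}\bigr) = \epsilon_k - \epsilon_{n+1}$ and $\wt\bigl(\gn{$k$}\bigr) = \epsilon_k + \epsilon_{n+1}$. Summing over the entries of $\mathscr{O}^0_{(0,0,1)}(s,t)$ and $\mathscr{O}^1_{(0,0,1)}(s,t)$ and rewriting in terms of fundamental weights should yield $\fw_{n+1} + \tfw_{n-2s}$ and $\fw_n + \fw_{n-2s+1}$ respectively; crucially the result is independent of $t$, which is precisely why each weight occurs with multiplicity $s+1$ (resp.\ $s$). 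Finally, I would compute the inner shape directly from the SSSRT identification: for each adjacent pair I determine the largest $t_i$ with $\ntau{i} \supseteq \ntau{i+1}_{>t_i}$ and $\ntau{i} \not\supseteq \ntau{i+1}_{>t_i-2}$, then assemble $\eta_i = \sum_{s \ge i} t_s$ and $\mu_j = \eta_j + \ell(\ntau{j})$. A short computation should produce inner shape $(2s+1, 2s-2t+1)$ in both parts.

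The step I expect to be the main obstacle is the bookkeeping at the two spin nodes $i = n$ and $i = n+1$, where the type $D_{n+1}$ crystal operators append or remove a colored $\cn{1}$ from the tail of an ASP in a color sensitive manner. Getting the interleaving of gray and white correct in $\uplambda(\gn{$2s$},\gn{$2t$})$ versus $\uplambda(\gn{$2s$-$1$},\nn{$2t$})$, and confirming that both $e_n$ and $e_{n+1}$ vanish on the candidates (this is where the almost even color pattern is forced), is the delicate point. As a consistency check I would compare against Lemma~\ref{lem: hw 3 odd 100}: moving the gray factor from the first to the third slot amounts to a reversal of the tensor order, and the parallel shapes of the two families strongly suggest that the stability argument of Proposition~\ref{prop: same inner shape}, applied to the pairs $(\ntau{1},\ntau{2})$ and $(\ntau{2},\ntau{3})$ as in Lemma~\ref{lem: stable Rd}, carries the verification through verbatim.
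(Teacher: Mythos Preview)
Your proposal is correct and is exactly the kind of direct verification the paper has in mind; note that the paper states this lemma (and its companion Lemma~\ref{lem: hw 3 odd 100}) without proof, treating it as a straightforward computation parallel to Lemma~\ref{lemma: tens3 odd}. Your outline of checking the highest-weight condition via the signature rule, reading off weights from Remark~\ref{rmk:weight consideration}, computing inner shapes from the SSSRT rule, and matching cardinalities against Lemma~\ref{lemma: tens3 odd} is precisely the intended argument.
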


\begin{corollary}
\label{cor: unique odd}
\hfill
\begin{enumerate}
\item[{\rm (1)}] There exists a unique highest weight element of weight $\fw_{n+1}+\tfw_{n-2s}$ for $0 \le s \le \lfloor n/2\rfloor$ whose inner shape is $(2s+1)$ and $\clr(\ntau{1}_1)=1$.
\item[{\rm (2)}] There exists a unique highest weight element corresponding to $\fw_{n+1}+\tfw_{n-2s}$ for $0 \le s \le \lfloor n/2\rfloor$ whose inner shape is $(2s+1,2s+1)$ and $\clr(\ntau{3}_1)=1$.
\end{enumerate}
\end{corollary}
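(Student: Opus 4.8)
The plan is to read off both uniqueness claims directly from the explicit lists of highest weight elements furnished by Lemmas~\ref{lem: hw 3 odd 100} and~\ref{lem: hw 3 odd 001}, exactly as the pure spin statement Corollary~\ref{cor: unique} follows from Lemma~\ref{lem: hw 3}. The only real content is that, once we fix the weight together with the color of the distinguished first entry, the inner shape determines the remaining free parameter $t$ uniquely. So I would not attempt anything clever: I would simply intersect the two constraints against the classifications already in hand.

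For part~(1), I would impose $\clr(\ntau{1}_1) = 1$, which places us in the setting of Lemma~\ref{lem: hw 3 odd 100}. Of the two families there, only $\mathscr{O}^0_{(1,0,0)}(s,t)$ carries the weight $\fw_{n+1}+\tfw_{n-2s}$; the family $\mathscr{O}^1_{(1,0,0)}(s,t)$ has weight $\fw_n + \fw_{n-2s+1}$, which is distinct because it involves the spin node $n$ rather than $n+1$. The inner shape of $\mathscr{O}^0_{(1,0,0)}(s,t)$ is $(2s+1, 2s-2t)$, and this is the one-row shape $(2s+1)$ precisely when $2s-2t = 0$, that is, when $t = s$. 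Hence $\mathscr{O}^0_{(1,0,0)}(s,s)$ is the unique highest weight element of the prescribed weight, inner shape, and color.

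Part~(2) is entirely parallel, now imposing $\clr(\ntau{3}_1) = 1$ and invoking Lemma~\ref{lem: hw 3 odd 001}. The weight $\fw_{n+1}+\tfw_{n-2s}$ again isolates the family $\mathscr{O}^0_{(0,0,1)}(s,t)$ (the other family $\mathscr{O}^1_{(0,0,1)}$ has weight $\fw_n+\fw_{n-2s+1}$), whose inner shape is $(2s+1, 2s-2t+1)$; this equals $(2s+1, 2s+1)$ exactly when $2s-2t+1 = 2s+1$, i.e., $t = 0$. Thus $\mathscr{O}^0_{(0,0,1)}(s,0)$ is the unique such element.

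The only step requiring any attention — and the closest thing to an obstacle — is confirming that the weight constraint genuinely separates the $\mathscr{O}^0$ and $\mathscr{O}^1$ families in each lemma, so that no candidate of the wrong spin type contributes. This is immediate from the highest weights recorded in Lemmas~\ref{lem: hw 3 odd 100} and~\ref{lem: hw 3 odd 001}, since $\fw_{n+1}+\tfw_{n-2s}$ and $\fw_n+\fw_{n-2s+1}$ differ in their spin-node components; one could even dispense with this by noting that the out-of-range parameter values $t=s+1$ (resp.\ $t=0$) would be needed for $\mathscr{O}^1$ to realize the prescribed inner shape. Everything else is just the injectivity of the assignment $t \mapsto (\text{inner shape})$ within a single family.
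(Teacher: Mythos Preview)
Your argument is correct and is exactly the intended one: the paper states this result as a corollary of Lemmas~\ref{lem: hw 3 odd 100} and~\ref{lem: hw 3 odd 001} without further proof, and the content is precisely the inspection you carry out, namely that within the $\mathscr{O}^0$ family the inner shape determines $t$ uniquely (forcing $t=s$ in part~(1) and $t=0$ in part~(2)), while the weight hypothesis rules out the $\mathscr{O}^1$ family.
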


\begin{definition}
For each $0 \le s \le \lfloor n/2 \rfloor$ and $x=0,1$, we define a subset $\mathscr{R}^x(\fw_{n+1}+\fw_{n-2s})$ of SSSRT consisting of $T = (\ntau{1},\ntau{2},\ntau{3})$ satisfying the following conditions:
\begin{enumerate}
\item[{\rm (a)}] $\max\{ \ntau{k}_1 \mid 1 \le k \le 3 \} \le n$ and $\clr(\ntau{k}_1,\ntau{k}_2,\ntau{k}_3)= \begin{cases}
(1,0,0) &\text{ if } x=1, \\
(0,0,1) &\text{ if } x=0,
\end{cases}$
\item[{\rm (b)}] $T$ is a SSSRT with inner shape $(2s+1, \delta_{0x}(2s+1))$.
\end{enumerate}
\end{definition}

Alternatively, the definition $\mathscr{R}^x(\fw_{n+1}+\fw_{n-2s})$ $(x=0,1)$ can be written as the following form: The set of SSSRT consisting of $T=(\ntau{1},\ntau{2},\ntau{3})$ such that
\begin{enumerate}
\item[{\rm (a)}] $\max\{ \ntau{k}_1 \mid 1 \le k \le 3 \} \le n$ and $\clr(\ntau{k}_1,\ntau{k}_2,\ntau{k}_3)= \begin{cases}
(1,0,0) &\text{ if } x=1, \\
(0,0,1) &\text{ if } x=0,
\end{cases}$
\item[{\rm (b)}] $\begin{cases}
\ntau{2} \supseteq \ntau{3},  \ \ntau{1} \supseteq \ntau{2}_{\ge 2s+2} \text{ and } \ntau{1} \not\supseteq \ntau{2}_{\ge 2s} & \text{ if } x =1, \\
\ntau{1} \supseteq \ntau{2},  \ \ntau{2} \supseteq \ntau{3}_{\ge 2s+2} \text{ and } \ntau{2} \not\supseteq \ntau{3}_{\ge 2s} & \text{ if } x =0.
\end{cases}
$
\end{enumerate}

\begin{example}
\hfill
\begin{enumerate}
\item For a connected component of $\begin{ytableau}
\cdot& *(gray!40) \cdot &  \cdot \\
2 & *(gray!40)  1
\end{ytableau}$ with $\clr(\ntau{k}_1,\ntau{k}_2,\ntau{k}_3)=(1,0,0)$, we know that $\dim V(\fw_{3}+\fw_{6})_{\fw_1+\fw_6}=10=\Rior_{(5,3)}$ is represented by following semistandard spin rigid tableaux:
\begin{align*}
&\begin{ytableau}
\cdot& *(gray!40) \cdot &  \cdot &  *(gray!40)1\\
4 & *(gray!40)  3 \\
2
\end{ytableau}, \
\begin{ytableau}
\cdot& *(gray!40) \cdot &  \cdot &  *(gray!40)2\\
4 & *(gray!40)  3 \\
1
\end{ytableau}, \
\begin{ytableau}
\cdot& *(gray!40) \cdot &  \cdot &  *(gray!40)1\\
4 & *(gray!40)  2 \\
3
\end{ytableau}, \
\begin{ytableau}
\cdot& *(gray!40) \cdot &  \cdot &  *(gray!40) 2 & 1\\
4 & *(gray!40)  3
\end{ytableau}, \
\begin{ytableau}
\cdot& *(gray!40) \cdot &  \cdot \\
4 & *(gray!40)  3 \\
2 & *(gray!40)  1
\end{ytableau}, \\
& \begin{ytableau}
\cdot& *(gray!40) \cdot &  \cdot \\
4 & *(gray!40)  2 \\
3 & *(gray!40)  1
\end{ytableau}, \
\begin{ytableau}
\cdot& *(gray!40) \cdot &  \cdot & *(gray!40)1\\
4 & *(gray!40)  3 & 2
\end{ytableau}, \
\begin{ytableau}
\cdot& *(gray!40) \cdot &  \cdot & *(gray!40)3\\
4 & *(gray!40)  2 & 1
\end{ytableau}, \
\begin{ytableau}
\cdot& *(gray!40) \cdot &  \cdot & *(gray!40)4\\
3 & *(gray!40)  2 & 1
\end{ytableau}, \
\begin{ytableau}
\cdot& *(gray!40) \cdot &  \cdot & *(gray!40)2\\
4 & *(gray!40)  3 & 1
\end{ytableau}.
\end{align*}

\item For a connected component of $\begin{ytableau}
*(gray!40) \cdot &  \cdot & *(gray!40) \cdot \\
*(gray!40) \cdot &  \cdot & *(gray!40) \cdot \\
*(gray!40) 2 &   1
\end{ytableau}$ with $\clr(\ntau{k}_1,\ntau{k}_2,\ntau{k}_3)=(0,0,1)$, we know that $\dim V(\fw_{3}+\fw_{6})_{\fw_1+\fw_6}=10=\Rior_{(5,3)}$ is represented by following semistandard spin rigid tableaux:
\begin{align*}
& \begin{ytableau}
*(gray!40) \cdot &  \cdot & *(gray!40) \cdot & 4 & *(gray!40) 3\\
*(gray!40) \cdot &  \cdot & *(gray!40) \cdot \\
*(gray!40) 2 &   1
\end{ytableau}, \
\begin{ytableau}
*(gray!40) \cdot &  \cdot & *(gray!40) \cdot & 4 & *(gray!40) 1\\
*(gray!40) \cdot &  \cdot & *(gray!40) \cdot \\
*(gray!40) 3 &   2
\end{ytableau}, \
\begin{ytableau}
*(gray!40) \cdot &  \cdot & *(gray!40) \cdot & 2 & *(gray!40) 1\\
*(gray!40) \cdot &  \cdot & *(gray!40) \cdot \\
*(gray!40) 4 &   3
\end{ytableau}, \
\begin{ytableau}
*(gray!40) \cdot &  \cdot & *(gray!40) \cdot & 3 & *(gray!40) 1\\
*(gray!40) \cdot &  \cdot & *(gray!40) \cdot \\
*(gray!40) 4 &   2
\end{ytableau}, \
\begin{ytableau}
*(gray!40) \cdot &  \cdot & *(gray!40) \cdot & 4 & *(gray!40) 2\\
*(gray!40) \cdot &  \cdot & *(gray!40) \cdot \\
*(gray!40) 3 &   1
\end{ytableau}, \\
&\begin{ytableau}
*(gray!40) \cdot &  \cdot & *(gray!40) \cdot & 3 & *(gray!40) 2\\
*(gray!40) \cdot &  \cdot & *(gray!40) \cdot \\
*(gray!40) 4 &   1
\end{ytableau}, \
\begin{ytableau}
*(gray!40) \cdot &  \cdot & *(gray!40) \cdot & 4\\
*(gray!40) \cdot &  \cdot & *(gray!40) \cdot \\
*(gray!40) 3 &   2 & *(gray!40) 1
\end{ytableau}, \
\begin{ytableau}
*(gray!40) \cdot &  \cdot & *(gray!40) \cdot & 3\\
*(gray!40) \cdot &  \cdot & *(gray!40) \cdot \\
*(gray!40) 4 &   2 & *(gray!40) 1
\end{ytableau}, \
\begin{ytableau}
*(gray!40) \cdot &  \cdot & *(gray!40) \cdot & 2\\
*(gray!40) \cdot &  \cdot & *(gray!40) \cdot \\
*(gray!40) 4 &   3 & *(gray!40) 1
\end{ytableau}, \
\begin{ytableau}
*(gray!40) \cdot &  \cdot & *(gray!40) \cdot & 1\\
*(gray!40) \cdot &  \cdot & *(gray!40) \cdot \\
*(gray!40) 4 &   3 & *(gray!40) 2
\end{ytableau}.
\end{align*}
\end{enumerate}
\end{example}

\begin{theorem} \label{thm: main Rd odd}
The subcrystal $\mathscr{R}^x(\fw_{n+1}+\fw_{n-2s})$ is isomorphic to $B(\fw_{n+1}+\fw_{n-2s})$.
\end{theorem}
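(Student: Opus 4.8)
The plan is to reproduce the proof scheme of Theorem~\ref{thm: main Rd} verbatim, substituting the mixed-spin ingredients for the pure-spin ones. Concretely, I would deduce the isomorphism from two assertions, exactly as Theorem~\ref{thm: main Rd} was deduced from Lemmas~\ref{lem: stable Rd} and~\ref{lem: connected Rd}: first that $\mathscr{R}^x(\fw_{n+1}+\fw_{n-2s})$ is stable under every Kashiwara operator $e_i$ and $f_i$, and second that it is connected. Granting both, stability shows that $\mathscr{R}^x(\fw_{n+1}+\fw_{n-2s})$ is a disjoint union of full connected components of $B(\fw_n)\otimes B(\fw_{n+1})^{\otimes 2}$, while connectedness collapses it to a single component. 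Comparing with the decomposition of Lemma~\ref{lemma: tens3 odd} and using that the highest weight element this component contains has weight $\fw_{n+1}+\fw_{n-2s}$ then yields the claimed isomorphism.

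For stability, I would run the argument of Proposition~\ref{prop: same inner shape} twice, once on the adjacent pair $(\ntau{1},\ntau{2})$ and once on $(\ntau{2},\ntau{3})$, precisely as in the proof of Lemma~\ref{lem: stable Rd}. The signature rule reduces the action of $e_i$ or $f_i$ on $T=(\ntau{1},\ntau{2},\ntau{3})$ to a single factor, and the case analysis for $i<n$, $i=n$, and $i=n+1$ transfers from the ASP crystal structure. The point to verify is that the defining inclusions in condition~(b) of the alternative description of $\mathscr{R}^x$ — equivalently the prescribed inner shape $(2s+1,\delta_{0x}(2s+1))$ — are preserved, which holds because the crystal operators do not alter the inner shape, together with the fact that the color pattern on the first entries is likewise left fixed.

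For connectedness, I would invoke Lemma~\ref{lemma: general hw D}: raising any $T\in\mathscr{R}^x(\fw_{n+1}+\fw_{n-2s})$ by the $e_i$ leads to a highest weight element whose first component is empty and whose second component is a staircase $\uplambda(\cn{r})$ for some $r$. Condition~(a) on the colors of the first entries together with the fixed inner shape then determine $r$ and all colors uniquely, so by Corollary~\ref{cor: unique odd} the resulting highest weight element is the unique one of its prescribed inner shape; the two relevant targets are the elements listed in Lemmas~\ref{lem: hw 3 odd 100} and~\ref{lem: hw 3 odd 001} for $x=1$ and $x=0$ respectively. Hence $\mathscr{R}^x(\fw_{n+1}+\fw_{n-2s})$ is connected, as desired.

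The main obstacle I anticipate is the gray/white color bookkeeping, which has no counterpart in the pure-spin Theorem~\ref{thm: main Rd}. Since the operators $e_i,f_i$ on alternating strict partitions act asymmetrically in the color, and the nodes $i=n$ and $i=n+1$ behave differently according to the color of the last part, I would need to check carefully that the fixed pattern $\clr(\ntau{k}_1,\ntau{k}_2,\ntau{k}_3)=(1,0,0)$ for $x=1$ (resp.\ $(0,0,1)$ for $x=0$) is genuinely invariant under the crystal operators, and that the uniqueness of Corollary~\ref{cor: unique odd} really separates one component rather than merging the two families of Lemma~\ref{lemma: tens3 odd}. This color-parity consistency is the only place where the mixed-spin argument deviates in substance from the even case.
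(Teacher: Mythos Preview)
Your proposal is correct and follows essentially the same approach as the paper: stability is handled exactly as in Lemma~\ref{lem: stable Rd}, and connectedness is established by showing, via Lemma~\ref{lemma: general hw D} and the inner-shape/color constraints, that every element is connected to a unique highest weight element identified by Corollary~\ref{cor: unique odd}. The paper's proof is terser but records the same two target highest weight elements $\left(\nn{$\emptyset$},\nn{$\emptyset$},\uplambda(\gn{$2s$})\right)$ for $x=0$ and $\left(\gn{$\emptyset$},\uplambda(\nn{$2s$}),\nn{$\emptyset$}\right)$ for $x=1$, and your anticipated color-bookkeeping obstacle is indeed the only substantive new check, which the paper absorbs into ``easily checked.''
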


\begin{proof}
As Lemma~\ref{lem: stable Rd}, the stability condition under Kashiwara operators can be easily checked.
Similar to the proof of Lemma~\ref{lem: connected Rd}, each element in $R^x(\fw_{n+1}+\fw_{n-2s})$ is connected to
\[
\begin{cases}
\left( \nn{$\emptyset$},  \nn{$\emptyset$}, \uplambda\left(\gn{$2s$}\right) \right)   & \text{ if } x=0, \\
\left(\gn{$\emptyset$},   \uplambda\left(\nn{$2s$}\right), \nn{$\emptyset$} \right)   & \text{ if } x=1,
\end{cases} \]
whose inner shapes are $(2s+1,2s+1)$ and $(2s+1)$, respectively. Thus our assertion follows from Corollary~\ref{cor: unique odd}.
\end{proof}

Similar to Corollary~\ref{cor:riordan_triangle_set}, we have the following corollary.

\begin{corollary} \label{cor: main Rd odd}  For $m \ge 2s$,
let $\mathscr{S}^x(\fw)_m$ be the subset of $\mathscr{R}^x(\fw_{n+1}+\fw_{n-2s})$ consisting of $T=(\ntau{1},\ntau{2},\ntau{3})$ such that $\ntau{1} \cup \ntau{2} \cup \ntau{3}=\uplambda(m)$ and either
\begin{itemize}
\item $(\lambda_1,\lambda_2+(2s+1),\lambda_3+(2s+1)) \Vdash_0 m+4s+2$ if $x = 1$,
\item $(\lambda_1,\lambda_2,\lambda_3+(2s+1)) \Vdash_0 m+2s+1$ if $x = 0$,
\end{itemize}
where $\lambda_i = \ell(\ntau{i})$ for $1 \le i \le 3$.
Then we have
\[
\lvert \mathscr{S}^x(\fw_{n+1}+\fw_{n-2s})_m \rvert = \Rior_{(m+1,2s+1)}.
\]
\end{corollary}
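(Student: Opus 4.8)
The plan is to deduce the identity from the crystal isomorphism $\mathscr{R}^x(\fw_{n+1}+\fw_{n-2s}) \iso B(\fw_{n+1}+\fw_{n-2s})$ of Theorem~\ref{thm: main Rd odd} together with the weight-multiplicity formula of Theorem~\ref{thm: wt Riordan}, paralleling exactly the derivation of Corollary~\ref{cor:riordan_triangle_set} from Theorem~\ref{thm: main Rd}. Concretely, I would identify $\mathscr{S}^x(\fw_{n+1}+\fw_{n-2s})_m$ with a single dominant weight space of $B(\fw_{n+1}+\fw_{n-2s})$ and then read off its dimension, applying Theorem~\ref{thm: wt Riordan} with the substitution $s \mapsto 2s+1$ (noting $\fw_{n+1}+\tfw_{n-2s} = \fw_{n+1}+\tfw_{n+1-(2s+1)}$).

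First I would invoke Theorem~\ref{thm: main Rd odd}, so that counting the elements of $\mathscr{S}^x(\fw_{n+1}+\fw_{n-2s})_m$ becomes counting elements of a prescribed weight inside $B(\fw_{n+1}+\fw_{n-2s})$. Next I would translate the two defining conditions into weight data via Remark~\ref{rmk:weight consideration}. The standardness condition $\ntau{1}\cup\ntau{2}\cup\ntau{3} = \uplambda(m)$ forces each of $1,\ldots,m$ to occur exactly once among the absolute values of the entries, so the $\epsilon_1,\ldots,\epsilon_n$-part of $\wt(T)$ equals the staircase weight $\sum_{k=1}^m \epsilon_k$. This is precisely the finite (Levi $A_{n-1}$) part of the two dominant weights $\fw_{n+1}+\tfw_{n-m}$ and $\fw_n+\tfw_{n-m}$ appearing in Theorem~\ref{thm: wt Riordan}, and it corresponds to the shape parameter giving Riordan index $m+1$.

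The almost-even condition $\Vdash_0$ then pins down the remaining $\epsilon_{n+1}$-coordinate, i.e.\ the spin parity distinguishing $\mu = \fw_{n+1}+\tfw_{n-m}$ (when $m \not\equiv 2s+1 \bmod 2$) from $\mu = \fw_n+\tfw_{n-m}$ (when $m \equiv 2s+1 \bmod 2$). Here the shifts $(2s+1)$ in the tested compositions $(\lambda_1, \lambda_2+(2s+1), \lambda_3+(2s+1))$ for $x=1$ and $(\lambda_1, \lambda_2, \lambda_3+(2s+1))$ for $x=0$ reincorporate the fixed inner shape $(2s+1, \delta_{0x}(2s+1))$ of $\mathscr{R}^x$, so that the parities being tested are those of the full outer row lengths rather than of the ASP lengths $\lambda_i = \ell(\ntau{i})$ alone. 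Since each $\nn{k}$ contributes $-\epsilon_{n+1}$ and each $\gn{k}$ contributes $+\epsilon_{n+1}$, and colors alternate within an ASP, only rows of odd total length contribute to the $\epsilon_{n+1}$-coefficient; the almost-even condition is exactly the statement that this coefficient takes the value dictated by $\mu$. With both conditions matched, Theorem~\ref{thm: wt Riordan} yields $\lvert \mathscr{S}^x(\fw_{n+1}+\fw_{n-2s})_m \rvert = \Rior_{(m+1,2s+1)}$.

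The main obstacle will be this last bookkeeping step: verifying rigorously that ``standard plus almost-even with the $(2s+1)$ shifts'' is equivalent to membership in the correct weight space for $\mu$, including the correct treatment of the two colorings $x=0,1$ (which fix whether the gray column sits in the first or third factor) and the inner-shape offsets. The alternating-color constraint makes each row's $\epsilon_{n+1}$-contribution depend only on its length parity and leading color, so the verification reduces to a parity count; once that is carried out carefully in the two cases, the remainder is a direct appeal to Theorems~\ref{thm: main Rd odd} and~\ref{thm: wt Riordan}.
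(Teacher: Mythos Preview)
Your proposal is correct and follows precisely the approach the paper takes: the paper states that the corollary follows ``similar to Corollary~\ref{cor:riordan_triangle_set}'' from Theorem~\ref{thm: main Rd odd} together with Theorem~\ref{thm: wt Riordan}, and remarks that the almost-even condition arises from the weight consideration of Remark~\ref{rmk:weight consideration}. Your write-up simply fills in the bookkeeping that the paper leaves implicit.
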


We remark that the almost even condition appears in Corollary~\ref{cor: main Rd odd} by the weight consideration as per Remark~\ref{rmk:weight consideration}.

\section{Open problems}
\label{sec:problems}

We conclude with a list of open problems that came up during the course of this work.

\begin{problem}
Determine if the lattice paths used in Theorem~\ref{thm:general_det_formula_Cn} can give a formula for the factorial characters in type $C_n$ similar to Hamel and King~\cite{HK17}.
\end{problem}

\begin{problem}
Determine if the proof of Theorem~\ref{thm:Touchard_identity} can be extended to prove the $q$-Touchard's identity of~\cite[Thm.~1]{Andrews10}, a triangle version, or an alternative form using $\Cat'_n(q)$.
\end{problem}

\begin{problem}
Determine a combinatorial interpretation of $\Mot'_n(q)$, $\Mot'_{(n,k)}(q)$, and $\Rior'_{(n,k)}(q)$.
\end{problem}

Our initial approach in trying to prove Proposition~\ref{prop:determinant_double_spin_B} was based on using the LGV lemma and a crystal model on non-intersecting lattice paths. Indeed, we consider the infinite $\Z^2$ grid as the infinite board with tiles given by Figure~\ref{fig:lattice_board}.

\begin{figure}
\[
\begin{tikzpicture}[yscale=0.5]
\clip (-0.65,-14.65) rectangle (6.65, 1.65);
\foreach \y in {1,...,10} {
  \draw (0, \y-9+0.15) node {$\ddots$};
}
\foreach \y in {1,...,10} {
  \draw (6, \y-15+0.15) node {$\ddots$};
}
\draw[step=1,xshift=0.5cm,yshift=0.5cm,very thin,gray!40] (-2,2) grid (7,-16);
\foreach \x in {1,...,5} {
  \draw (\x,1-\x) node {$1$};
  \draw (\x,-8-\x) node {$1$};
  \draw (\x,0-\x+0.2) node {$\vdots$};
  \draw (\x,-7-\x+0.2) node {$\vdots$};
  \draw (\x,-1-\x) node {$n-2$};
  \draw (\x,-2-\x) node {$n-1$};
  \draw (\x,-3-\x) node {$n$};
  \draw (\x,-4-\x) node {$n$};
  \draw (\x,-5-\x) node {$n-1$};
  \draw (\x,-6-\x) node {$n-2$};
}
\node[circle,inner sep=1,fill=black] at (0.5,-9.5) {};
\node[gray!70,anchor=north east] at (0.5,-9.5) {$(0,0)$};
\end{tikzpicture}
\]
\caption{The board used to construct the non-intersecting lattice path model for crystals. The black dot denotes the origin $(0,0) \in \Z^2$.}
\label{fig:lattice_board}
\end{figure}
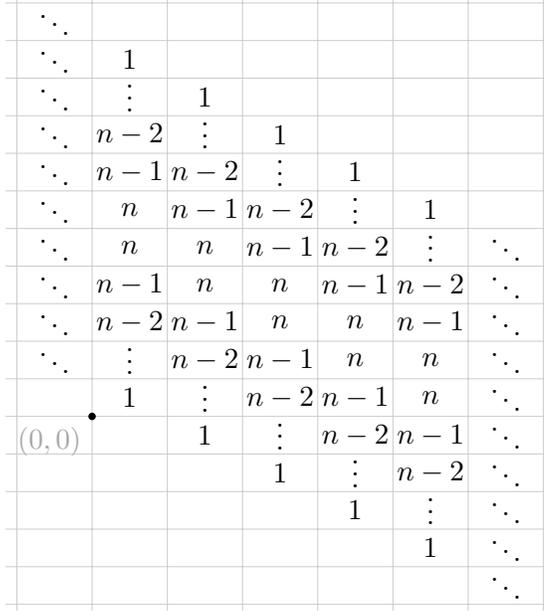

Next, we define a crystal structure on the non-intersecting lattice paths on this board as follows. An \defn{addable} (resp.\ \defn{removable}) $i$-corner is a path that has a corner on the south-east (resp.\ north-west) corner of an $i$ tile.
We define the \defn{good} addable (resp.\ removable) $i$-corner by using the signature rule given by reading top-to-bottom, left-to-right, where an addable $i$-corner is a $+$ and a removable $i$-corner is a $-$ and add a second $\pm$ if we can add (resp.\ remove) a second $n$-corner. The good addable (resp.\ removable) $i$-corner corresponds to the leftmost $+$ (resp.\ rightmost $-$) in the reduced signature.
The crystal structure is given by having an $f_i$ removing the good removable corner and $e_i$ adding the good addable corner.

Let $L_{n,i}$ denote the set of lattice paths from $(0,0)$ to $(i, 2n-i+1)$ with the aforementioned crystal structure.
We leave it to the interested reader to construct an explicit crystal isomorphism $\phi_i \colon L_{n,i} \to B(\tfw_i)$.
Furthermore, for a dominant integral weight $\lambda = \sum_{i \in I} c_i \tfw_i$, consider the sequence
\[
(i_k)_{k=0}^{\ell-1} = (\underbrace{n,\dotsc,n}_{c_n}, \dotsc, \underbrace{2,\dotsc,2}_{c_2}, \underbrace{1,\dotsc,1}_{c_1}),
\]
where $\ell = \sum_{i \in I} c_i$.
Let $L(\lambda)$ denote the set of non-intersecting lattice paths from $(k,-k)$ to $(i_k+k, 2n-i_k+1-k)$ with the crystal structure above. It is straightforward to see that this is compatible with the tensor product rule in $L_{n,i_0} \otimes \cdots \otimes L_{n,i_{\ell-1}}$. Hence, we obtain $L(\lambda) \iso B(\lambda)$ by applying $\phi_i$ to every factor $L_{n,i}$. Note that for $r \tfw_n$, we can uniquely extend each path $p_k$ to start at $(-k,-k)$ and end at $(k+n,k+n+1)$, where $k = 0, \dotsc, r-1$.

\begin{proposition}
\label{prop:NILP_model}
For type $B_n$, we have $L(\lambda) \iso B(\lambda)$.
\end{proposition}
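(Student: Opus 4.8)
The plan is to deduce the isomorphism from the single-path case $\phi_i \colon L_{n,i} \to B(\tfw_i)$ together with the tensor product rule, and then to recognize the non-intersecting families as precisely the connected subcrystal generated by a highest weight element of weight $\lambda$. First I would verify that each $\phi_i$ is a crystal isomorphism: reading the tile labels at the corners of a path from $(0,0)$ to $(i,2n-i+1)$ produces a single KN column of height $i$ whose recorded weight agrees with $\wt$ on $B(\tfw_i)$, and on a single path the good addable (resp.\ removable) $i$-corners of Figure~\ref{fig:lattice_board} are in bijection with the entries of this column at which the signature rule of Section~\ref{sec:crystals} can act. This is the finite local verification left to the reader in the statement.

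Next I would observe that the global signature rule --- reading corners top-to-bottom and then left-to-right across the board --- is by construction the concatenation, in the order $i_0, i_1, \dotsc, i_{\ell-1}$, of the individual signatures of the paths $P^{(0)}, \dotsc, P^{(\ell-1)}$. This is exactly the reduced signature governing the tensor product $L_{n,i_0} \otimes \dotsm \otimes L_{n,i_{\ell-1}}$, so the crystal structure on arbitrary (not necessarily non-intersecting) families of paths agrees with the tensor product structure; applying $\phi_{i_k}$ in each factor then identifies it with $\bigotimes_k B(\tfw_{i_k})$.

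The heart of the argument is to show that the subset $L(\lambda)$ of non-intersecting families is a subcrystal isomorphic to $B(\lambda)$. For stability, I would argue as in the proofs of Theorem~\ref{thm:Hankel_Catalan_det} and Theorem~\ref{thm:triangular_Catalan_det} that applying $e_i$ or $f_i$ to the good corner moves a single path locally and cannot create an intersection: an intersection would force a cancellable $(+-)$ pair in the global signature at the moved corner, contradicting that this corner survives in the reduced signature. Hence $e_i$ and $f_i$ preserve $L(\lambda)$. Finally I would exhibit the unique highest weight element of $L(\lambda)$ as the tightest (straight) packing of the paths, check that its weight equals $\sum_{i \in I} c_i \tfw_i = \lambda$, and conclude by connectedness --- every non-intersecting family is carried to this element by a sequence of $e_i$'s --- that $L(\lambda)$ is the connected component of $u_\lambda$ in $\bigotimes_k L_{n,i_k}$, namely $B(\lambda)$. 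The extension to $r\tfw_n$ by prolonging each path to start at $(-k,-k)$ and end at $(k+n,k+n+1)$ only adjoins forced steps and leaves the crystal structure unchanged.

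The main obstacle I expect is the stability step together with the exact matching of the board's reduced signature with the tensor product signature. These are the analogues of the dominance arguments of Section~\ref{sec:decomposition_multiplicities}, and carrying them out carefully --- rather than the routine single-path and weight computations --- is where the genuine content lies.
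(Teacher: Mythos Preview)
Your proposal is correct and follows essentially the same approach as the paper. The paper's ``proof'' is the terse sentence immediately preceding the proposition: the crystal structure on families is compatible with the tensor product rule in $L_{n,i_0} \otimes \cdots \otimes L_{n,i_{\ell-1}}$, and applying $\phi_i$ in each factor yields the isomorphism. Your expansion into (i) the single-path isomorphism $\phi_i$, (ii) matching the global board signature with the tensor-product signature, (iii) stability of the non-intersecting condition under $e_i,f_i$, and (iv) identification of the unique highest weight element, is exactly the unpacking of that sentence. The paper leaves (i) explicitly to the reader and treats (ii)--(iv) as ``straightforward,'' so you have supplied the details the paper omits rather than taken a different route.
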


\newcommand{\board}{
\clip (-0.65,-5.65) rectangle (6.65, 0.65);
\foreach \y in {1,2,3,4} {
  \draw (0, \y-4+0.2) node {$\ddots$};
}
\foreach \y in {5,6} {
  \draw (6, \y-10+0.2) node {$\ddots$};
}
\draw[step=1,xshift=0.5cm,yshift=0.5cm,very thin,gray!40] (-3,2) grid (7,-7);
\foreach \x in {1,...,5} {
  \foreach \i in {1,2,3} {
    \draw (\x,3-\i-\x) node {$\i$};
    \draw (\x,-4+\i-\x) node {$\i$};
  }
}
} 

\begin{example}
\label{ex:nilp_crystal}
For $\mcL(2\tfw_3)$ in type $B_3$, the first few non-intersecting lattice paths are:
\begin{align*}
& \begin{tikzpicture}[scale=0.5,baseline=-40]
\board
\fill[blue] (0.5,-4.5) circle (0.1);
\fill[blue] (3.5,-0.5) circle (0.1);
\draw[blue] (0.5,-4.5) -- (3.5,-4.5) -- (3.5,-0.5);
\fill[magenta] (-0.5,-5.5) circle (0.1);
\draw[magenta,dashed] (-0.5,-5.5) -- (1.5,-5.5);
\fill[magenta] (4.5,0.5) circle (0.1);
\draw[magenta,dashed] (4.5,-1.5) -- (4.5,0.5);
\fill[red] (1.5,-5.5) circle (0.1);
\fill[red] (4.5,-1.5) circle (0.1);
\draw[red] (1.5,-5.5) -- (4.5,-5.5) -- (4.5,-1.5);
\end{tikzpicture}
\xrightarrow[\hspace{20pt}]{f_3}
\begin{tikzpicture}[scale=0.5,baseline=-40]
\board
\fill[blue] (0.5,-4.5) circle (0.1);
\fill[blue] (3.5,-0.5) circle (0.1);
\draw[blue] (0.5,-4.5) -- (2.5,-4.5) -- (2.5,-3.5) -- (3.5,-3.5) -- (3.5,-0.5);
\fill[magenta] (-0.5,-5.5) circle (0.1);
\draw[magenta,dashed] (-0.5,-5.5) -- (1.5,-5.5);
\fill[magenta] (4.5,0.5) circle (0.1);
\draw[magenta,dashed] (4.5,-1.5) -- (4.5,0.5);
\fill[red] (1.5,-5.5) circle (0.1);
\fill[red] (4.5,-1.5) circle (0.1);
\draw[red] (1.5,-5.5) -- (4.5,-5.5) -- (4.5,-1.5);
\end{tikzpicture}
\xrightarrow[\hspace{20pt}]{f_2}
\begin{tikzpicture}[scale=0.5,baseline=-40]
\board
\fill[blue] (0.5,-4.5) circle (0.1);
\fill[blue] (3.5,-0.5) circle (0.1);
\draw[blue] (0.5,-4.5) -- (1.5,-4.5) -- (1.5,-3.5) -- (3.5,-3.5) -- (3.5,-0.5);
\fill[magenta] (-0.5,-5.5) circle (0.1);
\draw[magenta,dashed] (-0.5,-5.5) -- (1.5,-5.5);
\fill[magenta] (4.5,0.5) circle (0.1);
\draw[magenta,dashed] (4.5,-1.5) -- (4.5,0.5);
\fill[red] (1.5,-5.5) circle (0.1);
\fill[red] (4.5,-1.5) circle (0.1);
\draw[red] (1.5,-5.5) -- (4.5,-5.5) -- (4.5,-1.5);
\end{tikzpicture}
\\
\xrightarrow[\hspace{20pt}]{f_3} &
\begin{tikzpicture}[scale=0.5,baseline=-40]
\board
\fill[blue] (0.5,-4.5) circle (0.1);
\fill[blue] (3.5,-0.5) circle (0.1);
\draw[blue] (0.5,-4.5) -- (1.5,-4.5) -- (1.5,-3.5) -- (2.5,-3.5) -- (2.5,-2.5) -- (3.5,-2.5) -- (3.5,-0.5);
\fill[magenta] (-0.5,-5.5) circle (0.1);
\draw[magenta,dashed] (-0.5,-5.5) -- (1.5,-5.5);
\fill[magenta] (4.5,0.5) circle (0.1);
\draw[magenta,dashed] (4.5,-1.5) -- (4.5,0.5);
\fill[red] (1.5,-5.5) circle (0.1);
\fill[red] (4.5,-1.5) circle (0.1);
\draw[red] (1.5,-5.5) -- (4.5,-5.5) -- (4.5,-1.5);
\end{tikzpicture}
\xrightarrow[\hspace{20pt}]{f_3}
\begin{tikzpicture}[scale=0.5,baseline=-40]
\board
\fill[blue] (0.5,-4.5) circle (0.1);
\fill[blue] (3.5,-0.5) circle (0.1);
\draw[blue] (0.5,-4.5) -- (1.5,-4.5) -- (1.5,-2.5) -- (3.5,-2.5) -- (3.5,-0.5);
\fill[magenta] (-0.5,-5.5) circle (0.1);
\draw[magenta,dashed] (-0.5,-5.5) -- (1.5,-5.5);
\fill[magenta] (4.5,0.5) circle (0.1);
\draw[magenta,dashed] (4.5,-1.5) -- (4.5,0.5);
\fill[red] (1.5,-5.5) circle (0.1);
\fill[red] (4.5,-1.5) circle (0.1);
\draw[red] (1.5,-5.5) -- (4.5,-5.5) -- (4.5,-1.5);
\end{tikzpicture}
\xrightarrow[\hspace{20pt}]{f_3}
\begin{tikzpicture}[scale=0.5,baseline=-40]
\board
\fill[blue] (0.5,-4.5) circle (0.1);
\fill[blue] (3.5,-0.5) circle (0.1);
\draw[blue] (0.5,-4.5) -- (1.5,-4.5) -- (1.5,-2.5) -- (3.5,-2.5) -- (3.5,-0.5);
\fill[magenta] (-0.5,-5.5) circle (0.1);
\draw[magenta,dashed] (-0.5,-5.5) -- (1.5,-5.5);
\fill[magenta] (4.5,0.5) circle (0.1);
\draw[magenta,dashed] (4.5,-2.5) -- (4.5,0.5);
\fill[red] (1.5,-5.5) circle (0.1);
\fill[red] (4.5,-1.5) circle (0.1);
\draw[red] (1.5,-5.5) -- (3.5,-5.5) -- (3.5,-4.5) -- (4.5,-4.5) -- (4.5,-1.5);
\end{tikzpicture}
\end{align*}
The dashed lines represent the extension to obtain the non-intersecting lattice paths for the LGV lemma applied to Proposition~\ref{prop:determinant_double_spin_B}. The corresponding KN tableaux in $B(2\tfw_3)$ are
\[
\young(11,22,33)
\xrightarrow[\hspace{20pt}]{f_3} \young(11,22,30)
\xrightarrow[\hspace{20pt}]{f_2} \young(11,23,30)
\xrightarrow[\hspace{20pt}]{f_3} \young(11,20,30)
\xrightarrow[\hspace{20pt}]{f_3} \young(11,20,3\othree)
\xrightarrow[\hspace{20pt}]{f_3} \young(11,20,0\othree)\ .
\]
\end{example}

However, via the LGV lemma, there are non-intersecting lattice paths that do not correspond to the identity permutation; \textit{e.g.},
\[
\begin{tikzpicture}[scale=0.5,baseline=-40]
\begin{scope}
\board
\fill[blue] (0.5,-4.5) circle (0.1);
\fill[blue] (4.5,0.5) circle (0.1);
\draw[blue] (0.5,-4.5) -- (0.5,0.5) -- (4.5,0.5);
\fill[red] (-0.5,-5.5) circle (0.1);
\fill[red] (3.5,-0.5) circle (0.1);
\draw[red] (-0.5,-5.5) -- (3.5,-5.5) -- (3.5,-0.5);
\end{scope}
\node[anchor=south east] at (0.5,-4.5) {\scriptsize $s_1$};
\node[anchor=south east] at (-0.5,-5.5) {\scriptsize $s_2$};
\node[anchor=south west] at (3.5,-0.5) {\scriptsize $t_1$};
\node[anchor=south west] at (4.5,0.5) {\scriptsize $t_2$};
\end{tikzpicture}
\]
Hence, to obtain Proposition~\ref{prop:determinant_double_spin_B}, one would need to construct a sign-reversing involution on the corresponding non-intersecting lattice paths, where the fixed points are those non-intersecting lattice paths of Proposition~\ref{prop:NILP_model}.

\begin{problem}
Show Proposition~\ref{prop:determinant_double_spin_B} bijectively using the LGV lemma on the natural lattice paths for binomial coefficients and extend it to general $B(\lambda)$.
\end{problem}

We note the following Jacobi--Trudi-type formula was given in~\cite{Okada89} for type $B_n$:
\[
\ch(\lambda) = \det \left[ \ch_{(n+j-1)}(\tfw_{\lambda'_i - i + j}) \right]_{i,j=1}^{\ell},
\]
where $(\lambda'_1, \dotsc, \lambda'_{\ell})$ is the conjugate partition of $\lambda$ and $\ch_{(m)}(\mu)$ denotes the character of $V(\lambda)$ in type $B_m$.




\begin{problem}
For Theorem~\ref{thm:triangular_Catalan_det}, we can instead consider the graded characters/multiplicity in $(B^{n,1})^{\otimes 2m}$ in type $D_{n+1}^{(2)}$ as the corresponding classical type is $B_n$. Does the graded multiplicity have an interpretation as non-intersecting lattice paths or a determinant identity?
Similarly for $(\bigoplus_{k=0}^n B^{k,1})^{\otimes m}$ in type $A_{2n}^{(2)}$ with Theorem~\ref{thm:triangular_Catalan_det_type_C} (see Remark~\ref{remark:KR_tableaux}).
\end{problem}

\begin{example}
Let $M_{n,m}(q)$ denote the graded multiplicity of $B(0)$ inside of $(B^{n,1})^{\otimes 2m}$ in type $D_{n+1}^{(2)}$. Then we have
\begin{align*}
M_{2,1}(q) & = q^2
\allowdisplaybreaks \\ M_{2,2}(q) & = q^8+q^6+q^4
\allowdisplaybreaks \\ M_{2,3}(q) & = q^{18} + q^{16} + 2 q^{14} + q^{13} + 3 q^{12} + q^{11} + 2 q^{10} + q^9 + q^8 + q^6,
\allowdisplaybreaks \\ M_{3,1}(q) & = q^{3},
\allowdisplaybreaks \\ M_{3,2}(q) & = q^{12} + q^{10} + q^{8} + q^{6},
\allowdisplaybreaks \\ M_{3,3}(q) & = q^{27} + q^{25} + 2 q^{23} + 3 q^{21} + q^{20} + 4 q^{19} + 2 q^{18} + 4 q^{17}
  \\ & \hspace{10pt} + 2 q^{16} + 4 q^{15} + q^{14} + 2 q^{13} + q^{12} + q^{11} + q^{9},
\allowdisplaybreaks \\ M_{4,1}(q) & = q^{4},
\allowdisplaybreaks \\ M_{4,2}(q) & = q^{16} + q^{14} + q^{12} + q^{10} + q^{8},
\allowdisplaybreaks \\ M_{4,3}(q) & = q^{36} + q^{34} + 2 q^{32} + 3 q^{30} + 4 q^{28} + q^{27} + 5 q^{26} + 2 q^{25} + 7 q^{24} + 3 q^{23}
\allowdisplaybreaks   \\ & \hspace{10pt} + 6 q^{22} + 3 q^{21} + 5 q^{20} + 2 q^{19} + 4 q^{18} + q^{17} + 2 q^{16} + q^{15} + q^{14} + q^{12},
\allowdisplaybreaks \\ M_{5,1}(q) & = q^{5},
\allowdisplaybreaks \\ M_{5,2}(q) & = q^{20} + q^{18} + q^{16} + q^{14} + q^{12} + q^{10},
\allowdisplaybreaks \\ M_{5,3}(q) & = q^{45} + q^{43} + 2 q^{41} + 3 q^{39} + 4 q^{37} + 5 q^{35} + q^{34} + 7 q^{33} + 2 q^{32}
\allowdisplaybreaks \\ & \hspace{10pt} + 8 q^{31} + 4 q^{30} + 9 q^{29} + 4 q^{28} + 9 q^{27} + 4 q^{26} + 7 q^{25} + 3 q^{24}
\allowdisplaybreaks \\ & \hspace{10pt} + 5 q^{23} + 2 q^{22} + 4 q^{21} + q^{20} + 2 q^{19} + q^{18} + q^{17} + q^{15}
\end{align*}
Note that the coefficients of the lowest and highest degree terms of $M_{n,k}(q)$ match for fixed $k$ as $n$ varies (and the number of terms that agree depends on $k$).
\end{example}

While $\Cat'_n(q,t) \neq \Cat_n(q,t)$ as noted in Remark~\ref{rem:qt_catalan_are_different}, we conjecture the following relation, which we have verified for all $n \leq 10$.

\begin{conjecture}
We have
\[
\Cat_n(q,t) - \Cat'_n(q,t) = (qt - 1) f_n(q,t)
\]
for some $f_n(q,t) \in \Z_{\geq 0}[q,t]$.
\end{conjecture}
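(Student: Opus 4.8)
The plan is to first prove that $qt-1$ divides $\Cat_n(q,t)-\Cat'_n(q,t)$ over $\Z[q,t]$, and then to establish positivity of the quotient by reducing it to a one-variable ``tail-domination'' statement on each diagonal.

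\textbf{Divisibility.} First I would check that the two polynomials agree on the hyperbola $qt=1$. On the one hand, Proposition~\ref{prop:specialized_stump} gives $\Cat_n(q,q^{-1})=q^{-\binom n2}\Cat_n(q)$. On the other hand, substituting $t=q^{-1}$ into the defining sum~\eqref{eq: q,t catalan number} for $\Cat'_n(q,t)$ simply undoes the substitution $q^{-j}\mapsto t^j$ used to define it, so $\Cat'_n(q,q^{-1})=\dim_q V(\fw_n)$ in type $C_n$; combined with the identity $\Cat_n(q)=q^{\binom n2}\dim_q V(\fw_n)$ displayed just before~\eqref{eq: q,t catalan number}, this gives $\Cat'_n(q,q^{-1})=q^{-\binom n2}\Cat_n(q)$ as well. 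Hence $\bigl(\Cat_n-\Cat'_n\bigr)\big|_{t=q^{-1}}=0$, so $\Cat_n-\Cat'_n$ vanishes on $qt=1$ and is therefore divisible by the primitive polynomial $qt-1$ in $\Z[q,t]$. Since both $\Cat_n(q,t)$ and $\Cat'_n(q,t)$ are symmetric in $q$ and $t$ (for $\Cat_n$ this is the identity $\Cat_n(q,t)=\Cat_n(t,q)$ noted after Definition~\ref{def:qt_Catalan_triangle}, and for $\Cat'_n$ it is manifest from~\eqref{eq: q,t catalan number}) and $qt-1$ is symmetric, the quotient $f_n$ is a well-defined symmetric element of $\Z[q,t]$.

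\textbf{Reduction to a diagonal tail inequality.} Next I would pass to the grading by the \emph{diagonal} $d=\deg_q-\deg_t$ and the anti-diagonal variable $u=qt$. Writing the diagonal-$d$ component of $\Cat_n-\Cat'_n$ as $q^{d_+}t^{d_-}\,g_d(u)$ with $d_\pm=\max(\pm d,0)$ and $g_d\in\Z[u]$, the divisibility just proved is exactly the statement $g_d(1)=0$ for every $d$, and dividing by $qt-1$ amounts to dividing each $g_d(u)$ by $u-1$. Comparing coefficients in $(u-1)h_d(u)=g_d(u)$ shows that the coefficients of $h_d$ are the \emph{tail sums} of those of $g_d$, so $f_n\in\Z_{\ge0}[q,t]$ if and only if, for every diagonal $d$ and every threshold $i_0$,
\[
\sum_{i\ge i_0}\Bigl([q^{d_++i}t^{d_-+i}]\,\Cat_n(q,t)\;-\;[q^{d_++i}t^{d_-+i}]\,\Cat'_n(q,t)\Bigr)\ \ge\ 0 .
\]
In words: on each diagonal, the ``balanced'' (high $u$-degree) part of Stump's $\Cat_n$ must dominate that of $\Cat'_n$. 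This matches the behaviour seen in the $n=4$ data of Remark~\ref{rem:qt_catalan_are_different}, where $\Cat_4$ carries the balanced monomials $q^3t,\,qt^3,\dots$ while $\Cat'_4$ carries the extreme monomials $q^3,\,t^3,\dots$ of smaller $u$-degree.

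\textbf{Main step and obstacle.} To prove this tail inequality I would put both sides on the common footing of King tableaux for $V(\fw_{n-1})$ in type $C_{n-1}$: Stump's side through the bijection $\Xi$ of Section~\ref{sec:JT_type_C} together with the $w_+,w_-$ statistics of Theorem~\ref{thm:qt_polynomial}, and the primed side through the Touchard/Weyl-orbit decomposition of Theorem~\ref{thm:Touchard_identity} with $q^{-j}\mapsto t^j$. The goal is then to construct a diagonal-preserving, $u$-degree nondecreasing injection from the monomials of $\Cat'_n$ into those of $\Cat_n$ (ideally upgrading it to an explicit combinatorial model $f_n=\sum_{F}q^{a(F)}t^{b(F)}$, which would give nonnegativity outright). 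The hard part will be controlling this injection uniformly across all diagonals: the two refinements redistribute weight very differently away from $qt=1$, since the factors $(q^j+t^j)$ in $\Cat'_n$ produce the pure powers $q^j$ or $t^j$ (extreme, low $u$-degree) whereas the area/bounce-type statistics behind $\Cat_n$ favour balanced exponents, and one must show precisely that the resulting surplus always assembles into $(qt-1)$-divisible packets with nonnegative quotient. A sign-reversing involution on the signed sum $\Cat_n-\Cat'_n$ whose fixed points organize into such packets, or a direct transfer-matrix/recursive computation of the $g_d(u)$ verifying the tail-sum sign pattern, are the two routes I would pursue; the former is the more likely to yield the clean positive model, but it requires matching the intricate King-tableau combinatorics on the two sides.
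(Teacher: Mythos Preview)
The paper does not prove this statement: it is listed in Section~\ref{sec:problems} as an open conjecture, with the only evidence given being a numerical check for $n\le 10$. There is therefore no ``paper's proof'' to compare against.

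Your divisibility argument is correct and already goes strictly beyond what the paper establishes. Both specializations to $t=q^{-1}$ are exactly as you say: $\Cat_n(q,q^{-1})=q^{-\binom{n}{2}}\Cat_n(q)$ by Proposition~\ref{prop:specialized_stump}, and $\Cat'_n(q,q^{-1})=q^{-\binom{n}{2}}\Cat_n(q)$ by construction of~\eqref{eq: q,t catalan number} combined with the displayed identity immediately preceding it. Since $\Z[q,t]/(qt-1)\cong\Z[q,q^{-1}]$, the difference lies in the ideal $(qt-1)$, and symmetry of the quotient $f_n$ follows as you note. This part is a complete and clean argument.

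The positivity part, however, is not a proof but a plan. Your reduction to the diagonal tail-sum inequality is valid, and the heuristic that $\Cat'_n$ puts weight on ``extreme'' monomials while $\Cat_n$ puts it on ``balanced'' ones is consistent with the $n=4$ data. But the injection you describe has not been constructed, and you correctly identify the obstruction: the two sides are organized by entirely different combinatorics (Weyl-orbit factors $(q^j+t^j)$ versus the $w_{\pm}$ statistics coming from $\Xi$), and there is no evident structure forcing the required monotonicity in $u=qt$ on every diagonal simultaneously. Neither the sign-reversing involution nor the transfer-matrix approach you mention is carried out. So what you have is a correct proof of the weaker statement $f_n\in\Z[q,t]$, together with a reasonable but unexecuted strategy for the nonnegativity; the conjecture itself remains open.
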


We note that introducing $t$ to count the negative powers of $q$ to obtain Theorem~\ref{thm:qt_polynomial} is artificial. Indeed, it was not motivated representation theory.

\begin{problem}
Give a natural representation theoretic interpretation of the $q, t$-Catalan triangle numbers $\Cat_{(n,k)}(q,t)$.
\end{problem}

One possible interpretation of the $q,t$ version of our paths is coming from the branching rule coming from the natural embedding $\mathfrak{sp}_{2n} \to \mathfrak{sl}_{2n}$. Indeed, recall that we can relate characters $\chi(\lambda)(x_1, \dotsc, x_n)$ in type $C_n$ with a specialized type $A_{2n-1}$ character
\[
\chi(\lambda)(x_1, x_1^{-1}, x_2, x_2^{-1}, \dotsc , x_n, x_n^{-1})
\]
(see, \textit{e.g.},~\cite[Lemma~1.5.1]{KT87}). Therefore, we take the type $A_n$ character specialized as $\chi(\lambda)(q, t, q^2, t^2, \dotsc, q^n, t^n)$ and take the corresponding type $C_n$ character $\chi(\lambda)$ from the branching rule, we have an representation theoretic interpretation. Note the bijection on alphabets
\begin{align*}
1 & < 2 < 3 < 4 < \cdots < 2n - 1 < 2n,
\\ 1 & < \one < 2 < \otwo < \cdots < n < \on,
\end{align*}
and the semistandard condition on King tableaux.
However, it is remains artificial in type $C_n$ since it comes from the branching rule. Yet, this construction leads to the following questions.

\begin{problem}
The $q,t$ version of the statistic $w'$ from Equation~\eqref{eq:path_statistic} on lattice paths yields a $q,t$-analog of the binomial coefficients.
Determine the properties these $q,t$-binomial coefficients satisfy.
Give a natural representation theoretic interpretation of these $q,t$-binomial coefficients.
Determine what relation these $q,t$-binomial coefficients have with the $q,t$-Catalan numbers $\Cat_n(q,t)$.
\end{problem}

Our proof of Theorem~\ref{thm:q_binomial_paths} is essentially algebraic: showing that our combinatorial objects with their statistics satisfy the recursion relation for $q$-binomial coefficients. However, $q$-binomials $\qbinom{n+m}{n}{q}$ have an interpretation as generating function for the size of a partition in an $n \times m$ rectangle. Thus, we have the following problem.

\begin{problem}
\label{prob:combinatorial_q_binom}
Find a combinatorial proof of Theorem~\ref{thm:q_binomial_paths} by finding a bijection between paths and partitions in an $n \times m$ rectangle that sends our statistic to the size of the partition.
\end{problem}

One potential approach to solving Problem~\ref{prob:combinatorial_q_binom} for the $m=n$ case would be to follow the proof of Theorem~\ref{thm:q_catalan_paths} using the results of~\cite[Thm.~1.3]{Stump13} with an analogous result to Theorem~\ref{thm:qt_polynomial}.

\begin{conjecture}
\label{conj:other_specialization}
For type $B_n$, we have
\[
C_{n+1}(q) K_n(q) = \nps'\bigl( B(3\fw_n) \bigr),
\]
where $\nps'(B)$ is the normalized specialization $\chi(B)(q, q^3, q^5, \cdots, q^{2n-1})$ and
\begin{align*}
K_n(q) & = \frac{(q + 1) \prod_{i=0}^{n-1} (q^{2n+1-i} + 1) \prod_{i=1}^{\lfloor n/2 \rfloor - 1} (q^{2n+1-2i} + 1)}{\prod_{i=1}^{\lfloor n/2 \rfloor} (q^{2i+2} + 1)}
\\ & = (q^{2n+1} + 1) (q^{n+2} + 1) (q+1) \prod_{i=1}^{\lfloor n/2 \rfloor} \frac{(q^{2n+1-i} + 1)(q^{2n-i} + 1)^2}{(q^{2i+2} + 1)}.
\end{align*}
\end{conjecture}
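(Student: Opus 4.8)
The plan is to reduce the identity to the closed-form product expression for the ordinary principal specialization already available in Proposition~\ref{prop:Bn_ps}, together with the companion formula $\nps(3\fw_n) = \Cat_{n+1}(q)\prod_{k=1}^n(1+q^k)$ of Corollary~\ref{cor:catalan_q2}. First I would identify the ``odd'' specialization $\chi(B)(q,q^3,\dotsc,q^{2n-1})$ intrinsically. Since the Weyl group of type $B_n$ contains $\mathfrak{S}_n$, the character is symmetric in the $x_i$, so reversing the variables gives $\chi(q,q^3,\dotsc,q^{2n-1}) = \chi(q^{2n-1},\dotsc,q^3,q)$, that is, the evaluation $x^{\epsilon_i} \mapsto q^{2\langle \epsilon_i,\rho\rangle}$. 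In other words, whereas the usual principal specialization of Proposition~\ref{prop:Bn_ps} is the \emph{coweight} (coroot) specialization $x^\mu \mapsto q^{\langle\mu,\rho^\vee\rangle}$, the odd specialization is the \emph{weight} (root) specialization $x^\mu \mapsto q^{2\langle\mu,\rho\rangle}$.

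The key structural observation I would exploit is that these two specializations differ only in their short-root contributions. Applying the Weyl character formula and factoring the resulting determinants (the evaluation points $x_i = q^{2i-1}$ lie in geometric progression, so numerator and denominator are generalized Vandermonde/symplectic determinants that factor into products over the positive roots), one obtains for $\ps'(\lambda)$ the \emph{same} long-root factors $\frac{1-q^{\lambda_i-\lambda_j+j-i}}{1-q^{j-i}}$ and $\frac{1-q^{\lambda_i+\lambda_j+2n-i-j+1}}{1-q^{2n-i-j+1}}$ as in Proposition~\ref{prop:Bn_ps}, while each short-root factor $\frac{1-q^{2\lambda_i+2n-2i+1}}{1-q^{2n-2i+1}}$ is replaced by $\frac{q^{\lambda_i+n-i+1/2}-q^{-(\lambda_i+n-i+1/2)}}{q^{n-i+1/2}-q^{-(n-i+1/2)}} = q^{-\lambda_i}\,\frac{1-q^{2\lambda_i+2n-2i+1}}{1-q^{2n-2i+1}}$, up to the overall prefactor coming from the normalization of the Weyl character formula and the valuation shift hidden in $\nps'$. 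Consequently the ratio $\ps'(\lambda)/\ps(\lambda)$ is an explicit monomial-times-product depending only on the short roots, and for the rectangular weight $\lambda = 3\fw_n = \fw_n + \tfw_n$ (where every $\lambda_i = 3/2$) it collapses because the long-root factors cancel outright. Feeding this ratio into Corollary~\ref{cor:catalan_q2} then expresses $\nps'(3\fw_n)$ as $\Cat_{n+1}(q)$ times an explicit product of cyclotomic-type factors $(1+q^k)$, which I would reorganize into the stated $K_n(q)$ using the same $q$-Pochhammer and double-factorial $\qPdf{m}$ manipulations as in the proof of Proposition~\ref{prop:B_near_spin_q_char}.

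The main obstacle is twofold. First, I must justify the factorization of the odd-specialization determinant: unlike the standard case $x_i = q^i$, here $x_i = q^{-1}(q^2)^i$ carries a column-dependent twist $q^{\mp m_j}$ that does not factor out of the determinant, so the clean product formula (the analog of Proposition~\ref{prop:Bn_ps}) requires a genuine evaluation of a geometric-progression determinant rather than a mere substitution $q\mapsto q^2$; I expect this can be handled by the Vandermonde/Lindstr\"om--Gessel--Viennot techniques used throughout, or else deduced from the $\mathfrak{o}_{2n+1}$ character evaluations of~\cite{Okada09} and~\cite{BKW16}. Second, and more delicately, the passage from the raw product to the normalized $\nps'$ requires computing the correct valuation (the lowest power of $q$), and the final matching with $K_n(q)$ involves parity-dependent regrouping --- this is precisely where the floors $\lfloor n/2\rfloor$ and the asymmetric factors $(q^{2n-i}+1)^2/(q^{2i+2}+1)$ in the two displayed forms of $K_n(q)$ originate. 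I would verify that the two forms of $K_n(q)$ agree by an independent elementary telescoping check, and confirm the assembled identity against the small cases $n \le 10$ already computed, before committing to the parity bookkeeping in full generality.
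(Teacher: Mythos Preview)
The paper does not prove this statement: it is stated as a conjecture in the open-problems section, and the authors add only that they ``expect that a proof of Conjecture~\ref{conj:other_specialization} can be given by a direct, but tedious and lengthly, computation.'' There is therefore no proof in the paper to compare against.

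Your overall strategy --- obtain a Weyl-character-formula product expression for the odd specialization, specialize at $\lambda = 3\fw_n$, and match against Corollary~\ref{cor:catalan_q2} --- is exactly in the spirit of that expected direct computation, and mirrors what the proof of Proposition~\ref{prop:B_near_spin_q_char} does for the ordinary principal specialization. However, your key structural claim is wrong as stated: the specializations $x_i = q^i$ and $x_i = q^{2i-1}$ do \emph{not} differ only in their short-root contributions. For a long root $\epsilon_i - \epsilon_j$ the ratio $x_j/x_i$ equals $q^{j-i}$ under the first and $q^{2(j-i)}$ under the second, and likewise for $\epsilon_i + \epsilon_j$; every factor in the product changes, not just the short-root ones. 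Hence $\ps'(\lambda)/\ps(\lambda)$ is not a monomial times a product over short roots, and you cannot simply feed that ratio into Corollary~\ref{cor:catalan_q2}. What remains viable is to write down the full product formula for the odd specialization directly (this is the specialization actually used in~\cite{BKW16}, as the paper notes immediately after the conjecture), evaluate it at $3\fw_n$, and then do the $q$-Pochhammer and parity bookkeeping from scratch --- which is presumably the ``tedious and lengthly'' route the authors have in mind. Your identification of the odd specialization with the root specialization $x^\mu \mapsto q^{2\langle\mu,\rho\rangle}$ is correct and is the right starting point for that computation.
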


Note that Conjecture~\ref{conj:other_specialization} is the analog of Corollary~\ref{cor:catalan_q2} using instead the specialization used in~\cite{BKW16}. 
We expect that a proof of Conjecture~\ref{conj:other_specialization} can be given by a direct, but tedious and lengthly, computation.


\subsection*{Acknowledgments}

The authors would like to thank Jang Soo Kim, Christian Stump, and Ole Warnaar for useful discussions.
The authors additionally thank Ole Warnaar and Christian Stump for comments on earlier versions of this manuscript.
Furthermore, the authors thank Jang Soo Kim an initial proof of Proposition~\ref{prop:determinant_double_spin_B} using~\cite[Lemma 3.3]{BKW16} for $\dim V(r\tfw_n)$.
The authors would like to thank Christian Krattenthaler for pointing out that Theorem~\ref{thm:general_det_formula_Cn} was previously proven in~\cite{Okada89}.
The authors would like to thank Soichi Okada sending them~\cite{Okada89} and an English version of~\cite{Okada09} and for valuable discussions on his results.

TS would like to thank Ewha Womans University for its hospitality during his stay in June, 2017 and March, 2018. SjO would like to thank The University of Queensland for its hospitality during his stay in February, 2018.
This work benefited from the online databases {\sc OEIS}~\cite{OEIS} and {\sc FindStat}~\cite{FindStat} and computations using {\sc SageMath}~\cite{sage,combinat}.

\appendix

\section{Other $q$-determinants}

We collect some other identities we noticed during our work on this paper.

\begin{proposition}
\label{prop:q_Catalan_Hankel}
We have
\[
\det \left[ \CatCR_{r+i+j}(q) \right]_{i,j=1}^n = \sum_{\nilp} \prod_{i=1}^n q^{a(P^{(i)})},
\]
where the sum is over all non-intersecting lattice paths $\nilp = (P^{(1)}, \dotsc, P^{(n)})$ from $(i_1, \dotsc, i_n)$ to $(j_1, \dotsc, j_n)$, respectively, and $a(d)$ is the area of the corresponding Dyck path.
\end{proposition}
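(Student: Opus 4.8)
The plan is to apply the LGV lemma (Lemma~\ref{lemma:LGV}) with an edge weighting that encodes the area statistic on Dyck paths. First I would set up the weighted Catalan graph: recall from the background that $\CatCR_{m}(q) = \sum_{D \in \Dyck_m} q^{a(D)}$, where $a(D) = \sum_{i} j_i$ records the sum of the heights reached after each North step. The key observation is that this area statistic is \emph{additive along the path} in the sense that it can be realized as a product of edge weights on the Catalan graph. Concretely, I would assign to each North step $N = (i,j) \to (i,j+1)$ the weight $q^{j}$ (or an equivalent choice tracking the current height) and to each East step weight $1$, so that $\prod_{e \in D} w(e) = q^{a(D)}$. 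With this weighting, $e(s,t)$ in the notation of Lemma~\ref{lemma:LGV} becomes the generating function $\sum_{P \colon s \to t} q^{a(P)}$ summed over partial Dyck paths, and when $s = (0,0)$, $t = (m,m)$ this specializes to $\CatCR_m(q)$.

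Next I would verify that the Hankel entries $\CatCR_{r+i+j}(q)$ arise as $e(s_i, t_j)$ for an appropriate choice of initial points $\mathbf{s} = (s_1, \dotsc, s_n)$ and terminal points $\mathbf{t} = (t_1, \dotsc, t_n)$ lying on an anti-diagonal, exactly as in the setup of Theorem~\ref{thm:Hankel_Catalan_det} (which invokes~\cite[Thm.~1]{MW00}). The indices $i_1, \dotsc, i_n$ and $j_1, \dotsc, j_n$ in the statement are precisely these adjacent lattice sites, separated so that a path from $s_i$ to $t_j$ has the right number of steps to be counted by $\CatCR_{r+i+j}(q)$. The subtle point is that the weighting must be \emph{translation-coherent}: the weight $q^{a(P^{(i)})}$ assigned to a path starting at $s_i$ should agree with the area of the corresponding Dyck path regardless of which start/end pair is used. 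Because the area weighting depends only on the vertical coordinate reached, and the Catalan graph has a consistent global height function, I expect this coherence to hold after a uniform shift, analogous to the reweighting argument in the proof of Theorem~\ref{thm:general_det_formula_Cn}.

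Having established the weighting, the LGV lemma immediately gives
\[
\det \left[ \CatCR_{r+i+j}(q) \right]_{i,j=1}^n = \sum_{\nilp} \sgn(\nilp) \prod_{i=1}^n q^{a(P^{(i)})},
\]
so the remaining task is to show that every admissible family $\nilp$ of non-intersecting lattice paths has $\sgn(\nilp) = +1$. This follows from the geometry: because all initial and terminal points sit in monotone order along anti-diagonals below the main diagonal, any family of \emph{non-intersecting} paths must connect $s_i$ to $t_i$ (the identity permutation $\sigma = \mathrm{id}$), since any crossing of the induced permutation would force two paths to intersect. This is the same rigidity used implicitly in Theorem~\ref{thm:Hankel_Catalan_det} and Theorem~\ref{thm:triangular_Catalan_det_type_C}. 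Thus $\sgn(\nilp) = +1$ throughout and the signs drop out, yielding the claimed formula.

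The main obstacle I anticipate is confirming the precise additive realization of the \textbf{area} statistic as a product of edge weights that is simultaneously coherent across all the start/end pairs. The area $a(D)$ as defined counts $\sum_i j_i$ over North-step endpoints, which is naturally a path-dependent sum, but one must check that restricting to the truncated (non-fixed) region between the bounding diagonals does not shift the area in an $(i,j)$-dependent way that would spoil the clean Hankel form. I would handle this by carefully comparing the area of a truncated path $P^{(i)} \colon s_i \to t_j$ against the area of the full Dyck path it extends to, absorbing any constant offset into an overall normalization; since the offset should depend only on the fixed forced steps (as in Remark~\ref{rem:removed_fixed}), it contributes a global power of $q$ that is already accounted for, or else it matches the $\CatCR_{r+i+j}(q)$ indexing directly. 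Once this bookkeeping is pinned down, the rest is a direct invocation of the LGV lemma together with the sign-positivity argument.
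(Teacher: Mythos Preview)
Your overall plan is correct and is essentially the paper's one-line proof: realize the area statistic as a multiplicative edge weight on the Catalan graph and apply the LGV lemma; the identity-permutation argument then removes the signs.

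The one concrete error is the edge weight itself. Weighting the North step $(i,j)\to(i,j+1)$ by $q^{j}$, i.e.\ by the current \emph{height}, gives every Dyck path in $\Dyck_m$ the \emph{same} weight $q^{0+1+\cdots+(m-1)}=q^{\binom{m}{2}}$, since any Dyck path has exactly one North step starting at each height $j=0,\dots,m-1$. The area $a(D)=\sum_i j_i$ records the $x$-coordinates of the North-step endpoints, not the heights. Accordingly, the paper weights a vertical edge $e$ by $q^{a}$ where $a$ is ``the number of vertical steps directly to the left of $e$'', which tracks the horizontal position (equivalently, the distance to the main diagonal). With that weight the product over any path equals $q^{a(D)}$ on the nose, and because distance-to-the-diagonal is invariant under translation along the diagonal, your later concerns about translation-coherence and absorbing constant offsets (imported from the reweighting in Theorem~\ref{thm:general_det_formula_Cn}) simply do not arise here: $e(s_i,t_j)=\CatCR_{r+i+j}(q)$ directly, and LGV finishes the proof.
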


\begin{proof}
This follows from the LGV lemma and taking the weight of the vertical edge $e$ to be $q^a$, where $a$ is the number of vertical steps directly to the left of $e$. In particular, this corresponds to the contribution of area of a Dyck path passing through $e$.
\end{proof}

We were unable to find a reference in the literature, but we believe that Proposition~\ref{prop:q_Catalan_Hankel} is likely known to experts.
However, Proposition~\ref{prop:q_Catalan_Hankel} gives a combinatorial proof of the following formulas due to Cigler~\cite{Cigler99}:
\[
\det \left[ \CatCR_{0+i+j}(q) \right]_{i,j=0}^n = q^{n(n+1)(4n-1)/6},
\hspace{40pt}
\det \left[ \CatCR_{1+i+j}(q) \right]_{i,j=0}^n = q^{n(n+1)(4n+5)/6}.
\]
Indeed, this is a consequence of that there is precisely one family of non-intersecting lattice paths,
\[
\sum_{k=1}^n \frac{2k(2k-1)}{2} = \frac{n(n+1)(4n-1)}{6},
\hspace{40pt}
\sum_{k=1}^n \frac{(2k+1)2k}{2} = \frac{n(n+1)(4n+5)}{6},
\]
and the Dyck path $E^k N^k$ has area $\sum_{i=1}^k i = \frac{k(k-1)}{2}$.


We consider the $q$-Motzkin numbers defined by Cigler~\cite{Cigler99}:
\[
\widetilde{\Mot}^{\dagger}_{n+1}(q) = \widetilde{\Mot}^{\dagger}_n(q) + \sum_{k=0}^{n-1} q^{k+1} \widetilde{\Mot}^{\dagger}_k(q) \widetilde{\Mot}^{\dagger}_{n-k-1}(q),
\qquad\qquad \widetilde{\Mot}^{\dagger}_0(q) = 1,
\]
We note that these are distinct from the $q$-Motzkin numbers defined in~\cite{BDLFP98,BSS93} and do not appear to be related by a simple closed formula. Cigler proved algebraically in~\cite[Eq.~(39)]{Cigler99} that
\[
\det [ \widetilde{\Mot}^{\dagger}_{i+j}(q) ]_{i,j=0}^{n-1} = q^{n(n-1)(2n-1)/6}.
\]
Furthermore, Cigler also showed in~\cite[Eq.~(40)]{Cigler99} that
\[
\det [ \widetilde{\Mot}^{\dagger}_{i+j+1}(q) ]_{i,j=0}^{n-1} = \kappa_n q^{2\binom{n}{3}} = \kappa_n q^{\binom{n}{2}} q^{n(n-1)(2n-1)/6},
\]
where $(\kappa_n) = (1,1,0,-1,-1,0,1,1,0,-1,-1,\ldots)$.

Based on numerical computations, we have the following conjectures.

\begin{conjecture}
\label{conj:factored_motzkin_2shifted}
Define
\[
f_n(q) \seteq \begin{cases}
\displaystyle \sum_{\substack{1 \leq k \leq n \\ k \not\equiv 1 \;\mathrm{mod}\; 3}} q^k & \text{if } n \equiv 0 \mod{3}, \\
\displaystyle (q+1) \left( \sum_{k=0}^{\lfloor n/3 \rfloor} q^{3k} \right) & \text{otherwise}.
\end{cases}
\]
Then we have
\[
\left( \det \left[ \widetilde{\Mot}^{\dagger}_{i+j+2}(q) \right]_{i,j=0}^{n-1} \right)_{n=1}^{\infty} = \bigl( q^{c_n} f_n(q) \bigr)_{n=1}^{\infty}.
\]
for some $c_n \in \Z_{\geq 0}$.
\end{conjecture}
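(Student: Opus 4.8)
The plan is to avoid evaluating the $2$-shifted Hankel determinant directly and instead tie it to the two shifted determinants that Cigler has already computed. Write $H_n^{(s)} := \det\bigl[\widetilde{\Mot}^{\dagger}_{i+j+s}(q)\bigr]_{i,j=0}^{n-1}$ and $\delta_n := \tfrac{n(n-1)(2n-1)}{6}$, so that the results quoted just before the conjecture read $H_n^{(0)} = q^{\delta_n}$ and $H_n^{(1)} = \kappa_n\, q^{\binom{n}{2}+\delta_n}$, where $\kappa_n^2 = 1$ for $n\not\equiv 2 \bmod 3$ and $\kappa_n^2 = 0$ for $n \equiv 2 \bmod 3$. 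I will use the form $q^{\binom{n}{2}+\delta_n}$ of the $1$-shift exponent, which is the one that is actually needed. The point is that $H_n^{(2)}$ is then forced by a single algebraic identity linking three consecutive shifts.

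First I would apply the Desnanot--Jacobi (Dodgson condensation) identity to the $(n+1)\times(n+1)$ Hankel matrix $M = \bigl[\widetilde{\Mot}^{\dagger}_{i+j}(q)\bigr]_{i,j=0}^{n}$. Deleting the first (resp.\ last) rows and columns turns the four corner minors into $H_n^{(2)}, H_n^{(0)}, H_n^{(1)}, H_n^{(1)}$ and the central minor into $H_{n-1}^{(2)}$, which yields
\[
H_{n+1}^{(0)}\,H_{n-1}^{(2)} = H_n^{(0)}\,H_n^{(2)} - \bigl(H_n^{(1)}\bigr)^2 .
\]
Substituting Cigler's closed forms and using $\delta_{n+1}-\delta_n = n^2$ together with $2\binom{n}{2} = n(n-1)$, this collapses to the first--order inhomogeneous recurrence
\[
H_n^{(2)} = q^{\,n^2}\,H_{n-1}^{(2)} + \kappa_n^2\, q^{\,n(n-1)+\delta_n},\qquad H_0^{(2)} = 1 .
\]

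Next I would unroll this recurrence as $H_n^{(2)} = \sum_{j=0}^{n}\bigl(\prod_{i=j+1}^{n}q^{i^2}\bigr)\kappa_j^2\,q^{\,j(j-1)+\delta_j}$, with the convention $\kappa_0 = 1$. Using $\prod_{i=j+1}^n q^{i^2}=q^{\delta_{n+1}-\delta_{j+1}}$ and $\delta_{j+1}-\delta_j=j^2$, the exponent of the $j$-th summand simplifies, remarkably, to the single linear expression $\delta_{n+1}-j$. Hence
\[
H_n^{(2)} = \sum_{\substack{0\le j\le n\\ j\not\equiv 2 \bmod 3}} q^{\,\delta_{n+1}-j}.
\]
Factoring out the lowest occurring power $q^{\delta_{n+1}-j^\ast}$, where $j^\ast=\max\{\,0\le j\le n : j\not\equiv 2\bmod 3\,\}$ (so $j^\ast=n$ if $n\not\equiv 2$ and $j^\ast=n-1$ if $n\equiv 2$), gives $c_n=\delta_{n+1}-j^\ast\in\Z_{\ge 0}$ and $f_n(q)=\sum q^{\,j^\ast-j}$. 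Re-indexing by $k=j^\ast-j$ turns the condition $j\not\equiv 2$ into $k\not\equiv 2 \bmod 3$ when $n\not\equiv 0$, and into $k\not\equiv 1 \bmod 3$ when $n\equiv 0$; in the former case the admissible $k$ pair up as $\{3m,3m+1\}$, producing $(q+1)\sum_{m=0}^{\lfloor n/3\rfloor}q^{3m}$, and in the latter case one gets $\sum_{0\le k\le n,\,k\not\equiv 1}q^k$. This matches the two stated formulas.

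The computation above is essentially complete, so there is no serious analytic obstacle; the care lies in two places. The first is that one must use the exponent $\binom{n}{2}+\delta_n$ in the $1$-shift formula rather than the printed $2\binom{n}{3}$ (with the latter the recurrence produces non-integral powers of $q$), so I would begin by re-checking $H_n^{(1)}$ against small cases to pin this down. The second is purely bookkeeping: the residue analysis shows that the conjectured $n\equiv 0$ sum should range over $0\le k\le n$ rather than $1\le k\le n$ (for instance $H_3^{(2)}=q^{11}+q^{13}+q^{14}=q^{11}(1+q^2+q^3)$, so $f_3 = 1+q^2+q^3$), a one-term correction I would fold into the statement. Modulo these, Dodgson condensation reduces the entire conjecture to the already-established evaluations of $H_n^{(0)}$ and $H_n^{(1)}$.
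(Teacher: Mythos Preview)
The paper offers \emph{no} proof of this statement---it is presented as an open conjecture---so there is nothing to compare your approach against. Your argument is not merely different from the paper's; it resolves the conjecture.

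Your use of Desnanot--Jacobi on the $(n+1)\times(n+1)$ Hankel matrix $\bigl[\widetilde{\Mot}^{\dagger}_{i+j}(q)\bigr]_{i,j=0}^{n}$ is correct and yields exactly $H_{n+1}^{(0)}H_{n-1}^{(2)}=H_n^{(0)}H_n^{(2)}-(H_n^{(1)})^2$. Substituting Cigler's closed forms and using $\delta_{n+1}-\delta_n=n^2$ gives your first-order recurrence, and the telescoping of the exponent $\delta_{n+1}-\delta_{j+1}+j(j-1)+\delta_j$ down to $\delta_{n+1}-j$ is clean and correct. I verified $H_1^{(2)}=1+q$, $H_2^{(2)}=q^4+q^5$, and $H_3^{(2)}=q^{11}+q^{13}+q^{14}$ directly, and all match your closed form $\sum_{0\le j\le n,\ j\not\equiv 2\;(\mathrm{mod}\;3)}q^{\delta_{n+1}-j}$.

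Both errata you flag are genuine. First, the paper's displayed equality $\kappa_n q^{2\binom{n}{3}}=\kappa_n q^{\binom{n}{2}+\delta_n}$ is false (e.g.\ $n=3$ gives $2$ versus $8$), and direct computation of $H_3^{(1)}=-q^8$ confirms that the correct exponent is $\binom{n}{2}+\delta_n$; your proof rightly uses this value. (A small quibble: with the wrong exponent $2\binom{n}{3}$ the recurrence would produce \emph{negative} powers of $q$, not non-integral ones, since $\tfrac{n(n-1)(2n-7)}{6}\in\Z$.) Second, your observation that the $n\equiv 0\pmod 3$ case of $f_n$ must include the $k=0$ term is correct: $H_3^{(2)}=q^{11}(1+q^2+q^3)$, whereas the stated $\sum_{1\le k\le 3,\ k\not\equiv 1}q^k=q^2+q^3$ misses the constant. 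With these two corrections absorbed, your Dodgson-condensation argument constitutes a complete proof.
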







\begin{conjecture}
\label{conj:factored_motzkin_2shifted}
Define
\[
t_n(q) \seteq \left( \sum_{k=0}^{\lfloor n/3 \rfloor} q^{3k} \right).
\]
We have
\[
\left( \det \left[ \widetilde{\Mot}^{\dagger}_{i+j+3}(q) \right]_{i,j=0}^{n-1} \right)_{n=1}^{\infty} = \bigl( (-1)^{\lfloor n/3 \rfloor} q^{c_n} f_n(q) \bigr)_{n=1}^{\infty}.
\]
for some $c_n \in \Z_{\geq 0}$ and $g_n(q) \in \Z_{\geq 0}[q]$. Moreover, if $n \equiv 1 \mod{3}$ then $g_n(q) = (q+1)^2 t_n(q)^2$, and if $n \equiv 2 \mod{3}$, then $t_n(q)$ divides $g_n(q)$.
\end{conjecture}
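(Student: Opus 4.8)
The plan is to recognize the Cigler $q$-Motzkin numbers $\widetilde{\Mot}^{\dagger}_n(q)$ as the moment sequence of a family of monic orthogonal polynomials and to attack the Hankel determinant through the Lindstr\"om--Gessel--Viennot / continued-fraction machinery that already underlies Proposition~\ref{prop:q_Catalan_Hankel}. Feeding the defining recursion into the first-return decomposition of weighted Motzkin paths identifies the Jacobi (three-term) recurrence coefficients as $b_n=q^n$ and $\lambda_n=q^{2n-1}$, so that the unshifted Hankel determinant is the classical product $H_n\seteq\det[\widetilde{\Mot}^{\dagger}_{i+j}(q)]_{i,j=0}^{n-1}=\prod_{k=1}^{n-1}\lambda_k^{\,n-k}=q^{n(n-1)(2n-1)/6}$, recovering \cite[Eq.~(39)]{Cigler99}. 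The first thing I would do is solve the scalar three-term recurrence for the monic orthogonal polynomials $P_m$ evaluated at $0$,
\[
P_{m+1}(0)=-q^{m}P_m(0)-q^{2m-1}P_{m-1}(0),\qquad P_0(0)=1,\ P_1(0)=-1,
\]
whose solution is $P_m(0)=\epsilon_m\,q^{\binom m2}$ with $\epsilon_m=1,-1,0$ according as $m\equiv 0,1,2\pmod 3$. This single computation is the source of every period-$3$ phenomenon in the statement: the vanishing of $P_m(0)$ at $m\equiv 2\pmod 3$ is exactly what produces the sign sequence $\kappa_n$ in the one-shift case $\det[\widetilde{\Mot}^{\dagger}_{i+j+1}(q)]=(-1)^nH_nP_n(0)$ and, I expect, the factor $t_n(q)=\sum_{k=0}^{\lfloor n/3\rfloor}q^{3k}$ (a geometric sum in $q^3$, i.e.\ an ``every-third-power'' series) in the three-shift case.

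The structural engine I would use is the size-$r$ reduction of a shifted Hankel determinant via Jacobi's complementary-minor theorem. Writing $\mathcal H_{N}=[\widetilde{\Mot}^{\dagger}_{i+j}(q)]_{0\le i,j\le N-1}$ and viewing $\det[\widetilde{\Mot}^{\dagger}_{i+j+3}(q)]_{i,j=0}^{n-1}$ as the minor of $\mathcal H_{n+3}$ on rows $\{0,\dots,n-1\}$ and columns $\{3,\dots,n+2\}$, Jacobi's theorem gives
\[
\det[\widetilde{\Mot}^{\dagger}_{i+j+3}(q)]_{i,j=0}^{n-1}
=(-1)^{\binom n2+\binom{n+3}2-3}\,H_{n+3}\,
\det_{0\le i,j\le 2}\big[(\mathcal H_{n+3}^{-1})_{i,\,n+j}\big],
\]
and the Cholesky/orthogonal-polynomial factorization $\mathcal H_{n+3}^{-1}=U^{\mathsf T}D^{-1}U$, with $U_{mk}=[x^k]P_m$ and $D=\mathrm{diag}(d_k)$, $d_k=\prod_{j\le k}\lambda_j$, expresses each inverse entry as the finite sum $(\mathcal H_{n+3}^{-1})_{a,b}=\sum_k d_k^{-1}[x^a]P_k\,[x^b]P_k$. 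Because $[x^{n+j}]P_k=0$ for $k<n+j$, the top-right $3\times3$ block is governed entirely by the three leading polynomials $P_n,P_{n+1},P_{n+2}$ and their three lowest Taylor coefficients at $0$. I would therefore compute $[x^0]P_m=P_m(0)$ (done above), $[x^1]P_m=P_m'(0)$ and $[x^2]P_m=\tfrac12P_m''(0)$ in closed form from the differentiated three-term recurrence (e.g.\ $P_{m+1}'(0)=P_m(0)-q^mP_m'(0)-q^{2m-1}P_{m-1}'(0)$), each inheriting the same period-$3$ template. As a useful warm-up and consistency check, the analogous Desnanot--Jacobi identity $H_n^{(0)}H_{n-2}^{(2)}=H_{n-1}^{(0)}H_{n-1}^{(2)}-(H_{n-1}^{(1)})^2$ proves the two-shift evaluation (the preceding conjecture) outright from the $r=0,1$ data, and I would verify the whole apparatus against the small cases $\det[\widetilde{\Mot}^{\dagger}_{i+j+3}]_{i,j=0}^{0}=(1+q)^2$ and $\det[\widetilde{\Mot}^{\dagger}_{i+j+3}]_{i,j=0}^{1}=q^6(1+2q)$.

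With the coefficients in hand, the final step is to expand the $3\times3$ determinant, substitute the closed forms, and simplify. Here the period-$3$ vanishing of $P_m(0)$ forces a predictable pattern of zero and proportional entries depending on $n\bmod 3$, collapsing the determinant onto a small set of surviving terms. I expect this collapse to yield directly: the sign $(-1)^{\lfloor n/3\rfloor}$ (as the product of the surviving $\epsilon_m$ against the Jacobi sign $(-1)^{\binom n2+\binom{n+3}2-3}$ and the positive prefactor $H_{n+3}$), the power $q^{c_n}$ (collecting the uniform factors $q^{\binom m2}$ and $d_k^{-1}$, whose total exponent is manifestly a nonnegative integer), and the factor $g_n(q)\in\Z_{\ge0}[q]$. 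For $n\equiv1\pmod3$ two of the three relevant $P_m(0)$ are nonzero and the determinant should telescope into a perfect square of a geometric $q^3$-sum times $(q+1)^2$, giving $g_n=(q+1)^2t_n(q)^2$; for $n\equiv2\pmod3$ exactly one column carries a common geometric-sum factor, forcing $t_n(q)\mid g_n(q)$; the case $n\equiv0\pmod3$ rounds out the nonnegativity and sign claim.

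The hard part will be the exact factorization for $n\equiv1\pmod3$ and the nonnegativity of $g_n$, rather than the sign or the power, which are pure bookkeeping. The difficulty is that the most natural iterative tools (successive Christoffel transforms $d\mu\mapsto x\,d\mu$, or the one-shift Desnanot--Jacobi identity $H_n^{(1)}H_{n-2}^{(3)}=H_{n-1}^{(1)}H_{n-1}^{(3)}-(H_{n-1}^{(2)})^2$) divide by quantities that vanish precisely on the period-$3$ locus $m\equiv2\pmod3$ where the interesting combinatorics lives, so they determine $H^{(3)}_m$ cleanly only for $m\equiv0,1\pmod3$ and degenerate into mere consistency relations for $m\equiv2\pmod3$. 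The Jacobi size-$3$ formula above is my preferred route precisely because it never divides by $P_m(0)$; to nail the closed form of $g_n$ I would then show that the surviving OP-coefficient combinations telescope as a sum over an arithmetic progression of step $3$, which is exactly the generating-function fingerprint of $t_n(q)$, and establish nonnegativity either by recognizing $g_n$ as a manifestly positive geometric square or, combinatorially, by interpreting the shifted determinant through Lemma~\ref{lemma:LGV} as a signed count of non-intersecting area-weighted Motzkin-path families and constructing a sign-reversing involution whose fixed points generate $g_n$ with nonnegative coefficients.
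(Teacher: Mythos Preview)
There is nothing in the paper to compare your proposal against: the statement is a \emph{conjecture}, listed in the appendix under ``Based on numerical computations, we have the following conjectures.'' The authors give no proof, no sketch, and no indication of an intended method; they only record the numerical pattern (and, as you may have noticed, the displayed formula in the paper has a typo, writing $f_n(q)$ where $g_n(q)$ is meant, and the label \texttt{conj:factored\_motzkin\_2shifted} is duplicated with the preceding shift-$2$ conjecture).

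That said, your plan is a coherent attack on the problem rather than a comparison target, so let me comment on it as such. The identification of the Jacobi parameters $b_m=q^m$, $\lambda_m=q^{2m-1}$, the closed form $P_m(0)=\epsilon_m q^{\binom m2}$ with $\epsilon_m\in\{1,-1,0\}$ of period~$3$, and the recovery of Cigler's $r=0,1$ evaluations are all correct, as are your small checks $\widetilde{\Mot}^{\dagger}_3(q)=(1+q)^2$ and $\det[\widetilde{\Mot}^{\dagger}_{i+j+3}]_{i,j=0}^{1}=q^6(1+2q)$. Reducing the shift-$3$ Hankel determinant via Jacobi's complementary-minor identity to a $3\times3$ block in $\mathcal H_{n+3}^{-1}$ and then expressing that block through the low-order Taylor coefficients $P_m(0),P_m'(0),P_m''(0)$ for $m\in\{n,n+1,n+2\}$ is the right structural move and, as you note, avoids the division-by-zero that wrecks the naive Christoffel or Desnanot--Jacobi bootstrap at $m\equiv2\pmod3$.

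Where your proposal is still genuinely incomplete is exactly where you flag it: the closed form for $g_n(q)$ in the residue class $n\equiv1\pmod3$ (the asserted perfect square $(q+1)^2 t_n(q)^2$) and the nonnegativity of $g_n(q)$ in general. The $3\times3$ determinant, after substituting the period-$3$ templates for $P_m(0),P_m'(0),P_m''(0)$, will not collapse to a single term; you will get a linear combination of products of partial geometric sums in $q^3$, and extracting the exact factorization (and positivity) requires carrying out and simplifying those recurrences for $P_m'(0)$ and $P_m''(0)$ explicitly, not merely noting that they ``inherit the same period-$3$ template.'' Until that computation is done, the sign $(-1)^{\lfloor n/3\rfloor}$ and the divisibility by $t_n(q)$ remain plausible but unproved. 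In short: your framework is sound and goes well beyond what the paper offers, but the conjecture remains open after your sketch.
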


\begin{conjecture}
\label{conj:factored_motzkin_positive}
For any $k,n \in \Z_{>0}$, we have
\[
\det \left[ \widetilde{\Mot}^{\dagger}_{i+j+2k}(q) \right]_{i,j=0}^{n-1} \in \Z_{\geq 0}[q].
\]
\end{conjecture}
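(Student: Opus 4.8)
The plan is to prove coefficient positivity in exactly the way Proposition~\ref{prop:q_Catalan_Hankel} handles the Carlitz--Riordan case: realize the Hankel determinant as a \emph{manifestly positive} sum over non-intersecting lattice paths via the LGV lemma (Lemma~\ref{lemma:LGV}), so that every surviving term is a monomial $\prod_i q^{\mathrm{stat}(P^{(i)})}$ with coefficient $+1$. I emphasize at the outset that a ``positive measure'' argument (treating the $\widetilde{\Mot}^{\dagger}_m(q)$ as moments of a positive functional and writing the determinant as a product of squared norms) is not enough here: it would only show the determinant is positive for every real $q>0$, whereas membership in $\Z_{\ge 0}[q]$ requires nonnegativity of each coefficient, which is strictly stronger and genuinely combinatorial.

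First I would record a weighted-path model for Cigler's numbers. The recursion $\widetilde{\Mot}^{\dagger}_{n+1}(q) = \widetilde{\Mot}^{\dagger}_n(q) + \sum_{k=0}^{n-1} q^{k+1} \widetilde{\Mot}^{\dagger}_k(q)\widetilde{\Mot}^{\dagger}_{n-k-1}(q)$ is the first-return decomposition of Motzkin paths in which the level weights of the Flajolet continued fraction are $q$-deformed into an area statistic; thus $\widetilde{\Mot}^{\dagger}_n(q) = \sum_M q^{\mathrm{area}(M)}$, summed over Motzkin paths $M$ of length $n$ with steps $U,H,D$ and an area-type statistic read off exactly as in the proof of Proposition~\ref{prop:q_Catalan_Hankel}. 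I would then take the Motzkin lattice as the directed acyclic graph, weight each $D$-step by $q$ to the power counting the relevant area contribution, and choose initial and terminal vertices $\mathbf{s}=(s_0,\dots,s_{n-1})$ and $\mathbf{t}=(t_0,\dots,t_{n-1})$ offset by a common baseline so that $e(s_i,t_j) = \widetilde{\Mot}^{\dagger}_{i+j+2k}(q)$. The LGV lemma then yields $\det[\widetilde{\Mot}^{\dagger}_{i+j+2k}(q)]_{i,j=0}^{n-1} = \sum_{\nilp} \sgn(\nilp)\prod_i q^{\mathrm{area}(P^{(i)})}$.

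The heart of the argument is a parity lemma: when the shift $2k$ is \emph{even}, every family $\nilp$ of non-intersecting Motzkin paths from $\mathbf{s}$ to $\mathbf{t}$ has underlying permutation $\sigma=\mathrm{id}$, so $\sgn(\nilp)=+1$ on all surviving families and the determinant is a subset sum of monomials $\prod_i q^{\mathrm{area}(P^{(i)})}\in\Z_{\ge 0}[q]$. The even shift is exactly what forces the start and end points to interleave with matching parity, obstructing any transposition realizing $s_i\to t_j$ and $s_j\to t_i$ with $i<j$ without a crossing. This hypothesis is essential: for odd shifts the same setup admits non-identity, sign-negative families, which is precisely the source of the alternating $\kappa_n$ in Cigler's $1$-shifted determinant and of the conjectured factor $(-1)^{\lfloor n/3\rfloor}$ for the $3$-shift, so no naive positivity can hold there.

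The main obstacle I expect is this parity lemma, namely ruling out all non-identity non-intersecting families for even shift. The offsets $s_i,t_j$ must be positioned with care, since (unlike the Dyck/Catalan case, where $H$-steps are absent) the horizontal steps of Motzkin paths give extra room to reroute, and one must verify that any reroute forced by a transposition always creates an intersection. Should the identity-only statement fail for some configuration, the fallback is to construct an explicit area-preserving, sign-reversing involution on the non-identity families whose fixed-point set is empty; this is harder but standard, and would still deliver $\det\in\Z_{\ge 0}[q]$. Before attempting the general lemma I would calibrate the geometry against the explicit $2$- and $3$-shift data and against Conjecture~\ref{conj:factored_motzkin_positive}, both to fix the correct baseline offset and to confirm that the surviving families reproduce the observed positive polynomials.
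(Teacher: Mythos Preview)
This statement is listed in the paper as an open \emph{conjecture}; the paper gives no proof and in fact argues in the paragraph that follows it that the LGV route is obstructed. So there is no paper proof to compare against---the question is whether your outline actually closes the problem.

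It does not, and the gap is the ``parity lemma''. An area-based edge weighting does exist (weight every step from height $h$ by $q^h$), and with all sources $s_i=(-i,0)$ and sinks $t_j=(j+2k,0)$ on the axis one gets $e(s_i,t_j)=\widetilde{\Mot}^{\dagger}_{i+j+2k}(q)$. But the assertion that an even shift forces $\sigma=\mathrm{id}$ on every non-intersecting family is false. The Dyck argument works because with only $U,D$ steps the height at abscissa $x$ has fixed parity, so two paths cannot exchange which lies above without sharing a vertex; horizontal steps destroy that parity, and the global parity of the shift does nothing to restore it. Already for $n=2$, $k=1$ (sources $(0,0),(-1,0)$; sinks $(2,0),(3,0)$) the pair
\[
P^{(0)}\colon (0,0)\xrightarrow{\,H\,}(1,0)\xrightarrow{\,U\,}(2,1)\xrightarrow{\,D\,}(3,0),
\qquad
P^{(1)}\colon (-1,0)\xrightarrow{\,U\,}(0,1)\xrightarrow{\,H\,}(1,1)\xrightarrow{\,D\,}(2,0)
\]
is vertex-disjoint and realises the transposition, contributing with sign $-1$. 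Thus the LGV sum is genuinely signed for every even shift $2k\ge 2$, not only for odd shifts. Your fallback---a $q$-weight-preserving, sign-reversing involution killing all non-identity families---is precisely the missing ingredient, but it is not ``standard'': constructing one would \emph{settle} the conjecture, and neither your proposal nor the paper supplies it.
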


We note that for Conjecture~\ref{conj:factored_motzkin_2shifted}, we have $f_{3m+1}(q) = f_{3m+2}(q)$. Moreover, the positivity of the Hankel determinants does not seem to extend to higher odd shifts since
\begin{align*}
\det \left[ \widetilde{\Mot}^{\dagger}_{i+j+5}(q) \right]_{i,j=0}^{2} & = -q^{38} + 2q^{34} + 6q^{33} + 4q^{32} + 4q^{31} + q^{30} - 6q^{29} - 10q^{28}
\\ & \hspace{20pt} - 8q^{27} - 12q^{26} - 8q^{25} - 3q^{24} - 4q^{23} - 4q^{22} - q^{20}.
\end{align*}

The LGV approach does not naturally extend to show Conjecture~\ref{conj:factored_motzkin_positive} as the only statistic that we can see that results in Cigler's $q$-Motzkin number is \defn{tunnel length}. Indeed, recall that a \defn{tunnel} for a Motzkin path is the positions $(i, j)$ of a pair of matching parentheses\footnote{Recall that we can represent a Motzkin path as a sequence of parentheses with a letter $x$ such that every open parenthesis `(' has a matching closing parenthesis `)'.} and the length of the tunnel is $j - i + 1$. Thus, define the tunnel length of $M$ as $T(M) = \sum_{(i,j)} j - i + 1$, where the sum is over all tunnels $(i,j)$ of $M$. The following proposition is straightforward from the recursion on Motzkin paths for the first return to $y=0$.

\begin{proposition}
We have
\[
\Mot^{\dagger}_n(q) = \sum_M q^{T(M)},
\]
where the sum is over all Motzkin paths of length $M$.
\end{proposition}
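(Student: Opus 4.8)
The plan is to prove the identity by showing that the generating function
\[
G_n(q) \seteq \sum_M q^{T(M)},
\]
where $M$ ranges over all Motzkin paths of length $n$, satisfies exactly the recursion defining Cigler's $q$-Motzkin number $\widetilde{\Mot}^{\dagger}_n(q)$, so that $G_n(q) = \widetilde{\Mot}^{\dagger}_n(q)$ follows by induction on $n$. The base cases are immediate: the unique Motzkin path of length $0$ is empty and the unique one of length $1$ is a single horizontal step $H$; neither contains a matched parenthesis, so $T(M)=0$ and $G_0(q)=G_1(q)=1$, agreeing with $\widetilde{\Mot}^{\dagger}_0(q)=\widetilde{\Mot}^{\dagger}_1(q)=1$.

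For the inductive step I would split a nonempty Motzkin path of length $n+1$ according to its first step. If it begins with a horizontal step, then $M = H\,M'$ with $M'$ a Motzkin path of length $n$; prepending $H$ translates the position of every matched pair of $M'$ by one, which preserves each tunnel length, so $T(M)=T(M')$ and this case contributes $G_n(q)$. If instead the path begins with an up step $U$, let $D$ be its matching down step, and write $M = U\,M''\,D\,M'''$, where $M''$ is the enclosed subpath of some length $k$ with $0 \le k \le n-1$ and $M'''$ is the remaining Motzkin path of length $n-1-k$. The key structural fact is the additivity
\[
T(M) = \tau + T(M'') + T(M'''),
\]
where $\tau$ is the tunnel length of the newly created pair $(U,D)$: the matchings internal to $M''$ and to $M'''$ are merely translated by a constant and thus keep their lengths, while the canonical first-return factorization of a balanced parenthesization guarantees that no other matchings are created, destroyed, or shared across the three pieces. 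Since $D$ occurs $k+1$ steps after $U$, the enclosing tunnel contributes the single factor $q^{k+1}$ (as one checks on the minimal instance $UD$, where $k=0$ and the pair contributes $q$), so this case contributes $\sum_{k=0}^{n-1} q^{k+1} G_k(q)\,G_{n-1-k}(q)$.

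Combining the two cases yields
\[
G_{n+1}(q) = G_n(q) + \sum_{k=0}^{n-1} q^{k+1} G_k(q)\, G_{n-1-k}(q),
\]
which is precisely Cigler's recursion for $\widetilde{\Mot}^{\dagger}_{n+1}(q)$. The induction hypothesis $G_k(q)=\widetilde{\Mot}^{\dagger}_k(q)$ for $k \le n$ then forces $G_{n+1}(q) = \widetilde{\Mot}^{\dagger}_{n+1}(q)$, completing the argument and identifying $\widetilde{\Mot}^{\dagger}_n(q)$ with the quantity $\Mot^{\dagger}_n(q)$ of the statement.

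The only genuinely delicate point is the bookkeeping in the $U$-case: one must confirm both the additivity of $T$ under the factorization $M = U\,M''\,D\,M'''$ and the exact exponent of the enclosing tunnel. The additivity reduces to the observation that a constant shift of positions preserves each tunnel length together with the uniqueness of the first-return decomposition of a balanced word, which keeps the inner, outer, and enclosing tunnels disjoint. The exponent $k+1$ is exactly what is required to reproduce the $q^{k+1}$ weight in Cigler's recursion, and I would double-check it against the smallest instance $UD$ to ensure no off-by-one error slips into the accounting.
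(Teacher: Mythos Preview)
Your argument is correct and is exactly the first-return-to-zero decomposition the paper itself cites as the entire proof. Your caution about the exponent is warranted: with the paper's literal definition of tunnel length as $j-i+1$ for step positions $i<j$, the pair in $UD$ would contribute $q^2$ rather than $q$, so the convention that makes the proposition true (and that you effectively use) is tunnel length $j-i$.
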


Note that this is not compatible with the LGV lemma as we cannot assign a weighting to each edge that agrees with the tunnel length. Contrast this with area for Dyck paths which yields Proposition~\ref{prop:q_Catalan_Hankel}. Hence, to prove Conjecture~\ref{conj:factored_motzkin_positive} combinatorially, one would need a statistic on Motzkin paths that can be considered as an edge weighting on the corresponding digraph to generated Motzkin paths.

\section{Alternative proofs of Corollary~\ref{cor: Cn determinatal r omega_n}}
\label{app:alt_proofs}

We give two alternative proofs of Corollary~\ref{cor: Cn determinatal r omega_n}.

\begin{proof}[Alternative proof using the LGV for Catalan numbers]
The result follows from LGV lemma and a bijection with non-intersecting lattice paths and type $C_n$ King tableaux given by using $\Xi_{n+1}$ on each path $P^{(i)}$, with an appropriate shift of starting point, yielding the $i$-th column. It is straightforward to see that the non-intersecting condition corresponds to semistandard condition on King tableaux, similar to the case for the Jacobi--Trudi formula.
\end{proof}

\begin{proof}[Alternative proof using elementary manipulations]
From~\cite[Thm.~3]{Krat10}, we can compute
\[
\det [\Cat_{n+1+i+j} ]_{i,j=0}^{r-1}  = \prod_{0 \le i < j \le r-1}(j-i) \prod_{i=0}^{r-1} \dfrac{(i+r)!(2n+2+2i)!}{2i!(n+1+i)!(n+i+r+1)!}.
\]
Next, from Proposition~\ref{prop:Cn_ps}, we have
\[
\dim V(r \varpi_n) = \prod_{i=1}^r \dfrac{i}{n+i} \binom{2n+2r}{n+r-i}\binom{2n+2r}{r-i}^{-1} \prod_{1 \leq i < j \leq r} \frac{i+j}{2n+i+j}
\]
Now we shall prove that
\begin{align*}
& \prod_{0 \le i < j \le r-1}(j-i) \prod_{i=0}^{r-1} \dfrac{(i+r)!(2n+2+2i)!}{2i!(n+1+i)!(n+i+r+1)!}  \\
& \hspace{20ex} =  \prod_{i=1}^r \dfrac{i}{n+i} \binom{2n+2r}{n+r-i}\binom{2n+2r}{r-i}^{-1} \prod_{1 \leq i < j \leq r} \frac{i+j}{2n+i+j} \end{align*}
Using the notation defined in~\eqref{eq:simplified_factorials}, we have
\begin{align*}
\det [\Cat_{n+1+i+j} ]_{i,j=0}^{r-1} &= F(r-1) \dfrac{F(2r-1)}{F(r-1)}\dfrac{1}{\Phi(2r-2)}\dfrac{\Phi(2n+2r)}{\Phi(2n)}\dfrac{F(n)}{F(n+r)}\dfrac{F(n+r)}{F(n+2r)} \\
&= \dfrac{\Phi(2r-1)\Phi(2n+2r)F(n)}{\Phi(2n)F(n+2r)},
\end{align*}
while
\begin{align*}
\dim V(r \varpi_n) &= F(r)  \dfrac{F(n+r)}{F(n+2r)}\dfrac{F(2n+2r)}{F(2n+r)}\dfrac{F(n)}{F(n+r)} \times \dfrac{F(2n+r)}{F(2n+1)}\dfrac{\Phi(2n+1)}{\Phi(2n+2r-1)}\dfrac{\Phi(2r-1)}{F(r)} \\
&= \dfrac{\Phi(2r-1)\Phi(2n+2r)F(n)}{\Phi(2n)F(n+2r)}.
\end{align*}
Thus our assertion holds.
\end{proof}

\bibliographystyle{alpha}
\bibliography{catalan}{}
\end{document}